\newtheorem{thm}{Theorem}[section]
\newtheorem{lem}[thm]{Lemma}
\newtheorem{pro}[thm]{Proposition}
\newtheorem{cor}[thm]{Corollary}
\theoremstyle{definition}
\theoremstyle{remark}
\newtheorem{rem}[thm]{Remark}
\newcommand{\R}{\mathbb{R}}
\newcommand{\N}{\mathbb{N}}
\newcommand{\bO}{\mathbb{O}}
\newcommand{\C}{\mathbb{C}}
\newcommand{\bH}{\mathbb{H}}
\newcommand{\cF}{\mathcal{F}}
\newcommand{\cM}{\mathcal{M}}
\newcommand{\cR}{\mathcal{R}}
\newcommand{\cU}{\mathcal{U}}
\newcommand{\al}{\alpha}
\newcommand{\be}{\beta}
\newcommand{\ga}{\gamma}
\newcommand{\de}{\delta}
\newcommand{\ep}{\varepsilon}
\newcommand{\om}{\omega}
\newcommand{\si}{\sigma}
\newcommand{\la}{\lambda}
\newcommand{\La}{\Lambda}
\renewcommand{\phi}{\varphi}
\newcommand{\crt}{\operatorname{crt}}
\newcommand{\CAT}{\operatorname{CAT}}
\newcommand{\hyp}{\operatorname{H}}
\newcommand{\id}{\operatorname{id}}
\newcommand{\pr}{\operatorname{pr}}
\newcommand{\semi}{semi$\text{-}\C$}
\newcommand{\rot}{\operatorname{rot}}
\newcommand{\im}{\operatorname{Im}}
\newcommand{\harm}{\operatorname{Harm}}
\newcommand{\fix}{\operatorname{Fix}}
\newcommand{\es}{\emptyset}
\renewcommand{\d}{\partial}
\newcommand{\di}{\d_{\infty}}
\newcommand{\set}[2]{\{#1:\,\text{#2}\}}
\newcommand{\sm}{\setminus}
\newcommand{\sub}{\subset}
\newcommand{\ov}{\overline}
\newcommand{\wt}{\widetilde}
\newcommand{\wh}{\widehat}
\begin{document}

\title{Incidence axioms for the boundary at infinity
of complex hyperbolic spaces}
\author{Sergei Buyalo\footnote{Supported by RFFI Grant
14-01-00062 and SNF Grant}
\ \& Viktor Schroeder\footnote{Supported by Swiss National
Science Foundation}}

\date{}
\maketitle

\begin{abstract}
We characterize the boundary at infinity of a complex
hyperbolic space as a compact Ptolemy space that satisfies 
four incidence axioms.
\end{abstract}

\noindent{\bf Keywords} complex hyperbolic spaces, Ptolemy spaces, incidence axioms

\medskip

\noindent{\bf Mathematics Subject Classification} 53C35, 53C23

\section{Introduction}
\label{sect:introduction}

We characterize the boundary at infinity of a complex
hyperbolic space
$\C\hyp^k$, $k\ge 1$,
as a compact Ptolemy space that satisfies four incidence
axioms.

Recall that a M\"obius structure on a set 
$X$,
or a M\"obius space over
$X$,
is a class of M\"obius equivalent metrics on
$X$, 
where two metrics are equivalent if they have the same cross-ratios.
A Ptolemy space is a M\"obius space with the property that 
the metric inversion operation preserves the M\"obius structure,
that is, the function
\begin{equation}\label{eq:metric_inversion}
d_\om(x,y)=\frac{d(x,y)}{d(x,\om)\cdot(y,\om)} 
\end{equation}
satisfies the triangle inequality for every metric
$d$
of the M\"obius structure and every
$\om\in X$.
A basic example of a Ptolemy space is the boundary at infinity
of a rank one symmetric space
$M$
of noncompact type taken with the canonical M\"obius structure,
see sect.~\ref{subsect:model_moebius_structure}. In the case
$M=\hyp^{k+1}$
is a real hyperbolic space, the boundary at infinity
$\di M$
taken with the canonical M\"obius structure is M\"obius
equivalent to an extended Euclidean space
$\wh\R^k=\R^k\cup\{\infty\}$.

\subsection{Incidence axioms}
\label{subsect:incidence_axioms}

Basic objects of our axiom system are 
$\R$-circles, $\C$-circles
and harmonic 4-tuples in a M\"obius space
$X$. 
Any
$\R$-circle
is M\"obius equivalent to the extended real line
$\wh\R$
and any 
$\C$-circle
is the square root of
$\wh\R$.
In other words, an
$\R$-circle
can be identified with
$\di\hyp^2$,
and a
$\C$-circle with
$\di(\frac{1}{2}\hyp^2)$,
where the hyperbolic plane
$\frac{1}{2}\hyp^2$
has constant curvature 
$-4$.

An (ordered) 4-tuple
$(x,z,y,u)\sub X$
of pairwise distinct points is {\em harmonic} if
$$d(x,z)d(y,u)=d(x,u)d(y,z)$$
for some and hence for any metric
$d$
of the M\"obius structure.\footnote[1]{This notion in the case of 
$\wh\R$
was introduced by Karl von Staudt  in his book ``Geometrie der Lage, 1847.} 
We use the notation
$\harm_A$
for the set of harmonic 4-tuples in
$A\sub X$.
In the case
$A=X$
we abbreviate to
$\harm$.
In the case
$X=\di M$
for
$M=\hyp^2$
or
$M=\frac{1}{2}\hyp^2$
a 4-tuple
$(x,z,y,u)$
is harmonic if and only if the geodesic lines
$xy$, $zu\sub M$
intersect each other orthogonally.  

We consider the following axioms.  

\begin{itemize}
 \item [(E)] Existence axioms

 \begin{itemize}
 \item [($\rm E_\C$)]  Through every two distinct points in
$X$
there is a unique
$\C$-circle.
 \item [($\rm E_\R$)]  For every
$\C$-circle $F\sub X$, $\om\in F$
and 
$u\in X\sm F$
there is a unique $\R$-circle
$\si\sub X$
through
$\om$, $u$
that intersects
$F_\om=F\sm\{\om\}$.
\end{itemize}

\item [(O)] Orthogonality axioms: 

For every
$\R$-circle $\si$
and every
$\C$-circle $F$
with common distinct points
$o$, $\om$
the following holds

\begin{itemize}
 \item [($\rm O_\C$)]  given
 $u$, $v\in\si$
 such that
 $(o,u,\om,v)\in\harm_\si$, 
the 4-tuple
$(w,u,o,v)$
is harmonic for every
$w\in F$,
 \item [($\rm O_\R$)] given
 $x$, $y\in F$
 such that
 $(o,x,\om,y)\in\harm_F$,
the 4-tuple
$(w,x,\om,y)$
is harmonic for every
$w\in\si$.
\end{itemize}
\end{itemize}


\subsection{Main result}
\label{subsect:main_result}

Our main result is the following

\begin{thm}\label{thm:complex_hyperbolic} Let
$X$
be a compact Ptolemy space that satisfies axioms
(E) and (O). Then
$X$
is M\"obius equivalent to the boundary at infinity of a complex
hyperbolic space
$\C\hyp^k$, $k\ge 1$,
taken with the canonical M\"obius structure.
\end{thm}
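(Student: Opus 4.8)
The plan is to reconstruct the complex hyperbolic space from its boundary by building, from the incidence structure on $X$, first a filling by a $\CAT(-1)$-type space and then identifying that space with $\C\hyp^k$. Let me think about how the pieces fit together.

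First I would analyze the local structure. The axiom $(\rm E_\C)$ gives a canonical family of $\C$-circles, one through each pair of points, and this should let me define, for each $\om\in X$, a ``contact'' or ``horizontal'' distribution on $X_\om=X\sm\{\om\}$: the $\C$-circles through $\om$ foliate $X_\om$ and their tangents single out a distinguished line field, while the $\R$-circles through $\om$ meeting a given $\C$-circle (axiom $(\rm E_\R)$) furnish complementary directions. The expectation is that $X_\om$, with the metric $d_\om$ from \eqref{eq:metric_inversion}, carries the structure of a Heisenberg-type group: the $\C$-circles become the cosets of the center and the $\R$-circles become the horizontal lines. Verifying that $d_\om$ is (M\"obius equivalent to) a Carnot--Carath\'eodory / Kor\'anyi metric on a Heisenberg group is the crucial computation, and here the orthogonality axioms (O) do the work: $(\rm O_\C)$ and $(\rm O_\R)$ encode, in harmonic-4-tuple language, exactly the orthogonality relations that pin down the algebraic commutator relation of the Heisenberg group. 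The key point is that a harmonic 4-tuple translates, via $d_\om$, into an orthogonality or midpoint condition in the punctured space.

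Next I would globalize. Having identified each punctured space $X_\om$ with a Heisenberg group $\mathcal{H}$ of some dimension $k$ (the same $k$ for all $\om$ by homogeneity, which follows from $(\rm E_\C)$ together with transitivity of the inversions), I would form the hyperbolic cone or filling: take $X\times(0,\infty)$ and define a metric using the M\"obius structure, in the spirit of the standard reconstruction of a $\CAT(-1)$ space from a Ptolemy boundary. The $\C$-circles and $\R$-circles should extend to totally geodesic subspaces of this filling — the $\C$-circles spanning copies of $\C\hyp^1=\frac12\hyp^2$ and the $\R$-circles spanning copies of $\hyp^2$ — and the orthogonality axioms guarantee these subspaces meet along the angles prescribed by the complex hyperbolic metric. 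One then checks that the resulting space is a homogeneous, negatively curved space whose boundary, with its canonical M\"obius structure, is $X$; by the rank-one symmetric space classification, the only candidate with Heisenberg boundary and the prescribed totally geodesic $\C$-line / $\R$-line pattern is $\C\hyp^k$ itself.

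The hard part, I expect, is the identification of the punctured metric space $X_\om$ with a Heisenberg group purely from the incidence data — that is, showing that the abstract $\C$-circle and $\R$-circle families, constrained only by (E) and (O), must assemble into an honest Heisenberg group structure rather than some exotic alternative. Concretely, one must extract from axiom (O) a \emph{nondegenerate} symplectic/complex structure on the horizontal distribution and prove that it is integrable and compatible across different base points $\om$; this is where a genuine rigidity argument is needed, presumably by showing that the orthogonality data determine a quadratic (cross-ratio) identity characterizing the Kor\'anyi metric up to M\"obius equivalence. Once $X_\om\cong\mathcal{H}$ is established with its Carnot metric, the passage to $\C\hyp^k$ is comparatively formal, invoking the known characterization of $\di\C\hyp^k$ as the boundary whose punctured spaces are Heisenberg groups with the canonical Ptolemy structure.
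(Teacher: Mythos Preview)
Your outline has a real gap at the point you yourself flag as ``the hard part,'' and it is more serious than you indicate. You write that the dimension is the same for all $\om$ ``by homogeneity, which follows from $(\rm E_\C)$ together with transitivity of the inversions.'' But there are no inversions in the sense of M\"obius automorphisms here: the metric inversion \eqref{eq:metric_inversion} is only a change of metric within the M\"obius class, not a self-map of $X$. The axioms give you \emph{no} nontrivial M\"obius automorphism of $X$ whatsoever, and producing even one is a substantial part of the proof. So any appeal to homogeneity, transitivity, or a group structure on $X_\om$ is circular until that is done. Likewise, your plan to recognise $X_\om$ as a Heisenberg group with a Carnot--Carath\'eodory metric is aimed at the wrong target: the metrics in the canonical M\"obius structure on $\di\C\hyp^k$ are \emph{not} Carnot--Carath\'eodory or length metrics (they are of Kor\'anyi-gauge type), so a CC identification will not match what you need.

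The paper's route is quite different and stays entirely on the boundary. From the axioms one first constructs, for each $\C$-circle $F$, an involution $\phi_F:X\to X$ (the ``reflection'' with fixed set $F$), and then derives an explicit distance formula $r^4=a^4+b^4$ in $X_\om$ relating a point to its projection on a $\C$-line. This formula is the engine: it yields first vertical shifts, then shows that the reflections $\phi_F$ and certain pure homotheties are M\"obius, so that automorphisms are manufactured rather than assumed. With automorphisms in hand one defines orthogonal complements $(F,\eta)^\perp$ and M\"obius joins $F\ast F'$, proves the base case $F\ast F'\cong\di\C\hyp^2$ for mutually orthogonal $\C$-circles by direct metric computation, uses this to show the base $B_\om$ of the canonical foliation is Euclidean, and then finishes by induction on $\dim X$ via $X=F\ast(F,\eta)^\perp$. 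No filling, no $\CAT(-1)$ cone, no appeal to a symmetric-space classification is used. Your sketch would have to reproduce essentially all of this before the Heisenberg identification could even be posed, and the cone/filling step at the end would then be superfluous.
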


In \cite{BS2} we have obtained a M\"obius characterization of the boundary
at infinity of any rank one symmetric space assuming existence of
sufficiently many M\"obius automorphisms. In contrast to \cite{BS2},
there is no assumption in Theorem~\ref{thm:complex_hyperbolic} on
automorphisms of
$X$, 
and one of the key problems with Theorem~\ref{thm:complex_hyperbolic} 
is to establish existence of at least one nontrivial automorphism. 
In this respect, Theorem~\ref{thm:complex_hyperbolic} is an analog 
for complex hyperbolic spaces of a result obtained in \cite{FS1}
for real hyperbolic spaces: every compact Ptolemy space such that 
through any three points there exists an
$\R$-circle 
is M\"obius equivalent to
$\wh\R^k=\di\hyp^{k+1}$.

The model space
$Y=\di M$, $M=\C\hyp^k$,
serves for motivation and illustration of various notions
and constructions used in the proof of Theorem~\ref{thm:complex_hyperbolic}.
These are reflections with respect to 
$\C$-circles,
pure homotheties, orthogonal complements to
$\C$-circles,
suspensions over orthogonal complements, M\"obius joins.
In sect.~\ref{sect:model_space}, we describe the canonical
M\"obius structure on
$Y$,
show that this structure satisfies axioms (E), (O),
and study in Proposition~\ref{pro:holonomy_normal_bundle} 
holonomy of the normal bundle to a complex hyperbolic plane in
$\C\hyp^k$. 
This Proposition plays an important role in sect.~\ref{sect:moebius_join_equivalence}, 
see Lemma~\ref{lem:unique_representative_cfiber}.

Now, we briefly describe the logic of the proof. The first
important step is Proposition~\ref{pro:conjugate_pole} which
leads to existence for every
$\C$-circle $F\sub X$
of an involution
$\phi_F:X\to X$
whose fixed point set is 
$F$,
and such that every
$\R$-circle $\si\sub X$
intersecting 
$F$
at two points is 
$\phi_F$-invariant, $\phi_F(\si)=\si$.
The involution
$\phi_F$
is called the reflection with respect to
$F$.

In the second step, crucial for the paper, we obtain
a distance formula, see Proposition~\ref{pro:explicit_distance},
which is our basic tool. Using the distance formula, we establish
existence of nontrivial M\"obius automorphisms
$X\to X$
beginning with vertical shifts, see sect.~\ref{subsect:vertical_shifts}, 
and then proving that reflections with respect to
$\C$-circles
and pure homotheties are M\"obius, sect.~\ref{subsect:reflection_ccircle}
and \ref{subsect:pure_homothety}.

Next, in sect.~\ref{sect:fiber_filling_map} we introduce the notion of 
an orthogonal complement 
$A=(F,\eta)^\perp$
to a
$\C$-circle $F\sub X$
at a M\"obius involution
$\eta:F\to F$
without fixed points. In the case of the model space
$Y$, $A$
is the boundary at infinity of the orthogonal complement
to the complex hyperbolic plane
$E\sub M$
with
$\di E=F$
at
$a\in E$, $\eta=\di s_a$,
where
$s_a:E\to E$
is the central symmetry with respect to
$a$.

In the third step, we show that
for any mutually orthogonal 
$\C$-circles $F$, $F'\sub X$
the intersection
$A\cap A'$
of their orthogonal complements, when nonempty,
satisfies axioms (E), (O), see sect.~\ref{subsect:induced_argument}. 
This gives a possibility to proceed by induction on dimension.

To do that, we introduce in sect.~\ref{sect:moebius_join} the notion of 
a M\"obius join 
$F\ast F'\sub X$
of a 
$\C$-circle $F$
and its canonical orthogonal subspace
$F'\sub (F,\eta)^\perp$.
It turns out that
$X=F\ast A$
for any 
$A=(F,\eta)^\perp$.
All what remains to prove is that whenever
$F'$
is M\"obius equivalent to
$\di\C\hyp^{k-1}$,
the M\"obius join
$F\ast F'$
is equivalent to
$\di\C\hyp^k$.
The distance formula works perfectly in the case
$\dim F'=1$,
i.e.,
$F'\sub (F,\eta)^\perp$
is a
$\C$-circle,
however, if
$\dim F'>1$,
it does not work directly. Thus we proceed in two steps. 
First, we prove that
$F\ast F'=\di\C\hyp^2$
for any mutually orthogonal
$\C$-circles $F$, $F'\sub X$,
in particular, this proves Theorem~\ref{thm:complex_hyperbolic}
in the case
$\dim X=3$.
Then using this fact as a power tool, we establish that the base 
$B_\om$
of the canonical foliation
$X_\om\to B_\om$, $\om\in X$,
see sect.~\ref{sect:canonical_foliation},
is Euclidean. Using this, we finally prove the general case for
$\dim F'>1$.

{\em Acknowledgments.} The first author
is grateful to the University of Z\"urich for 
hospitality and support.

\tableofcontents

\section{The model space $\C\hyp^k$}
\label{sect:model_space}
Let
$M=\C\hyp^k$, $k\ge 1$,
be the complex hyperbolic space,
$\dim M=2k$.
For
$k=1$
we identify
$M=\frac{1}{2}\hyp^2$
with the hyperbolic plane of constant curvature
$-4$.
For
$k\ge 2$
we choose a normalization of the metric so that 
the sectional curvatures of
$M$
are pinched as
$-4\le K_\si\le -1$.

We use the standard notation
$TM$
for the tangent bundle of
$M$
and
$UM$
for the subbundle of the unit vectors.
For every unit vector
$u\in U_aM$, $a\in M$,
the eigenspaces
$E_u(\la)$
of the {\em curvature operator}
$\cR(\cdot,u)u:u^\bot\to u^\bot$,
where
$u^\bot\sub T_oM$
is the subspace orthogonal to
$u$,
are parallel along the geodesic
$\ga(t)=\exp_a(tu)$, $t\in\R$, and the respective
eigenvalues
$\la=-1,-4$
are constant along
$\ga$.
The dimensions of the eigenspaces are
$\dim E_u(-1)=2(k-1)$, $\dim E_u(-4)=1$,
$u^\bot=E_u(-1)\oplus E_u(-4)$.

Any 
$u\in U_aM$
and a unit vector
$v\in E_u(-4)$
span a 2-dimensional subspace
$L=L(u,v)\sub T_aM$
for which
$\exp_aL\sub M$
is a totally geodesic subspace isometric to
$\frac{1}{2}\hyp^2$
called a {\em complex} hyperbolic plane,
while for every unit
$v\in E_u(-1)$
the totally geodesic subspace
$\exp_aL(u,v)\sub M$
is isometric to
$\hyp^2$
and called a {\em real} hyperbolic plane.

At every point
$a\in M$
there is an isometry
$s_a:M\to M$
with unique fixed point
$a$, $s_a(a)=a$,
such that its differential
$ds_a:T_aM\to T_aM$
is the antipodal map,
$ds_a=-\id$.
The isometry
$s_a$
is called the {\em central symmetry} at
$a$.
For different 
$a$, $b\in M$
the composition
$s_a\circ s_b:M\to M$
preserves the geodesic
$\ga\sub M$
through
$a$, $b$
and acts on 
$\ga$
as a shift whose differential is a parallel 
translation along
$\ga$.
Any such isometry is called a {\em transvection}.

Furthermore, there is a complex structure
$J:TM\to TM$,
where
$J_a:T_aM\to T_aM$
is an isometry with
$J_a^2=-\id$.
Then for every
$u\in U_aM$,
the vectors
$u$, $v=J_a(u)$
form an orthonormal basis of the tangent space
to a complex hyperbolic plane
$E\ni a$.

\begin{rem}\label{rem:3dim_basis} 
Given a complex hyperbolic plane
$E\sub M$
and a point
$a\in E$,
the orthogonal complement
$E^\bot\sub M$
to
$E$
at 
$a$
is a totally geodesic subspace isometric to
$\C\hyp^{k-1}$.
\end{rem}

\subsection{Holonomy of the normal bundle to a complex plane}
\label{subsect:holonomy_normal_bundle}

Let
$E\sub M$
be a complex hyperbolic plane. Here, we describe the holonomy 
of the normal bundle 
$E^\bot$
along
$E$.
The result of this section plays an important role in
sect.~\ref{sect:moebius_join_equivalence}. For every
$a\in E$
we have a 1-dimensional fibration
$\cF_a$
of the unit sphere
$U_aE^\bot$
tangent to the orthogonal complement
$E_a^\bot$
to
$E$.
Fibers of
$\cF_a$
are circles
$T_aL\cap U_aE^\bot$,
where
$L\sub E_a^\bot$
are complex hyperbolic planes through
$a$.
By a {\em holonomy}
of
$E^\bot$
we mean a parallel translation along 
$E$
with respect to the Levi-Civita connection of
$M$.

\begin{pro}\label{pro:holonomy_normal_bundle} For every
$a\in E$
the holonomy
of
$E^\bot$
along any (piece-wise smooth) loop in
$E$
with vertex
$a$
preserves every fiber of
$\cF_a$,
though
$E^\bot$
is not flat.
\end{pro}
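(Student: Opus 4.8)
The plan is to reduce everything to a single curvature computation via the Ambrose--Singer theorem, exploiting crucially that $E$ is a \emph{complex} plane. Since $E\sub M$ is totally geodesic, the normal connection on $E^\bot$ is the restriction of the Levi--Civita connection $\nabla$ of $M$ to normal vectors, so parallel translation along a loop in $E$ is exactly the ambient parallel translation restricted to $E_a^\bot$. Because $M=\C\hyp^k$ is K\"ahler, the complex structure $J$ is parallel, $\nabla J=0$; hence ambient parallel translation commutes with $J$, and in particular $E_a^\bot$ is $J$-invariant (the orthogonal complement of the $J$-invariant plane $T_aE$) with $(E_a^\bot,J)\cong\C^{k-1}$. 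Since $E$ is simply connected, the normal holonomy group coincides with its identity component, and by Ambrose--Singer its Lie algebra is spanned by the operators $P_\ga^{-1}\,\cR(X,Y)\,P_\ga$ restricted to $E_a^\bot$, where $\ga$ runs over paths in $E$ issuing from $a$, $P_\ga$ is parallel translation along $\ga$, and $X,Y$ are tangent to $E$ at the endpoint of $\ga$.

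The heart of the argument is the following computation, which I would carry out at an arbitrary point $b\in E$. Fix a unit $e\in T_bE$; since $T_bE$ is a complex line we have $T_bE=\R e\oplus\R Je$, and every normal vector $Z\in E_b^\bot$ is orthogonal to both $e$ and $Je$. Substituting $X,Y\in T_bE$ and such a $Z$ into the explicit curvature tensor of the complex space form of holomorphic sectional curvature $-4$, every term containing an inner product of $Z$ with $e$ or $Je$ drops out, and only the term proportional to $\langle JX,Y\rangle\,JZ$ survives. Thus $\cR(X,Y)Z=c(X,Y)\,JZ$ on $E_b^\bot$, where $c(X,Y)$ is a scalar; concretely $c(e,Je)=2$ with the present normalization. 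In other words, $\cR(X,Y)|_{E_b^\bot}$ is a real multiple of $J$ for all $X,Y\in T_bE$.

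To conclude, I would combine this with $\nabla J=0$: parallel translation along any path in $E$ satisfies $P_\ga^{-1}JP_\ga=J$, so every Ambrose--Singer generator is again a real multiple of $J$. Hence the normal holonomy algebra is exactly $\R\cdot J$, and the normal holonomy group along loops in $E$ is precisely $\{\cos t\cdot\id+\sin t\cdot J:t\in\R\}$, i.e.\ the circle of complex scalars $e^{it}$ acting on $E_a^\bot\cong\C^{k-1}$. Each such scalar maps every complex line to itself, hence preserves (rotating within it) every fiber $T_aL\cap U_aE^\bot$ of $\cF_a$, which is the assertion. Finally, since $c(e,Je)=2\neq0$ the normal curvature is nonzero, so $E^\bot$ is not flat.

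The main obstacle is the curvature computation of the middle paragraph: the clean identity $\cR(X,Y)|_{E^\bot}=c\,J$ depends entirely on $T_bE$ being $J$-invariant, i.e.\ on $E$ being a complex and not merely a totally real plane. For a real hyperbolic plane the surviving terms would differ and the holonomy would be strictly larger, so this is exactly where the complex-plane hypothesis enters. A secondary point requiring care is the bookkeeping for Ambrose--Singer together with the simple connectivity of $E$, which is what lets me pass from the holonomy algebra to the full holonomy group without worrying about extra components.
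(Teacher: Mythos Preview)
Your argument is correct and follows the same overall strategy as the paper: both proofs reduce the claim to the single curvature identity $\cR(x,y)z=2Jz$ for $x,y\in T_aE$ and $z\in E_a^\bot$, and then use parallelism of $J$ to conclude that the normal holonomy lies in the circle of complex scalars. The difference is in how this identity is obtained and how the implication is phrased. The paper computes $\langle\cR(x,y)z,u\rangle=2$ and $\langle\cR(x,y)z,v\rangle=0$ by a bare-hands polarization of the sectional curvature (the formula from \cite{GKM}), which is lengthy but entirely self-contained; you instead quote the closed-form curvature tensor of a complex space form, which makes the computation nearly immediate at the cost of invoking that standard formula. Conversely, the paper simply asserts that $\cR(x,y)z=2u$ ``implies the Proposition,'' leaving the passage from infinitesimal to global holonomy implicit, whereas you spell this out via Ambrose--Singer together with $\nabla J=0$ and the simple connectivity of $E$. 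Each version buys something: the paper's is elementary in its inputs, yours is shorter and makes the logical structure (why the holonomy group is exactly $e^{it}$, not merely contained in something) more transparent.
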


\begin{proof} Let
$x$, $y=J_a(x)\in U_aE$
be an orthonormal basis of the tangent space
$T_aE$.
For an arbitrary
$z\in U_aE^\bot$
we put
$u=J_a(z)$.
We actually only show that
$\cR(x,y)z=2u$,
where
$\cR$
is the curvature tensor of
$M$.
This implies the Proposition.

For
$k=1$,
where
$M=\C\hyp^k$,
there is nothing to prove, and for 
$k=2$
the first assertion is trivial.
Thus assume that 
$k\ge 3$.
We take
$v\in U_aE^\bot$
that is orthogonal to
$z$, $u$.

We use the following expression of
$\cR$ 
via
$k(x,y):=\langle\cR(x,y)y,x\rangle$
for arbitrary
$x,y,z,w\in T_aM$,
see \cite{GKM}, where
$\langle x,y\rangle$
is the scalar product on
$T_aM$,

\begin{align*}
 6\langle\cR(x,y)z,w\rangle&=k(x+w,y+z)-k(y+w,x+z)\\
 &-k(x+w,y)-k(x+w,z)-k(x,y+z)-k(w,y+z)\\
 &+k(y+w,x)+k(y+w,z)+k(y,x+z)+k(w,x+z)\\
 &+k(x,z)+k(w,y)-k(y,z)-k(w,x).
\end{align*}

Coming back to our choice of
$x,y,z,u,v$,
we obtain
$k(x,z)=k(y,z)=-1$, $k(w,y)=k(w,x)=-1$
for 
$w=u,v$.

Next, using the Euler formula
$$K_\al=K_0\cos^2\al+K_{\pi/2}\sin^2\al$$
for values of a quadratic form on a 2-dimensional vector
space, we obtain
$k(x,y+z)=k(y,x+z)=-5$, $k(x+w,y)=k(y+w,x)=-5$
for 
$w=u,v$,
$$k(x+w,z)=k(y+w,z)=\begin{cases}
                     -5,\ w=u\\
                     -2,\ w=v
                    \end{cases},
$$
and
$$k(w,y+z)=k(w,x+z)=\begin{cases}
                     -5,\ w=u\\
                     -2,\ w=v
                    \end{cases}.
$$
Therefore,
$$6\langle\cR(x,y)z,w\rangle=k(x+w,y+z)-k(y+w,x+z)$$
for 
$w=u,v$.

Now, we set 
$\wt x=\frac{1}{\sqrt 2}(x+u)$, $\wt y=J_a(\wt x)=\frac{1}{\sqrt 2}(y-z)$,
$\wt z=\frac{1}{\sqrt 2}(x+z)$, $\wt u=J_a(\wt z)=\frac{1}{\sqrt 2}(y+u)$.
For the sectional curvature in the 2-directions
$(\wt x,\wt y)$
and
$(\wt z,\wt u)$
we have
$K(\wt x,\wt y)=K(\wt z,\wt u)=-4$.
Thus
$k(y+u,x+z)=4k(\wt u,\wt z)=-16$
because
$|\wt x|=|\wt y|=|\wt z|=|\wt u|=1$.

For
$\wt y'=\frac{1}{\sqrt 2}(y+z)$,
we have
$\langle\wt y',\wt x\rangle=0=\langle\wt y',\wt y\rangle$.
Hence
$K(\wt y',\wt x)=-1$,
and we obtain
$k(x+u,y+z)=4k(\wt x,\wt y')=-4$.
Thus
$6\langle\cR(x,y)z,u\rangle=-4+16=12$,
and
$\langle\cR(x,y)z,u\rangle=2$.

For the unit vectors
$\wt v=\frac{1}{\sqrt 2}(x+v)$, $\wt v'=\frac{1}{\sqrt 2}(y+v)$,
$\wt x'=J_a(\wt y')=\frac{1}{\sqrt 2}(-x+u)$
we have
$\langle\wt v,\wt y'\rangle=0=\langle\wt v',\wt z\rangle$,
$\langle\wt v,\wt x'\rangle=-\frac{1}{2}=-\langle\wt v',\wt u\rangle$.
Thus
$K(\wt v,\wt y')=K(\wt v',\wt z)
 =-4\cos^2\frac{\pi}{3}-\sin^2\frac{\pi}{3}=-\frac{7}{4}$,
and
$k(x+v,y+z)=4k(\wt v,\wt y')=4k(\wt v',\wt z)=k(y+v,x+z)=-7$.
Therefore,
$\langle\cR(x,y)z,v\rangle=0$.

Since the vector
$\cR(x,y)z\in T_aE^\bot$
is orthogonal to
$z$,
we finally obtain
$\cR(x,y)z=2u$.
\end{proof}

\subsection{The canonical M\"obius structure on $\di\C\hyp^k$}
\label{subsect:model_moebius_structure}

We let
$Y=\di M$
be the geodesic boundary at infinity of a complex hyperbolic space
$M$.
For every
$a\in M$
the function
$d_a(\xi,\xi')=e^{-(\xi|\xi')_a}$
for
$\xi$, $\xi'\in Y$
is a (bounded) metric on
$Y$,
where
$(\xi|\xi')_a$
is the Gromov product based at
$a$.
For every
$\om\in Y$
and every Busemann function
$b:M\to\R$
centered at
$\om$
the function
$d_b(\om,\om):=0$
and
$d_b(\xi,\xi')=e^{-(\xi|\xi')_b}$,
except for the case
$\xi=\xi'=\om$,
is an extended (unbounded) metric on
$Y$
with infinitely remote point
$\om$,
where
$(\xi|\xi')_b$
is the Gromov product with respect to
$b$,
see \cite[sect.3.4.2]{BS3}.
Since
$M$
is a
$\CAT(-1)$-space, 
the metrics
$d_a$, $d_b$
satisfy the Ptolemy inequality and furthermore
all these metrics are pairwise M\"obius equivalent,
see \cite{FS2}.

We let
$\cM$
be the {\em canonical} M\"obius structure on
$Y$
generated by the metrics of type
$d_a$, $a\in M$,
i.e. any metric
$d\in\cM$
is M\"obius equivalent to some metric
$d_a$, $a\in M$.
Then
$Y$
endowed with
$\cM$
is a compact Ptolemy space. Every extended metric
$d\in\cM$
is of type
$d=d_b$
for some Busemann function
$b:M\to\R$,
while a bounded metric
$d\in\cM$
does not necessary coincide with
$\la d_a$,
for some 
$a\in M$
and 
$\la>0$,
see \cite{FS2}. We emphasize that metrics of
$\cM$
are neither Carnot-Carath\'eodory metrics nor length metrics.

\subsection{Axioms (E) and (O) for $\di\C\hyp^k$}

\begin{pro}\label{pro:axioms_model} The boundary at infinity
$Y=\di M$
of any complex hyperbolic space
$M=\C\hyp^k$, $k\ge 1$,
taken with the canonical M\"obius structure,
satisfies axioms (E), (O).
\end{pro}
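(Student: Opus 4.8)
The plan is to translate the two kinds of circles and the harmonic relation into the geometry of totally geodesic subspaces of $M=\C\hyp^k$, and then to verify each axiom either by a direct incidence argument for complex/real planes or, for the orthogonality axioms, by an explicit computation in the boundary with one point sent to infinity. Recall the dictionary: a $\C$-circle is $\di E$ for a complex hyperbolic plane $E\sub M$, an $\R$-circle is $\di P$ for a real hyperbolic plane $P\sub M$, and (by the criterion stated above) a 4-tuple lying on a single such circle is harmonic precisely when its two diagonal geodesics meet orthogonally in $E$ resp. $P$. Since $M$ is $\CAT(-1)$, harmonicity is a M\"obius invariant, so for every computation we may fix $\om\in Y$, pass to the extended metric $d_b$ centered at $\om$, and identify $Y\sm\{\om\}$ with the Heisenberg group $N$ on which $d_b$ is the associated Cygan-type gauge metric $d_b(g,h)=\|g^{-1}h\|$, $\|(\zeta,t)\|=(|\zeta|^4+t^2)^{1/4}$. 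In this chart the $\C$-circles through $\om$ are the vertical chains $\{\zeta=\text{const}\}$ and the $\R$-circles through $\om$ are the horizontal (Legendrian) straight lines; a 4-tuple one of whose points is $\om$ is harmonic iff the three finite points satisfy the corresponding equidistance relation.

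For axiom $(\rm E_\C)$ I would argue that two distinct points $\xi,\xi'\in Y$ span a geodesic $\ga=\xi\xi'$, that any complex hyperbolic plane with $\xi,\xi'$ in its boundary is totally geodesic and hence contains $\ga$, and that the only complex line in $T_pM$ containing $\dot\ga$ is $\mathrm{span}_\R(\dot\ga,J\dot\ga)$; this integrates to a unique complex hyperbolic plane $E\sups\ga$, so $F=\di E$ is the unique $\C$-circle through $\xi,\xi'$. For $(\rm E_\R)$, normalize $\om$ to infinity so that $F$ becomes the vertical axis and $u=(\zeta_u,t_u)$ with $\zeta_u\ne0$; an $\R$-circle through $\om$ meeting $F$ is a horizontal line whose projection to $\C^{k-1}$ passes through $0$ and $\zeta_u$, hence has direction $\zeta_u$, and the Legendrian condition then determines the height function uniquely, producing exactly one intersection point with the vertical axis. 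This gives existence and uniqueness of $\si$.

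For the orthogonality axioms the two circles share the distinct points $o,\om$, so the geodesic $\ga=o\om$ lies in $E\cap P$. Sending $\om$ to infinity and putting $o$ at the origin, $F$ becomes the vertical chain through $o$ and $\si$ a horizontal line through $o$, and the harmonic hypothesis becomes the statement that $o$ is the midpoint of the relevant pair, i.e. that the pair is interchanged by the central involution $(\zeta,t)\mapsto(-\zeta,-t)$ based at $o$. Concretely, in $(\rm O_\C)$ one gets $u=(\zeta_0,0)$, $v=(-\zeta_0,0)$, and in $(\rm O_\R)$ one gets $x=(0,t_0)$, $y=(0,-t_0)$. The desired conclusions then reduce, after cancelling the equal factors $d_b(o,u)=d_b(o,v)$ (resp. the factors at $\om$), to the equidistance statements $d_b(w,u)=d_b(w,v)$ for all $w\in F$, resp. $d_b(w,x)=d_b(w,y)$ for all $w\in\si$; both follow at once from the gauge formula, since for $w=(0,s)$ one has $\|w^{-1}u\|^4=|\zeta_0|^4+s^2=\|w^{-1}v\|^4$, and for $w=(s\nu,0)$ one has $\|w^{-1}x\|^4=s^4+t_0^2=\|w^{-1}y\|^4$.

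The main obstacle is the orthogonality axioms rather than the existence axioms. The difficulty is conceptual: in $(\rm O_\C)$ and $(\rm O_\R)$ the conclusion concerns 4-tuples whose points do not all lie on one $\R$- or $\C$-circle, so the clean ``orthogonal diagonals'' criterion for harmonicity is unavailable and one is forced to verify the Ptolemy/harmonic equality metrically. The computation is transparent only after the right normalization (the point $\om$ at infinity, $o$ at the origin) and the right reading of the hypothesis (symmetry of the pair under the central involution at $o$); the remaining work is to confirm that the extended metric $d_b$ is exactly the Cygan gauge metric and that the infinite $\R$-circles and $\C$-circles are precisely the horizontal lines and vertical chains, which is where the normalization and the classification of totally geodesic planes of $M$ must be invoked.
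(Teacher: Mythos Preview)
Your proposal is correct and handles each axiom soundly, but it follows a different route from the paper's proof for $(\rm E_\R)$ and the two orthogonality axioms. The paper stays entirely inside $M$: for $(\rm E_\R)$ it takes the orthogonal projection $a\in E$ of $u$ onto the complex plane $E$ with $\di E=F$, draws the geodesic in $E$ from $a$ to $\om$, and spans a real plane with the ray $[au)$; for $(\rm O_\C)$ and $(\rm O_\R)$ it reads the harmonic hypothesis as orthogonality of two geodesics at a point $a\in E$ (or a projected point $b\in E$) and then argues that the relevant sectors $wau$, $wav$ (resp.\ $wbx$, $wby$) sit in real hyperbolic planes and are isometric, so the Gromov-product metric $d_a$ (resp.\ $d_b$) gives equal distances and the harmonic conclusion follows. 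Your approach instead works at infinity: you normalize $\om$ to $\infty$, identify $Y_\om$ with the Heisenberg group equipped with the Kor\'anyi--Cygan gauge, translate the harmonic hypothesis into symmetry under the central involution $(\zeta,t)\mapsto(-\zeta,-t)$, and verify the required equidistances by a one-line gauge computation.

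Both arguments are clean; the paper's has the advantage that it never leaves the $\CAT(-1)$ framework and needs no coordinate model, while yours is more explicit and makes the connection to the Heisenberg description transparent. The one point you flag yourself---that the extended metric $d_b$ coincides with the Cygan gauge and that $\R$- and $\C$-circles through $\om$ are exactly the horizontal Legendrian lines and vertical chains---is indeed what your argument rests on; it is standard, but you should state it as a lemma with a reference (the paper itself alludes to this identification in Remark~\ref{rem:koranyi_gauge}).
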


\begin{proof} Every 
$\C$-circle $F\sub Y$
is the boundary at infinity of a uniquely
determined complex hyperbolic plane
$E\sub M$, 
and every
$\R$-circle $\si\sub Y$
is the boundary at infinity of a uniquely
determined real hyperbolic plane
$R\sub M$.
Axioms (E), (O) are trivially satisfied for 
$\C\hyp^1=\frac{1}{2}\hyp^2$.
Thus we assume that
$k\ge 2$.

Axiom ($\rm E_\C$): given distinct
$x$, $y\in Y$
there is a unique geodesic
$\ga\sub M$
with the ends 
$x$, $y$
at infinity. This 
$\ga$
lies in the unique complex hyperbolic plane
$E\sub M$.
Then 
$F=\di E\sub Y$
is a uniquely determined
$\C$-circle
containing
$x$, $y$.
 
Axiom ($\rm E_\R$): given a
$\C$-circle $F\sub Y$, $\om\in F$, $u\in Y\sm F$,
we let
$E\sub M$
be the complex hyperbolic plane with
$\di E=F$, $a\in E$
the orthogonal projection of
$u$
to 
$E$,
i.e. the geodesic ray
$[au)\sub M$
is orthogonal to
$E$
at 
$a$.
There is a unique geodesic 
$\ga\sub E$
through
$a$
with 
$\om$
as one of the ends at infinity. Then 
$[au)$
and
$\ga$
span a real hyperbolic plane
$R\sub M$,
for which
$\si=\di R$
is a uniquely determined 
$\R$-circle
in
$Y$
through
$\om$, $u$
that hits
$F_\om$.

Axioms (O): given an
$\R$-circle $\si\sub Y$
and a 
$\C$-circle $F\sub Y$
with common distinct points
$o$, $\om$,
there are real hyperbolic plane
$R\sub M$,
complex hyperbolic plane
$E\sub M$
with
$\di R=\si$, $\di E=F$.
Then
$R$
and
$E$
are mutually orthogonal along the geodesic
$\ga=R\cap E$.

($\rm O_\C$): given
$u$, $v\in\si$
such that
$(o,u,\om,v)\in\harm_\si$,
we can assume that
$u\neq v$.
Then the geodesic
$uv\sub R$
is orthogonal to
$\ga$
at
$a=uv\cap\ga$.
For every
$w\in F$
the geodesic rays
$[aw)\sub E$, $[au)\sub R$
span a sector
$wau$
in a real hyperbolic plane
$R'$, 
which is isometric to the sectors
$wav\sub R'$, $oau,oav\sub R$.
Thus
$d_a(o,u)=d_a(o,v)=d_a(w,u)=d_a(w,v)$
with respect to the metric
$d_a(p,q)=e^{-(p|q)_a}$
on
$Y$.
It follows that
$(w,u,o,v)\in\harm$. 

($\rm O_\R$): given
$x$, $y\in F$
such that
$(o,x,\om,y)\in\harm_F$,
we can assume that
$x\neq y$.
Then the geodesic
$xy\sub E$
is orthogonal to
$\ga$
at
$a=xy\cap\ga$
For 
$w\in\si$,
let
$b\in E$
be the orthogonal projection of
$w$
on
$E$.
Then
$b\in o\om$,
and the geodesic rays
$[bw)\sub R$, $[bx)\sub E$
span a sector
$wbx$
in a real hyperbolic plane
$R'$, 
which is isometric to the sector
$wby$
in another real hyperbolic plane
$R''$,
while the sectors
$xb\om$, $yb\om\sub E$
are isometric. Thus
$d_b(\om,x)\cdot d_b(w,y)=d_b(\om,y)\cdot d_b(w,x)$
with respect to the metric
$d_b(p,q)=e^{-(p|q)_b}$
on
$Y$.
It follows that
$(w,x,\om,y)\in\harm$. 
\end{proof}

\section{Spheres between two points}

\subsection{Briefly about M\"obius geometry}

Here we briefly recall basic notions of M\"obius geometry.
For more detail see \cite{BS2}, \cite{FS1}. A quadruple
$Q=(x,y,z,u)$
of points in a set
$X$
is said to be {\em admissible} if no entry occurs three or
four times in 
$Q$.
Two metrics 
$d$, $d'$
on 
$X$ 
are {\em M\"obius equivalent} if for any admissible quadruple
$Q=(x,y,z,u)\sub X$
the respective {\em cross-ratio triples} coincide,
$\crt_d(Q)=\crt_{d'}(Q)$,
where
$$\crt_d(Q)=(d(x,y)d(z,u):d(x,z)d(y,u):d(x,u)d(y,z))\in\R P^2.$$
We consider {\em extended} metrics on
$X$
for which existence of an {\em infinitely remote} point
$\om\in X$
is allowed, that is,
$d(x,\om)=\infty$
for all
$x\in X$, $x\neq\om$.
We always assume that such a point is unique if exists, and that
$d(\om,\om)=0$.
We use notation
$X_\om:=X\sm\om$
and the standard conventions for the calculation with 
$\om=\infty$.

A {\em M\"obius structure} on a set
$X$,
or a M\"obius space over 
$X$,
is a class 
$\cM=\cM(X)$
of metrics on
$X$
which are pairwise M\"obius equivalent. A map
$f:X\to X'$
between two M\"obius spaces
is called {\em M\"obius}, if 
$f$ 
is injective and for all admissible quadruples
$Q\sub X$
$$\crt(f(Q))=\crt(Q),$$
where the cross-ratio triples are taken with respect to
some (and hence any) metric of the M\"obius structure
of
$X$
and of 
$X'$.
If a M\"obius map
$f:X\to X'$
is bijective, then 
$f^{-1}$
is M\"obius,
$f$
is homeomorphism, and the M\"obius
spaces
$X$, $X'$
are said to be {\em M\"obius equivalent}. 

If two metrics 
of a M\"obius structure have the same infinitely remote
point, then they are homothetic, see \cite{FS1}.
We always assume that for every
$\om\in X$
the set 
$X_\om$
is endowed with a metric of the structure having
$\om$
as infinitely remote point, and use notation
$|xy|_\om$
for the distance between
$x$, $y\in X_\om$.
Sometimes we abbreviate to
$|xy|=|xy|_\om$.

A M\"obius space
$X$
is Ptolemy, if it satisfies the {\em Ptolemy inequality}
$$|xz|\cdot|yu|\le|xy|\cdot|zu|+|xu|\cdot|yz|$$
for any admissible 4-tuple
$(x,y,z,u)\sub X$
and for every metric of the M\"obius structure. This is
equivalent to the definition of a Ptolemy space given in 
sect.~\ref{sect:introduction}.

An 
$\R$-circle $\si\sub X$
satisfies the {\em Ptolemy equality}
$$|xz|\cdot|yu|=|xy|\cdot|zu|+|xu|\cdot|yz|$$
for every 4-tuple
$(x,y,z,u)\sub\si$
(in this order) and for every metric of the M\"obius structure.
In particular, for a fixed
$u\in\si$
we have
$|xz|_u=|xy|_u+|yz|_u$,
i.e. 
$\si$
is a geodesic in the space
$X_u$
called an 
$\R$-{\em line}.

A 
$\C$-circle $F\sub X$
satisfies the squared Ptolemy equality
$$|xz|^2|yu|^2=|xy|^2|zu|^2+|xu|^2|yz|^2$$
for every 4-tuple
$(x,y,z,u)\sub\si$
(in this order) and for every metric of the M\"obius structure.
In particular, for a fixed
$u\in F$
we have
$|xz|_u^2=|xy|_u^2+|yz|_u^2$,
and 
$F$
is called a 
$\C$-{\em line}
in 
$X_u$.

\subsection{Harmonic 4-tuples and spheres between two points}

An admissible 4-tuple
$(x,z,y,u)\sub X$
is {\em harmonic} if
$$\crt(x,z,y,u)=(1:\ast:1)$$
for some and hence any metric
$d$
of the M\"obius structure. Note that if
$x=y$
or
$z=u$
for an admissible
$Q=(x,z,y,u)$,
then
$Q$
is harmonic.

We say that 
$x$, $x'\in X$
lie on a sphere between distinct
$\om$, $\om'\in X$
if the 4-tuple
$(\om,x,\om',x')$
is harmonic. For a fixed
$\om$, $\om'$
this defines an equivalence relation on
$X\sm\{\om,\om'\}$,
and any equivalence class
$S\sub X\sm\{\om,\om'\}$
is called a {\em sphere between}
$\om$, $\om'$.
M\"obius maps preserve spheres between two points: if
$f:X\to X$
is M\"obius, then
$f(S)$
is a sphere between
$f(\om)$, $f(\om')$.
In the metric space
$X_\om$
the sphere
$S$
is a metric sphere centered at
$\om'$,
$$S=\set{x\in X_\om}{$|x\om'|_\om=r$}$$
for some
$r>0$.
The points
$\om$, $\om'$
are {\em poles} of
$S$.

The orthogonality axioms~(O) tell us that if
$(o,u,\om,v)$
is harmonic on an 
$\R$-circle, 
then the 
$\C$-circle 
through
$o$, $\om$
lies in a sphere between
$u$, $v$,
and vice versa, if
$(x,o,y,\om)$
is harmonic on a
$\C$-circle,
then any
$\R$-circle
through
$x$, $y$
lies in a sphere between
$o$, $\om$.

If we take some point on a sphere as infinitely remote, then
the sphere becomes the {\em bisector} between its poles,

\begin{lem}\label{lem:bisector} Let
$S\sub X$
is a sphere between distinct
$u$, $v\in X$.
Then for every
$\om\in S$
the set 
$S_\om$
is the bisector in
$X_\om$
between
$u$, $v$,
that is,
$$S_\om=\set{x\in X_\om}{$|xu|_\om=|xv|_\om$}.$$ 
\end{lem}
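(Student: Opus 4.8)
The plan is to rewrite membership in $S$ as a harmonic condition that explicitly involves the chosen infinitely remote point $\om$, and then to read off that condition in the metric $|\cdot\cdot|_\om$. Since $\om\in S$ and $S$ is an equivalence class of the relation ``lying on a sphere between $u$ and $v$'', a point $x\in X\sm\{u,v\}$ belongs to $S$ if and only if $x$ and $\om$ lie on a common sphere between $u$ and $v$, that is, if and only if the $4$-tuple $(u,x,v,\om)$ is harmonic. Restricting to $x\in X_\om$ (so $x\neq\om$), this is exactly the condition cutting out $S_\om$.

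Next I would compute the cross-ratio triple $\crt(u,x,v,\om)$ using the metric $|\cdot\cdot|_\om$, for which $\om$ is infinitely remote. By the standard conventions for $\om=\infty$ the three factors $|u\om|_\om$, $|x\om|_\om$, $|v\om|_\om$ all become infinite and cancel, leaving
$$\crt(u,x,v,\om)=(|ux|_\om:|uv|_\om:|xv|_\om).$$
By definition the $4$-tuple $(u,x,v,\om)$ is harmonic precisely when the first and third entries of this triple coincide, that is, when $|ux|_\om=|xv|_\om$. Combined with the previous paragraph this says that $x\in S_\om$ if and only if $|xu|_\om=|xv|_\om$, which is the claimed bisector description.

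The computation is essentially forced, so the only points requiring care are bookkeeping ones. First, the degenerate cases must be excluded consistently: because a sphere between $u$, $v$ is a subset of $X\sm\{u,v\}$ and $\om\in S$, the points $u$, $v$, $\om$ are pairwise distinct, so $(u,x,v,\om)$ is admissible for every $x\in X_\om\sm\{u,v\}$; the reverse implication (a bisector point lies in $S$) follows from the same equivalence, since $|xu|_\om=|xv|_\om$ already forces $x\neq u,v$ as $u\neq v$. Second, to be safe I would either justify the cancellation directly through the conventions for the infinitely remote point, or reprove it by passing to an auxiliary bounded metric $d\in\cM$: writing $|xy|_\om=\la\,d(x,y)/(d(x,\om)d(y,\om))$ up to a homothety constant $\la>0$, one checks that both the harmonic identity $d(u,x)d(v,\om)=d(u,\om)d(v,x)$ and the bisector identity $|xu|_\om=|xv|_\om$ reduce to the single equation $d(u,x)/d(v,x)=d(u,\om)/d(v,\om)$, the factor $\la$ and the common factor $d(x,\om)$ dropping out. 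The main (modest) obstacle is thus simply handling the infinitely remote point cleanly; there is no substantial geometric difficulty, the statement being a direct translation of the harmonicity of $(u,x,v,\om)$ into the metric $|\cdot\cdot|_\om$.
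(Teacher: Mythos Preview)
Your proof is correct and follows essentially the same approach as the paper: both arguments observe that $x\in S_\om$ is equivalent to the harmonicity of the $4$-tuple containing $u,v,x,\om$, then compute the cross-ratio in the metric with $\om$ infinitely remote to obtain the bisector condition $|xu|_\om=|xv|_\om$. The paper's proof is simply the one-line version (writing $\crt(x,u,\om,v)=(|xu|_\om:\ast:|xv|_\om)$), while you have added the bookkeeping about admissibility and the alternative computation via a bounded metric, but the substance is identical.
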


\begin{proof} For every
$x\in X_\om$
we have
$\crt(x,u,\om,v)=(|xu|_\om:\ast:|xv|_\om)$.
Thus
$x\in S_\om$
if and only if
$|xu|_\om=|xv|_\om$. 
\end{proof}

\begin{cor}\label{cor:bisector_cline} Let
$\si$, $F$
be 
$\R$-circle, $\C$-circle
respectively with distinct common points
$o$, $\om$.
Then
\begin{itemize}
 \item [(i)] for any
$(u,o,v,\om)\in\harm_\si$
the 
$\C$-line $F\sub X_\om$
through
$o$
lies in the bisector between
$u$, $v$,
that is, 
$|wu|_\om=|wv|_\om$
for every
$w\in F$,
 \item [(ii)] for any
$(x,o,y,\om)\in\harm_F$
any 
$\R$-line $\si\sub X_\om$
through
$o$
lies in the bisector between
$x$, $y$,
that is, 
$|wx|_\om=|wy|_\om$
for every
$w\in\si$.
\end{itemize}
In particular, if
$x,y\in F$, $u,v\in\si$
with
$|xo|_\om=|yo|_\om$
and
$|uo|_\om=|vo|_\om$,
then
$$|ux|_\om=|vx|_\om=|uy|_\om=|vy|_\om.$$
\end{cor}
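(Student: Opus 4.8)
The plan is to prove Corollary~\ref{cor:bisector_cline} by combining the orthogonality axioms~(O) with Lemma~\ref{lem:bisector}, since the Corollary is essentially the translation of the axioms into the metric language of bisectors. First I would treat part~(i). Given $(u,o,v,\om)\in\harm_\si$ on the $\R$-circle $\si$, axiom ($\rm O_\C$) tells us that the $4$-tuple $(w,u,o,v)$ is harmonic for every $w\in F$. Reading off what it means for $(w,u,o,v)$ to be harmonic: by the discussion preceding the Corollary, the $\C$-circle $F$ through $o$, $\om$ lies in a sphere between $u$ and $v$. Then I would apply Lemma~\ref{lem:bisector} with this sphere $S$ between the poles $u$, $v$ and with the infinitely remote point taken to be $\om\in S$ (note $\om\in F\sub S$), which gives $S_\om=\set{x\in X_\om}{$|xu|_\om=|xv|_\om$}$. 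Since every $w\in F_\om=F\sm\{\om\}$ lies in $S_\om$, we conclude $|wu|_\om=|wv|_\om$ for all such $w$, proving~(i).

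For part~(ii) I would argue symmetrically using axiom ($\rm O_\R$). Given $(x,o,y,\om)\in\harm_F$ on the $\C$-circle $F$, axiom ($\rm O_\R$) asserts that $(w,x,\om,y)$ is harmonic for every $w\in\si$. Interpreting this via Lemma~\ref{lem:bisector}: the roles here are that $x$, $y$ are the poles and $\om$ is again the distinguished (infinitely remote) point lying on the relevant sphere, so that any $\R$-line $\si$ through $o$ lies in the bisector between $x$ and $y$, giving $|wx|_\om=|wy|_\om$ for every $w\in\si$. The only mild care needed is matching the cyclic position of entries in the harmonic $4$-tuple from axiom~($\rm O_\R$) against the pole/infinite-point convention used in Lemma~\ref{lem:bisector}, which is a bookkeeping check on which two points play the role of $u$, $v$ in the Lemma.

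Finally, for the ``in particular'' statement I would specialize. Suppose $x,y\in F$ and $u,v\in\si$ with $|xo|_\om=|yo|_\om$ and $|uo|_\om=|vo|_\om$. Placing $\om$ at infinity, the condition $|xo|_\om=|yo|_\om$ together with $x,y$ on the $\C$-line $F$ says that $x$, $y$ are symmetric about $o$ on $F$, which is exactly the harmonicity hypothesis of~(ii); similarly $|uo|_\om=|vo|_\om$ with $u,v$ on the $\R$-line $\si$ gives the harmonicity hypothesis of~(i). Applying~(i) to the pair $u$, $v$ yields $|wu|_\om=|wv|_\om$ for all $w\in F$, in particular $|xu|_\om=|xv|_\om$ and $|yu|_\om=|yv|_\om$; applying~(ii) to the pair $x$, $y$ yields $|wx|_\om=|wy|_\om$ for all $w\in\si$, in particular $|ux|_\om=|uy|_\om$ and $|vx|_\om=|vy|_\om$. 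Chaining these four equalities gives $|ux|_\om=|vx|_\om=|uy|_\om=|vy|_\om$.

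The main obstacle I anticipate is not any deep computation but rather verifying that the harmonicity conditions stated in axioms~(O) correspond precisely to the sphere-membership statements, i.e. correctly identifying which ordered $4$-tuple is harmonic and which two of its entries serve as the poles $u$, $v$ in Lemma~\ref{lem:bisector}. In particular one must confirm that $\om$, which is simultaneously a common point of $\si$ and $F$ and the chosen infinitely remote point, indeed lies on the relevant sphere so that Lemma~\ref{lem:bisector} applies; this follows because $\om\in F$ and $\om\in\si$ and the harmonic $4$-tuples degenerate appropriately when an entry equals $\om$, as noted in the remark that an admissible $4$-tuple with a repeated or infinitely remote entry is automatically harmonic.
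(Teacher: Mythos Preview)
Your approach is correct and is exactly the paper's own argument: invoke axioms~($\rm O_\C$) and~($\rm O_\R$) to place $F$ (respectively $\si$) on a sphere between $u,v$ (respectively $x,y$), then apply Lemma~\ref{lem:bisector} with $\om$ as the infinitely remote point. One small clarification on your closing paragraph: the fact that $\om$ lies on the relevant sphere does not come from any degenerate harmonicity, but directly from the hypothesis itself --- e.g.\ $(u,o,v,\om)\in\harm$ is the same as $(u,\om,v,o)\in\harm$, which says precisely that $\om$ and $o$ lie on a common sphere between $u$ and $v$.
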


\begin{proof} We have
$\{o,\om\}=F\cap\si$.
Thus by axiom~($\rm O_\C$)
the 
$\C$-line $F$
lies in the bisector between
$u$, $v$,
and by axiom~($\rm O_\R$)
the 
$\R$-line $\si$
lies in the bisector between
$x$, $y$.
\end{proof}

\begin{lem}\label{lem:intersect_rcircle_ccircle} Any
$\C$-circle $F$
and any
$\R$-circle $\si$
have at most two points in common. 
\end{lem}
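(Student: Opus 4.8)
The plan is to argue by contradiction. Suppose $\si$ and $F$ share three pairwise distinct points, which I label $o$, $z$, $\om$, and pass to the extended metric $|\cdot|_\om$ having $\om$ as infinitely remote point. Then $\si_\om$ is an $\R$-line, i.e.\ a geodesic in $X_\om$ isometric to $\R$, while $F_\om$ is a $\C$-line, and the two finite points $o$, $z$ lie on both. It is worth noting first that a purely metric comparison is not sharp enough here: on $\si_\om$ three collinear points satisfy a linear relation, whereas on $F_\om$ they satisfy the Pythagorean relation $|xz|_\om^2=|xy|_\om^2+|yz|_\om^2$, and playing these against each other only contradicts the existence of \emph{three} finite common points, hence rules out four common points rather than three. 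To reach the optimal bound one must invoke the orthogonality axiom.

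The sharper argument runs through Corollary~\ref{cor:bisector_cline}(i), whose hypotheses are met since $o$, $\om$ are distinct common points of $\si$ and $F$. I choose $u$, $v\in\si$ with $(u,o,v,\om)\in\harm_\si$ and with $u$, $v$ distinct from $o$, $z$; since $\om$ is infinitely remote, this harmonic condition reduces to $|uo|_\om=|vo|_\om$, so that $o$ is the midpoint of the pair $u$, $v$ on the geodesic $\si_\om$. The corollary then forces the whole $\C$-line $F$ into the bisector between $u$ and $v$, that is, $|wu|_\om=|wv|_\om$ for every $w\in F$.

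Applying this to $w=z\in F$ yields $|zu|_\om=|zv|_\om$. But $z$ lies on the geodesic $\si_\om\cong\R$ that also contains $u$ and $v$, and on such a line the unique point equidistant from the two distinct points $u$, $v$ is their midpoint. Hence $z=o$, contradicting the distinctness of $o$, $z$, $\om$, and proving that $\si$ and $F$ meet in at most two points.

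The only steps requiring care are routine: translating $(u,o,v,\om)\in\harm_\si$ into the midpoint statement once $\om$ is sent to infinity, and checking that $u$, $v$ can be chosen distinct from the common points so that every quadruple in sight stays admissible. I expect the genuine content to be the first, conceptual step — recognizing that it is precisely axiom~(O), repackaged as the bisector corollary, that excludes a third intersection by confining the $\C$-circle to the perpendicular bisector of a symmetric pair on $\si$, which the straight $\R$-line can meet only at its own midpoint.
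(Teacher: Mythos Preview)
Your proof is correct and follows essentially the same approach as the paper. Both arguments pick two common points $o,\om$, pass to $X_\om$, choose a harmonic pair $(u,o,v,\om)$ on $\si$, invoke Corollary~\ref{cor:bisector_cline}(i) to place the $\C$-line $F_\om$ in the bisector between $u$ and $v$, and then observe that the $\R$-line $\si_\om$ meets this bisector only at its midpoint $o$; the paper phrases this last step as a direct statement that $\si\cap F_\om=\{o\}$, while you frame it as a contradiction with a hypothetical third point $z$, but the content is identical.
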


\begin{proof} Assume
$o,\om\in F\cap\si$.
Choose
$u,v\in\si$
such that
$(o,u,\om,v)\in\harm_\si$.
By Corollary~\ref{cor:bisector_cline} the 
$\C$-line $F\sub X_\om$
lies in the bisector between
$u$, $v$.
Thus
$\R$-line $\si\sub X_\om$
intersects
$F_\om$
once (at
$o$).  
\end{proof}

\begin{lem}\label{lem:three_points_rcircles} If two 
$\R$-circles $\si$, $\ga$
have in common three distinct points, then they coincide,
$\si=\ga$. 
\end{lem}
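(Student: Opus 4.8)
The plan is to show $\si\sub\ga$; since each of $\si,\ga$ is M\"obius equivalent to $\wh\R$ and hence homeomorphic to a circle, a copy of $S^1$ contained in another forces $\si=\ga$. So suppose $\si\ne\ga$ and pick $p\in\si\sm\ga$, which exists because $\si\not\sub\ga$; note $p\ne a,b,c$ since $a,b,c\in\ga$. As $\ga$ is M\"obius equivalent to $\wh\R$, for the fixed triple $a,b,c$ the assignment $w\mapsto\crt(a,b,c,w)$ is a bijection of $\ga$ onto its range, so there is a unique $q\in\ga$ with $\crt(a,b,c,q)=\crt(a,b,c,p)$. I would aim to prove $q=p$, which contradicts $p\notin\ga$. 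Fixing any bounded metric $d\in\cM$ and comparing the two equal cross-ratio triples coordinatewise produces a single factor $\mu>0$ with $d(a,q)=\mu\,d(a,p)$, $d(b,q)=\mu\,d(b,p)$, $d(c,q)=\mu\,d(c,p)$. Passing to the metric $|\cdot|_p$ with $p$ infinitely remote, this becomes $|aq|_p=|bq|_p=|cq|_p=:\rho$, where $\rho=\mu/|qp|$; meanwhile $a,b,c$ lie, together with $p$, on $\si$, hence are collinear on the $\R$-line $\si_p$. Relabel so that $b$ lies between $a$ and $c$.

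The second step is a convexity observation coming from the Ptolemy inequality. For ordered points $\si_p(s_1),\si_p(s_2),\si_p(s_3)$ on the $\R$-line (so $|\si_p(s_i)\si_p(s_j)|_p=|s_i-s_j|$) and any $z\in X_p$, the Ptolemy inequality for $(\si_p(s_1),\si_p(s_2),\si_p(s_3),z)$ reads $(s_3-s_1)\,|z\,\si_p(s_2)|_p\le(s_2-s_1)\,|z\,\si_p(s_3)|_p+(s_3-s_2)\,|z\,\si_p(s_1)|_p$, that is, $s\mapsto|z\,\si_p(s)|_p$ is convex. Applying this to $z=q$, the convex function takes the common value $\rho$ at the three collinear parameters of $a,b,c$ with $b$ interior; a convex function meeting its chord at an interior point is constant there, so $|q\,\si_p(s)|_p\equiv\rho$ on the whole subsegment $[a,c]\sub\si_p$. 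Thus $q$ is equidistant, at distance $\rho$, from every point of a nondegenerate subsegment of $\si_p$. Since $p$ is the infinitely remote point, $\rho=0$ is equivalent to $q=p$, so everything reduces to excluding $\rho>0$.

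Excluding $\rho>0$ is, I expect, the main obstacle: it is exactly the strict convexity of $s\mapsto|q\,\si_p(s)|_p$ for $q$ off the $\R$-line, equivalently the assertion that no point lying off an $\R$-line is equidistant from a nondegenerate subsegment of it. The Ptolemy inequality by itself yields only weak convexity, and the degenerate flat configuration is Ptolemy-consistent (the metrics of $\cM$ are not length metrics), so the strictness must be drawn from the orthogonality axioms~(O). Concretely, the level set $\{w:|wq|_p=\rho\}$ is the sphere $S$ between the poles $p$ and $q$, so by Lemma~\ref{lem:bisector}, for $\om=a\in S$ the set $S_a$ is the bisector between $p$ and $q$ in $X_a$; the proportionality relations already give $b,c\in S_a$, and the plan is to use the $\C$-circle through two of the collinear points together with Corollary~\ref{cor:bisector_cline} to place an $\R$-line of the configuration inside such a bisector and then invoke the one-point intersection argument of Lemma~\ref{lem:intersect_rcircle_ccircle} (an $\R$-line meets a bisector of this type in a single point) to contradict $S$ containing a whole subsegment of $\si_p$. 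This forces $\rho=0$, hence $q=p\in\ga$, contradicting $p\in\si\sm\ga$; therefore $\si\sub\ga$ and $\si=\ga$.
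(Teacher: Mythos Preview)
Your argument has a genuine gap at exactly the point you flag as the ``main obstacle'': you never actually exclude $\rho>0$. The convexity step is fine and correctly yields that $q$ is equidistant from the whole segment $[a,c]\sub\si_p$, but the proposed finish does not go through. Lemma~\ref{lem:intersect_rcircle_ccircle} concerns the intersection of an $\R$-circle with a $\C$-circle, not with a bisector; the bisector between $p$ and $q$ is not a $\C$-circle and there is no result available at this stage saying that an $\R$-line can meet such a bisector only once. In $X_a$ the function $w\mapsto|wp|_a-|wq|_a$ along $\si_a$ is a difference of a piecewise-linear function and a convex function, so nothing prevents it from vanishing on a set like $\{b,c\}$. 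Your appeal to Corollary~\ref{cor:bisector_cline} would place a $\C$-line inside a bisector, but that is the wrong direction: you need the bisector to be small relative to the $\R$-line, not to contain extra structure.

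More to the point, you are missing the one-line route the paper takes. Pick the three common points $x,y,\om$ and let $F$ be the $\C$-circle through $x$ and $\om$. By Lemma~\ref{lem:intersect_rcircle_ccircle} the point $y$ does not lie on $F$. Now axiom~($\rm E_\R$) says there is a \emph{unique} $\R$-circle through $\om$ and $y$ that hits $F_\om$; both $\si$ and $\ga$ are such circles (each passes through $\om,y$ and meets $F_\om$ at $x$), so $\si=\ga$. The uniqueness in ($\rm E_\R$) is precisely the three-point determination statement you are trying to prove, and no cross-ratio or convexity machinery is needed.
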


\begin{proof} Assume
$x,y,\om\in\si\cap\ga$.
Then for the 
$\C$-circle $F$
through
$x,\om$
we have
$y\not\in F$
by Lemma~\ref{lem:intersect_rcircle_ccircle}.
By ($\rm E_\R$), there is at most one
$\R$-circle $\si$
with
$x,y,\om\in\si$. 
\end{proof}

\begin{cor}\label{cor:cline_projection} Given a 
$\C$-circle $F\sub X$
and
$\om\in F$,
there is a retraction
$\mu_{F,\om}:X\to F$
(continuous on 
$X_\om$), $\mu_{F,\om}(u)=u$
for
$u\in F$
and 
$$\mu_{F,\om}(u)=\si\cap F_\om$$
for 
$u\in X\sm F$,
where
$\si$
is the 
$\R$-circle
through
$\om,u$
that hits
$F_\om$.
\end{cor}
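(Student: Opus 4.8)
The plan is to first verify that $\mu_{F,\om}$ is well defined and a genuine retraction, and then to prove continuity, which is the only substantial point.

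\emph{Well-definedness and the retraction property.} For $u\in F$ we simply set $\mu_{F,\om}(u)=u$. For $u\in X\sm F$, axiom~($\rm E_\R$) provides a unique $\R$-circle $\si$ through $\om$, $u$ that meets $F_\om$. Since $\om\in F\cap\si$ and $\si$ also meets $F_\om$, the intersection $F\cap\si$ contains $\om$ together with at least one point of $F_\om$; by Lemma~\ref{lem:intersect_rcircle_ccircle} it contains at most two points. Hence $\si\cap F_\om$ is a single point, and $\mu_{F,\om}(u):=\si\cap F_\om$ is unambiguously defined. By construction $\mu_{F,\om}$ restricts to the identity on $F$, so it is a retraction onto $F$.

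\emph{Reduction of continuity to a limiting statement for $\R$-circles.} Fix $u\in X_\om$ and a sequence $u_n\to u$ in $X_\om$; write $o_n=\mu_{F,\om}(u_n)$, and let $\si_n$ be the attached $\R$-circle (for $u_n\in F$ take $o_n=u_n$). Since $X$ is compact, so is $F$, and it suffices to show that every convergent subsequence $o_{n_k}\to o^*\in F$ has $o^*=\mu_{F,\om}(u)$; then $o_n\to\mu_{F,\om}(u)$. Passing to a further subsequence, the circles $\si_{n_k}$ converge in the Hausdorff metric of the compact space $X$ to a compact set $\Si$ containing $\om$, $u$ and $o^*$. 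I would then use two facts: (i) $\Si$ is again an $\R$-circle, and (ii) $o^*\neq\om$, so that $\Si$ is an $\R$-circle through $\om$, $u$ meeting $F_\om$. Granting these, uniqueness in axiom~($\rm E_\R$) forces $\Si=\si$ when $u\in X\sm F$, whence $o^*=\si\cap F_\om=\mu_{F,\om}(u)$; and when $u\in F_\om$, the three points $\om,u,o^*\in F\cap\Si$ together with Lemma~\ref{lem:intersect_rcircle_ccircle} and $o^*\neq\om$ force $o^*=u=\mu_{F,\om}(u)$.

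\emph{The main obstacle.} The two granted facts carry all the weight. For~(i), an $\R$-circle is characterized by the Ptolemy equality holding on all of its ordered $4$-tuples, and this is a closed condition; so a Hausdorff limit of $\R$-circles through three converging points is again an $\R$-circle through the limit points. Writing this out carefully, in particular ruling out that the limit degenerates to a lower-dimensional set rather than a full $\R$-circle, is the technical heart of the argument. Fact~(ii), the nondegeneration $o^*\neq\om$, is the subtler issue: one must prevent the crossing point $o_n$ from escaping to the infinitely remote point $\om$ while $u_n$ stays near the fixed point $u\in X_\om$. I would establish this by a uniform-separation argument, using that the $u_n$ are eventually bounded away from $F$ (when $u\notin F$), respectively converge to $u\in F_\om$ at finite $|\cdot|_\om$-distance, so as to confine $o_n$ to a bounded region of $X_\om$ away from $\om$. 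Alternatively, one may bypass the Hausdorff-limit machinery through the bisector description of Corollary~\ref{cor:bisector_cline}: writing $(x_n,o_n,y_n,\om)\in\harm_F$ for the pair on $F$ harmonic to $(o_n,\om)$, the identity $|u_nx_n|_\om=|u_ny_n|_\om$ characterizes $o_n$, and continuity follows from the continuity of this harmonic-pair assignment and of the metric; in that route the obstacle reappears as the uniqueness of the point $o\in F_\om$ satisfying the metric condition $|ux|_\om=|uy|_\om$.
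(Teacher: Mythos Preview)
Your well-definedness argument is correct and matches the paper's. The overall continuity scheme---pass to subsequences, identify the subsequential limit via uniqueness in~($\rm E_\R$)---is also the paper's, and both arguments tacitly rely on the (routine) stability of $\R$-circles under limits once three distinct limit points are secured.

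The real gap is your obstacle~(ii), the boundedness $o_n\not\to\om$. Your proposed ``uniform-separation argument'' based on $u_n$ being bounded away from $F$ does not go through: axiom~($\rm E_\R$) alone gives no quantitative control of $o_n$ in terms of $u_n$, and nothing you have written invokes the orthogonality axioms, which are what actually tie $o_n$ to $u_n$. The paper settles this point in one line. Fix any basepoint $o\in F_\om$ and suppose $|op_i|_\om\to\infty$ for $p_i:=\mu_{F,\om}(u_i)$. Choose $q_i\in F$ with $(o,p_i,q_i,\om)\in\harm_F$; on the $\C$-line $F_\om$ this forces $|oq_i|_\om\to\infty$ as well. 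By axiom~($\rm O_\R$) applied to the $\R$-circle through $\om,u_i,p_i$ and the $\C$-circle $F$ (common points $p_i,\om$), the point $u_i$ lies on the same sphere between $o$ and $q_i$ as $p_i$ does. Since that sphere, viewed in $X_o$, is a metric sphere about $q_i$ of radius $|p_iq_i|_o\to 0$, we get $|ou_i|_\om\to\infty$, contradicting $u_i\to u\in X_\om$. Thus $(p_i)$ is bounded in $X_\om$, and your subsequence argument finishes.

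Your ``alternative'' via Corollary~\ref{cor:bisector_cline} is heading in the right direction---it is essentially this harmonic-pair/sphere idea---but note that the efficient choice is to fix the basepoint $o$ and let the harmonic partner $q_i$ vary with $p_i$, rather than taking $(x_n,o_n,y_n,\om)\in\harm_F$ as you wrote; with your pairing the bisector identity $|u_nx_n|_\om=|u_ny_n|_\om$ is always satisfied and does not by itself bound $o_n$.
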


\begin{proof} By Lemma~\ref{lem:intersect_rcircle_ccircle},
the intersection
$\si\cap F_\om$
consists of a unique point, thus
$\mu_{F,\om}(u)\in F_\om$
is well defined. Assume
$u_i\to u\in X\sm F$.
We show that 
$p_i:=\mu_{F,\om}(u_i)$
is bounded. This then implies
$\mu_{F,\om}(u_i)\to\mu_{F,\om}(u)$
by uniqueness. Assume to the contrary that 
$|op_i|\to\infty$
for some basepoint
$o\in F$
in the metric of
$X_\om$.
Let
$q_i\in F$
such that
$(o,p_i,q_i,\om)\in\harm_F$.
Then
$|oq_i|\to\infty$.
But this would imply that also
$|ou_i|\to\infty$
because
$p_i$
and
$u_i$
lie on the same sphere between
$o$
and
$q_i$.
\end{proof}

\section{Involutions associated with complex circles}
\label{sect:ccircle_involutions}

\subsection{Reflections with respect to complex circles}

Let
$F\sub X$
be a 
$\C$-circle.
Then by Corollary~\ref{cor:cline_projection}, every
$u\in X\sm F$
defines an involution
$\eta_u:F\to F$
without fixed points by
$$\eta_u(\om)=\mu_{F,\om}(u).$$

\begin{pro}\label{pro:conjugate_pole} Given a
$\C$-circle $F\sub X$
and
$u\in X\sm F$,
there exists a unique
$v\in X\sm F$, $v\neq u$,
such that
$(x,u,\eta_u(x),v)\in\harm_\si$
for every
$x\in F$
and for the 
$\R$-circle $\si=\si_x$
through
$x,u,\eta_u(x)$.
\end{pro}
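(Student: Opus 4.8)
The plan is to analyze each $\R$-circle $\si_x$ separately, produce on it the unique harmonic conjugate of $u$, and then prove that this conjugate is the same point for all $x$.

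First I would fix the combinatorics. For $x\in F$ set $\om=\eta_u(x)$ and let $\si_x$ be the $\R$-circle through $x,u,\om$ furnished by $(\rm E_\R)$. Since $\si_x$ passes through $\om,u$ and meets $F_\om$ at $x$, uniqueness in $(\rm E_\R)$ gives $\eta_u(\om)=x$ and $\si_\om=\si_x$, so $\si_x$ depends only on the pair $\{x,\eta_u(x)\}$. As $\si_x$ is M\"obius equivalent to $\wh\R$ and $x,u,\om$ are three distinct points on it, there is a unique $v_x\in\si_x$ with $(x,u,\om,v_x)\in\harm_{\si_x}$; one checks $v_x\neq u$, and since $\si_x\cap F=\{x,\om\}$ by Lemma~\ref{lem:intersect_rcircle_ccircle}, also $v_x\in X\sm F$. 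The Proposition is precisely the claim that $v_x$ does not depend on $x$.

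Next I would reduce the harmonic half of the statement to a single application of orthogonality. The relation $(x,u,\om,v_x)\in\harm$ is the same product identity as $(u,x,v_x,\om)\in\harm_{\si_x}$, which is the hypothesis of Corollary~\ref{cor:bisector_cline}(i) for the pair $\si_x,F$ with common points $o=x$, $\om=\eta_u(x)$. Hence the $\C$-line $F\sub X_{\om}$ lies in the bisector between $u$ and $v_x$, i.e. $F$ is contained in a single sphere between $u$ and $v_x$. Fixing a base point $x_0\in F$, writing $\om_0=\eta_u(x_0)$ and $v:=v_{x_0}$, this gives $F\sub S$ for a sphere $S$ between $u$ and $v$. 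Consequently, for any two distinct $w,w'\in F$ the $4$-tuple $(w,u,w',v)$ is harmonic; taking $w=x$ and $w'=\eta_u(x)$ for an arbitrary $x$ yields $(x,u,\eta_u(x),v)\in\harm$ for \emph{every} $x\in F$. Thus the fixed point $v$ already satisfies the required harmonic identity for all $x$, and uniqueness of $v$ is immediate (any admissible choice agrees with the harmonic conjugate on $\si_{x_0}$). All that remains is the incidence $v\in\si_x$, for then $v=v_x$ by uniqueness of the conjugate on $\si_x$.

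The membership $v\in\si_x$ is the step I expect to be the main obstacle. Passing to $X_u$, the circle $\si_x$ becomes an $\R$-line through $\infty$ on which $v_x$ is the midpoint of $x$ and $\eta_u(x)$, while $F\sub S$ forces $v$ to be equidistant from every point of $F$; both $v$ and $v_x$ are then equidistant from the pair $x,\eta_u(x)$, but the harmonic identity by itself only locates $v$ on the sphere between $x$ and $\eta_u(x)$ through $u$, not on the line $\si_x$. To upgrade this to genuine incidence I would combine the two bisector relations available at a common base point $\eta_u(x)$: Corollary~\ref{cor:bisector_cline}(i) gives $|wu|_{\eta_u(x)}=|wv|_{\eta_u(x)}$ for all $w\in F$ (so $u$ and $v$ have identical distances to all of $F$), while Corollary~\ref{cor:bisector_cline}(ii), applied to $\si_x,F$ with a harmonic pair $(x,a,\eta_u(x),b)\in\harm_F$, forces the entire $\R$-line $\si_x$ into the bisector of $a,b$. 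Running the latter over enough pairs $a,b\in F$ should pin the conjugate of $u$ along $\si_x$ to a point equidistant from all of $F$, which by the first relation can only be $v$. An alternative I would keep in reserve is a connectedness argument: the set $\{x\in F:v_x=v\}$ contains $x_0,\om_0$ and is closed by continuity of $x\mapsto v_x$ and of $\eta_u$, so it suffices to prove it open. Either way, this is the delicate point, as it amounts to showing that $u$ has a well-defined reflection across $F$ lying on every $\si_x$ — exactly the datum needed to build the reflection $\phi_F$ in the following step, and something that cannot be extracted from the harmonic relation on a single circle.
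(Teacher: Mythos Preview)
Your setup is fine, and the bisector observation is correct: from Corollary~\ref{cor:bisector_cline}(i) you do get that $F$ lies in a sphere between $u$ and $v:=v_{x_0}$, hence $(x,u,\eta_u(x),v)\in\harm$ as a cross-ratio identity in $X$ for every $x\in F$. You also correctly isolate the remaining obstruction: the statement requires $(x,u,\eta_u(x),v)\in\harm_{\si_x}$, which needs $v\in\si_x$, and this does not follow from the cross-ratio identity alone.

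The gap is that neither of your two proposed routes closes this. In route~(a) you appeal to Corollary~\ref{cor:bisector_cline}(ii) to put $\si_x$ in the bisector of each harmonic pair $(a,b)$ with $(a,x,b,\eta_u(x))\in\harm_F$; but those relations are satisfied by \emph{every} point of $\si_x$, in particular by $u$, so they cannot single out $v_x$. Moreover the conclusion you aim for, ``equidistant from all of $F$'', is neither what these relations give (they give only symmetric pairwise equalities) nor a property that determines a unique point. What your earlier step actually yields is $|wv|_{\eta_u(x)}=|wu|_{\eta_u(x)}=|wv_x|_{\eta_u(x)}$ for all $w\in F$, i.e.\ $v$ and $v_x$ have the \emph{same} distance function to $F$; this is far from $v=v_x$. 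Route~(b) has no argument for openness, and there is no evident local mechanism forcing $v_{x'}=v_x$ for $x'$ near $x$.

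The paper's proof supplies exactly the missing metric input. Fix an auxiliary point $\om\in F$, let $o=\eta_u(\om)$, and \emph{define} $v$ on the $\R$-line $\ga\sub X_\om$ through $u,o$ by $|ov|=|ou|$, $v\neq u$. For an arbitrary $x$ with $y=\eta_u(x)$ and $v'\in\si_x$ the harmonic conjugate, Lemma~\ref{lem:bisector_rcircle} gives $d_{v'}=d_u$ on $F_\om$; Lemma~\ref{lem:sym_dist} shows each $d_u,d_{v'}$ is symmetric about its projection $\mu_{F,\om}(\cdot)$; and the rigidity Lemma~\ref{lem:symmetric_functions} forces $\mu_{F,\om}(v')=o$. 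A Ptolemy comparison (equality on $\si_x$, inequality on the quadruple $(x,u,v,y)$ together with $|xv'|=|xu|=|xv|$, $|yv'|=|yu|=|yv|$) then pins $|uv'|=|uo|+|ov'|$, and the concatenation Lemma~\ref{lem:concatenation} places $v'$ on $\ga$, whence $v'=v$. These four lemmas are the substance you are missing; the bisector observations you make are only the first of them.
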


This results defines for a 
$\C$-circle $F$
an involution
$\phi_F:X\to X$
with fixed point set 
$F$
in the following way: for
$u\in X\sm F$
let
$v=\phi_F(u)$
be the unique point defined by 
Proposition~\ref{pro:conjugate_pole}.
For
$u\in F$
define
$\phi_F(u)=u$.
The  involution
$\phi_F$
is called the {\em reflection} with respect to
$F$.

In the case 
that
$u\in X\sm F$
we say that
$u$, $v$
are {\em conjugate poles} of
$F$.
Note that
$\eta_u=\eta_v$
for conjugate poles 
$u,v=\phi_F(u)$
of
$F$.
Thus in a simplified way one can visualize
$F$
as equator of a 2-sphere, 
$u,v$
as the poles,
the map
$\eta_u=\eta_v$
as antipodal map on the equator
such that for all
$x$
on the equator the points
$(x,u,\eta_u(x),v)$
lie in harmonic position on a circle as in the following picture,
where
$x'=\eta_u(x)=\eta_v(x)$.

\begin{figure}[htbp]\label{fi:conj_poles}
\centering
\psfrag{u}{$u$}
\psfrag{v}{$v$}
\psfrag{x}{$x$}
\psfrag{x'}{$x'$}
\psfrag{F}{$F$}
\includegraphics[width=0.4\columnwidth]{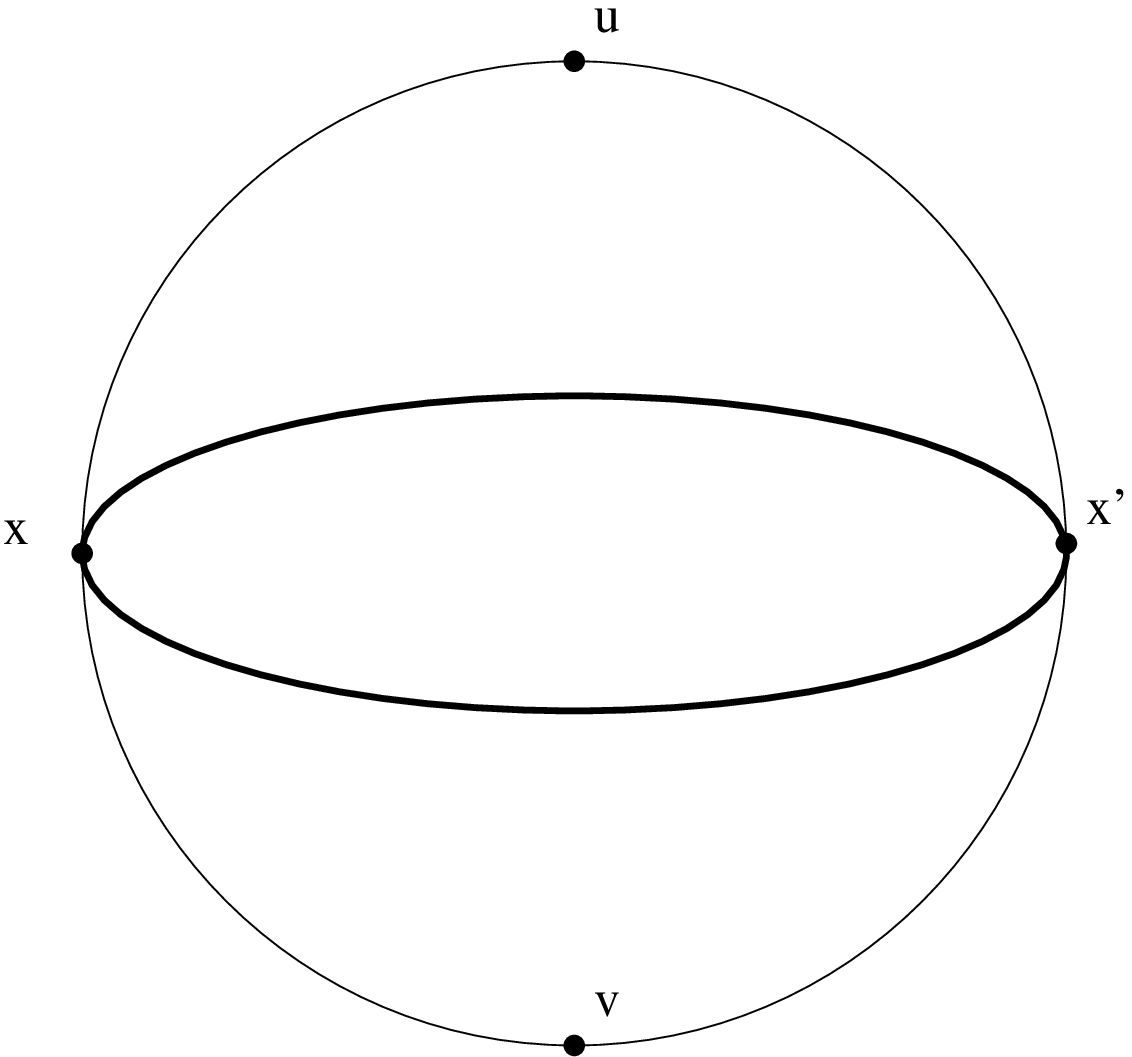}
\end{figure}

For the proof of Proposition~\ref{pro:conjugate_pole} we need 
Lemmas~\ref{lem:bisector_rcircle}--\ref{lem:concatenation}.

\begin{lem}\label{lem:bisector_rcircle} Given a
$\C$-line $F_\om\sub X_\om$, $\R$-circle $\si\sub X_\om$
and
$(x,u,y,v')\in\harm_\si$
with
$x,y\in F_\om$,
we have
$$|wu|_\om=|wv'|_\om$$
for all
$w\in F_\om$. 
\end{lem}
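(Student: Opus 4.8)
The plan is to reduce the statement to Corollary~\ref{cor:bisector_cline}(i) and then to transport the resulting bisector property from one infinitely remote point to another. First I observe that $x,y$ are two distinct common points of $\si$ and the $\C$-circle $F=F_\om\cup\{\om\}$: indeed $x,y\in F_\om\sub F$ by hypothesis, and $x,y\in\si$ because $(x,u,y,v')\in\harm_\si$. By Lemma~\ref{lem:intersect_rcircle_ccircle} these are the only common points, so in particular $u,v'\notin F$ (hence the ratios $d(w,u)/d(w,v')$ below are well defined and positive for $w\in F$) and $\om\neq x,y$, since $\om\notin\si$. The pair $\si,F$ with common points $x,y$ is thus exactly the configuration to which Corollary~\ref{cor:bisector_cline} applies, with the role of the corollary's second common point played by $y$, not by our $\om$.

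Next I rewrite the harmonic tuple into the shape required by Corollary~\ref{cor:bisector_cline}(i). The harmonic condition $d(x,u)\,d(y,v')=d(x,v')\,d(y,u)$ for $(x,u,y,v')$ is literally the harmonic condition for $(u,x,v',y)$ (this is the permutation by $(12)(34)$, which preserves cross-ratios), so $(u,x,v',y)\in\harm_\si$, now with the common points $x,y$ sitting in the second and fourth slots. Corollary~\ref{cor:bisector_cline}(i), applied with common points $x,y$, then yields that $F$ lies in the bisector between $u$ and $v'$ in the metric of $X_y$, that is $|wu|_y=|wv'|_y$ for every $w\in F$ with $w\neq y$.

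It remains to pass from the infinitely remote point $y$ to $\om$, which is the only real point of the argument. Fix a bounded metric $d$ of the M\"obius structure. By the cross-ratio computation in the proof of Lemma~\ref{lem:bisector}, for any $q\in F$ the equality $|wu|_q=|wv'|_q$ is equivalent to harmonicity of $(w,u,q,v')$, i.e.\ to $d(w,u)/d(w,v')=d(q,u)/d(q,v')$. Hence the conclusion of the previous step says precisely that the ratio $d(w,u)/d(w,v')$ is constant, equal to $c:=d(y,u)/d(y,v')$, as $w$ ranges over $F\sm\{y\}$. Since $\om\in F$ and $\om\neq y$, this gives $d(\om,u)/d(\om,v')=c$; and for $w=y$ the ratio equals $c$ by definition. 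Therefore $d(w,u)/d(w,v')=d(\om,u)/d(\om,v')$ for every $w\in F_\om$, which by the same cross-ratio characterization now taken with infinitely remote point $\om$ means $(w,u,\om,v')\in\harm$, i.e.\ $|wu|_\om=|wv'|_\om$ for all $w\in F_\om$, as required. The substance of the proof is the observation that the bisector property, though stated metrically, is equivalent to the basepoint-free statement that $d(\cdot,u)/d(\cdot,v')$ is constant along $F$; once this is seen, the change of infinitely remote point from $y$ to $\om$ is automatic.
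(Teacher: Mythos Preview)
Your proof is correct and follows essentially the same route as the paper: apply Corollary~\ref{cor:bisector_cline}(i) with the common points $x,y$ of $\si$ and $F$ to obtain $|wu|_y=|wv'|_y$ for all $w\in F_y$, then transfer this bisector property from the infinitely remote point $y$ to $\om$. The only cosmetic difference is in the transfer step: the paper carries it out by the explicit metric inversion formula $|wu|_\om=\frac{|wu|_y}{|w\om|_y\cdot|u\om|_y}$ together with $|u\om|_y=|v'\om|_y$ (the bisector identity at $w=\om$), whereas you phrase the same computation as constancy of the ratio $d(\cdot,u)/d(\cdot,v')$ along $F$---these are two ways of saying the same thing.
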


\begin{proof} By Corollary~\ref{cor:bisector_cline}(i), the 
$\C$-line $F_y$
lies in the bisector in
$X_y$
between
$u$, $v'$,
thus
$|wu|_y=|wv'|_y$
for every
$w\in F_y$.
Taking the metric inversion (\ref{eq:metric_inversion}) with respect to
$\om$
we obtain
$$|wu|_\om=\frac{|wu|_y}{|w\om|_y\cdot|u\om|_y}
  =\frac{|wv'|_y}{|w\om|_y\cdot|v'\om|_y}=|wv'|_\om$$  
for every
$w\in F_\om$.
\end{proof}

\begin{lem}\label{lem:sym_dist} Given
$u\in X_\om$
and a
$\C$-line $F_\om\sub X_\om$,
the distance function
$d_u:F_\om\to\R$, $d_u(w)=|wu|_\om$,
is symmetric with respect to
$o=\mu_{F,\om}(u)$,
that is,
$d_u(w)=d_u(w')$
for every
$w,w'\in F_\om$
with
$|wo|_\om=|w'o|_\om$. 
\end{lem}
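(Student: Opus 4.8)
The plan is to recognize the symmetry condition $|wo|_\om=|w'o|_\om$ as a harmonicity statement on the $\C$-line $F_\om$ and then transport equidistance from $o$ into equidistance from $u$ along an $\R$-circle that passes through $u$. First I would dispose of the trivial cases: if $w=w'$ there is nothing to prove, and if one of $w,w'$ equals $o$ then the hypothesis forces $w=w'=o$; so I may assume $w,w'\in F_\om$ are distinct and different from $o$. Next I would fix the $\R$-circle $\si$ through $\om,u$ that realizes the projection, so that $o=\mu_{F,\om}(u)=\si\cap F_\om$. By construction $o,\om\in F\cap\si$ are distinct common points of the $\C$-circle $F$ and the $\R$-circle $\si$, and crucially $u\in\si$.

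The second step is the translation into harmonic language. Exactly as in the proof of Lemma~\ref{lem:bisector}, with $\om$ as the infinitely remote point one computes $\crt(w,o,w',\om)=(|wo|_\om:\ast:|w'o|_\om)$, the two cross-ratio factors involving $\om$ cancelling in the limit. Hence the hypothesis $|wo|_\om=|w'o|_\om$ is equivalent to $(w,o,w',\om)\in\harm_F$.

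Now I would apply Corollary~\ref{cor:bisector_cline}(ii) to the pair $\si,F$ with common points $o,\om$ and the harmonic 4-tuple $(w,o,w',\om)\in\harm_F$: the conclusion is that the $\R$-line $\si\sub X_\om$ through $o$ lies in the bisector between $w$ and $w'$, i.e. $|pw|_\om=|pw'|_\om$ for every $p\in\si$. Taking $p=u$, which lies on $\si$, yields $d_u(w)=|uw|_\om=|uw'|_\om=d_u(w')$, as required.

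The only genuinely delicate point is the choice of the auxiliary $\R$-circle: it must simultaneously meet $F$ in the two points $o,\om$ and contain the given pole $u$, which is precisely what the defining $\R$-circle of the projection $\mu_{F,\om}$ provides. Once this circle is identified, the lemma is an immediate packaging of the orthogonality axioms through Corollary~\ref{cor:bisector_cline}; no estimate or limiting argument beyond the standard reading of the cross-ratio with an infinitely remote point is needed.
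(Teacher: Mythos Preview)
Your proof is correct and follows essentially the same route as the paper: both recognize that $|wo|_\om=|w'o|_\om$ encodes $(w,o,w',\om)\in\harm_F$, invoke Corollary~\ref{cor:bisector_cline}(ii) to place the $\R$-line through $o$ and $u$ in the bisector between $w$ and $w'$, and evaluate at $u$. Your version is simply more explicit about the edge cases and about identifying $\si$ as the defining circle of the projection $\mu_{F,\om}$; the paper compresses this into two sentences.
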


\begin{proof} If
$|wo|_\om=|w'o|_\om$,
then
$(w,o,w',\om)\in\harm_F$.
Thus by Corollary~\ref{cor:bisector_cline}(ii), the 
$\R$-line $\si\sub X_\om$
through
$u$, $o$
lies in the bisector in
$X_\om$
between
$w$, $w'$.
\end{proof}

\begin{lem}\label{lem:symmetric_functions} Assume for 
$u,v'\in X_\om$
the distance functions
$d_u,d_{v'}:F_\om\to\R$
along a
$\C$-line $F_\om\sub X_\om$
coincide,
$d_u(w)=d_{v'}(w)$
for all
$w\in F_\om$,
and
$d_u$
is symmetric with respect to
$o\in F_\om$,
while
$d_{v'}$
is symmetric with respect to
$o'\in F_\om$.
Then
$o=o'$.
\end{lem}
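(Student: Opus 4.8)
The plan is to exploit the one-dimensional structure of the $\C$-line $F_\om$. As noted after the definition of a $\C$-line, once we fix an order on $F_\om$, any three points $x,y,z\in F_\om$ in this order satisfy $|xz|_\om^2=|xy|_\om^2+|yz|_\om^2$. First I would use this to construct a coordinate $t:F_\om\to\R$, unique up to an additive constant, characterized by $|t(z)-t(x)|=|xz|_\om^2$: the additivity $|t(z)-t(x)|=|t(z)-t(y)|+|t(y)-t(x)|$ for $t(x)<t(y)<t(z)$ makes $t$ well defined, and it identifies $(F_\om,|\cdot|_\om)$ isometrically with the snowflake line $(\R,\rho)$, $\rho(s,s')=\sqrt{|s-s'|}$, the infinitely remote point $\om$ of $F$ being the common limit as $t\to\pm\infty$.

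In this coordinate the hypotheses become statements about the single function $f:=d_u=d_{v'}$, read as a function of $t$. Indeed $|wo|_\om=|w'o|_\om$ is equivalent to $t(w')\in\{t(w),\,2t(o)-t(w)\}$, so symmetry of $d_u$ about $o$ says precisely that $f$ is even about $t(o)$, i.e. $f(2t(o)-t)=f(t)$ for all $t$; likewise symmetry of $d_{v'}$ about $o'$ says $f$ is even about $t(o')$. Since $d_u=d_{v'}$, the same $f$ is simultaneously invariant under the two reflections $t\mapsto 2t(o)-t$ and $t\mapsto 2t(o')-t$.

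The decisive step is then elementary: if $o\neq o'$, composing the two reflections produces the nontrivial translation $t\mapsto t+2(t(o')-t(o))$, under which $f$ is invariant, so $f$ is periodic with nonzero period. I would finish by contradiction using that $f$ is continuous and unbounded. Unboundedness follows from $\om$ being infinitely remote: along $F_\om$, as $t\to\pm\infty$ the point $w$ tends to $\om$, and $|wu|_\om\ge|wo|_\om-|uo|_\om\to\infty$ with $u,o$ fixed, whence $f(w)\to\infty$. A continuous periodic function on $\R$ is bounded, so the period must vanish, forcing $t(o)=t(o')$ and therefore $o=o'$.

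The step I expect to require the most care is the bookkeeping of the coordinate $t$ and the verification that symmetry about $o$ corresponds exactly to evenness of $f$ about $t(o)$; once $F_\om$ is identified with $(\R,\sqrt{|s-s'|})$, the remainder (two mirror symmetries forcing periodicity, clashing with the blow-up of $d_u$ at $\om$) is routine. A coordinate-free variant would instead read the two symmetries as M\"obius involutions of $F$ fixing $\{o,\om\}$ and $\{o',\om\}$ respectively and observe that their product fixes only $\om$ and pushes $F_\om$ to infinity, but the coordinate version seems cleanest.
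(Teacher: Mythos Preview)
Your proposal is correct and follows essentially the same line as the paper's proof: the paper composes the two isometric reflections $\al,\al'$ of $F_\om$ at $o,o'$ to get $\be=\al'\circ\al$, shows inductively that $d_u$ is constant along the orbit $\be^i(o)$, and derives a contradiction from $\be^i(o)\to\infty$ when $o\neq o'$. Your explicit coordinate $t$ on $F_\om$ (via the squared metric) and the phrasing ``two reflections force periodicity, but $f$ is unbounded'' is just a coordinate-based rendering of the same argument; the paper's version avoids setting up $t$ but is otherwise identical in substance.
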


\begin{proof} Let
$\al,\al':F_\om\to F_\om$
be isometric reflections with respect to
$o,o'$
respectively,
$\be=\al'\circ\al$.
Using
$d_u(\al x)=d_u(x)$
and
$d_{v'}(\al' x)=d_{v'}(x)$
for all
$x\in F_\om$,
we obtain
\begin{align*}
d_u(\be^{i+1}(o))=&d_{v'}(\al'\circ\al\circ\be^i(o))\\
                =&d_{v'}(\al\circ\be^i(o))\\
                =&d_u(\be^i(o)).
\end{align*}

Thus by induction, the function
$d_u$
is constant along the sequence
$\be^i(o)$.
If one admits that
$o\neq o'$,
then
$\be^i(o)\to\infty$
in
$X_\om$
and thus
$d_u(\be^i(o))\to\infty$
as
$i\to\infty$ 
being a distance function, a contradiction. Hence
$o=o'$.
\end{proof}

\begin{lem}\label{lem:concatenation} Assume that
$\mu(u)=\mu(v')=o\in F_\om$
for some
$u,v'\in X_\om$,
where
$\mu=\mu_{F,\om}:X_\om\to F_\om$.
If
$|uv'|=|uo|+|ov'|$,
then
$u,o,v'$
are contained in an
$\R$-line.
\end{lem}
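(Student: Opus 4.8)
The plan is to reduce the statement to the coincidence of two $\R$-circles and then force that coincidence. Since $\mu(u)=o$, axiom ($\rm E_\R$) produces a unique $\R$-circle $\si_u$ through $\om,u$ meeting $F_\om$, and by Lemma~\ref{lem:intersect_rcircle_ccircle} it meets $F$ exactly in $\{o,\om\}$; likewise $\mu(v')=o$ gives $\si_{v'}$ through $\om,v',o$. Because $\om\in\si_u$, the set $\si_u\sm\{\om\}$ is an $\R$-line, hence a geodesic in $X_\om$, and the same holds for $\si_{v'}$. If $\si_u=\si_{v'}$, then $\om,u,o,v'$ all lie on this single $\R$-circle and in particular $u,o,v'$ lie on the $\R$-line $\si_u$, as required. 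By Lemma~\ref{lem:three_points_rcircles} two $\R$-circles sharing three distinct points coincide, and $\si_u,\si_{v'}$ already share $o,\om$; so it suffices to exhibit one further common point, equivalently to show $v'\in\si_u$.

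The metric engine is the Ptolemy inequality tested against points of $\si_u$ escaping to $\om$. Write $\rho=|uo|_\om$ and $r=|ov'|_\om$, so the hypothesis is $|uv'|_\om=\rho+r$. Parametrise $\si_u\sm\{\om\}$ by arclength with $o$ at $0$ and $u$ at $\rho$, and for $n>0$ let $q_n$ be the point at parameter $\rho+n$, so $q_n\to\om$. Since $\si_u$ is an $\R$-line, additivity gives $|oq_n|_\om=\rho+n$ and $|uq_n|_\om=n$. Applying the Ptolemy inequality to the admissible $4$-tuple $(q_n,v',o,u)$ gives $(\rho+n)(\rho+r)\le|q_nv'|_\om\cdot\rho+n\,r$, whence $|q_nv'|_\om\ge n+\rho+r$; as the triangle inequality yields the reverse bound $|q_nv'|_\om\le|v'o|_\om+|oq_n|_\om=n+\rho+r$, we obtain the \emph{exact} equality $|q_nv'|_\om=|v'o|_\om+|oq_n|_\om$ for every $n$. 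Thus $o$ realises the triangle equality between $v'$ and every point of the ray of $\si_u$ issuing from $u$ toward $\om$, and each such $q_n$ itself satisfies $\mu(q_n)=o$ by ($\rm E_\R$) applied to $\si_u$. By the symmetry of the hypotheses in $u$ and $v'$, the analogous equalities hold with the roles of $\si_u$ and $\si_{v'}$ interchanged.

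The substantive step is to convert these metric equalities into the incidence statement $v'\in\si_u$, and this is where I expect the main difficulty. The Ptolemy inequality by itself is \emph{not} enough: if $v''\in\si_u$ is the point at parameter $-r$ (so $o$ lies between $u$ and $v''$ and $|ov''|_\om=r$), the computation above shows $v'$ and $v''$ have equal distances to $o$, to $u$, and to every $q_n$, yet Ptolemy only bounds $|v'v''|_\om\le 2r$, so equidistance alone cannot close the gap. To force $v'=v''$ I would therefore use the orthogonality axioms rather than the metric directly: applying ($\rm O_\C$) to $\si_u$ and $F$ at their common points $o,\om$, and to $\si_{v'}$ and $F$, expresses $F_\om$ as a bisector (via Corollary~\ref{cor:bisector_cline}) and produces, through Lemma~\ref{lem:sym_dist}, distance functions on $F_\om$ symmetric about the common point $o$; comparing them by the escape-to-infinity argument of Lemma~\ref{lem:symmetric_functions} should pin $v'$ onto $\si_u$ and hence yield a third common point of $\si_u$ and $\si_{v'}$, so that Lemma~\ref{lem:three_points_rcircles} finishes the proof. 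The crux is thus the rigidity claim that $\mu(v')=o$ together with the propagated triangle equalities along $\si_u$ already determines $v'$ to be $v''\in\si_u$; I expect this incidence-theoretic rigidity, and not the metric estimate, to be the real content of the argument.
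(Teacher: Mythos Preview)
Your proposal has a genuine gap, and you identify it yourself: after the Ptolemy computation you have only the equalities $|q_nv'|_\om=|q_no|_\om+|ov'|_\om$ along one ray of $\si_u$, and you correctly observe that these do \emph{not} distinguish $v'$ from the point $v''\in\si_u$ at parameter $-r$. Your suggested rescue via Lemma~\ref{lem:sym_dist} and Lemma~\ref{lem:symmetric_functions} does not work: those lemmas compare the centres of symmetry of the distance functions $d_u,d_{v'}:F_\om\to\R$ and conclude that the two centres coincide. But here you already know $\mu(u)=\mu(v')=o$, so both functions are symmetric about the \emph{same} point $o$; the lemmas give you back exactly the hypothesis and no more. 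There is no mechanism in those lemmas, or in the orthogonality axioms, that controls where $v'$ sits \emph{along} the $\R$-line $\si_u$ --- they only see the projection to $F_\om$.

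The paper's argument is purely metric and does not try to show $v'\in\si_u$ directly. Instead it parametrises the two $\R$-lines $\si,\si'$ through $o$ (containing $u$ and $v'$ respectively) by arclength with $\si(0)=\si'(0)=o$, sets $d(s,t)=|\si(s)\si'(t)|_\om$, and uses that for each fixed $t$ the function $s\mapsto d(s,t)$ is \emph{convex} and $1$-Lipschitz along the $\R$-line $\si$ (convexity of distance functions in Ptolemy spaces). Since $d(0,t_0)=t_0$ and $d(s_0,t_0)=s_0+t_0$ by hypothesis, convexity plus the Lipschitz bound force $d(s,t_0)=s+t_0$ for \emph{all} $s\ge 0$; iterating in the second variable gives $d(s,t)=s+t$ for all $s,t\ge 0$. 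Thus the concatenation of the two rays $\si|[0,\infty)$ and $\si'|[0,\infty)$ is an isometric copy of $\R$, hence an $\R$-line, and it contains $u,o,v'$. The key idea you are missing is convexity: your Ptolemy estimate recovers the equality only for points $q_n$ \emph{beyond} $u$, whereas convexity propagates it to the full ray and then, by a second application, to all of $\si'|[0,\infty)$ as well.
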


\begin{proof} We can assume that
$u,v'\neq o$.
Let
$\si,\si':\R\to X_\om$
be the unit speed parametrizations of the 
$\R$-lines
through
$u,o$
and
$v',o$
respectively such that
$\si(0)=o=\si'(0)$, $\si(|ou|)=u$, $\si'(|ov'|)=v'$.
Then by the triangle inequality
$$d(s,t):=|\si(s)\si'(t)|\le s+t$$
for all
$s,t\ge 0$.
We put
$s_0=|uo|$, $t_0=|ov'|$.
The function
$d_{t_0}(s)=d(s,t_0)$
is convex and 1-Lipschitz,
$d_{t_0}(0)=t_0$, $d_{t_0}(s_0)=|uv'|=s_0+t_0$
by the assumption. Hence
$$d_{t_0}(s)=s+t_0$$
for all
$s\ge 0$.
Similarly, for a fixed
$s\ge 0$
the function
$d_s(t)=d(s,t)$
is convex and 1-Lipschitz,
$d_s(0)=s$, $d_s(t_0)=d_{t_0}(s)=s+t_0$.
Hence
$d_s(t)=s+t$
for all
$t\ge 0$.
Therefore,
$|\si(s)\si'(t)|=s+t$
for all 
$s,t\ge 0$.
Thus the concatenation
$\si|[0,\infty)\cup\si'|[0,\infty)$
is a geodesic line, which implies that
$u,o,v'$
are contained in an
$\R$-line.
\end{proof}

\begin{proof}[Proof of Proposition~\ref{pro:conjugate_pole}] Let
$o=\mu_{F,\om}(u)\in F_\om$, $\ga$
the 
$\R$-line
in
$X_\om$
through
$u$, $o$.
We take
$v\in\ga$, $v\neq u$,
such that
$|vo|=|ou|$.

Given
$x\in F$,
we put
$y=\eta_u(x)\in F$,
and let
$\si$
be the 
$\R$-circle
through
$x,u,y$.
We show that
$v'=v$
for 
$v'\in\si$, $v'\neq u$,
such that
$(x,u,y,v')\in\harm_\si$.

By Lemma~\ref{lem:bisector_rcircle} the distance functions
$d_u,d_{v'}:F_\om\to\R$, $d_u(w)=|wu|$, $d_{v'}(w)=|wv'|$,
coincide,
$|wu|=|wv'|$
for all
$w\in F_\om$.
By Lemma~\ref{lem:sym_dist},
$d_u$
is symmetric with respect to
$o$,
while
$d_{v'}$
is symmetric with respect to
$o'=\mu_{F,\om}(v')$.
Thus
$o'=o$
by Lemma~\ref{lem:symmetric_functions}.

By the triangle inequality
$|uv'|\le|uo|+|ov'|=2|uo|=|uv|$.
On the other hand, by the Ptolemy equality we have
$$|uv'|\cdot|xy|=|xu|\cdot|v'y|+|yu|\cdot|xv'|.$$
By Lemma~\ref{lem:bisector_rcircle},
$|xv'|=|xu|$,
and by Corollary~\ref{cor:bisector_cline}(i),
$|xu|=|xv|$.
Similarly,
$|yv'|=|yu|=|yv|$.
By the Ptolemy inequality
$$|xu|\cdot|vy|+|yu|\cdot|xv|\ge|xy|\cdot|uv|.$$
We conclude that
$|uv'|\ge|uv|$.
Hence,
$|uv'|=|uv|=|uo|+|ov'|$.
From Lemma~\ref{lem:concatenation}
we conclude that
$v'\in\ga\cap\si$,
thus
$v'=v$.
\end{proof}

\subsection{A M\"obius involution of a complex circle}

\begin{lem}\label{lem:harmonic_midpoint} Given a
$\C$-circle $F$, $x$, $y\in F$, $u\in X\sm F$,
there exists 
$\om\in F$
such that
$(x,\eta_u(\om),y,\om)\in\harm_F$. 
\end{lem}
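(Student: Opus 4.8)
I want to find $\om\in F$ such that $(x,\eta_u(\om),y,\om)\in\harm_F$, where $\eta_u:F\to F$ is the fixed-point-free involution associated with $u$ via $\eta_u(\om)=\mu_{F,\om}(u)$. Let me think about what this means. We have a $\C$-circle $F$, two points $x,y\in F$, and an external point $u$. For each candidate $\om\in F$, the involution $\eta_u$ produces a partner point $\om':=\eta_u(\om)$, and I want the four points $(x,\om',y,\om)$ to be in harmonic position on $F$.

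**Strategy via a one-parameter family and intermediate value.**
The plan is to exploit the topology of the $\C$-circle $F$ together with the continuity of $\eta_u$ and the harmonicity condition. A $\C$-circle is M\"obius equivalent to $\di(\frac{1}{2}\hyp^2)$, which is topologically a circle $S^1$. The involution $\eta_u$ is a fixed-point-free involution of this circle, hence conjugate to the antipodal map: it pairs each point $\om$ with its "antipode" $\om'$, and as $\om$ traverses a half of $F$, $\om'$ traverses the complementary half. The harmonicity condition $(x,\om',y,\om)\in\harm_F$ should be read, after fixing $\om$ as the infinitely remote point in $X_\om$, as an equidistance condition on the $\C$-line $F_\om$: by Corollary~\ref{cor:bisector_cline} and Lemma~\ref{lem:bisector}, this amounts to $|\om'x|_\om=|\om'y|_\om$, i.e.\ $\om'=\eta_u(\om)$ lies on the bisector between $x$ and $y$ in $F_\om$.

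**The main argument.**
First I would parametrize $F$ by the circle and consider the continuous function that measures, for each $\om\in F$, the signed discrepancy between the two sides of the harmonicity (equidistance) condition applied to the pair $(\om,\eta_u(\om))$ relative to $\{x,y\}$. Concretely, fixing a metric and letting $\om$ run over $F$, define $g(\om)=|\om' x|_\om-|\om' y|_\om$ where $\om'=\eta_u(\om)$; I want a zero of $g$. The key is a symmetry/antisymmetry observation: swapping the roles of $\om$ and $\om'$ (which $\eta_u$ does involutively, since $\eta_u(\om')=\om$) should reverse the sign of the discrepancy, so that $g$ takes values of opposite sign at $\om$ and at $\om'$. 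Since $F_\om$ is connected (a circle minus a point is an arc) and $\eta_u$ is continuous, the intermediate value theorem then yields a point where $g=0$, which is exactly the desired $\om$. The hard part will be making the sign-reversal rigorous: I must show that the equidistance discrepancy is genuinely \emph{antisymmetric} under $\om\leftrightarrow\om'$, and that $g$ extends continuously across the removed point so that an honest IVT applies on the closed circle. Handling the degenerate cases where $x=y$, or where one of $x,y$ coincides with a natural fixed configuration of $\eta_u$, should be disposed of separately, since there the harmonic condition holds trivially.

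**Expected obstacle.**
The genuine difficulty is not the existence-by-continuity step but verifying the antisymmetry of the discrepancy function under the involution $\eta_u$, which ties the external data $u$ (through $\eta_u$) to the internal metric geometry of $F$; I expect this to require invoking Lemma~\ref{lem:bisector_rcircle} or Lemma~\ref{lem:sym_dist} to convert the condition on $(x,\om',y,\om)$ into a statement purely about distances on $F_\om$ that is manifestly odd under the swap. Once that is in hand, the topological conclusion is routine.
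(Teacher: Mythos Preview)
Your overall strategy---an intermediate value argument on the circle $F$---matches the paper's. But the execution has two problems.

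First, $g(\om)=|\om'x|_\om-|\om'y|_\om$ is not well-defined as a real-valued function of $\om$: the metric $|\cdot|_\om$ is determined only up to a positive scalar, and that scalar changes with $\om$. So the \emph{sign} of $g$ makes sense, but not its value, and you still have to exhibit a sign change.

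Second, the antisymmetry you hope for under $\om\leftrightarrow\eta_u(\om)$ compares quantities in two different metrics and is not straightforward to formulate; the lemmas you cite (\ref{lem:bisector_rcircle}, \ref{lem:sym_dist}) concern the distance function from the external point $u$ to $F$ and are irrelevant here, since the harmonicity of $(x,\om',y,\om)$ is a pure cross-ratio condition on four points of $F$.

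The paper bypasses both issues by working with the metric-independent cross-ratio
\[
f(\om)=\frac{|xo|\cdot|y\om|}{|x\om|\cdot|oy|},\qquad o=\eta_u(\om),
\]
restricted to the arc of $F$ from $x':=\eta_u(x)$ to $y':=\eta_u(y)$ that avoids $x,y$. At $\om=x'$ one has $o=x$, so $f(x')=0$; at $\om=y'$ one has $o=y$, so $f(y')=\infty$. Continuity then yields some $\om$ with $f(\om)=1$, which is exactly the harmonic condition. No swap-antisymmetry is needed: the endpoint values come directly from the observation that $\eta_u$ exchanges $x\leftrightarrow x'$ and $y\leftrightarrow y'$. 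Your approach can be salvaged along these lines, but the detour through $g$ and the involution symmetry is unnecessary.
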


\begin{proof} We put
$x'=\eta_u(x)$, $y'=\eta_u(y)$.
Then for every
$\om\in x'y'$
we have
$o=\eta_u(\om)\in xy$,
where
$xy\sub F$
is the arc between
$x,y$
that does not contain  
$x'$, $y'$,
and
$x'y'\sub F$
is the arc between
$x',y'$
that does not contain  
$x$, $y$.
The cross-ratio function
$f:x'y'\to\R$,
defined by
$$f(\om)=\frac{|xo|\cdot|y\om|}{|x\om|\cdot|oy|}$$
via a metric of the M\"obius structure, is continuous and takes values
$f(x')=0$, $f(y')=\infty$.
Then
$\om$
with
$f(\om)=1$
does the job. 
\end{proof}

\begin{lem}\label{lem:filling_sphere_rcirles} Assume a sphere
$S\sub X$
between
$\om$, $w'$
intersects the 
$\C$-circle $F$
through
$\om$, $w'$
at
$x$, $x'$.
Then for every
$u\in S$
there is an
$\R$-circle $\ga'$
with 
$x,u,x'\in\ga'$.
\end{lem}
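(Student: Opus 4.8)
The plan is to realize the required circle as the unique $\R$-circle furnished by axiom~($\rm E_\R$) through $x$ and $u$, and then to pin down its second intersection with $F$. Assume first that $u\notin\{x,x'\}$; since $S\cap F=\{x,x'\}$ this forces $u\in X\sm F$, so the fixed-point-free involution $\eta_u:F\to F$ is defined, and by ($\rm E_\R$) there is a unique $\R$-circle $\ga'$ through $x,u$ meeting $F_x$, namely at the point $\eta_u(x)=\mu_{F,x}(u)$. Thus $\ga'$ passes through $x,u,\eta_u(x)$, and the entire lemma reduces to the single claim $\eta_u(x)=x'$. (The degenerate cases $u=x$ or $u=x'$ are trivial, since any $\R$-circle through $x,x'$ already does the job.)

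First I record the harmonic data. As $x,x'$ lie on the common sphere $S$ between $\om,w'$, the $4$-tuple $(\om,x,w',x')$ is harmonic; equivalently $(x,\om,x',w')\in\harm_F$, so passing to $X_x$ (with $x$ infinitely remote) the points $\om,w'$ are equidistant from $x'$ along the $\C$-line $F_x$, i.e. symmetric about $x'$. Moreover $x\in S$, so by Lemma~\ref{lem:bisector} the set $S_x$ is the bisector in $X_x$ between the poles $\om,w'$; since $u\in S$ this gives the key coincidence $|u\om|_x=|uw'|_x$.

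Now consider the distance function $d_u:F_x\to\R$, $d_u(w)=|wu|_x$. By Lemma~\ref{lem:sym_dist} it is symmetric about its foot $o:=\mu_{F,x}(u)=\eta_u(x)$, and by Lemma~\ref{lem:symmetric_functions} (applied with $v'=u$) this foot is its \emph{only} centre of symmetry. The identity $d_u(\om)=d_u(w')$ shows that $d_u$ takes equal values at the distinct pair $\{\om,w'\}$, which is symmetric about $x'$. The crux is to upgrade this one coincidence to the assertion that $\{\om,w'\}$ is actually symmetric about the foot $o$: granting this, a pair of distinct points of a $\C$-line has a unique centre of symmetry, whence $o=x'$, that is $\eta_u(x)=x'$, as required.

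I expect this promotion step to be the main obstacle, and it hinges on the monotonicity of $d_u$ along $F_x$. What must be shown is that $d_u$ is strictly increasing in the $\C$-line distance from its foot $o$, so that an equality $d_u(\om)=d_u(w')$ with $\om\neq w'$ can occur only when $\om,w'$ sit symmetrically about $o$. Coercivity of $d_u$ (it tends to $\infty$ along $F_x$, as already exploited in the proof of Lemma~\ref{lem:symmetric_functions}), combined with the Ptolemy inequality in $X_x$ and the Pythagorean relation $|pq|_x^2=|pc|_x^2+|cq|_x^2$ for ordered points $p,c,q\in F_x$, yields convexity-type control on $d_u$; the delicate part is to sharpen this to strict monotonicity on each side of $o$. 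Once that is available the argument closes as above. As a structural check, I note the complementary viewpoint: the map $S\sm\{x,x'\}\to F\sm\{x\}$, $u\mapsto\eta_u(x)$, is continuous by Corollary~\ref{cor:cline_projection} and never equals $x$, so proving that its image lands in $S\cap F=\{x,x'\}$ would, by connectedness of $S\sm\{x,x'\}$, force it to be identically $x'$; but this membership $\eta_u(x)\in S$ is precisely the same claim, confirming that the monotonicity estimate is the genuine core of the proof.
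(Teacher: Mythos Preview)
Your proposal correctly isolates the reduction to $\eta_u(x)=x'$, but the argument is incomplete exactly where you yourself flag it: you need that the distance function $d_u$ along the $\C$-line $F_x$ is \emph{strictly monotone} on each side of its foot $o$, so that a single coincidence $d_u(\om)=d_u(w')$ forces $\{\om,w'\}$ to be the reflection pair about $o$. You invoke ``convexity-type control'' from the Ptolemy inequality, but at this stage of the paper the distance formula (Proposition~\ref{pro:explicit_distance}) is not yet available---indeed the present lemma is one of the ingredients leading to it---so you have no explicit handle on $d_u$. Along a $\C$-line the metric satisfies $|pq|^2=|pc|^2+|cq|^2$ for ordered $p,c,q$, which is not a geodesic parametrization, and it is not clear that the Ptolemy inequality alone yields the required injectivity of $d_u$ on each half of $F_x$. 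Lemma~\ref{lem:symmetric_functions} gives uniqueness of the symmetry centre, but that is weaker than what you need. Without the monotonicity the argument does not close.

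The paper sidesteps this difficulty by reversing the logic. Rather than starting from $x$ and trying to locate $\eta_u(x)$, it first invokes Lemma~\ref{lem:harmonic_midpoint} to produce a point $\wt x\in F$ with $(w',\wt x,\om,\eta_u(\wt x))\in\harm_F$; the $\R$-circle $\ga'$ through $\wt x,u,\eta_u(\wt x)$ then lies, by axiom~$(\rm O_\R)$, on a sphere between $w'$ and $\om$, and since $u\in\ga'\cap S$ this sphere is $S$. Hence $\{\wt x,\eta_u(\wt x)\}=F\cap S=\{x,x'\}$. This route uses only the orthogonality axiom and the intermediate-value argument already packaged in Lemma~\ref{lem:harmonic_midpoint}, and requires no monotonicity estimate at all.
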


\begin{proof} By Lemma~\ref{lem:harmonic_midpoint} there is
$\wt x\in F$
such that
$(w',\wt x,\om,\wt x')\in\harm_F$,
where
$\wt x'=\eta_u(\wt x)$,
in particular, there is an
$\R$-circle $\ga'\sub X$
with
$\wt x,u,\wt x'\in\ga'$.
By axiom $\rm O_\R$, $\ga'$
lies on a sphere
$S'$
between
$w',\om$.
Since
$u\in\ga'\cap S$,
we conclude that
$S'=S$
and therefore
$\{\wt x,\wt x'\}=\{x,x'\}$. 
\end{proof}

\begin{pro}\label{pro:moebius_involution} For every
$x$, $z\in F$, $u\in X\sm F$
we have
$$\crt(x,u,z,v)=\crt(\eta_u(x),v,\eta_u(z),u),$$
where
$v\in X\sm F$
is the conjugate to
$u$
pole of
$F$.
\end{pro}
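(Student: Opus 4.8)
The plan is to reduce the projective identity to a single scalar equation and then to recognise that equation as an isometry property of $\eta_u$ in the metric based at the conjugate pole. First I would record the two structural facts that come for free. By Proposition~\ref{pro:conjugate_pole} the $4$-tuple $(x,u,\eta_u(x),v)$ is harmonic on the $\R$-circle $\si_x$ through $x,u,\eta_u(x)$, and $\si_x$ meets $F$ exactly in $x,\eta_u(x)$; applying axiom~$(\rm O_\C)$ to this harmonic tuple shows that the $\C$-circle $F$ through these two points lies on a single sphere $S$ between the conjugate poles $u,v$. By Lemma~\ref{lem:bisector} the sphere $S$ is, in the pole metric $d_v$ (with $v=\infty$), a metric sphere centred at $u$, so there is a radius $r$ with $|wu|_v=r$ for every $w\in F$; symmetrically every point of $F$ is at a common distance from $v$ in $d_u$.

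Next I would compute the two cross-ratio triples in $d_v$. Using $v=\infty$ one gets $\crt(x,u,z,v)=(|xu|_v:|xz|_v:|uz|_v)=(r:|xz|_v:r)$ and likewise $\crt(\eta_u(x),v,\eta_u(z),u)=(|\eta_u(z)\,u|_v:|\eta_u(x)\eta_u(z)|_v:|\eta_u(x)\,u|_v)=(r:|\eta_u(x)\eta_u(z)|_v:r)$. Thus both triples are harmonic, their outer entries already agree, and the whole Proposition collapses to the single identity $|xz|_v=|\eta_u(x)\,\eta_u(z)|_v$. In other words, it suffices to prove that $\eta_u$ is an isometry of $F$ for the pole metric $d_v$. (Equivalently, since swapping the second and fourth entries of a harmonic tuple interchanges the two outer cross-ratio coordinates and hence fixes the triple, the assertion may be rephrased as $\crt(x,u,z,v)=\crt(\eta_u(x),u,\eta_u(z),v)$, i.e. $\eta_u$ preserves the cross-ratio cut out by the poles.)

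To organise the remaining work I would describe the pencil geometry: every $\R$-circle $\si_w$, $w\in F$, passes through both poles $u,v$ and meets $F$ in the $\eta_u$-antipodal pair $\{w,\eta_u(w)\}$, and the harmonic relation on $\si_w$ says exactly that $u$ is the $d_v$-midpoint of that pair, so $|w\,\eta_u(w)|_v=2r$ for all $w$. Hence in $d_v$ the point $u$ is a common centre of $F$ and $\eta_u$ is the antipodal map along the central chords $[w,\eta_u(w)]$.

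The crux — and the main obstacle — is the rigidity statement $|xz|_v=|\eta_u(x)\eta_u(z)|_v$. The M\"obius-invariant squared-Ptolemy equality for the four concyclic points $x,z,\eta_u(x),\eta_u(z)$ of $F$ (in this cyclic order), combined with $|x\,\eta_u(x)|_v=|z\,\eta_u(z)|_v=2r$, yields only the single relation $16r^4=\big(|xz|_v\,|\eta_u(x)\eta_u(z)|_v\big)^2+\big(|x\,\eta_u(z)|_v\,|z\,\eta_u(x)|_v\big)^2$, which does not by itself separate the two factors. The extra, independent relation is exactly what the two preceding lemmas supply: Lemma~\ref{lem:harmonic_midpoint} produces harmonic midpoints $\om,\eta_u(\om)$ of the pair $\{x,z\}$ on $F$, placing $x,z$ on a sphere between $\om,\eta_u(\om)$, and Lemma~\ref{lem:filling_sphere_rcirles} fills that sphere with $\R$-circles through the relevant points. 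Feeding the resulting harmonic tuples back through axioms~$(\rm O_\R)$,~$(\rm O_\C)$ and the associated Ptolemy equalities gives a second relation which, together with the one above, forces both $|xz|_v=|\eta_u(x)\eta_u(z)|_v$ and $|x\,\eta_u(z)|_v=|z\,\eta_u(x)|_v$. Carrying out this last metric step — equivalently, proving that the meridional antipodal map $\eta_u$ restricts to an isometry of the pole-sphere on $F$ — is where essentially all the difficulty lies; everything else is bookkeeping with cross-ratios.
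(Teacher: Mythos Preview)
Your reduction is correct: in the pole metric $d_v$ both cross-ratio triples have the form $(r:\ast:r)$, and the Proposition is equivalent to the isometry statement $|xz|_v=|\eta_u(x)\,\eta_u(z)|_v$. But you have not proved this. Your last paragraph is a promise, not an argument: you correctly sense that the squared Ptolemy relation alone is underdetermined, and you correctly name Lemmas~\ref{lem:harmonic_midpoint} and~\ref{lem:filling_sphere_rcirles} as the missing ingredients, yet ``feeding the resulting harmonic tuples back through the axioms'' never says which second relation you extract or how it combines with the first to separate the two factors. As you yourself concede, this is where all the difficulty lies, and it is left undone.

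The paper supplies the missing idea by changing the frame. Instead of staying in $X_v$ and attacking the isometry of $\eta_u$ directly, one applies Lemma~\ref{lem:harmonic_midpoint} to the pair $x$ and $z':=\eta_u(z)$ (not $x,z$) to obtain $\om\in F$ with $(x,o,z',\om)\in\harm_F$, $o=\eta_u(\om)$. In $X_\om$ the point $o$ is then the \emph{metric midpoint} of $x$ and $z'$ on the $\C$-line $F$, so there is an honest isometric reflection $\psi:F\to F$ about $o$ with $\psi(x)=z'$ built in. The entire problem now becomes showing that $\psi(z)=\eta_u(x)$. This is where Lemma~\ref{lem:filling_sphere_rcirles} actually does work: if $w$ is the midpoint of $z,z'$, then the $\R$-circle through $z,u,z',v$ is a metric circle of radius $r$ about $w$, and after reflecting by $\psi$ one places $x,u,\psi(z),v$ on the metric sphere of radius $r$ about $\psi(w)$, whence Lemma~\ref{lem:filling_sphere_rcirles} yields an $\R$-circle through $x,u,\psi(z)$ and hence $\psi(z)=\eta_u(x)$. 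Once $\psi$ agrees with $\eta_u$ at both $x$ and $z$, the cross-ratio identity is immediate, because $\psi$ is an isometry of $X_\om$ and by Corollary~\ref{cor:bisector_cline} the $\R$-line through $o,u,v$ lies in the bisector between any $w\in F$ and $\psi(w)$, so $|\psi(\cdot)\,u|_\om=|\cdot\,u|_\om$ and $|\psi(\cdot)\,v|_\om=|\cdot\,v|_\om$. The move you are missing is precisely this change of viewpoint from $X_v$ to $X_\om$: it converts the hard synthetic claim ``$\eta_u$ is a $d_v$-isometry'' into the concrete one ``$\eta_u$ agrees with a specific metric reflection at two points''.
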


\begin{proof} We put
$z'=\eta_u(z)\in F$.
By Lemma~\ref{lem:harmonic_midpoint} there is
$\om\in F$
such that
$(x,o,z',\om)\in\harm_F$,
where
$o=\eta_u(\om)$.
By definition of
$\eta_u$,
there is an
$\R$-circle $\si$
through
$\om$, $u$
and
$o$.
Then
$F$, $\si$
are 
$\C$-line, $\R$-line
respectively in  the metric space
$X_\om$,
and
$o$
is the midpoint between
$x$, $z'$, $|xo|=|oz'|$,
where
$|xo|=|xo|_\om$
is the distance in
$X_\om$.

We denote by
$\psi:F\to F$
the metric reflection with respect to
$o$.
Then 
$\psi$
is an isometry with
$\psi(o)=o$, 
in particular,
$\psi$
is M\"obius. Note that
$F$
is in the bisector in
$X_\om$
between
$u$
and
$v$,
that is
$|wu|=|wv|$
for every
$w\in F$.
Furthermore,
$\si$
is in the bisector between any
$w\in F$
and
$\psi(w)$.
Thus
$\psi(x)=z'=\eta_u(z)$.
We show that
$x':=\psi(z)=\eta_u(x)$.

Since
$z'=\eta_u(z)$,
there is an 
$\R$-circle
$\ga\sub X$
such that
$(z,u,z',v)\in\harm_\ga$.
Let
$w\in F$
be the midpoint between
$z$, $z'$, $|zw|=|wz'|=:r$.
The circle
$\ga$
is a metric circle in
$X_\om$
centered at
$w$,
in particular,
$|wu|=|wv|=r$.
We put
$w'=\psi(w)$.
Then
$|w'u|=|w'v|=r=|w'x'|$
and
$|xw'|=|wz'|=r$.
It follows that
$x,u,x',v$
lie on the metric sphere of radius
$r$
centered at
$w'$.
Thus by Lemma~\ref{lem:filling_sphere_rcirles} there is a circle
$\ga'\sub X$
through
$x$, $x'$
with
$u\in\ga'$.
This means that
$x'=\eta_u(x)$.

It follows
\begin{align*}
 \crt(x,u,z,v)&=(|xu|\cdot|zv|:|xz|\cdot|uv|:|xv|\cdot|uz|)\\
              &=\crt(\psi(x),u,\psi(z),v)\\
              &=\crt(\eta_u(z),u,\eta_u(x),v)=\crt(\eta_u(x),v,\eta_u(z),u).
\end{align*}
\end{proof}

\begin{cor}\label{cor:phi_u_moebius} For every 
$\C$-circle $F$
and every 
$u\in X\sm F$
the involution
$\eta_u:F\to F$
is M\"obius.
\end{cor}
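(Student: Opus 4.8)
The plan is to deduce from Proposition~\ref{pro:moebius_involution} that $\eta_u$ is an isometry of a suitably chosen metric of the structure, since every isometry preserves all cross-ratios and is therefore automatically M\"obius. Let $v=\phi_F(u)$ be the conjugate pole, so that $\eta_u=\eta_v$, and recall that the conjugate poles are equidistant from $F$: by the defining harmonic relation $(x,u,\eta_u(x),v)\in\harm$ together with Corollary~\ref{cor:bisector_cline}(i), for every $\om\in F$ one has $|wu|_\om=|wv|_\om$ for all $w\in F$. This equidistance is the feature I would exploit.

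Fix $\om\in F$ and abbreviate $|{\cdot}|=|{\cdot}|_\om$. For $x,z\in F$ put $x'=\eta_u(x)$, $z'=\eta_u(z)$. Writing out the two cross-ratio triples of Proposition~\ref{pro:moebius_involution},
$$\crt(x,u,z,v)=(|xu|\,|zv|:|xz|\,|uv|:|xv|\,|uz|),\quad \crt(x',v,z',u)=(|x'v|\,|z'u|:|x'z'|\,|uv|:|x'u|\,|z'v|),$$
and substituting the equidistances $|xv|=|xu|$, $|zv|=|zu|$, $|x'v|=|x'u|$, $|z'v|=|z'u|$, the first and third entries of each triple become equal. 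Both triples thus have the shape $(A:B:A)$, and their equality in $\R P^2$ amounts to preservation of $B/A$, that is
$$\frac{|xz|}{|xu|\,|zu|}=\frac{|x'z'|}{|x'u|\,|z'u|}.$$
The left-hand side is exactly the metric inversion (\ref{eq:metric_inversion}) of $|{\cdot}|_\om$ with respect to $u$; since $X$ is Ptolemy, the function $\bar d(x,z)=|xz|/(|xu|\,|zu|)$ is a genuine metric of $\cM$ with $u$ as infinitely remote point. The displayed identity therefore says precisely that $\eta_u$ preserves $\bar d$.

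It then remains to promote this to the M\"obius property. Given an admissible quadruple $Q\sub F$, I would choose $\om\in F$ lying outside the finite set $Q\cup\eta_u(Q)$, which is possible because $F$ is infinite; then every point of $Q$ and of $\eta_u(Q)$ differs from $\om$ and from $\eta_u(\om)$, so the identity above applies to each pair of points of $Q$. Hence $\eta_u$ preserves all pairwise $\bar d$-distances within $Q$, whence $\crt_{\bar d}(\eta_u(Q))=\crt_{\bar d}(Q)$, and since $\bar d\in\cM$ this is the M\"obius cross-ratio. As $Q$ is arbitrary, $\eta_u$ is M\"obius. The only real content of the argument is the observation that equidistance of the conjugate poles from $F$ forces the two extreme entries of each cross-ratio triple to cancel, collapsing Proposition~\ref{pro:moebius_involution} to invariance of the inverted metric $\bar d$; everything else is bookkeeping, and I anticipate no genuine obstacle beyond correctly identifying this metric.
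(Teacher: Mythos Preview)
Your proof is correct and follows essentially the same approach as the paper: both combine Proposition~\ref{pro:moebius_involution} with the equidistance of the conjugate poles $u,v$ from $F$ to conclude that $\eta_u$ is an isometry for a metric of the M\"obius structure. The only difference is the choice of metric: the paper works directly in $X_v$, where $F$ is a metric sphere about $u$ (so $|xu|_v$ is constant on $F$) and the equality of cross-ratio triples immediately gives $|x'y'|_v=|xy|_v$, whereas you work in $X_\om$ for $\om\in F$ and then perform a metric inversion at $u$ to obtain the same conclusion for the inverted metric $\bar d$.
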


\begin{proof} Note that
$\crt(x,u,y,v)=(1:\ast:1)$
for each
$x$, $y\in F$,
where
$v$
is the conjugate to
$u$
pole of
$F$.
Thus in the space
$X_v$
we have
$|x'y'|_v=|xy|_v$
by Proposition~\ref{pro:moebius_involution}, where
$x'=\eta_u(x')$, $y'=\eta_u(y)$.
Hence
$\eta_u$
preserves the cross-ratio triple of any admissible 4-tuple in
$F$.
\end{proof}

\section{A distance formula}

Using Proposition~\ref{pro:moebius_involution}, we derive here
a distance formula (see Proposition~\ref{pro:explicit_distance})
which plays a very important role in the paper.

\begin{lem}\label{lem:wharm_basics} Let
$u$, $v\in X$
be conjugate poles of a
$\C$-circle $F$.
Then for every
$x$, $z\in F$
we have
$$|xz|_\om\cdot|zy|_\om=|zu|_\om^2,$$
where
$y=\eta_u(x)$, $\om=\eta_u(z)$.
\end{lem}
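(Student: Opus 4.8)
The plan is to carry out the whole computation inside the metric space $X_\om$, where $\om$ is the infinitely remote point, and to extract the identity from \emph{two} applications of Proposition~\ref{pro:moebius_involution} combined with the bisector property of conjugate poles. Throughout I assume $x\in F_\om$ with $x\neq z$, so that $y=\eta_u(x)\neq\eta_u(z)=\om$ and all five points $x,y,z,u,v$ are genuine (finite) points of $X_\om$; here $v=\phi_F(u)$ is the pole conjugate to $u$, and note $z\neq\om$ automatically since $\eta_u$ is fixed-point free. The degenerate cases $x=z$ or $x=\om$ can be read off by the usual $0\cdot\infty$ convention and I would not treat them separately.

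First I would record that $F$ lies in the bisector between the conjugate poles. Applying Lemma~\ref{lem:bisector_rcircle} to a harmonic $4$-tuple $(x_0,u,\eta_u(x_0),v)\in\harm$, which exists by Proposition~\ref{pro:conjugate_pole}, gives $|wu|_\om=|wv|_\om$ for every $w\in F_\om$. In particular $|xv|_\om=|xu|_\om$, $|yv|_\om=|yu|_\om$ and $|zv|_\om=|zu|_\om$, and these are the only facts about $v$ that I will use.

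Next I would apply Proposition~\ref{pro:moebius_involution} to the pair $(x,z)$, using $\eta_u(x)=y$ and $\eta_u(z)=\om$, to obtain $\crt(x,u,z,v)=\crt(y,v,\om,u)$. Expanding both cross-ratio triples with respect to the metric of $X_\om$ — substituting the three bisector equalities on the left, and on the right dropping the factors that contain the infinitely remote point $\om$, exactly as in the expansion $\crt(x,u,\om,v)=(|xu|_\om:\ast:|xv|_\om)$ used in the proof of Lemma~\ref{lem:bisector} — the left triple becomes $(|xu||zu| : |xz||uv| : |xu||zu|)$ and the right triple becomes $(|yv| : |vu| : |yu|)=(|yu| : |uv| : |yu|)$. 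Comparing the first and second entries gives the scalar identity $|xu|_\om\,|zu|_\om=|xz|_\om\,|yu|_\om$. Running the identical argument with the pair $(y,z)$ in place of $(x,z)$, which is legitimate since $\eta_u(y)=x$, yields the companion identity $|yu|_\om\,|zu|_\om=|yz|_\om\,|xu|_\om$.

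Finally I would multiply these two identities and cancel the common factor $|xu|_\om\,|yu|_\om$, which is nonzero because $u\notin F$, arriving at $|zu|_\om^2=|xz|_\om\,|yz|_\om$, that is, the assertion. The computation is short once the right viewpoint is adopted, so the ``hard part'' is conceptual rather than technical: the key move is to invoke Proposition~\ref{pro:moebius_involution} twice, for $(x,z)$ and for $(y,z)$, and to multiply the outcomes, so that the exterior distances $|xu|_\om,|yu|_\om$ cancel and only the intrinsic $F$-distances remain; the one point needing care is the correct suppression of the $\om$-factors in the cross-ratio with an infinitely remote entry. For intuition it is worth observing that in the square-root parametrization $F_\om\cong(\R,|s-t|^{1/2})$ of the $\C$-line the involution $\eta_u$ is the projective map $t\mapsto-\rho^2/t$ with $z$ at $t=0$, which already shows that $|xz|_\om\,|zy|_\om=\rho$ is independent of $x$; the content of the lemma is precisely the identification of this constant with $|zu|_\om^2$.
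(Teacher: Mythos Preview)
Your argument is correct. Both your proof and the paper's rest on the same two ingredients—the bisector property of $F$ between the conjugate poles and two applications of Proposition~\ref{pro:moebius_involution} (once with $(x,z)$, once with $(y,z)$)—but the execution differs. The paper chooses $v$ as the infinitely remote point, so that $F$ becomes a metric sphere of radius~$\rho$ about $u$; from the two cross-ratio equalities it extracts $|xz|_v=|y\om|_v$ and $|yz|_v=|x\om|_v$, and then finishes by comparing two normalizations of the metric inversion with respect to~$z$. You instead stay entirely in $X_\om$, read off the scalar identities $|xu|_\om|zu|_\om=|xz|_\om|yu|_\om$ and $|yu|_\om|zu|_\om=|yz|_\om|xu|_\om$ directly from the cross-ratio triples (correctly suppressing the $\om$-factors), and multiply. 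Your route is shorter and avoids the auxiliary metrics $|\,\cdot\,|_v$ and $|\,\cdot\,|_z$; the paper's route has the minor advantage that the identity $|xz|_v=|y\om|_v$ is visually immediate once one notices both cross-ratios are of the form $(1:\ast:1)$. One small remark: the degenerate cases $x=z$ and $x=\om$ give an indeterminate $0\cdot\infty$, not a clean convention, but as you implicitly observe (and as is clear from the only use of the lemma in Proposition~\ref{pro:explicit_distance}) these cases are never invoked.
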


\begin{proof} We consider a background metric
$|\ |_v$
with infinitely remote point
$v$
and we define
$|\ |_\om$
and
$|\ |_z$
as the metric inversions, i.e.
$$|ef|_\om=\frac{|ef|_v}{|e\om|_v\cdot|f\om|_v}$$
and
$$|ef|_z=\frac{|ef|_v}{|ez|_v\cdot|fz|_v}.$$
Since
$F$
lies on a sphere between
$u$, $v$,
we have
$|uw|_v=:\rho$
for every
$w\in F$
and observe that
$|z\om|_v=2\rho$.
Thus we have
$|uz|_\om=|u\om|_z=\frac{1}{2\rho}$.

Furthermore,
$$|xz|_\om=\frac{|xz|_v}{|x\om|_v\cdot|z\om|_v}=\frac{|xz|_v}{2\rho|x\om|_v}$$
and
$$|y\om|_z=\frac{|y\om|_v}{|yz|_v\cdot|z\om|_v}=\frac{|y\om|_v}{2\rho|yz|_v}.$$

By Proposition~\ref{pro:moebius_involution}, 
$\crt(x,u,z,v)=\crt(y,v,\om,u)$
and also
$\crt(y,u,z,v)=\crt(x,v,\om,u)$.
Each of these cross-ratios is of the form
$(1:\ast:1)$.
Hence
$|xz|_v=|y\om|_v$
and
$|yz|_v=|x\om|_v$.
It follows that
$|xz|_\om=|y\om|_z$
and similarly
$|yz|_\om=|x\om|_z$.
 
We denote by
$a=|xz|_\om$, $d=|yz|_\om$,
and consider the metric inversion
$$d_z(p,q)=\frac{ad|pq|_\om}{|zp|_\om\cdot|zq|_\om}$$
with respect to
$z$.
We calculate
$d_z(x,\om)=\frac{ad}{|xz|_\om}=d=|x\om|_z$,
$d_z(y,\om)=\frac{ad}{|yz|_\om}=a=|y\om|_z$.
Thus two metrics
$d_z(p,q)$
and
$|pq|_z$
with the common infinitely remote point
$z$
coincide, in particular,
$d_z(u,\om)=|u\om|_z=|uz|_\om$.

On the other hand,
$d_z(u,\om)=\frac{ad}{|uz|_\om}$.
Therefore
$\frac{ad}{|uz|_\om}=|uz|_\om$
and hence
$|xz|_\om\cdot|yz|_\om=|uz|_\om^2$. 
\end{proof}

\begin{pro}\label{pro:explicit_distance} Given
$o$, $u\in X_\om$,
let
$F\sub X_\om$
be the 
$\C$-line
through
$o$
(and 
$\om$),
$z=\mu_{F,\om}(u)$,
and let
$a=|zu|_\om$, $b=|oz|_\om$, $r=|ou|_\om$.
Then
$r^4=a^4+b^4$.
\end{pro}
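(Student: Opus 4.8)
The plan is to prove the more symmetric statement that $|xu|_\om^4 = a^4 + |xz|_\om^4$ for \emph{every} $x\in F$, and then specialize to $x=o$, where $|oz|_\om=b$ and $|ou|_\om=r$. We may assume $u\notin F$, since otherwise $z=u$, $a=0$ and the claim is immediate. First I would bring in the conjugate pole $v=\phi_F(u)$ from Proposition~\ref{pro:conjugate_pole}. Since $\eta_u=\eta_v$, both $u$ and $v$ project to $z=\eta_u(\om)$; the $\R$-line $\ga$ through $u,z,v$ in $X_\om$ has $z$ as midpoint of $u,v$ with $|zu|_\om=|zv|_\om=a$ and $|uv|_\om=2a$; and $F$ lies in the bisector between $u$ and $v$, so that $|wu|_\om=|wv|_\om$ for all $w\in F$ by Corollary~\ref{cor:bisector_cline}. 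Now fix $x\in F$, put $x'=\eta_u(x)$, and abbreviate $P=|xu|_\om=|xv|_\om$, $P'=|x'u|_\om=|x'v|_\om$, $\be=|xz|_\om$, $\be'=|x'z|_\om$.

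The first ingredient comes from Lemma~\ref{lem:wharm_basics}, applied to the conjugate poles $u,v$ with our $z$ (note $\eta_u(z)=\om$, since $z=\mu_{F,\om}(u)$): it gives $\be\be'=a^2$. Moreover the fixed-point-free involution $\eta_u$ of the circle $F$ interchanges the two pairs $\{x,x'\}$ and $\{z,\om\}$, which are therefore pairs of antipodes separating each other on $F$; deleting $\om$, this says $z$ lies between $x$ and $x'$ on the $\C$-line $F_\om$, whence the $\C$-line identity yields $|xx'|_\om^2=\be^2+\be'^2=\be^2+a^4/\be^2$. The second ingredient is a Ptolemy equality: the points $x,u,x',v$ lie on the $\R$-circle $\si_x$ with $(x,u,x',v)\in\harm_{\si_x}$, hence in this alternating cyclic order, so
\begin{equation*}
|xx'|_\om\cdot|uv|_\om=|xu|_\om\cdot|x'v|_\om+|xv|_\om\cdot|ux'|_\om=2PP'.
\end{equation*}
With $|uv|_\om=2a$ this reads $PP'=a\,|xx'|_\om$, i.e. $(PP')^2=a^2(a^4+\be^4)/\be^2$.

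It remains to split this product, which is where the individual distance $|xu|_\om$ gets pinned down and which I expect to be the crux. For this I would invoke Proposition~\ref{pro:moebius_involution}, namely $\crt(x,u,z,v)=\crt(x',v,\om,u)$ (again using $\eta_u(z)=\om$). Evaluating the left triple in the metric $|\cdot|_\om$ gives $(Pa:2a\be:Pa)=(P:2\be:P)$, while on the right the three factors involving the infinitely remote point $\om$ occur once in each coordinate and cancel projectively, leaving $(|x'v|_\om:|uv|_\om:|x'u|_\om)=(P':2a:P')$. Comparing the two triples produces the linear relation $Pa=P'\be$. Substituting $P'=Pa/\be$ into $PP'=a\,|xx'|_\om$ gives $P^2=\sqrt{a^4+\be^4}$, hence $|xu|_\om^4=P^4=a^4+\be^4$; taking $x=o$ finishes the proof. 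The two delicate points are the correct evaluation of the cross-ratio triple at the infinitely remote point $\om$, and the verification that $z$ separates $x$ from $x'$ on $F$; both become routine once the conjugate pole $v$ is available.
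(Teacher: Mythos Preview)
Your argument is correct, but it takes a different route from the paper's. The paper makes a single clever choice: pick $x,y\in F$ with $|xo|_\om=|oy|_\om=r$; then $x,y,u$ lie on the metric sphere of radius $r$ about $o$, so Lemma~\ref{lem:filling_sphere_rcirles} gives an $\R$-circle through $x,y,u$, hence $y=\eta_u(x)$. Since $o$ lies between $x$ and $y$ on the $\C$-line and $z$ lies (say) between $o$ and $x$, the $\C$-line identity gives $|xz|_\om^2=r^2-b^2$ and $|yz|_\om^2=r^2+b^2$; then Lemma~\ref{lem:wharm_basics} says $|xz|_\om^2|yz|_\om^2=a^4$, i.e.\ $(r^2-b^2)(r^2+b^2)=a^4$, and the formula drops out in one line.

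By contrast you keep $x\in F$ generic and bring in the conjugate pole $v=\phi_F(u)$. You still use Lemma~\ref{lem:wharm_basics} (for $\be\be'=a^2$), but instead of Lemma~\ref{lem:filling_sphere_rcirles} you invoke Proposition~\ref{pro:moebius_involution} to obtain the linear relation $Pa=P'\be$, together with the Ptolemy equality on $\si_x$, and then eliminate $P'$ and $|xx'|_\om$. This is a legitimate and self-contained computation; it trades the sphere-filling lemma for the cross-ratio identity, and yields the slightly more general statement $|xu|_\om^4=a^4+|xz|_\om^4$ for arbitrary $x\in F$ (which the paper's argument also gives, simply by replacing $o$ with $x$). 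The paper's path is shorter and the algebra cleaner; your path is a nice illustration of how much Proposition~\ref{pro:moebius_involution} already encodes. The two points you flag as delicate are indeed fine: the evaluation of $\crt(x',v,\om,u)$ with $\om$ infinitely remote follows by metric inversion (each coordinate carries exactly one factor $|\,\cdot\,\om|$, which cancel projectively), and the separation of $z$ from $\om$ by $\{x,x'\}$ on $F$ is immediate since a fixed-point-free M\"obius involution of a circle sends every pair to a separating pair.
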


\begin{proof} We take
$x$, $y\in F$
with
$|xo|_\om=|oy|_\om=r$.
Without loss of generality we can assume that
$o<z<x$.
Then
$|xz|_\om^2=r^2-b^2$, $|yz|_\om^2=r^2+b^2$.
We have
$\om=\eta_u(z)$.
The points
$x$, $y$, $u$
lie on the metric sphere of radius
$r$
centered at
$o$.
Thus by Lemma~\ref{lem:filling_sphere_rcirles} there exists an
$\R$-circle $\ga\sub X_\om$
through
$x$, $y$, $u$,
and therefore
$y=\eta_u(x)$.
Using that 
$|xz|_\om^2\cdot|zy|_\om^2=a^4$
by Lemma~\ref{lem:wharm_basics}, we obtain
$r^4=a^4+b^4$.
\end{proof}

\begin{rem}\label{rem:koranyi_gauge} As it is explained in
\cite{BS1}, the distance formula
$r^4=a^4+b^4$
gives rise to the Kor\'anyi gauge on the Heisenberg group
$\bH^{2k-1}$,
see \cite[Remark~14.4]{BS1}.
\end{rem}

\section{The canonical foliation of $X_\om$}
\label{sect:canonical_foliation}

We fix 
$\om\in X$
and denote by
$B_\om$
the set of all the 
$\C$-circles
in
$X$
through
$\om$.
By axiom~($\rm E_\C$), for every
$x\in X_\om$
the is a unique
$\C$-circle
$F\in B_\om$
with
$x\in F$.
This defines a map 
$\pi_\om:X_\om\to B_\om$, $\pi_\om(x)=F$,
called the {\em canonical} projection, the set 
$B_\om$
is called the {\em base} of
$\pi_\om$,
and the foliation of
$X_\om$
by the fibers of 
$\pi_\om$
is said to be {\em canonical}. In this section we use notation
$|xy|$
for the distance between
$x$, $y\in X_\om$
in the metric space
$X_\om$.

\subsection{Busemann functions on $X_\om$}

\begin{lem}\label{lem:busemann_constant} Let
$\si\sub X_\om$
be an
$\R$-line.
Any Busemann function 
$b:X_\om\to\R$
associated with 
$\si$
is constant along any
$\C$-line $F\sub X_\om$
which meets
$\si$.
\end{lem}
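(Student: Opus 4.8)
The plan is to reduce everything to the distance formula of Proposition~\ref{pro:explicit_distance}, whose exponent $4$ is exactly what makes the statement work. Let $o$ be a common point of $\si$ and $F$ in $X_\om$ (as circles both $\si$ and $F$ also pass through $\om$, and by Lemma~\ref{lem:intersect_rcircle_ccircle} they share at most one further point, so $\si\cap F_\om=\{o\}$). Parametrize $\si$ by arc length as a geodesic line $\si:\R\to X_\om$ with $\si(t_0)=o$, and let $b(w)=\lim_{t\to\infty}(|w\si(t)|-t)$ be the Busemann function of the corresponding end; the other end is handled identically, with the sign of $t$ reversed.

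The key observation is that the canonical projection onto $F$ of \emph{every} point of $\si$ equals the single point $o$: for $u\in\si\sm F$ the $\R$-circle $\si$ passes through $\om$ and $u$ and hits $F_\om$ at $o$, so by Corollary~\ref{cor:cline_projection} (using Lemma~\ref{lem:intersect_rcircle_ccircle}) we have $\mu_{F,\om}(u)=o$. Now fix $w\in F$ and apply Proposition~\ref{pro:explicit_distance} with the $\C$-line $F$ through $w$ (and $\om$), the point $u=\si(t)$, and projection $z=\mu_{F,\om}(\si(t))=o$. This yields
$$|w\si(t)|^4=|o\si(t)|^4+|wo|^4.$$
Since $o,\si(t)\in\si$ and $\si$ is a geodesic in $X_\om$, we have $|o\si(t)|=t-t_0$ for $t$ large.

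It remains to take the Busemann limit. Writing $s=t-t_0$ and $c=|wo|^4$, we obtain
$$|w\si(t)|-t=(s^4+c)^{1/4}-(s+t_0)\to -t_0\qquad(t\to\infty),$$
because $(s^4+c)^{1/4}-s=O(s^{-3})\to 0$ as $s\to\infty$. Hence $b(w)=-t_0=b(o)$ for every $w\in F$, so $b$ is constant along $F$.

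I do not expect a genuine obstacle here once the projection identity $\mu_{F,\om}(u)=o$ is established; that identity is the crux, since it is what forces the term $|o\si(t)|^4$ to grow like $(t-t_0)^4$ with no dependence on $w$. The only quantitative content is that the distance formula has exponent $4$, so the transverse displacement $|wo|$ enters at order $s^{-3}$ and therefore vanishes in the limit defining $b$; an exponent $\le 1$ would not suffice, but $4>1$ is ample.
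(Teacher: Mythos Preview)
Your proof is correct and follows essentially the same approach as the paper's: parametrize $\si$ by arclength through the intersection point $o=\si\cap F$, apply Proposition~\ref{pro:explicit_distance} to obtain $|w\si(t)|^4=|o\si(t)|^4+|wo|^4$ for $w\in F$, and pass to the Busemann limit. The paper's version is terser (it sets $t_0=0$ and does not spell out why $\mu_{F,\om}(\si(t))=o$), but the content is identical.
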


\begin{proof} We parametrize
$\si:\R\to X_\om$
by arclength such that
$\si(0)=\si\cap F$
and 
$b$
decreases along
$\si$.
By Proposition~\ref{pro:explicit_distance}
we have
$|x\si(t)|^4=t^4+|x\si(0)|^4$
for every
$x\in F$
and
$t\in\R$.
Therefore
$b(x)=\lim_{t\to\infty}(|x\si(t)|-t)=0$.
\end{proof}

\begin{lem}\label{lem:distance_clines} For every
$\C$-line $F\sub X_\om$
the function
$x\mapsto|x\mu_{F,\om}(x)|$
is constant along any 
$\C$-line $F'\in X_\om$.
\end{lem}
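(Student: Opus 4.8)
The plan is to reduce the claim to the distance formula (Proposition~\ref{pro:explicit_distance}), applied along \emph{both} $\C$-lines $F$ and $F'$, after first establishing that the two foot-point projections are mutually inverse on $F'$. I would begin by disposing of the trivial case $F=F'$, where $\mu_{F,\om}(x)=x$ for $x\in F$ and the function vanishes identically. So assume $F\neq F'$; then $F\cap F'=\{\om\}$ by axiom~($\rm E_\C$), hence $F'_\om\cap F=\es$ and every $x\in F'_\om$ satisfies $x\notin F$. Fix $x,x'\in F'_\om$ and set $z=\mu_{F,\om}(x)$, $z'=\mu_{F,\om}(x')$, so that the value of our function at $x$ and $x'$ equals $|xz|$ and $|x'z'|$ respectively; the goal is $|xz|=|x'z'|$.

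The key step is the cross-projection identity $\mu_{F',\om}(z)=x$ and $\mu_{F',\om}(z')=x'$. To see the first, recall that by definition $z=\mu_{F,\om}(x)=\si\cap F_\om$, where $\si$ is the $\R$-circle through $\om,x$ that hits $F_\om$; in particular $\om,x,z\in\si$. Reading this same $\si$ as the $\R$-circle through $\om$ and $z$ that meets $F'$ at $x\in F'_\om$, uniqueness in axiom~($\rm E_\R$) identifies $\si$ with the $\R$-circle defining $\mu_{F',\om}(z)$, while Lemma~\ref{lem:intersect_rcircle_ccircle} gives $\si\cap F'=\{\om,x\}$. Hence $\mu_{F',\om}(z)=x$, and symmetrically $\mu_{F',\om}(z')=x'$.

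Now I would apply the distance formula four times. Along $F$ with base points $z',z\in F$ (and feet $\mu_{F,\om}(x)=z$, $\mu_{F,\om}(x')=z'$) I obtain
\[
|z'x|^4=|zz'|^4+|xz|^4,\qquad |zx'|^4=|zz'|^4+|x'z'|^4 .
\]
Along $F'$ with base points $x,x'\in F'$, using the cross-projection identity to locate the feet of $z',z$ on $F'$ at $x',x$, I obtain
\[
|xz'|^4=|x'z'|^4+|xx'|^4,\qquad |x'z|^4=|xz|^4+|xx'|^4 .
\]
Comparing the two expressions for $|z'x|^4$ and the two for $|zx'|^4$ yields
\[
|xz|^4+|zz'|^4=|x'z'|^4+|xx'|^4,\qquad |x'z'|^4+|zz'|^4=|xz|^4+|xx'|^4 .
\]
Adding these gives $|zz'|=|xx'|$, and subtracting them gives $|xz|=|x'z'|$, which is precisely the desired equality.

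The main obstacle is the cross-projection identity $\mu_{F',\om}\circ\mu_{F,\om}=\id$ on $F'_\om$: the real content is recognizing that a single $\R$-circle $\si$ through $\om,x,z$ simultaneously realizes the projection of $x$ onto $F$ and of $z$ back onto $F'$, landing at $x$. Once this is in hand the distance formula closes the argument by pure bookkeeping; I should only be careful that all four base points genuinely lie on the relevant $\C$-line and that $x,x'\notin F$, so that the feet $z,z'$ and the projections used are all well defined.
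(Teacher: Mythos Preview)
Your proof is correct but follows a genuinely different route from the paper's.

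The paper argues via Busemann functions: taking the $\R$-line $\si$ through $x'\in F'$ and its foot $\mu_{F,\om}(x')\in F$, the associated Busemann function is $1$-Lipschitz and, by Lemma~\ref{lem:busemann_constant}, constant on both $F$ and $F'$ (since each meets $\si$). This forces $|yy'|\ge a:=|x'\mu_{F,\om}(x')|$ for every $y\in F$, $y'\in F'$; in particular $|y'\mu_{F,\om}(y')|\ge a$ for all $y'\in F'$, and swapping the roles of $x'$ and $y'$ gives equality.

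Your approach instead isolates the cross-projection symmetry $\mu_{F',\om}\circ\mu_{F,\om}|_{F'_\om}=\id$ and then applies the distance formula (Proposition~\ref{pro:explicit_distance}) four times, reducing the question to a linear system in fourth powers. Besides the target identity $|xz|=|x'z'|$, your computation also yields $|zz'|=|xx'|$, which is exactly Lemma~\ref{lem:mu_isometric}; the paper proves that lemma separately and \emph{after} Lemma~\ref{lem:distance_clines}. So your argument is more self-contained (it uses only Proposition~\ref{pro:explicit_distance} and axiom~($\rm E_\R$)) and delivers two lemmas at once, while the paper's Busemann-function argument is perhaps more conceptual---the global inequality $|yy'|\ge a$ falls out immediately---at the cost of invoking Lemma~\ref{lem:busemann_constant}.
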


\begin{proof} For
$x'\in F'$
let
$\si:\R\to X_\om$
be the arclength parametrization of the uniquely
determined
$\R$-line
with
$\si(0)=\mu_{F,\om}(x')$, $\si(a)=x'$,
where
$a=|x'\mu_{F,\om}(x')|$.
The Busemann function of
$\si$
is 1-Lipschitz and by Lemma~\ref{lem:busemann_constant}
it is constant on
$F$
as well as on
$F'$.
It follows that
$|yy'|\ge a$
for every
$y\in F$, $y'\in F'$.
In particular,
$|y'\mu_{F,\om}(y')|\ge a$
for all 
$y'\in F'$.
By symmetry we get equality.
\end{proof}

\subsection{Canonical metric on the base}

Given
$F$, $F'\in B_\om$
we put
$|FF'|:=|x'\mu_{F,\om}(x')|$
for some
$x'\in F'$.
By Lemma~\ref{lem:distance_clines} this is well defined
and moreover, it is a metric on
$B_\om$.
Indeed, given
$F$, $F'$, $F''\in B_\om$,
we take
$x'\in F'$
and put
$x=\mu_{F,\om}(x')\in F$, $x''=\mu_{F'',\om}(x')\in F''$.
Then
$|FF''|\le|xx''|\le|xx'|+|x'x''|=|FF'|+|F'F''|$. 
This metric on
$B_\om$
is said to be {\em canonical}.

The projection
$\pi_\om:X_\om\to B_\om$
is 1-Lipschitz and isometric if restricted to any
$\R$-line
in
$X_\om$.
This implies that every two points in
$B_\om$
lie on a geodesic line.

\begin{lem}\label{lem:base_unique_geodesic} The base
$B_\om$
with the canonical metric is uniquely geodesic, i.e.
between any two points there is a unique geodesic.
In particular,
$B_\om$
is contractible.
\end{lem}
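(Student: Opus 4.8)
The plan is to exploit the distance formula $r^4 = a^4 + b^4$ (Proposition~\ref{pro:explicit_distance}) to control how the projection $\pi_\om$ behaves on geodesics of $X_\om$, and to combine this with the already-established fact that every two points of $B_\om$ lie on a geodesic. Concretely, I would first observe that the canonical metric on $B_\om$ is a quotient-type metric: for $F, F' \in B_\om$ one has $|FF'| = |x' \mu_{F,\om}(x')|_\om$, and since $\pi_\om$ is isometric on $\R$-lines, each $\R$-line of $X_\om$ projects to a geodesic segment in $B_\om$ of the same length. So existence of geodesics is free; the whole content is \emph{uniqueness}.

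For uniqueness I would argue that the canonical metric is \emph{strictly convex} in the sense that the distance function is genuinely Euclidean-like transverse to the fibers, using the fourth-power formula. The key step is this: given $F \in B_\om$ and a point $x' \in X_\om$ with $z = \mu_{F,\om}(x')$, the formula $r^4 = a^4 + b^4$ says that for any $o \in F$ at distance $b = |oz|_\om$ along $F$, the distance $|ox'|_\om = (a^4 + b^4)^{1/4}$, where $a = |FF'|$ with $F' = \pi_\om(x')$. I would use this to show that the distance $|FF'|$ between two points of $B_\om$ can be computed purely infinitesimally and that midpoints are unique. Specifically, suppose $F_0, F_1 \in B_\om$ and let $\gamma_0, \gamma_1$ be two geodesics joining them. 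Lift each to $\R$-lines $\si_0, \si_1$ in $X_\om$ (possible since $\pi_\om$ is isometric on $\R$-lines and every pair of base points is joined by a geodesic that lifts to an $\R$-line). Then uniqueness of the geodesic in $B_\om$ would follow from uniqueness of the lift up to fiber translation, together with strict convexity of the distance along transversals.

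The cleanest route, I expect, is to show directly that $B_\om$ with its canonical metric has \emph{convex} (indeed strictly convex) distance balls, forcing uniqueness of geodesics. I would take two distinct geodesics $\gamma_0, \gamma_1 : [0,1] \to B_\om$ with the same endpoints $F_0 = \gamma_0(0) = \gamma_1(0)$ and $F_1 = \gamma_0(1) = \gamma_1(1)$, pick a parameter $t$ where they separate, and derive a contradiction by comparing the length of a competing path through the midpoint. The distance formula guarantees that moving transverse to a fiber strictly increases distance at the fourth-power rate, so a strict triangle-inequality gain appears unless the two geodesics coincide. This gives $\gamma_0 = \gamma_1$. Contractibility is then immediate: a uniquely geodesic space admits a canonical geodesic contraction to any basepoint, so $B_\om$ is contractible.

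\medskip

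\noindent\textbf{Main obstacle.} The hard part will be establishing strict convexity of the canonical metric rigorously rather than by analogy with the Euclidean/Heisenberg picture. The subtlety is that $\pi_\om$ is only $1$-Lipschitz and isometric on $\R$-lines, so a general geodesic in $B_\om$ need not lift to a single $\R$-line in $X_\om$; I must first show that minimizing geodesics in $B_\om$ \emph{do} lift to $\R$-lines (or to concatenations controlled by Lemma~\ref{lem:distance_clines} and Lemma~\ref{lem:busemann_constant}), so that the fourth-power formula can be applied along them. Once the lifting is in place, the strict convexity coming from $r^4 = a^4 + b^4$ does the rest, but I expect pinning down the lifting and ruling out geodesics that wander between fibers to be the genuinely delicate point.
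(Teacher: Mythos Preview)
Your proposal correctly identifies the crux---that uniqueness reduces to showing geodesics in $B_\om$ lift to $\R$-lines in $X_\om$---but it stops short of actually doing this; your ``Main obstacle'' paragraph is an honest admission that the proof is incomplete. The appeals to ``strict convexity of balls'' and ``fourth-power rate'' remain heuristic: you never produce the inequality that would force two distinct geodesics with common endpoints to coincide, and the attempt to compare midpoints of two geodesics would require exactly the lifting you say you cannot yet establish.

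The paper closes this gap directly, and the key tool you are missing is Lemma~\ref{lem:concatenation}. The argument is short: suppose $F,F',F''\in B_\om$ satisfy $|FF''|=|FF'|+|F'F''|$. Pick $x'\in F'$ and set $x=\mu_{F,\om}(x')$, $x''=\mu_{F'',\om}(x')$, $y=\mu_{F,\om}(x'')$. The chain
\[
|yx''|\le|xx''|\le|xx'|+|x'x''|=|FF''|=|yx''|
\]
forces $|xx''|=|yx''|$, and Proposition~\ref{pro:explicit_distance} (the very formula you wanted to use) gives $y=x$. Now $x$ and $x''$ both project to $x'$ under $\mu_{F',\om}$, and $|xx''|=|xx'|+|x'x''|$; Lemma~\ref{lem:concatenation} says precisely that under these two conditions $x,x',x''$ lie on a common $\R$-line. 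Uniqueness of that line (Lemma~\ref{lem:three_points_rcircles}) then gives uniqueness of the geodesic in $B_\om$. So rather than proving an abstract convexity statement and then deducing lifting, the paper shows in one stroke that any three aligned base points lift to a single $\R$-line; the convexity you were aiming for is a consequence, not a prerequisite.
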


\begin{proof} Given pairwise distinct
$F$, $F'$, $F''\in B_\om$
with
$|FF''|=|FF'|+|F'F''|$,
we take
$x'\in F'$
and put
$x=\mu_{F,\om}(x')\in F$, $x''=\mu_{F'',\om}(x')\in F''$,
$y=\mu_{F,\om}(x'')\in F$.
Then
$$|yx''|\le|xx''|\le|xx'|+|x'x''|=|FF''|=|yx''|.$$
Therefore
$y=x$
by Proposition~\ref{pro:explicit_distance}.
Since
$\mu(x)=\mu(x'')=x'$
for 
$\mu=\mu_{F',\om}$
and
$|xx''|=|xx'|+|x'x''|$,
the points
$x$, $x'$, $x''$
are contained in an
$\R$-line by Lemma~\ref{lem:concatenation}. This line
is unique by Lemma~\ref{lem:three_points_rcircles}.
The claim follows.
\end{proof}

\begin{lem}\label{lem:mu_isometric} For every
$\C$-line $F\sub X_\om$
the retraction
$\mu_{F,\om}:X_\om\to F$
if restricted to any
$\C$-line $F'\sub X_\om$
is isometric.
\end{lem}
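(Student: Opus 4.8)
The plan is to reduce the claim to the equality $|\mu(x')\mu(y')| = |x'y'|$ for $\mu = \mu_{F,\om}$ and arbitrary $x'$, $y'$ on a second $\C$-line $F' \in B_\om$, and to prove this by computing a single ``cross distance'' in two different ways via the distance formula of Proposition~\ref{pro:explicit_distance}. I may assume $F \neq F'$ (otherwise $\mu$ is the identity on $F'$), and then $F \cap F' = \{\om\}$ by axiom~($\rm E_\C$), so that $\mu$ is defined on all of $F'_\om$ and, symmetrically, $\mu_{F',\om}$ is defined on $F_\om$.

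The first step is a symmetry observation about the two projections. Writing $x = \mu_{F,\om}(x')$ for $x' \in F'_\om$, the defining $\R$-circle $\si$ through $\om$, $x'$ meeting $F_\om$ (Corollary~\ref{cor:cline_projection}) also passes through $x$ and meets $F'_\om$ exactly at $x'$; by the uniqueness in axiom~($\rm E_\R$) it is therefore the very $\R$-circle realizing $\mu_{F',\om}(x)$, whence $\mu_{F',\om}(x) = x'$. Thus the two retractions are mutually inverse on the relevant points, and the distance formula may be read off either $F$ or $F'$ with the roles of a point and its projection interchanged.

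The main computation then runs as follows. Set $h = |FF'|$, which by Lemma~\ref{lem:distance_clines} equals $|y'\mu_{F,\om}(y')|$ and, by the symmetry observation together with the same lemma, also equals $|x\,\mu_{F',\om}(x)| = |xx'|$; so every fiber point lies at distance $h$ from its projection. First I project $y'$ onto $F$, obtaining $y = \mu_{F,\om}(y')$ with $|yy'| = h$, and apply Proposition~\ref{pro:explicit_distance} along $F$ with base point $o = x \in F$ and $u = y'$, which gives
$$|xy'|^4 = h^4 + |xy|^4.$$
Second, using $\mu_{F',\om}(x) = x'$ with $|xx'| = h$, I apply the same proposition along $F'$ with base point $o = y' \in F'$ and $u = x$, obtaining
$$|xy'|^4 = h^4 + |x'y'|^4.$$
Equating the two expressions for $|xy'|^4$ cancels $h^4$ and yields $|xy| = |x'y'|$, which is precisely the asserted isometry of $\mu_{F,\om}|_{F'}$.

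I expect the main obstacle to be the symmetry observation of the first step --- verifying that one and the same $\R$-circle simultaneously realizes $\mu_{F,\om}(x') = x$ and $\mu_{F',\om}(x) = x'$ --- since the entire two-sided computation rests on being able to invoke the distance formula along $F'$ with the point and its projection swapped. Once that is secured, the remaining work is only the bookkeeping of which point plays the role of $o$, $u$, $z$ in each application of Proposition~\ref{pro:explicit_distance} and the routine verification, via Lemma~\ref{lem:distance_clines}, that the ``$a$''-term equals $h$ in both.
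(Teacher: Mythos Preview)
Your proof is correct and follows essentially the same route as the paper's own argument: compute a single cross-distance (you use $|xy'|$, the paper uses $|x'y|$) in two ways via Proposition~\ref{pro:explicit_distance}, once along $F$ and once along $F'$, and cancel the common $h^4$-term. Your explicit justification of the symmetry $\mu_{F',\om}(x)=x'$ via axiom~($\rm E_\R$) and Lemma~\ref{lem:intersect_rcircle_ccircle} is in fact more careful than the paper, which simply asserts the analogous fact $y'=\mu_{F',\om}(y)$ without comment.
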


\begin{proof} Given
$x'$, $y'\in F'$
we let
$x=\mu_{F,\om}(x')$, $y=\mu_{F,\om}(y')$,
$a=|FF'|$, $b=|xy|$.
By Proposition~\ref{pro:explicit_distance},
$|x'y|^4=a^4+b^4$.
On the other hand,
$y'=\mu_{F',\om}(y)$.
Thus again by Proposition~\ref{pro:explicit_distance},
$|x'y|^4=a^4+|x'y'|^4$.
Hence,
$|xy|=|x'y'|$. 
\end{proof}

\subsection{Busemann functions are affine}

\begin{lem}\label{lem:limit_circle} Assume that
$\om_i\to\om$
in
$X$,
and a point
$x\in X$
distinct from
$\om$
is fixed. Then any $\R$-circle
$l\sub X$
through
$\om$, $x$
is the (pointwise) limit of a sequence of $\R$-circles
$l_i\sub X$
through
$\om_i$, $x$.
\end{lem}

\begin{proof} Let
$o\in l$
be a point different from
$x$, $\om$,
and let 
$F=F(o,\om)\sub X$
be the 
$\C$-circle 
through
$o$, $\om$.
For 
$i$
we let 
$F_i=F(o,\om_i)$
be a
$\C$-circle
through
$o$, $\om_i$,
which is uniquely determined for sufficiently
large 
$i$
as soon as
$\om_i\neq o$.
Since
$X$
is compact,
$F_i\to F$
pointwise and thus for 
$i$
large enough
$x\not\in F_i$.
By axiom~($\rm E_\R$) there is uniquely determined
$\R$-circle $l_i\sub X$
through
$\om_i$, $x$
which hits
$F_i\sm\om_i$.
Then
$l_i$
subconverges to an
$\R$-circle $\ov l$
through
$\om$, $x$
that hits
$F\sm\om$.
By axiom~($\rm E_\R$),
$\ov l=l$
and
$l_i\to l$
follows.
\end{proof}

The following result has been obtained in \cite[Proposition~4.1]{FS1},
\cite[Corollary~3.19]{BS2} under different assumptions.

\begin{pro}\label{pro:busemann_affine} Given two 
$\R$-lines $l$, $l'\sub X_\om$,
the Busemann functions of
$l$
are affine on
$l'$.
\end{pro}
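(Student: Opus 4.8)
The plan is to fix arclength parametrizations $\sigma,\sigma':\R\to X_\om$ of $l,l'$ and to prove that $\beta(s):=b(\sigma'(s))$ is affine, where $b(x)=\lim_{t\to+\infty}(|x\sigma(t)|_\om-t)$ is the Busemann function of the relevant end of $l$. The crucial structural observation to exploit is that, since $\om$ is the infinitely remote point of $X_\om$ and both $l$ and $l'$ are $\R$-circles through $\om$ (with $\om$ deleted), we have $\sigma(t)\to\om$ in the compact space $X$ as $t\to+\infty$, while $\om$ itself lies on the $\R$-circle $\ov{l'}$ containing the fixed points $A,M,C\in l'$. Since $b$ is $1$-Lipschitz, hence continuous, it suffices to establish the midpoint identity $b(M)=\tfrac12(b(A)+b(C))$ for any three consecutive points $A,M,C\in l'$ with $M$ the midpoint, i.e. $|AM|_\om=|MC|_\om$.

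The convexity half ($\le$) is immediate from the Ptolemy inequality. Applying it to the quadruple $(A,M,C,\sigma(t))$ in the metric $|\cdot|_\om$, and using that $l'$ is a geodesic so that $|AC|_\om=|AM|_\om+|MC|_\om$ with $|AM|_\om=|MC|_\om$, one gets $|M\sigma(t)|_\om\le\tfrac12(|A\sigma(t)|_\om+|C\sigma(t)|_\om)$; subtracting $t$ and letting $t\to\infty$ yields $b(M)\le\tfrac12(b(A)+b(C))$. More precisely, writing $\Delta(t):=|AM|_\om|C\sigma(t)|_\om+|MC|_\om|A\sigma(t)|_\om-|AC|_\om|M\sigma(t)|_\om\ge0$ for the Ptolemy defect, one has exactly
\[
|M\sigma(t)|_\om - t=\tfrac12\big((|A\sigma(t)|_\om-t)+(|C\sigma(t)|_\om-t)\big)-\frac{\Delta(t)}{|AC|_\om},
\]
so in the limit $b(M)=\tfrac12(b(A)+b(C))-\tfrac{1}{|AC|_\om}\lim_{t\to\infty}\Delta(t)$. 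Thus everything reduces to the reverse bound $\lim_{t\to\infty}\Delta(t)=0$.

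I would attack this limit by passing to a bounded background metric $d$ of the M\"obius structure and writing $|\cdot|_\om$ as its metric inversion at $\om$; then $\Delta(t)$ equals the nonnegative Ptolemy defect $d(A,M)d(C,\sigma(t))+d(M,C)d(A,\sigma(t))-d(A,C)d(M,\sigma(t))$ divided by the product $d(A,\om)d(M,\om)d(C,\om)d(\sigma(t),\om)$. The numerator tends to $0$ because $A,M,C,\om$ are concircular on $\ov{l'}$ and hence satisfy the Ptolemy \emph{equality}, while $\sigma(t)\to\om$; the denominator also tends to $0$ since $d(\sigma(t),\om)\to0$. The content of the reverse bound is therefore a matching of rates: the defect must vanish to higher order than $d(\sigma(t),\om)$ as $\sigma(t)\to\om$. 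This is not a generic Ptolemy phenomenon (distance functions are not concave); it reflects the specific anisotropic geometry encoded in the distance formula $r^4=a^4+b^4$ of Proposition~\ref{pro:explicit_distance}, under which displacement of $\sigma(t)$ away from the circle $\ov{l'}$ near $\om$ is of higher order than the displacement toward $\om$.

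The main obstacle is precisely this rate estimate, and I expect to control it using compactness of $X$ together with Lemma~\ref{lem:limit_circle}: as the points $\sigma(t)\to\om$, the $\R$-circles and the harmonic (spherical) configurations attached to them converge to $\R$-circles through $\om$, and this continuity is what forces $\Delta(t)\to0$ at the required order. Equivalently, one may first use Proposition~\ref{pro:explicit_distance} to split $|x\sigma(t)|_\om^4$ into a base contribution $|x\,\mu_{F_t,\om}(x)|_\om^4=d_{B_\om}(\pi_\om x,\pi_\om\sigma(t))^4$ and a fiber contribution, and then show via Lemma~\ref{lem:limit_circle} that the fiber term is asymptotically negligible as $\sigma(t)\to\om$; the affineness of $\beta$ then follows because $\pi_\om$ is isometric on $l'$ and $B_\om$ is uniquely geodesic. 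Either way, convexity is the routine half and the vanishing of the defect at $\om$ is the genuine difficulty, which is exactly the step where the complex-hyperbolic axioms (through Lemma~\ref{lem:limit_circle} and the distance formula) are indispensable.
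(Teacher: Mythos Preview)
Your convexity half is exactly right and matches the paper. The gap is in the reverse inequality. You set it up as a rate problem---show that the Ptolemy defect $\Delta(t)$ for the fixed quadruple $(A,M,C,\sigma(t))$ vanishes faster than $d(\sigma(t),\om)$---and then gesture at Lemma~\ref{lem:limit_circle} and the distance formula as the tools that should deliver this. But you never actually carry out the estimate, and there is no clear mechanism in your sketch that would do so. The distance--formula route you mention is also dangerously close to circular: the fact that $B_\om$ is a normed space (Proposition~\ref{pro:base_normed_space}) and that Busemann functions descend to $B_\om$ (Lemma~\ref{lem:busemann_constant_fibers}) both come \emph{after} Proposition~\ref{pro:busemann_affine} in the paper's logic.

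The paper avoids the defect estimate altogether by changing the quadruple. Rather than work with $(A,M,C,\sigma(t))$, which does not lie on an $\R$-circle, one invokes Lemma~\ref{lem:limit_circle} to produce, for each $\om_i=\sigma(t_i)\to\om$, an $\R$-circle $l_i$ through $A$ \emph{and} $\om_i$ with $l_i\to l'$ pointwise. Choosing $y_i\in l_i$ with $y_i\to C$ and a midpoint $m_i\in l_i$ between $A$ and $y_i$ (on the arc avoiding $\om_i$), the four points $A,m_i,y_i,\om_i$ lie on the $\R$-circle $l_i$, so the Ptolemy \emph{equality} holds exactly:
\[
|Ay_i|\cdot|m_i\om_i| = |Am_i|\cdot|y_i\om_i| + |m_iy_i|\cdot|A\om_i|.
\]
Since $|Am_i|=|m_iy_i|\ge\tfrac12|Ay_i|$, this immediately gives $|m_i\om_i|\ge\tfrac12(|A\om_i|+|y_i\om_i|)$, and the limit yields $b(M)\ge\tfrac12(b(A)+b(C))$. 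No asymptotic rate comparison is needed; the approximating circles are chosen so that the ``remote'' point $\om_i$ lies on them and the equality does all the work. This is the idea you are missing.
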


\begin{proof} Let
$b:X_\om\to\R$
be a Busemann function of
$l$.
We can write
$b(x)=\lim_{i\to\infty}(|x\om_i|-|o\om_i|)$
for every
$x\in X_\om$,
where
$o\in l$
is some fixed point, $l\ni\om_i\to\om$.

Let
$m\in l'$
be the midpoint between
$x$, $y\in l'$, $|xm|=|my|=\frac{1}{2}|xy|$.
We have to show that
$b(m)=\frac{1}{2}(b(x)+b(y))$.
Busemann functions in any Ptolemy space are convex,
see \cite[Proposition~4.1]{FS1}, thus
$b(m)\le\frac{1}{2}(b(x)+b(y))$.

By Lemma~\ref{lem:limit_circle}, for every sufficiently large
$i$
there is an
$\R$-circle $l_i\sub X_\om$
through
$x$
and
$\om_i$
such that the sequence
$l_i$
converges pointwise to
$l'$.
Thus there are points
$y_i\in l_i$
with
$y_i\to y$.
The points
$x$, $y_i$
divide
$l_i$
into two segments. Choose a point
$m_i$
in the segment that does not contain
$\om_i$
such that
$|xm_i|=|m_iy_i|$.
One easily sees that
$m_i\to m$.

The points
$x$, $m_i$, $y_i$, $\om_i$
lie on the 
$\R$-circle $l_i$
in this order. Thus
$$|xy_i|\cdot|m_i\om_i|=|xm_i|\cdot|y_i\om_i|+|m_iy_i|\cdot|x\om_i|,$$
and since
$|xm_i|=|m_iy_i|\ge\frac{1}{2}|xy_i|$,
we see 
$|m_i\om_i|\ge\frac{1}{2}(|x\om_i|+|y_i\om_i|)$.
This implies in the limit
$b(m)\ge\frac{1}{2}(b(x)+b(y))$. 
\end{proof}

\begin{lem}\label{lem:busemann_constant_fibers} Busemann
functions are constant on the fibers of
$\pi_\om:X_\om\to B_\om$.
\end{lem}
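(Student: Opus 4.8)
The plan is to prove the statement by reducing it to the distance formula (Proposition~\ref{pro:explicit_distance}) together with the fact, recorded in the subsection on M\"obius geometry, that a $\C$-line carries an essentially one-dimensional metric. Recall that the fibers of $\pi_\om$ are exactly the $\C$-lines $F\sub X_\om$, so the claim is that every Busemann function $b$ of every $\R$-line $l\sub X_\om$ is constant along every such $F$. Lemma~\ref{lem:busemann_constant} already settles the case in which $F$ meets $l$; the point is to remove that restriction. I fix $x,y\in F$ and aim to show $b(x)=b(y)$.

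First I write $b$ in the form used in Proposition~\ref{pro:busemann_affine}, namely $b(w)=\lim_i(|w\om_i|-|o\om_i|)$ with $o\in l$ fixed and $l\ni\om_i\to\om$, so that $b(x)-b(y)=\lim_i(|x\om_i|-|y\om_i|)$. Let $z_i=\mu_{F,\om}(\om_i)\in F$ be the foot of $\om_i$ on $F$. Applying the distance formula with the $\C$-line $F$ to the pairs $(x,\om_i)$ and $(y,\om_i)$ --- both times the foot is the same point $z_i$ --- gives $|x\om_i|^4=|z_i\om_i|^4+|xz_i|^4$ and $|y\om_i|^4=|z_i\om_i|^4+|yz_i|^4$, hence the clean identity $|x\om_i|^4-|y\om_i|^4=|xz_i|^4-|yz_i|^4$, whose right-hand side only involves distances inside $F$.

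Now I exploit the $\C$-line structure: since $|pq|^2=|pr|^2+|rq|^2$ for ordered triples on $F$, there is a coordinate $\xi:F\to\R$ with $|pq|^4=(\xi(p)-\xi(q))^2$. Thus the numerator $|xz_i|^4-|yz_i|^4=(\xi(x)-\xi(y))(\xi(x)+\xi(y)-2\xi(z_i))$ grows at most linearly in $|\xi(z_i)|$, while from $|x\om_i|\ge|xz_i|=|\xi(x)-\xi(z_i)|^{1/2}$ one gets $|\xi(z_i)|\le|\xi(x)|+|x\om_i|^2$. Dividing the displayed identity by $(|x\om_i|+|y\om_i|)(|x\om_i|^2+|y\om_i|^2)\ge|x\om_i|^3$ therefore bounds $\bigl||x\om_i|-|y\om_i|\bigr|$ by a quantity of order $|x\om_i|^{-1}$, which tends to $0$ because $|x\om_i|\to\infty$ as $\om_i\to\om$. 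Hence $b(x)=b(y)$.

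The main obstacle is precisely this middle estimate: the foot $z_i$ need not stay bounded in $F$ (in the model it escapes to infinity), so one cannot simply pass to the limit in the numerator. What rescues the argument is the sharp growth mismatch peculiar to the fourth-power distance formula --- the numerator is quadratic in the $F$-coordinate $\xi(z_i)$ while the denominator is cubic in $|x\om_i|\gtrsim|\xi(z_i)|^{1/2}$ --- so I would take care to phrase the final inequality uniformly in $i$ (as above) rather than through a case distinction on whether $\xi(z_i)$ is bounded. An alternative route, reducing a general fiber to the case of Lemma~\ref{lem:busemann_constant} by producing through each point of $F$ an $\R$-line asymptotic to $l$, looks attractive but would require an existence result for asymptotic $\R$-lines that is not yet available, so I prefer the self-contained computation.
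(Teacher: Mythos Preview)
Your argument is correct and takes a genuinely different route from the paper. The paper does not compute at all: it chooses an auxiliary $\C$-line $F'$ through a point of $l$ (so that $b|F'$ is constant by Lemma~\ref{lem:busemann_constant}), connects each $x'\in F$ to $F'$ by the $\R$-line $\si'$ through $x'$ and $\mu_{F',\om}(x')$, invokes Proposition~\ref{pro:busemann_affine} to know that $b|\si'$ is affine, and then observes via Lemma~\ref{lem:base_unique_geodesic} and Lemma~\ref{lem:mu_isometric} that all these $\si'$ project to the same geodesic in $B_\om$, so $b|\si'$ is bounded and hence constant.

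Your approach replaces this geometric transfer argument by a direct asymptotic estimate from the distance formula: the identity $|x\om_i|^4-|y\om_i|^4=|xz_i|^4-|yz_i|^4$ together with the $\C$-line coordinate $\xi$ gives a numerator that is $O(|x\om_i|^2)$ and a denominator $\ge|x\om_i|^3$, forcing $|x\om_i|-|y\om_i|\to 0$. This is more elementary in that it avoids Proposition~\ref{pro:busemann_affine} and the structural Lemmas~\ref{lem:base_unique_geodesic},~\ref{lem:mu_isometric} altogether, so in principle the lemma could be proved earlier in the paper. The paper's route, on the other hand, makes the role of the base geometry more transparent and feeds naturally into Proposition~\ref{pro:base_normed_space}, which immediately follows and uses both the affinity of $b$ and its constancy on fibers.
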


\begin{proof} Let
$b:X_\om\to\R$
be a Busemann function associated with an
$\R$-line $l\sub X_\om$,
and let
$F\sub X_\om$
be a
$\C$-line, $x\in F$, $c=b(x)\in\R$.
Using Lemma~\ref{lem:busemann_constant} we can assume that
$l\cap F=\es$.
We take
$y\in l$
with
$b(y)=c$
and consider the 
$\C$-line $F'\sub X_\om$
through
$y$.
Then 
$F'\cap F=\es$.
By Lemma~\ref{lem:busemann_constant}, the function
$b$
takes the constant value
$c$
on
$F'$.
Let
$z=\mu_{F',\om}(x)\in F'$.
Then
$b(z)=b(x)=c$,
and there is a uniquely determined
$\R$-line $\si\sub X_\om$
through
$x$, $z$.
By Proposition~\ref{pro:busemann_affine} the function
$b$
takes the constant value
$c$
along
$\si$.

Given
$x'\in F$,
for the 
$\R$-line $\si'\sub X_\om$
through
$x'$, $z'=\mu_{F',\om}(x')\in F'$
we have
$\pi_\om(\si')=\pi_\om(\si)\sub B_\om$
by Lemma~\ref{lem:base_unique_geodesic}.
Thus the values of
$b$
along
$\si'$
are uniformly bounded because
$b$
is Lipschitz and the distance of any
$u\in\si'$
to
$\si$
equals
$|x'x|=|z'z|$
by Lemma~\ref{lem:mu_isometric}. Since
$b|\si'$
is affine, we conclude that
$b|\si'\equiv b(z')=c$,
in particular,
$b(x')=c$. 
\end{proof}

%
%
%
%
%
%

\subsection{Properties of the base}

\begin{pro}\label{pro:base_normed_space} The base
$B_\om$
is isometric to a normed vector space of a finite dimension
with a strictly convex norm.
\end{pro}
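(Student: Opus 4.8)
The plan is to establish that $B_\om$ is a normed vector space by combining the geodesic structure already proven (Lemma~\ref{lem:base_unique_geodesic}) with the affine behaviour of Busemann functions (Proposition~\ref{pro:busemann_affine}). The key realization is that $B_\om$ is a geodesic metric space in which geodesics extend uniquely to lines, and in which Busemann functions are affine; these are precisely the ingredients that force a flat (normed) structure. First I would verify that $B_\om$ is \emph{geodesically complete} with unique geodesic lines: every two points lie on a geodesic (already noted after the definition of the canonical metric), this geodesic is unique by Lemma~\ref{lem:base_unique_geodesic}, and each $\R$-line in $X_\om$ projects isometrically to a geodesic line in $B_\om$, so these geodesics extend to complete lines. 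This gives $B_\om$ the structure of a uniquely geodesic, geodesically complete space.

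**Next I would transfer affineness downstairs.** By Lemma~\ref{lem:busemann_constant_fibers}, Busemann functions of $\R$-lines in $X_\om$ descend to well-defined functions on $B_\om$, and by Proposition~\ref{pro:busemann_affine} they are affine along every geodesic of $B_\om$ (each geodesic of $B_\om$ lifts to an $\R$-line, and affineness is preserved under the isometric projection). The standard principle is that a uniquely geodesic, geodesically complete space whose Busemann functions are all affine along geodesics is isometric to a normed vector space: affineness of Busemann functions is equivalent to the statement that the space has no nonconstant bounded convex functions arising from the boundary and that geodesics satisfy a parallelogram-type rigidity. Concretely, I would fix a basepoint $F_0 \in B_\om$, use the exponential-type map sending a geodesic ray direction and a parameter to the point at that distance, and show using affineness that this map is a well-defined bijection onto a vector space with $F_0 \mapsto 0$, converting geodesics through $F_0$ into rays and making addition well-defined via midpoint construction (the midpoint map is single-valued by unique geodesics). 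Affineness of Busemann functions guarantees that these linear-combination operations are compatible, i.e.\ associative and distributive, giving genuine vector-space axioms.

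**For the norm and its strict convexity,** once $B_\om$ is identified with a vector space, the metric $|FF'|$ becomes translation-invariant and homogeneous (homogeneity along each line comes from the isometric reparametrization of $\R$-lines, translation-invariance from the affine Busemann structure), so $|FF'| = \|F - F'\|$ for a norm $\|\cdot\|$. Strict convexity of the norm is equivalent to \emph{unique} geodesics between points, which is exactly the content of Lemma~\ref{lem:base_unique_geodesic}: if the unit ball of $\|\cdot\|$ had a nontrivial segment on its boundary, there would be distinct geodesics between two points, contradicting unique geodesicity. Finite-dimensionality I would extract from compactness of $X$: the base $B_\om$ is locally compact (fibers are compact $\C$-lines and $X$ is compact), and a locally compact normed vector space is finite-dimensional by the Riesz lemma.

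**I expect the main obstacle** to be making the vector-space operations rigorous rather than merely plausible. The delicate point is proving that the midpoint-based addition on $B_\om$ is associative and that scalar multiplication distributes over it; both reduce to showing that affineness of Busemann functions forces geodesics emanating from a common point to fit together into flat two-dimensional subspaces (flat triangles). This is where Proposition~\ref{pro:busemann_affine} must be used most carefully: one needs that for any two geodesic rays from $F_0$, the ``parallelogram'' they span is genuinely Euclidean-flat in the affine sense, so that the group law is well-defined independent of the order of operations. Verifying this flatness rigidity — that affine Busemann functions in \emph{all} directions force the triangle comparison to be an equality — is the technical heart, and I would likely invoke or adapt the characterization of normed spaces among Busemann-convex geodesic spaces from the references \cite{FS1}, \cite{BS2} cited just before Proposition~\ref{pro:busemann_affine}, where analogous affineness results were proved.
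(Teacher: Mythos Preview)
Your approach is essentially the paper's: both start from the unique-geodesic, geodesically complete structure of $B_\om$ (Lemma~\ref{lem:base_unique_geodesic}), push Busemann functions down to $B_\om$ via Lemma~\ref{lem:busemann_constant_fibers}, use their affineness on geodesics (Proposition~\ref{pro:busemann_affine}), and then read off strict convexity from unique geodesics and finite dimension from local compactness of $B_\om$ (via compactness of $X$). The one substantive difference is the step from ``affine functions separate points'' to ``normed vector space'': where you propose to build the vector-space operations by hand (midpoint addition, exponential map, checking associativity and distributivity via flat triangles), the paper simply quotes the theorem of Hitzelberger--Lytchak \cite{HL}, which says exactly that a geodesic metric space whose points are separated by affine functions embeds isometrically into a strictly convex normed space. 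Your hand-construction would work but amounts to reproving \cite{HL}; the references you reach for at the end, \cite{FS1} and \cite{BS2}, are cited in the paper only for the affineness statement itself (Proposition~\ref{pro:busemann_affine}), not for the normed-space characterization, so \cite{HL} is the citation you actually want.
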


\begin{proof} By Lemma~\ref{lem:base_unique_geodesic},
$B_\om$
is a geodesic metric space such that through any two distinct
points there is a unique geodesic line. We show that affine
functions on
$B_\om$
separate points. Any Busemann function
$b:X_\om\to\R$
is affine on 
$\R$-lines 
by Proposition~\ref{pro:busemann_affine}. 
By Lemma~\ref{lem:busemann_constant_fibers},
$b$
is constant on the fibers of
$\pi_\om$,
thus it determines a function
$\ov b:B_\om\to\R$
such that
$\ov b\circ\pi_\om=b$.
Every geodesic line
$\ov l\sub B_\om$
is of the form
$\ov l=\pi_\om(l)$
for some 
$\R$-line
$l\sub X_\om$,
and each unit speed parameterization
$c:\R\to X_\om$
of
$l$
induces the unit speed parameterization
$\ov c=\pi_\om\circ c$
of
$\ov l$.
Then
$\ov b\circ\ov c=\ov b\circ\pi_\om\circ c=b\circ c$
is an affine function on
$\R$.
Through any
$x$, $x'\in B_\om$,
there is a geodesic line
$\ov l=\pi_\om(l)$.
Let
$b$
be a Busemann function on
$X_\om$
associated with
$l$.
Then
$b$
takes different values on the fibers of
$\pi_\om$
over
$x$, $x'$
respectively. Thus the affine function
$\ov b$
separates the points
$x$, $x'$, $\ov b(x)\neq\ov b(x')$.
Then by \cite{HL},
$B_\om$
is isometric to a convex subset of a normed vector space
with a strictly convex norm. Since
$B_\om$
is geodesically complete, i.e., through any two points
there is a geodesic line, this subset is a subspace,
and therefore
$B_\om$
is isometric to a normed vector space
$E$.
The Ptolemy space
$X$
is compact, thus
$B_\om$
is locally compact, and the dimension of
$E$
is finite. 
\end{proof}

\begin{cor}\label{cor:homeo_sphere} The space
$X$
is homeomorphic to sphere
$S^{k+1}$, $k\ge 0$.
\end{cor}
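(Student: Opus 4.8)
The plan is to produce an explicit homeomorphism of $X_\om$ onto a Euclidean space and then pass to one-point compactifications. By Proposition~\ref{pro:base_normed_space} the base $B_\om$ is isometric to a normed vector space of some finite dimension $k:=\dim B_\om\ge 0$, hence homeomorphic to $\R^k$. Fix once and for all a $\C$-line $F\sub X_\om$; since a $\C$-circle is M\"obius equivalent to $\di(\frac12\hyp^2)$ and M\"obius equivalences are homeomorphisms, $F$ is homeomorphic to $\R$. I consider
$$\Phi\colon X_\om\to B_\om\times F,\qquad \Phi(x)=(\pi_\om(x),\mu_{F,\om}(x)),$$
and aim to show that $\Phi$ is a continuous bijection onto $B_\om\times F\cong\R^{k+1}$ which extends to a homeomorphism $X\to S^{k+1}$ carrying $\om$ to the point at infinity.

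First I treat bijectivity. The map is injective because $\pi_\om$ separates distinct fibers (two distinct $\C$-circles through $\om$ meet only at $\om$ by axiom~($\rm E_\C$)), while on a single fiber $F'$ the restriction $\mu_{F,\om}|_{F'}$ is isometric, hence injective, by Lemma~\ref{lem:mu_isometric}. For surjectivity I verify that for two $\C$-lines $F,F'$ through $\om$ the isometric embeddings $\mu_{F,\om}|_{F'}$ and $\mu_{F',\om}|_{F}$ are mutually inverse. Indeed, given $p\in F$ (so $p\notin F'$ since $F\cap F'=\{\om\}$), the point $q=\mu_{F',\om}(p)$ lies on an $\R$-circle $\si$ through $\om$ and $p$ that hits $F'_\om$; this same $\si$ passes through $\om,q$ and hits $F_\om$ at $p$, so by the uniqueness clause of axiom~($\rm E_\R$) it realizes $\mu_{F,\om}(q)$ as well, giving $\mu_{F,\om}(q)=p$. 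Thus $\mu_{F,\om}|_{F'}$ is a bijection onto $F$, and $\Phi$ is onto $B_\om\times F$. Continuity of the second coordinate is Corollary~\ref{cor:cline_projection}; for the first coordinate, if $x_i\to x$ and $F'=\pi_\om(x)$, then $|\pi_\om(x_i)F'|_{B_\om}=|x_i\,\mu_{F',\om}(x_i)|\to 0$ because $\mu_{F',\om}(x_i)\to\mu_{F',\om}(x)=x$, so $\pi_\om$ is continuous into the canonical metric on $B_\om$.

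To upgrade the continuous bijection $\Phi$ to a homeomorphism I compactify. Here the quartic distance formula of Proposition~\ref{pro:explicit_distance} is the decisive tool: fixing a basepoint $o\in F$ and writing $F'=\pi_\om(x)$, $z=\mu_{F,\om}(x)$, the quantities $a=|zx|_\om=|F'F|_{B_\om}$ and $b=|oz|$ satisfy
$$|ox|_\om^4=|F'F|_{B_\om}^4+|oz|^4.$$
Consequently $x\to\om$ in $X$, i.e. $|ox|_\om\to\infty$, occurs precisely when $\Phi(x)=(F',z)$ leaves every compact subset of $B_\om\times F\cong\R^{k+1}$. Hence the extension $\bar\Phi\colon X\to S^{k+1}$ with $\bar\Phi(\om)=\infty$ (the point at infinity of the one-point compactification of $\R^{k+1}$) is a continuous bijection from the compact space $X$ to the Hausdorff space $S^{k+1}$, and is therefore a homeomorphism. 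This yields $X\cong S^{k+1}$ with $k=\dim B_\om\ge 0$.

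I expect the main obstacle to be the bijectivity of $\mu_{F,\om}|_{F'}$ and the clean control of $\Phi$ near $\om$: the former rests on the uniqueness in axiom~($\rm E_\R$) together with Lemma~\ref{lem:intersect_rcircle_ccircle}, while the latter is exactly what the formula $r^4=a^4+b^4$ is designed to govern, converting ``$x\to\om$'' into ``$\Phi(x)\to\infty$'' in a single step.
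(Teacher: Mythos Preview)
Your proof is correct and follows the same overall plan as the paper---show $X_\om\cong\R^{k+1}$ via the canonical projection $\pi_\om$ and then pass to one-point compactifications---but you carry it out more explicitly. The paper's argument is a two-line sketch: it asserts that $\pi_\om:X_\om\to B_\om$ is a fibration with fiber $\R$ over the contractible base $B_\om\cong\R^k$, concludes $X_\om\cong\R^{k+1}$, and then compactifies. You instead produce a concrete global trivialization $\Phi=(\pi_\om,\mu_{F,\om})$, verify bijectivity using the mutual-inverse property of the projections $\mu_{F,\om}|_{F'}$ and $\mu_{F',\om}|_F$ (a nice observation resting on the uniqueness clause in axiom~($\rm E_\R$)), and then use the distance formula $r^4=a^4+b^4$ of Proposition~\ref{pro:explicit_distance} to show that $\Phi$ extends continuously over $\om$. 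This buys you a self-contained argument that does not rely on any background fact about fibrations over contractible bases, and it makes the compactification step rigorous where the paper leaves it implicit.
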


\begin{proof} By Proposition~\ref{pro:base_normed_space},
$B_\om$
is homeomorphic to an Euclidean space
$\R^k$,
and since
$\pi_\om:X_\om\to B_\om$
is a fibration over
$B_\om$
with fibers homeomorphic to
$\R$,
the space
$X_\om$
is homeomorphic to
$\R^{k+1}$.
Thus
$X$
is homeomorphic to
$S^k$.
\end{proof}

\begin{pro}\label{pro:property_u} The Ptolemy space
$X$
has the following property: Any 4-tuple
$Q\sub X$
of pairwise distinct points lies on an 
$\R$-circle $\si\sub X$
provided three of the points of
$Q$
lie on 
$\si$
and the Ptolemy equality holds for
the cross-ratio triple
$\crt(Q)$.
\end{pro}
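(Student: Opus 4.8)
The plan is to convert the Ptolemy equality into a metric betweenness relation, and then to use the distance formula together with the concatenation lemma to force the fourth point onto the circle. First I would relabel $Q=(x,y,z,u)$ so that $x,y,z\in\si$ and the Ptolemy equality reads $|xz|\,|yu|=|xy|\,|zu|+|xu|\,|yz|$, and pass to the metric space $X_z$ by taking $z$ as the infinitely remote point. In $X_z$ the circle $\si$ becomes an $\R$-line through $x,y$, and computing $\crt(x,y,z,u)$ in $|\cdot|_z$ turns the equality into $|yu|_z=|yx|_z+|xu|_z$; thus $x$ is a metric between-point of $y$ and $u$. Parametrize $\si$ by arclength with $\si(0)=y$, $\si(a)=x$, $a=|yx|_z$.

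Next I would upgrade this single betweenness to betweenness along the whole subarc. Since $\si$ is a geodesic of $X_z$ and distance functions are convex and $1$-Lipschitz along $\R$-lines (as already used in the proof of Lemma~\ref{lem:concatenation}), the convex $1$-Lipschitz function $h(\la)=|u\,\si(\la)|_z$ satisfies $h(0)-h(a)=a$, the maximal possible decrease, so $h(\la)=h(0)-\la$ for every $\la\le a$. Hence $|yu|_z=|y\,\si(\la)|_z+|\si(\la)\,u|_z$ for all $\la\in[0,a]$, i.e. every point of the subarc $\si|[0,a]$ is a between-point of $y$ and $u$.

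The decisive step is to show this forces $u\in\si$. Let $F$ be the $\C$-circle through $x$ and $z$; by Lemma~\ref{lem:intersect_rcircle_ccircle} one has $\si\cap F=\{x,z\}$, so the whole line $\si$ is carried to the single point $x$ by the retraction $\mu_{F,z}$ of Corollary~\ref{cor:cline_projection}, and $|\si(\la)x|_z=|a-\la|$ is the distance of $\si(\la)$ to $F$. Applying Proposition~\ref{pro:explicit_distance} along the $\C$-circle through $u$ and $z$ expresses $h(\la)^4$ as a degree-four polynomial in $\la$: one summand is the fourth power of the distance of $\si(\la)$ to that $\C$-circle (the square of a quadratic in $\la$), and the other is the fourth power of a distance measured along the $\C$-circle, which is quadratic in $\la$ because the foot of $\si(\la)$ moves affinely. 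Since this quartic agrees with $(h(a)-(\la-a))^4$ on the entire interval $[0,a]$, the two polynomials are identical; matching the subleading (cubic) coefficient forces the off-$F$ displacement of $u$ to attain the extremal value allowed by the distance formula, which happens only when $\mu_{F,z}(u)=x$. With $\mu_{F,z}(u)=x=\mu_{F,z}(y)$ and $|yu|_z=|yx|_z+|xu|_z$, Lemma~\ref{lem:concatenation} places $u,x,y$ on a common $\R$-line; this line passes through $x,y$ and through the infinitely remote point $z$, so it shares $x,y,z$ with $\si$ and equals $\si$ by Lemma~\ref{lem:three_points_rcircles}. Hence $u\in\si$.

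I expect the main obstacle to be precisely this decisive step: extracting $u\in\si$ from the betweenness. Convexity alone is insufficient, since $X_z$ is a Kor\'anyi-type (non-Euclidean) metric space in which geodesics may branch, so the rigidity must come from the quartic $r^4=a^4+b^4$ of the distance formula. The technical heart is to justify that $\la\mapsto h(\la)^4$ is genuinely a polynomial of degree four and that its cubic coefficient measures exactly the deviation of $u$ from $\si$; once the polynomial identity is in hand, the conclusion $\mu_{F,z}(u)=x$ and the final application of Lemma~\ref{lem:concatenation} and Lemma~\ref{lem:three_points_rcircles} are routine.
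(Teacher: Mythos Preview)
Your proposal has a genuine gap at the decisive step. The claim that $h(\la)^4$ is a degree-four polynomial in $\la$ rests on two assertions neither of which is available at this stage of the paper. For the first summand you need the squared distance from a point moving at unit speed along a line in the base $B_z$ to a fixed point to be quadratic in $\la$; this is the parallelogram law and holds only in an inner-product space, whereas at this point $B_z$ is known merely to be a normed space with strictly convex norm (Proposition~\ref{pro:base_normed_space}). The Euclidean property is Proposition~\ref{pro:base_euclidean}, proved much later and depending, through Proposition~\ref{pro:intersection_orthogonal_complements}, on the very statement you are trying to prove. For the second summand, the assertion that the foot $\mu_{F',z}(\si(\la))$ moves affinely along $F'$ is also unjustified: the retraction $\mu_{F',z}$ is isometric on $\C$-lines (Lemma~\ref{lem:mu_isometric}), but its behaviour on an $\R$-line is not controlled in this way.

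The paper avoids all of this by a different and much shorter route. It sends the \emph{middle} point of the betweenness to infinity: choosing $y$ as infinitely remote turns the Ptolemy equality into $|xz|_y=|xu|_y+|uz|_y$, so now \emph{both} endpoints $x,z$ of the betweenness lie on the $\R$-line $\si$. Projecting by $\pi_y$ (which is $1$-Lipschitz everywhere and isometric on $\R$-lines) then forces $|\ov x\,\ov z|=|\ov x\,\ov u|+|\ov u\,\ov z|$, so $\ov u$ lies on the segment $\ov x\,\ov z$ by strict convexity of the norm on $B_y$. Hence the $\C$-line through $u$ meets $\si$ at a point $o$, and a single application of the distance formula gives the strict inequalities $|xo|_y<|xu|_y$ and $|oz|_y<|uz|_y$ unless $u=o$. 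Your choice of $z$ as the infinitely remote point gives instead $|yu|_z=|yx|_z+|xu|_z$, with only one endpoint on $\si$, so the projection argument does not close up; this is what forces you toward the unjustified polynomial identity.
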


\begin{proof} We assume that
$Q=(x,y,z,u)$, $x,y,z\in\si$
and
$$|xz|\cdot|yu|=|xy|\cdot|zu|+|xu|\cdot|yz|.$$
Choosing
$y$
as infinitely remote, we have
$|xz|_y=|xu|_y+|uz|_y$
and 
$\si$
is an
$\R$-line 
in
$X_y$.
Recall that the canonical projection
$\pi_y:X_y\to B_y$ 
is 1-Lipschitz and isometric on every
$\R$-line.
Thus
$$|\ov x\,\ov z|=|xz|_y=|xu|_y+|uz|_y\ge|\ov x\ov u|+|\ov u\ov z|,$$
where
$\ov x=\pi_y(x)$.
The triangle inequality in
$B_y$
implies
$|\ov x\,\ov z|=|\ov x\,\ov u|+|\ov u\,\ov z|$.
By Proposition~\ref{pro:base_normed_space},
$B_y$
is isometric to a normed vector space with a strictly convex norm.
Thus we conclude that
$\ov u$
lies between
$\ov x$
and
$\ov z$
on a line in
$B_y$.
This means that the 
$\C$-line $F\sub X_y$
through
$u$
hits
$\si$,
and for 
$o=\si\cap F$
we have
$|xz|_y=|xo|_y+|oz|_y$.
By Proposition~\ref{pro:explicit_distance},
$|xo|_y<|xu|_y$
and
$|oz|_y<|uz|_y$
unless
$u=o$.
We conclude that
$u=o\in\si$.
\end{proof}

\section{M\"obius automorphisms}

\subsection{Vertical shifts}
\label{subsect:vertical_shifts}

We fix 
$\om\in X$
and change notation using the letter
$b$
for elements of the base
$B_\om$
and
$F_b$
for the respective fiber of
$\pi_\om$.
For any two 
$b$, $b'\in B_\om$
we have by Lemma~\ref{lem:mu_isometric} the isometry
$\mu_{bb'}:F_b\to F_{b'}$.

\begin{lem}\label{lem:continuity_project_fiber} The isometries
$\mu_{bb'}:F_b\to F_{b'}$
depend continuously of
$b$, $b'\in B_\om$,
that is, for
$b_i\to b'$
and for any
$x\in F_b$,
we have
$\mu_{bb_i}(x)\to\mu_{bb'}(x)$.
\end{lem}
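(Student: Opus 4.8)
The statement claims that the family of isometries $\mu_{bb'}\colon F_b\to F_{b'}$ (projection between fibers of the canonical foliation) varies continuously: if $b_i\to b'$ in $B_\om$, then $\mu_{bb_i}(x)\to\mu_{bb'}(x)$ for each fixed $x\in F_b$. The plan is to fix $x\in F_b$, set $y_i=\mu_{bb_i}(x)\in F_{b_i}$ and $y'=\mu_{bb'}(x)\in F_{b'}$, and show $y_i\to y'$. Since the ambient space $X$ is compact (hence $X_\om$ is proper after adding the point at infinity), the sequence $y_i$ has accumulation points, so the natural approach is to show that \emph{every} subsequential limit of $y_i$ equals $y'$, which forces $y_i\to y'$.

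\textbf{Key steps.}
First I would extract a convergent subsequence $y_{i}\to \ov y$ (compactness of $X$; one checks $\ov y$ stays in $X_\om$ using that distances $|x y_i|_\om=|b b_i|\to|bb'|$ are bounded by Proposition~\ref{pro:explicit_distance} and Lemma~\ref{lem:distance_clines}). Next I would argue that $\ov y\in F_{b'}$: the fibers $F_{b_i}$ converge to $F_{b'}$ pointwise because the $\C$-circle through $\om$ and any point depends continuously on that point (axiom~($\rm E_\C$) together with compactness, exactly as used in Lemma~\ref{lem:limit_circle} for $\R$-circles), so the limit of points $y_i\in F_{b_i}$ lies on $F_{b'}$. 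The crucial step is then to identify which point of $F_{b'}$ the limit is, i.e.\ to show $\ov y=\mu_{b'b}(x)^{-1}$-image, equivalently $\mu_{b'b}(\ov y)=x$. For this I would pass the defining relation of $\mu_{bb_i}$ to the limit: by definition $y_i=\mu_{F_b,\om}(\cdot)$-type projection data, realized through an $\R$-line joining $x$ and $y_i$ with $\pi_\om$ sending it to the geodesic segment $[b,b_i]\sub B_\om$. Using Lemma~\ref{lem:limit_circle}, the $\R$-lines $\si_i$ through $x,y_i$ converge to an $\R$-line $\si$ through $x,\ov y$; since $\pi_\om$ is $1$-Lipschitz and isometric on $\R$-lines with $\pi_\om(\si_i)=[b,b_i]\to[b,b']$ (geodesics vary continuously in the strictly convex normed space $B_\om$ by Proposition~\ref{pro:base_normed_space}), the limiting $\R$-line $\si$ projects to $[b,b']$, whence $\ov y=\mu_{bb'}(x)=y'$.

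\textbf{Main obstacle.}
The hard part is controlling the $\R$-lines realizing the projection, i.e.\ upgrading the pointwise convergence of the $\C$-fibers $F_{b_i}\to F_{b'}$ to convergence of the specific projection points $y_i$. The subtlety is that $\mu_{bb'}$ is defined via Lemma~\ref{lem:mu_isometric} through orthogonal projection along $\R$-lines, and one must ensure these connecting $\R$-lines do not degenerate or escape to $\om$ in the limit. This is where I expect to lean hardest on the normed-space structure of $B_\om$: strict convexity guarantees unique geodesics $[b,b']$ depending continuously on endpoints (Lemma~\ref{lem:base_unique_geodesic}), and the isometry property of $\pi_\om$ on $\R$-lines transports this continuity upstairs, pinning down the limit uniquely and ruling out spurious accumulation points.
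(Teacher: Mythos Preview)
Your approach is correct and follows the same underlying strategy as the paper: extract a subsequential limit using compactness of $X$, argue that the limiting configuration is again an $\R$-line through $x$ hitting the limit fiber, and then invoke uniqueness (axiom~($\rm E_\R$), equivalently Lemma~\ref{lem:three_points_rcircles}) to pin down the limit as $\mu_{bb'}(x)$. The paper compresses all of this into one sentence: pointwise limits of geodesic segments are geodesic segments, plus uniqueness of $\R$-lines in $X_\om$, plus compactness of $X$.

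The one genuine difference is that you route the identification of the limit through the base $B_\om$, invoking the strictly convex normed-space structure from Proposition~\ref{pro:base_normed_space} to get continuity of geodesics $[b,b_i]\to[b,b']$ downstairs and then lifting. This is valid (the proposition is already available at this point in the paper) but unnecessary: the paper works entirely in $X_\om$, observing directly that the limit of the $\R$-line segments $[x,y_i]$ is a geodesic segment, hence lies on an $\R$-line through $x$, and that line must be the unique one meeting $F_{b'}$. Your detour through the base buys nothing extra here, and your ``main obstacle'' (controlling the $\R$-lines and ruling out escape to $\om$) is handled in the paper simply by the bounded-length observation $|xy_i|_\om=|bb_i|\to|bb'|$ together with compactness --- exactly the ingredient you already identified. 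Also note that Lemma~\ref{lem:limit_circle} is not quite the right citation (there $\om_i$ varies, here $\om$ is fixed), though the same compactness-plus-uniqueness mechanism applies.
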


\begin{proof} If a sequence of geodesic segments in a
metric space pointwise converges, then the limit is
also a geodesic segment. Together with
uniqueness of $\R$-lines in
$X_\om$
and compactness of
$X$,
this implies the claim.
\end{proof}

We fix an orientation of
$F_b$
and define the orientation of
$F_{b'}$
via the isometry
$\mu_{bb'}$.
This gives a simultaneously
determined orientation
$O$
on all the fibers of
$\pi_\om$.

\begin{lem}\label{lem:fiber_orient} The orientation
$O$
is well defined and independent of the choice of
$b\in B_\om$.
\end{lem}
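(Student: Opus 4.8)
The plan is to show that the orientation $O$ is well defined (independent of the chain of fibers used to propagate it) and independent of the choice of basepoint $b \in B_\om$. The essential structure to exploit is that the transport maps $\mu_{bb'} \colon F_b \to F_{b'}$ are orientation-preserving isometries between oriented circles, and that they vary continuously in $b,b'$ by Lemma~\ref{lem:continuity_project_fiber}. Since $B_\om$ is contractible by Lemma~\ref{lem:base_unique_geodesic}, any loop in the base can be contracted, and this is what will force consistency of the transported orientation.

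First I would make precise what ``well defined'' means. For each ordered pair $(b,b')$ the isometry $\mu_{bb'}$ carries an orientation of $F_b$ to an orientation of $F_{b'}$; to see that the simultaneously determined orientation $O$ does not depend on intermediate choices, it suffices to check that transporting the orientation around any closed path in $B_\om$ returns the original orientation. I would set this up as a $\Z/2$-valued cocycle: for each pair $(b,b')$ let $\ep(b,b') \in \{\pm 1\}$ record whether $\mu_{bb'}$ preserves or reverses the chosen orientations, so that the composition rule $\mu_{b'b''} \circ \mu_{bb'} = \mu_{bb''}$ (which I would verify from the definition of $\mu$ via uniqueness of $\R$-lines) gives the cocycle identity $\ep(b,b'')=\ep(b,b')\,\ep(b',b'')$. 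Well-definedness of $O$ is then exactly the statement that this cocycle is trivial, i.e.\ $\ep(b,b')\equiv +1$ once we fix orientations by transport from a single basepoint.

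The key point is continuity together with contractibility. By Lemma~\ref{lem:continuity_project_fiber} the map $(b,b') \mapsto \mu_{bb'}$ is continuous, hence $\ep(b,b')$ is a continuous $\{\pm 1\}$-valued function on $B_\om \times B_\om$, and so it is locally constant. Since $\ep(b,b)=+1$ and $B_\om \times B_\om$ is connected (indeed contractible, as $B_\om$ is), $\ep$ is identically $+1$. Equivalently, transport of orientation along any path depends only on its endpoints, and around a loop it is trivial because the loop is null-homotopic in the contractible space $B_\om$; thus the orientation assigned to each fiber $F_{b'}$ is independent of the connecting path from $b$, so $O$ is well defined. Independence of the choice of $b$ then follows: choosing a different basepoint $\tilde b$ and transporting an orientation from $\tilde b$ gives, on each $F_{b'}$, the orientation $\mu_{\tilde b b'}$ of the $\tilde b$-orientation, which by the cocycle relation and $\ep \equiv +1$ agrees with the $b$-transported one up to the single global sign $\ep(b,\tilde b)=+1$.

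I expect the main obstacle to be establishing the compatibility (cocycle) relation $\mu_{b'b''}\circ\mu_{bb'}=\mu_{bb''}$ cleanly, since this is what turns local continuity into a genuine consistency statement; it should follow from the defining property of $\mu_{F,\om}$ as the retraction realizing the distance $|FF'|$ together with uniqueness of the $\R$-line connecting corresponding points (Lemma~\ref{lem:three_points_rcircles}), but the verification that these retractions compose correctly requires care. Once that relation is in hand, the topological argument via continuity and contractibility of $B_\om$ is routine.
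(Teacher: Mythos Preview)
Your continuity argument is essentially the paper's own proof: use Lemma~\ref{lem:continuity_project_fiber} to see that the orientation induced by $\mu_{bb''}$ agrees with that induced by $\mu_{b'b''}\circ\mu_{bb'}$, since both vary continuously and agree when $b'=b$ (or $b''=b'$), and $B_\om$ is contractible by Lemma~\ref{lem:base_unique_geodesic}. That part is fine and is all that is needed.

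However, the step you flag as the ``main obstacle'', namely the identity
\[
\mu_{b'b''}\circ\mu_{bb'}=\mu_{bb''}
\]
as maps $F_b\to F_{b''}$, is \emph{false} in general, and you should not try to prove it. Indeed, the failure of this identity is precisely what the paper later exploits: for a closed polygon $P\sub B_\om$ the composition of the $\mu$-maps around $P$ is the lifting isometry $\tau_P$ of sect.~\ref{subsect:lifting_isometry}, a nontrivial vertical shift in general (see Lemmas~\ref{lem:triangle_lift}--\ref{lem:homothety_lift_parallelogram}). What \emph{is} true, and what both your continuity argument and the paper's proof actually establish, is only that $\mu_{b'b''}\circ\mu_{bb'}$ and $\mu_{bb''}$ induce the same \emph{orientation} on $F_{b''}$; they differ by an orientation-preserving shift. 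So drop the cocycle identity for maps, keep only the orientation version, and your argument goes through exactly as in the paper.
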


\begin{proof} By Lemma~\ref{lem:base_unique_geodesic}, the base
$B_\om$
is contractible. Using Lemma~\ref{lem:continuity_project_fiber},
we see that the orientation of
$F_{b''}$
induced by
$\mu_{bb''}$
coincides with that induced by
$\mu_{b'b''}\circ\mu_{bb'}$
for each
$b'$, $b''\in B$.
Hence, the claim.
\end{proof}

We assume that a simultaneous orientation
$O$
of
$\C$-lines
in
$X_\om$
is fixed, and we call it the {\em fiber orientation}.
Now we are able to produce nontrivial M\"obius
automorphisms of
$X$.
Using the fiber orientation
$O$
we define for every
$s\in\R$
the map 
$\ga=\ga_s:X_\om\to X_\om$
which acts on every fiber
$F$
on
$\pi_\om$
as the shift by
$|s|$
in the direction determined in the obvious way by the sign of
$s$
and
$O$.
The map 
$\ga$
is called a {\em vertical shift}.

\begin{pro}\label{pro:vshift_isometry} Every vertical shift
$\ga:X_\om\to X_\om$
is an isometry.
\end{pro}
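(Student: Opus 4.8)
The plan is to verify that a vertical shift $\ga=\ga_s$ preserves distances in $X_\om$, and the natural strategy is to use the coordinate description of $X_\om$ furnished by the fibration $\pi_\om:X_\om\to B_\om$ together with the distance formula. By Proposition~\ref{pro:explicit_distance}, for any two points $p,q\in X_\om$ lying on fibers $F=\pi_\om(p)$, $F'=\pi_\om(q)$, the distance is governed by two quantities: the horizontal distance $a=|FF'|$ in the base $B_\om$, and a vertical quantity coming from the position of $p$ and $q$ relative to their common perpendicular. More precisely, writing $z=\mu_{F,\om}(q)\in F$, one has $|pq|_\om^4=|pz|_\om^4+|zq|_\om^4$, where $|zq|_\om=|FF'|$ by Lemma~\ref{lem:distance_clines} and $|pz|_\om$ is the distance \emph{along the fiber} $F$ between $p$ and the foot $z$ of the perpendicular from $q$. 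Since $\ga$ fixes the base pointwise (it acts only along fibers), the horizontal term $|FF'|$ is manifestly preserved, and everything reduces to understanding the vertical term.

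First I would fix two fibers $F$, $F'$ and analyze how the foot-point map and the along-fiber distance interact with $\ga$. The key geometric input is that $\mu_{F,\om}:F'\to F$ is an \emph{orientation-preserving isometry} of oriented $\C$-lines, which is exactly how the simultaneous orientation $O$ was transported in Lemma~\ref{lem:fiber_orient} via the maps $\mu_{bb'}$. Consequently, applying $\ga$ to both $p\in F$ and $q\in F'$ shifts each by the same signed amount $s$ in the $O$-direction, and because $\mu_{F'F}=\mu_{bb'}$ intertwines these oriented shifts, the foot $z=\mu_{F,\om}(q)$ is carried to $\ga(z)=\mu_{F,\om}(\ga(q))$, i.e.\ $\ga$ commutes with the perpendicular projection $\mu_{F,\om}$ restricted to $F'$. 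This is the heart of the argument: it shows $|\ga(p)\,\ga(z)|_\om=|pz|_\om$ (a shift along the single $\C$-line $F$, which is an $\R^2$-like metric line on which the shift is an isometry) and $|\ga(z)\,\ga(q)|_\om=|zq|_\om=|FF'|$ (horizontal term unchanged). Feeding these back into $r^4=a^4+b^4$ gives $|\ga(p)\,\ga(q)|_\om=|pq|_\om$.

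The main obstacle I anticipate is justifying that the along-fiber shift on a single $\C$-line is itself an isometry and, more subtly, that $\ga$ really commutes with $\mu_{F,\om}$ after the orientation transport — i.e.\ that shifting $q$ along $F'$ by $s$ moves its foot-point $z$ along $F$ by exactly $s$ in the compatible orientation, with no distortion. This is where the coherence of the simultaneous orientation $O$ from Lemma~\ref{lem:fiber_orient}, the isometric nature of $\mu_{bb'}$ from Lemma~\ref{lem:mu_isometric}, and its continuity from Lemma~\ref{lem:continuity_project_fiber} must all be combined: the isometry $\mu_{bb'}:F'\to F$ sends the oriented arclength parameter of $F'$ to that of $F$, so a signed shift by $s$ on $F'$ corresponds under $\mu_{bb'}$ to the signed shift by $s$ on $F$, which is precisely the compatibility needed. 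On a single $\C$-line, fixing any point as infinitely remote realizes it as a $\C$-line in the sense of Section~2, and the shift is a translation of this line; that it preserves the induced metric follows because the along-fiber metric is, up to the canonical identification, that of a Euclidean line (the fiber is a geodesic line in $X_\om$, isometric to $\R$). Once this commutation and the along-fiber isometry are in hand, the distance formula closes the argument immediately for every pair $p,q$.
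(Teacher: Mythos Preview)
Your proposal is correct and follows exactly the route the paper takes: the paper's proof is a single sentence citing the definition of a vertical shift, Lemma~\ref{lem:distance_clines}, and Proposition~\ref{pro:explicit_distance}, and you have spelled out precisely the details that sentence compresses --- in particular the commutation $\ga\circ\mu_{bb'}=\mu_{bb'}\circ\ga$, which is built into the very definition of the simultaneous fiber orientation~$O$.

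One small inaccuracy to fix: a $\C$-line is \emph{not} ``a geodesic line in $X_\om$, isometric to $\R$''. Geodesic lines in $X_\om$ are the $\R$-lines; a $\C$-line $F_\om$ satisfies $|xz|_\om^2=|xy|_\om^2+|yz|_\om^2$ for ordered triples, so it is (up to scale) a snowflake $(\R,\sqrt{|\cdot|})$. This does not harm your argument: a translation of the natural parameter on such a line is still a self-isometry, which is all you need for the along-fiber term.
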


\begin{proof} This immediately follows from definition
of a vertical shift, Lemma~\ref{lem:distance_clines}
and Proposition~\ref{pro:explicit_distance}.
\end{proof}

\subsection{Homotheties and shifts}

Recall that every M\"obius map 
$\ga:X\to X$
which fixes the point
$\om\in X$
is a homothety of
$X_\om$,
that is,
$|\ga(x)\ga(y)|_\om=\la|xy|_\om$
for some
$\la>0$
and every
$x$, $y\in X_\om$.

\begin{lem}\label{lem:involution_extended} For every
$\C$-circle $F\sub X$,
every M\"obius involution
$\ga:F\to F$
without fixed points extends to a M\"obius map
$\ov\ga:X\to X$.
\end{lem}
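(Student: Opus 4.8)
The plan is to recognize the abstract involution $\ga$ as one of the concrete pole-involutions $\eta_u$ already attached to $F$, to observe that Proposition~\ref{pro:moebius_involution} makes the natural extension M\"obius on the finite set $F\cup\{u,v\}$, and then to promote it to all of $X$ by working in the background metric based at the conjugate pole.

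First I would show that every fixed-point-free M\"obius involution $\ga\colon F\to F$ equals $\eta_u$ for a suitable $u\in X\sm F$. After identifying conjugate poles, the family $\{\eta_u\}$ is parametrized by the same data as the fixed-point-free M\"obius involutions of $F$ (in the model space $Y=\di\C\hyp^k$ these are exactly the boundary central symmetries $\di s_a$, $a\in E$). Concretely, fix $\om\in F$ and put $o=\ga(\om)\neq\om$; a pole $u$ satisfies $\eta_u(\om)=o$ precisely when $u$ lies on an $\R$-circle $\si$ with $\si\cap F=\{\om,o\}$, and letting $\si$ vary through $\om,o$ together with the position of $u$ on $\si$ gives enough freedom to force agreement of $\eta_u$ with $\ga$ at a second point of $F$. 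Since two fixed-point-free M\"obius involutions of $F$ that agree at two points coincide, this yields $\eta_u=\ga$. I would then set $v=\phi_F(u)$, the conjugate pole, so that $\eta_u=\eta_v=\ga$.

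Next, Proposition~\ref{pro:moebius_involution} gives $\crt(x,u,z,v)=\crt(\eta_u(x),v,\eta_u(z),u)$ for all $x,z\in F$, which says precisely that the prescription ``$\ga$ on $F$ together with the transposition $u\leftrightarrow v$'' preserves cross-ratio triples of quadruples built from $F\cup\{u,v\}$; combined with Corollary~\ref{cor:phi_u_moebius}, which handles quadruples lying entirely in $F$, this means $\ov\ga$ is already M\"obius on $F\cup\{u,v\}$, and the whole task is to extend it to $X$. To do so I would pass to the metric $|\cdot|_v$. By Corollary~\ref{cor:phi_u_moebius}, $\ga=\eta_u$ is an \emph{isometry} of $(F,|\cdot|_v)$, and since $F$ lies on a sphere between $u$ and $v$, it is the metric sphere of some radius $\rho$ centered at $u$ in $X_v$. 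The extension $\ov\ga$ should then be the M\"obius involution of $X_v$ that interchanges $u$ with the infinitely remote point $v$, carries this sphere to itself, and restricts to $\ga$ on it. I would build it by coordinatizing $X_v$ through the $\R$-circles and $\C$-circles issuing from $u$, giving each point an ``$F$-coordinate'' on the sphere and a radial coordinate, letting $\ov\ga$ act as $\ga$ on the $F$-coordinate and as the inversion $t\mapsto\rho^2/t$ on the radial coordinate, and then verifying via the distance formula $r^4=a^4+b^4$ of Proposition~\ref{pro:explicit_distance} that the resulting map preserves cross-ratios.

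The main obstacle is exactly this last step. Producing radial/angular coordinates on $X_v$ that are genuinely compatible with the M\"obius structure, and checking that the extended map preserves \emph{all} cross-ratio triples rather than only distances along individual $\R$- and $\C$-lines, is where the real work lies. I expect the distance formula of Proposition~\ref{pro:explicit_distance} together with the isometry statement of Corollary~\ref{cor:phi_u_moebius} to carry this verification, whereas the exhaustion claim $\ga=\eta_u$ of the first step is a comparatively soft parameter count.
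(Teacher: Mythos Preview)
Your proposal has a genuine gap in the construction of the extension, and the paper's route is entirely different and much shorter.

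The paper's argument is group-theoretic and uses Proposition~\ref{pro:vshift_isometry}, proved immediately before, as its only substantive input. One identifies $F=\di(\frac{1}{2}\hyp^2)$. For any $\om\in F$, a vertical shift of $X_\om$ restricts on $F$ to the boundary map of a parabolic rotation of $\frac{1}{2}\hyp^2$ fixing $\om$. Since parabolic rotations (with varying center at infinity) generate the full orientation-preserving isometry group of $\frac{1}{2}\hyp^2$, and since a fixed-point-free M\"obius involution $\ga$ of $F$ is the boundary map of a central symmetry (which is orientation preserving), $\ga$ can be written as a finite product of restrictions of vertical shifts of $X$, each already known to be a M\"obius automorphism of $X$. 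The composition of these vertical shifts is the desired $\ov\ga$. No coordinates, no distance formula, no identification $\ga=\eta_u$ is needed.

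The gap in your approach is the coordinatization in the final step. You propose to give every point of $X_v$ an ``$F$-coordinate on the sphere'' together with a radial coordinate. But $F$ is a $1$-dimensional circle while the metric sphere of radius $\rho$ about $u$ in $X_v$ has dimension $\dim X-1$, which is at least $2$ as soon as $\dim X\ge 3$; the sphere is much larger than $F$, and a generic point of $X_v$ does not lie on any $\R$-circle through $u$ that also meets $F$. Thus there is no ``$F$-coordinate'' for most points, and you have no prescription for how $\ov\ga$ should act off the \semi-plane spanned by $F$ and a single $\R$-line. Fixing this would require you to already know how $\ga$ propagates to the whole sphere around $u$, which is exactly what the lemma is trying to establish. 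The distance formula of Proposition~\ref{pro:explicit_distance} controls distances once you know where points go; it cannot by itself tell you where to send them.

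Your first step, realizing $\ga$ as some $\eta_u$, is correct in spirit and could be made rigorous, but it is not needed: the paper bypasses it entirely by working with the group generated by vertical shifts.
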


\begin{proof} We identify
$F=\di Y$,
where
$Y=\frac{1}{2}\hyp^2$
is the hyperbolic plane of constant curvature
$-4$.
For a fixed
$\om\in F$
any vertical shift
$\al:X_\om\to X_\om$
restricted to
$F$
is induced by a parabolic rotation 
$\wt\al:Y\to Y$,
which is an isometry without fixed points in
$Y$
having the unique fixed point
$\om\in\di Y$.
One easily sees that parabolic rotations generate the 
isometry group of
$Y$
preserving orientation.

The involution
$\ga$
is induced by a central symmetry of
$Y$.
Thus 
$\ga$
can be represented by a composition of vertical
shifts with appropriate fixed points on
$F$.
Hence
$\ga$
extends to a M\"obius map 
$\ov\ga:X\to X$.
\end{proof}

\begin{lem}\label{lem:homothety_existence} Given distinct
$o$, $\om\in X$, $\la>0$,
there is a homothety
$\ov\ga:X_\om\to X_\om$
with coefficient
$\la$, 
$\ov\ga(o)=o$,
preserving the fiber orientation
$O$.
\end{lem}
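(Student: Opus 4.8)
The plan is to realize the required homothety as the boundary extension of a transvection of the complex hyperbolic plane whose ideal boundary is the $\C$-circle through $o$ and $\om$, and then to read off both the coefficient and the orientation behaviour from this realization.

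First I would let $F\sub X$ be the unique $\C$-circle through $o$, $\om$ provided by axiom~($\rm E_\C$), and identify $F=\di Y$ with $Y=\frac12\hyp^2$ exactly as in the proof of Lemma~\ref{lem:involution_extended}, so that $o$, $\om$ correspond to the two endpoints at infinity of a geodesic axis $\al\sub Y$. A transvection $\tau$ along $\al$ of translation length $t\in\R$ fixes both $o$ and $\om$ and acts on $F_\om$, taken with the metric $|\cdot|_\om$, as a homothety whose coefficient is a fixed strictly increasing function of $t$ ranging over all of $(0,\infty)$; I would choose $t$ so that this coefficient equals $\la$.

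Next I would write $\tau=s_{a_2}\circ s_{a_1}$ as a composition of the two central symmetries of $Y$ at suitable points $a_1,a_2\in\al$. Each $s_{a_i}$ induces on $F$ a M\"obius involution $\eta_i\colon F\to F$ without fixed points, which by Lemma~\ref{lem:involution_extended} extends to a M\"obius map $\ov\eta_i\colon X\to X$. Setting $\ov\ga=\ov\eta_2\circ\ov\eta_1$ then yields a M\"obius map of $X$ whose restriction to $F$ is $\eta_2\circ\eta_1=\tau$. Since $\om\in F$ and $\tau(\om)=\om$, the map $\ov\ga$ fixes $\om$ and hence restricts to a homothety of $X_\om$; because the coefficient of a homothety of $X_\om$ is the same for every pair of points, evaluating it on $F$ shows it equals $\la$, while $\ov\ga(o)=\tau(o)=o$.

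It remains to check that $\ov\ga$ preserves the fiber orientation $O$, and this I expect to be the main obstacle. As $\ov\ga$ is M\"obius and fixes $\om$, it permutes the $\C$-circles through $\om$, i.e.\ the fibers of $\pi_\om$, and thus induces a homeomorphism of the connected base $B_\om$. For each $b\in B_\om$ the restriction of $\ov\ga$ to $F_b\to F_{\ov\ga(b)}$ is an orientation-preserving or orientation-reversing homeomorphism of circles oriented by $O$; denote the corresponding sign by $\ep(b)\in\{\pm1\}$. Using continuity of $\ov\ga$ together with the continuous dependence of $O$ on the fiber through the transport isometries $\mu_{bb'}$ (Lemma~\ref{lem:continuity_project_fiber}), the function $b\mapsto\ep(b)$ is locally constant, hence constant on the connected base (Lemma~\ref{lem:base_unique_geodesic}). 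On the fiber $F=F_{b_0}$, $b_0=\pi_\om(o)$, the map $\ov\ga$ acts as the transvection $\tau$, which is orientation-preserving because an orientation-preserving isometry of $Y$ induces an orientation-preserving homeomorphism of $\di Y=S^1$. Therefore $\ep\equiv+1$, so $\ov\ga$ preserves $O$, completing the construction.
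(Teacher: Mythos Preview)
Your proof is correct and follows essentially the same route as the paper: both take the $\C$-circle $F$ through $o,\om$, identify $F=\di(\tfrac12\hyp^2)$, realize the desired homothety on $F$ as a transvection written as a product of two central symmetries, and extend each of the resulting fixed-point-free M\"obius involutions to $X$ via Lemma~\ref{lem:involution_extended}. The only difference is that you supply an explicit continuity argument for preservation of the fiber orientation $O$, whereas the paper's proof simply records that the chosen homothety $\ga$ on $F$ is orientation-preserving and leaves the conclusion for $\ov\ga$ on all fibers implicit; your locally-constant-sign argument over the connected base $B_\om$ is a clean way to fill in that detail.
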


\begin{proof} Let
$F$
be the 
$\C$-circle
determined by
$o$, $\om$.
Using representation
$F=\di Y$
for 
$Y=\frac{1}{2}\hyp^2$
we write 
$\ga=\al\circ\be$
for any orientation preserving homothety
$\ga:F\to F$
with fixed
$o$, $\om$,
where
$\al$, $\be:F\to F$
are M\"obius involutions without fixed points.
By Lemma~\ref{lem:involution_extended},
$\ga$
extends to a M\"obius
$\ov\ga:X\to X$. 
\end{proof}

Assuming that
$\om\in X$
is fixed we denote
$B=B_\om$.
For an isometry
$\al:X_\om\to X_\om$
preserving the fiber orientation 
$O$,
we use notation
$\rot\al$
for the rotational part of the induced isometry
$\ov\al:B\to B$.
An isometry
$\ga:X_\om\to X_\om$
is called a {\em shift,} if
$\ga$
preserves 
$O$
and
$\rot\ga=\id$. 

\begin{lem}\label{lem:refined_shifts} For any
$x$, $x'\in X_\om$
there is a shift
$\eta=\eta_{xx'}:X_\om\to X_\om$
with
$\eta(x)=x'$.
\end{lem}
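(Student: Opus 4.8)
The plan is to construct the required shift $\eta_{xx'}$ as a composition of two simpler maps whose existence we already control. The key observation is that a shift is an isometry of $X_\om$ preserving the fiber orientation $O$ with trivial rotational part. We have two families of such maps already at our disposal: the vertical shifts $\ga_s$ of Proposition~\ref{pro:vshift_isometry}, which move points along a single fiber (and hence induce the identity on the base $B$), and the shifts obtained from homotheties. First I would reduce the problem to the base. Writing $\ov x=\pi_\om(x)$ and $\ov{x'}=\pi_\om(x')$ for the images in $B$, the induced isometry $\ov\eta:B\to B$ of any candidate must be a shift of the normed vector space $B$ (by Proposition~\ref{pro:base_normed_space}) taking $\ov x$ to $\ov{x'}$, i.e.\ translation by the vector $\ov{x'}-\ov x$.

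\textbf{Step 1: produce a base translation.} I would first construct an isometry $\al:X_\om\to X_\om$ preserving $O$ whose induced map on $B$ is the translation by $\ov{x'}-\ov x$. The natural way is to use Lemma~\ref{lem:homothety_existence}: choosing appropriate poles and coefficients one obtains homotheties of $X_\om$, and by composing two homotheties with suitably placed fixed points (centered so that their product has coefficient $1$) one obtains an isometry whose rotational part is trivial but which translates the base. Concretely, given the target translation vector, pick a point $p$ on a geodesic line in $B$ through $\ov x$ in the direction of $\ov{x'}$, realize the corresponding $\C$-circle and use the homothety construction along that $\R$-line to move $\ov x$ to $\ov{x'}$. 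The resulting map $\al$ satisfies $\rot\al=\id$ and $\ov\al(\ov x)=\ov{x'}$, so $\al(x)$ and $x'$ lie in the \emph{same} fiber $F_{\ov{x'}}$.

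\textbf{Step 2: correct within the fiber.} Since $\al(x)$ and $x'$ now lie on the common $\C$-line $F_{\ov{x'}}$, I would apply a vertical shift $\ga_s$ that carries $\al(x)$ to $x'$ along that fiber. By Proposition~\ref{pro:vshift_isometry} every vertical shift is an isometry of $X_\om$, and by construction it preserves the fiber orientation $O$ and induces the identity on $B$, so $\rot\ga_s=\id$. The composition $\eta=\ga_s\circ\al$ is then an isometry of $X_\om$ preserving $O$, with $\rot\eta=\rot\ga_s\cdot\rot\al=\id$, hence a shift, and $\eta(x)=\ga_s(\al(x))=x'$ as required.

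\textbf{The hard part} will be Step~1: ensuring that the isometry of $X_\om$ produced from homotheties really has \emph{trivial} rotational part while achieving an arbitrary prescribed base translation. One must verify that the composition of homotheties used to translate the base does not introduce an unwanted rotation, which requires a careful bookkeeping of how $\rot$ behaves under composition together with the fact that $B$ is a genuine normed vector space, so that its translation subgroup is transitive and disjoint (up to the rotational factor) from the point-stabilizers realized by homotheties fixing $\om$. Once the rotational part is shown to cancel, the fiber-correction in Step~2 is immediate from the already-established properties of vertical shifts.
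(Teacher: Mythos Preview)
Your proposal correctly identifies the structure of the argument---reduce to a base translation, then fix up within the fiber by a vertical shift---and Step~2 is fine. But Step~1 is where the entire content of the lemma lives, and your sketch does not supply the missing idea.

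At this point in the paper the only M\"obius automorphisms available besides vertical shifts are the homotheties of Lemma~\ref{lem:homothety_existence}. These come with a prescribed dilation factor and fixed point, and they preserve the fiber orientation, but there is \emph{no} control whatsoever over their rotational part on $B$. Composing two such homotheties with reciprocal coefficients and different centers does yield an isometry, but its rotational part is $(\rot\psi)^{-1}\rot\phi$, and you have no reason to expect this to be the identity. Your suggestion to ``use the homothety construction along that $\R$-line'' presupposes something like a pure homothety (one preserving every $\R$-line through its center), but pure homotheties are only constructed later, in Proposition~\ref{pro:pure_homothety}, and in fact depend on the present lemma through the lifting-isometry machinery of sect.~\ref{subsect:lifting_isometry}. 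So this is circular.

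The paper's proof fills exactly this gap with a limiting argument. One takes homotheties $\phi,\psi$ with large coefficient $\la=1/\ep$ fixing $x,x'$ respectively. Since the rotation group $\bO(B)$ is compact, a pigeonhole/recurrence argument gives a common power $mr$ with $|\rot\phi^{mr}|,|\rot\psi^{mr}|<\ep$. The isometry $\eta_\ep=\psi_\ep^{-1}\circ\phi_\ep$ then has $|\rot\eta_\ep|<2\ep$ and $|\eta_\ep(x)\,x'|=|xx'|/\la^{mr}\to 0$. Passing to the limit $\ep\to 0$ produces the desired shift with $\rot\eta=\id$ and $\eta(x)=x'$. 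The key mechanism---killing the unknown rotation by exploiting compactness and taking a limit---is absent from your plan.
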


\begin{proof} Let
$\bO=\bO(B)$
be the isometry group of 
$B$
preserving a fixed point
$\ov o\in B$.
For
$A\in\bO$
we put
$|A|=\sup_v|vA(v)|$,
where the supremum is taken over all unit vectors
$v\in B$.
Note that
$\bO$
is compact and that
$|A|\le 2$
for every
$A\in\bO$.
We shall use the following well known
(and obvious) fact. For every
$\de>0$
there is
$K(\de)=K(B,\de)\in\N$
such that for any
$A\in\bO$
there is an integer
$m$, $1\le m\le K(\de)$,
with
$|A^m|<\de$.

By Lemma~\ref{lem:homothety_existence}, for every
$\ep>0$,
there are homotheties
$\phi$, $\psi:X_\om\to X_\om$
with coefficient
$\la=1/\ep$, $\phi(x)=x$, $\psi(x')=x'$.
Next, there is an integer
$m$, $1\le m\le K(\ep/K(\ep))$,
such that
$|\rot\phi^m|<\ep/K(\ep)$.
Applying the same argument to
$\psi^m$,
we find an integer
$r$, $1\le r\le K(\ep)$,
such that
$|\rot\psi^{mr}|<\ep$.
Then also
$|\rot\phi^{mr}|<\ep$,
and for
$\phi_\ep=\phi^{mr}$, $\psi_\ep=\psi^{mr}$
we have
$\phi_\ep(x)=x$, $|\rot\phi_\ep|<\ep$
and
$\psi_\ep(x')=x'$, $|\rot\psi_\ep|<\ep$.
For the isometry
$\eta_\ep=\psi_\ep^{-1}\circ\phi_\ep$,
we have
$|\rot\eta_\ep|<2\ep$, $|\eta_\ep(x)x'|=|xx'|/\la_\ep$,
where
$\la_\ep=\la^{mr}\to\infty$
as
$\ep\to 0$.
Thus
$\eta_\ep\to\eta$
as
$\ep\to 0$
with
$\eta(x)=x'$, $\rot\eta=\id$.
The isometry
$\eta$
preserves the fiber orientation
$O$
because the homotheties
$\phi$, $\psi$
do.
\end{proof}

\subsection{Lifting isometry}
\label{subsect:lifting_isometry}

We fix 
$\om\in X$
and use notations
$B=B_\om$, $\pi=\pi_\om:X_\om\to B$.
Let
$P\sub B$
be a {\em pointed} oriented parallelogram,
i.e., we assume that an orientation and a vertex
$o$
of
$P$
are fixed. We have a map
$\tau_P:F\to F$,
where
$F\sub X_\om$
is the fiber of
$\pi$
over
$o$, $F=\pi^{-1}(o)$.
Namely, given
$x\in F$, 
by axiom~($\rm E_\R$) there is a unique
$\R$-line
in
$X_\om$
through
$x$
that projects down by
$\pi$
to the first (according to the orientation) side of
$P$
containing
$o$.
In that way, we lift the sides of
$P$
to
$X_\om$
in the cyclic order according to the orientation and starting
with
$o$
which is initially lifted to
$x$.
Then
$\tau_P(x)\in F$
is the resulting lift of the parallelogram sides.

\begin{lem}\label{lem:triangle_lift} The map
$\tau_P:F\to F$
is an isometry that preserves orientation and, therefore,
it acts on
$F$
as a vertical shift.
\end{lem}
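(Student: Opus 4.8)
The plan is to decompose $\tau_P$ into its four consecutive side-lifts and to show that each of them is an orientation-preserving isometry between the corresponding fibers; then $\tau_P$, being a composition of such maps, is an orientation-preserving isometry of $F$, and the final identification with a vertical shift will follow from equivariance under the vertical shift action.

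Label the vertices of $P$ in cyclic order as $o=o_1,o_2,o_3,o_4$ and set $F_i=\pi^{-1}(o_i)$, so that $F_1=F$. The lifting procedure is the composition $\tau_P=L_4\circ L_3\circ L_2\circ L_1$, where $L_i\colon F_i\to F_{i+1}$ (indices mod $4$) sends $x\in F_i$ to $\si_x\cap F_{i+1}$, and $\si_x\sub X_\om$ is the unique $\R$-line through $x$ projecting under $\pi$ to the $i$-th side of $P$ (uniqueness by axiom~($\rm E_\R$)). Since $\pi$ is injective on each $\R$-line, the intersection $\si_x\cap F_{i+1}$ is a single point, so each $L_i$ is well defined.

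The heart of the matter is that every side-lift $L=L_i$ commutes with all vertical shifts. Recall that a vertical shift $\ga\colon X_\om\to X_\om$ is a M\"obius automorphism of $X$ and an isometry of $X_\om$ by Proposition~\ref{pro:vshift_isometry}, and that it acts inside each fiber of $\pi$, so $\pi\circ\ga=\pi$. Being M\"obius, $\ga$ carries $\R$-lines to $\R$-lines; hence $\ga(\si_x)$ is an $\R$-line through $\ga(x)$ with $\pi(\ga(\si_x))=\pi(\si_x)$ equal to the $i$-th side, so by the uniqueness in axiom~($\rm E_\R$) it coincides with $\si_{\ga(x)}$. Consequently $\ga(L(x))=L(\ga(x))$, i.e.\ $L$ is equivariant. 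Taking for $\ga$ the vertical shift with $\ga(x)=y$, equivariance gives $L(y)=\ga(L(x))$, and since a vertical shift has constant displacement on $X_\om$ (by its definition via the simultaneous orientation $O$ together with Proposition~\ref{pro:vshift_isometry}) we obtain $|L(x)\,L(y)|_\om=|x\ga(x)|_\om=|xy|_\om$. Thus each $L_i$ is an isometry, and equivariance under the oriented $\R$-action of vertical shifts forces $L_i$ to preserve orientation.

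Composing the four side-lifts, $\tau_P\colon F\to F$ is an orientation-preserving isometry that again commutes with every vertical shift. The vertical shifts act simply transitively on the oriented line $F$, so any self-map commuting with this action is one of the shifts: letting $\ga_0$ be the shift with $\ga_0(p_0)=\tau_P(p_0)$ for a fixed $p_0\in F$, for every vertical shift $\ga$ we have $\tau_P(\ga(p_0))=\ga(\tau_P(p_0))=\ga(\ga_0(p_0))=\ga_0(\ga(p_0))$, and transitivity yields $\tau_P=\ga_0$. This is exactly the assertion that $\tau_P$ acts on $F$ as a vertical shift. I expect the single-side step to be the main obstacle: one must be certain that a vertical shift is genuinely M\"obius, so that it sends the lift $\si_x$ to $\si_{\ga(x)}$, and that its displacement is the same on $F_i$ and $F_{i+1}$; both facts come from the construction of vertical shifts and Proposition~\ref{pro:vshift_isometry}, and they are precisely what makes the side-lifts isometric rather than merely continuous.
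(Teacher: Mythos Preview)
Your proof is correct, but it takes a somewhat longer route than the paper's. The paper simply observes that each side-lift $L_i$ is exactly the map $\mu_{o_io_{i+1}}$, which is already known to be an isometry by Lemma~\ref{lem:mu_isometric} and to preserve the fiber orientation by Lemma~\ref{lem:fiber_orient}; the composition is then an orientation-preserving self-isometry of $F$, hence a vertical shift. You instead re-derive both facts by proving that every $L_i$ is equivariant under the vertical shift action and then exploit constant displacement; this is sound (Proposition~\ref{pro:vshift_isometry} indeed gives that vertical shifts are isometries of $X_\om$, hence M\"obius, and send $\R$-lines to $\R$-lines), but it is essentially a second proof of Lemma~\ref{lem:mu_isometric} in disguise. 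Your final identification of $\tau_P$ with a vertical shift via the simply transitive action is also correct and a bit more explicit than the paper's ``therefore'', which just uses that an orientation-preserving isometry of a line is a shift. In short: same decomposition into four side-lifts, same conclusion, but the paper cites the ready-made isometry $\mu_{bb'}$ directly while you rebuild it from equivariance.
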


\begin{proof} The map
$\tau_P$
is obtained as a composition of four
$\C$-line
isometries of type
$\mu_{bb'}$,
see sect.~\ref{subsect:vertical_shifts}. Any isometry
$\mu_{bb'}$
preserves orientation,
see Lemma~\ref{lem:fiber_orient}. Thus
$\tau_P:F\to F$
is an isometry preserving orientation.
\end{proof}

There is a unique vertical shift
$\ga:X_\om\to X_\om$
with
$\ga|F=\tau_P$.
Furthermore, every shift
$\eta:X_\om\to X_\om$
commutes with
$\ga$,
thus the extension
$\ga$
of
$\tau_P$
coincides with that of
$\tau_{P'}$
for any 
$P'$
obtained from
$P$
by a shift of the base
$B$.
We use the same notation for the extension
$\tau_P:X_\om\to X_\om$
and call it a {\em lifting isometry}.

Since the group of vertical shifts is commutative, we have
$\tau_P\circ\tau_{P'}=\tau_{P'}\circ\tau_P$
for any (pointed oriented) parallelograms and even for any closed
oriented polygons
$P$, $P'\sub B$.

Let
$Q\sub B$
be a closed, oriented polygon. Adding a segment
$qq'\sub B$
between points
$q$, $q'\in Q$
we obtain closed, oriented polygons
$P$, $P'$
such that
$Q\cup qq'=P\cup P'$,
the orientations of
$P$, $P'$
coincide with that of
$Q$
along
$Q$,
and the segment
$qq'=P\cap P'$
receives from
$P$, $P'$
opposite orientations. In this case we use notation
$Q=P\cup P'$.

\begin{lem}\label{lem:adding_lifts} In the notation above we have
$\tau_Q=\tau_{P'}\circ\tau_P$.
\end{lem}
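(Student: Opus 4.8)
The plan is to work entirely with the path-lifting maps out of which the $\tau$'s are built, and to exploit two formal properties: lifts compose under concatenation of oriented broken geodesics in $B$, and the lift of a reversed path is the inverse lift. For an oriented geodesic segment from $b$ to $b'$ in $B$, lifting it by $\R$-lines (as in the construction of $\tau_P$) carries $x\in F_b$ to the point of $F_{b'}$ on the $\R$-line through $x$ that projects to $bb'$; since $B$ is uniquely geodesic by Lemma~\ref{lem:base_unique_geodesic}, this $\R$-line is exactly the one through $x$ and $\om$ hitting $F_{b'}$, so the side-lift coincides with $\mu_{bb'}\colon F_b\to F_{b'}$. For an oriented broken geodesic $\al$ I write $\lift_\al$ for the corresponding composition of maps $\mu_{bb'}$ along its segments. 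Two properties are then immediate: $\lift_{\al\be}=\lift_\be\circ\lift_\al$ for a concatenation, and $\lift_{\bar\al}=\lift_\al^{-1}$ for the reversal $\bar\al$, the latter because $\mu_{b'b}=\mu_{bb'}^{-1}$ (both feet lie on the same $\R$-line).

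Next I would fix $q$ as a common vertex and describe all three boundary loops based at $q$. Writing $s$ for the oriented diagonal from $q$ to $q'$ and $\bar s$ for its reverse, the segment $qq'$ splits $\d Q$ into the arc $A$ from $q$ to $q'$ and the arc $C$ from $q'$ to $q$, so that as based loops $\d Q=A\cdot C$, while $\d P=A\cdot\bar s$ and $\d P'=s\cdot C$. Here the diagonal enters $\d P$ as $\bar s$ and $\d P'$ as $s$, which is precisely the opposite-orientation hypothesis of the lemma. Restricting the three lifting isometries to the fiber $F=\pi_\om^{-1}(q)$ and using the concatenation rule gives $\tau_P|_F=\lift_{\bar s}\circ\lift_A=\lift_s^{-1}\circ\lift_A$, then $\tau_{P'}|_F=\lift_C\circ\lift_s$, and $\tau_Q|_F=\lift_C\circ\lift_A$.

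The shared diagonal then cancels: $\tau_{P'}|_F\circ\tau_P|_F=\lift_C\circ\lift_s\circ\lift_s^{-1}\circ\lift_A=\lift_C\circ\lift_A=\tau_Q|_F$. Finally I would pass from the single fiber to all of $X_\om$. Since the extension of a lifting isometry to a vertical shift is independent of the chosen basepoint (as recorded just before the lemma), basing $\tau_P$, $\tau_{P'}$, $\tau_Q$ at the common vertex $q$ is legitimate; and because the vertical shifts form a commutative group, $\tau_{P'}\circ\tau_P$ is again a vertical shift. Two vertical shifts that agree on the one fiber $F$ have the same displacement and hence coincide on $X_\om$, so $\tau_Q=\tau_{P'}\circ\tau_P$ as claimed.

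The only genuinely delicate point is the identification of the side-lift with $\mu_{bb'}$ together with the reversal identity $\mu_{b'b}=\mu_{bb'}^{-1}$; once these are in place the argument is the purely formal cancellation of the shared diagonal, and the orientation bookkeeping — that $qq'$ receives opposite orientations in $P$ and $P'$ — is exactly what makes $\lift_s$ and $\lift_s^{-1}$ appear and annihilate one another.
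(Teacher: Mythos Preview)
Your proof is correct and follows essentially the same approach as the paper's own argument: base all three lifts at the common point $q$, decompose the boundary loops so that the shared diagonal $qq'$ appears once with each orientation, and observe that the two diagonal lifts cancel. Your write-up is more formal---introducing the $\lift_\al$ notation and making the identifications $\lift_{\bar s}=\lift_s^{-1}$ and side-lift $=\mu_{bb'}$ explicit---whereas the paper simply traces the lift of $P$ (ending on $q'q$) followed by that of $P'$ (starting on $qq'$) and notes ``clearly'' that the result coincides with $\tau_Q$; but the underlying idea is identical.
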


\begin{proof} We fix
$q\in Q\cap P\cap P'$
as the base point. Moving from
$q$
along
$Q$
in the direction prescribed by the orientation of
$Q$,
we also move along one of
$P$, $P'$
according to the induced orientation. We assume 
without loss of generality that this is the polygon
$P$.
In that way, we first lift
$P$
to
$X_\om$
starting with some point
$o\in F$,
where
$F$
is the fiber of the projection
$\pi:X_\om\to B$
over
$q$,
such that the side
$q'q\sub P$
is the last one while lifting
$P$.
Now, we lift
$P'$
to
$X_\om$
starting with
$o'=\tau_P(o)\in F$
moving first along the side
$qq'\sub P'$.
Then clearly the resulting lift of
$P'$
gives
$\tau_Q(o)=\tau_{P'}(o')\in F$.
Thus
$\tau_Q=\tau_{P'}\circ\tau_P$.
\end{proof}

For a vertical shift
$\ga:X_\om\to X_\om$
we denote
$|\ga|=|x\ga(x)|_\om$
the displacement of
$\ga$.
This is independent of
$x\in X_\om$.
If
$\ga$, $\ga'$
are vertical shifts in the same direction, then
$|\ga\circ\ga'|^2=|\ga|^2+|\ga'|^2$.

\begin{lem}\label{lem:equal_shifts_triangles} Given
a representation 
$P=T\cup T'$
of an oriented parallelogram
$P\sub B$
by oriented triangles
$T$, $T'$
with induced from
$P$
orientations, whose common side
$qq'$
is a diagonal of
$P$,
we have
$\tau_T=\tau_{T'}$.
\end{lem}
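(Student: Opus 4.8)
The plan is to encode each lifting isometry as a signed real number and to prove that this number is an alternating function of the edge-vectors of the triangle. By \ref{lem:triangle_lift} every $\tau_P$ is a vertical shift, and by the remark preceding \ref{lem:adding_lifts} the vertical shifts in a fixed direction form the group $(\R,+)$ via $\ga\mapsto\pm|\ga|^2$. For an oriented polygon $Q\sub B$ I write $\alpha(Q)\in\R$ for the signed squared displacement of $\tau_Q$, so that $\alpha(\bar Q)=-\alpha(Q)$ for the reversed polygon (its lift is run backwards) and, by \ref{lem:adding_lifts}, $\alpha(Q)=\alpha(P)+\alpha(P')$ whenever $Q=P\cup P'$. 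Since $\tau_P$ is unchanged under translations of the base, $\alpha$ is translation invariant; hence for an oriented triangle with consecutive edge-vectors $u,v$ (third edge $-(u+v)$) the quantity $g(u,v):=\alpha(T_{u,v})$ depends only on $u,v$.

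In these terms the two triangles of the parallelogram with vertices $q,q+u,q+u+v,q+v$ are $T=T_{u,v}$ and, reading off its edges, $T'$ with $\alpha(T')=g(u+v,-u)$. Reversing orientation in a triangle gives $g(x+y,-y)=-g(x,y)$; taking $x=v$, $y=u$ yields $\alpha(T')=-g(v,u)$. Thus the asserted identity $\tau_T=\tau_{T'}$ is exactly the antisymmetry $g(u,v)=-g(v,u)$, i.e.\ the vanishing of the symmetric part $h(u,v):=g(u,v)+g(v,u)$.

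To pin down $h$ I would use the midpoint subdivision of $T_{u,v}$: adding the three midsegments splits it into three corner triangles, each a translate of $T_{u/2,v/2}$ and so of value $g(\tfrac u2,\tfrac v2)$, together with the medial triangle, whose induced orientation makes its edge-vectors $\big(\tfrac{u+v}{2},-\tfrac u2\big)$ and hence value $g\big(\tfrac{u+v}{2},-\tfrac u2\big)=-g(\tfrac v2,\tfrac u2)$ by the reversal identity. Iterating \ref{lem:adding_lifts} gives $g(u,v)=3\,g(\tfrac u2,\tfrac v2)-g(\tfrac v2,\tfrac u2)$, and symmetrizing in $(u,v)$ leaves $h(u,v)=2\,h(\tfrac u2,\tfrac v2)$, so $h(u,v)=2^{n}\,h(2^{-n}u,2^{-n}v)$ for every $n$.

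The remaining point, and the real obstacle, is an a priori quadratic bound incompatible with this degree-one scaling unless $h\equiv0$. Tracking one point $x$ of the base fiber through the three isometries $\mu_{bb'}$ whose composition is $\tau_T$, each step moves $x$ within fiber-distance equal to the corresponding base edge-length (by definition of the base metric), so $|x\,\tau_T(x)|_\om\le|u|+|v|+|u+v|$ by the triangle inequality in $X_\om$, whence $|\alpha(T_{u,v})|=|\tau_T|^2\le(|u|+|v|+|u+v|)^2$. Applying this at scale $2^{-n}$ gives $|h(2^{-n}u,2^{-n}v)|=O(4^{-n})$, and the scaling identity then forces $|h(u,v)|=2^{n}\,O(4^{-n})\to0$. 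Hence $h\equiv0$, $g$ is antisymmetric, and $\alpha(T)=\alpha(T')$, that is $\tau_T=\tau_{T'}$. This is the expected fact that $\alpha$ is the symplectic area underlying the Kor\'anyi gauge of \ref{rem:koranyi_gauge}, the subdivision identity together with the quadratic displacement bound being exactly what fixes it without invoking any reflection.
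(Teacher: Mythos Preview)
Your argument is correct and takes a genuinely different route from the paper's. The paper subdivides the parallelogram $P$ into $n^2$ congruent sub-parallelograms $P_a$, each with $|\tau_{P_a}|^2=|\tau_P|^2/n^2$; since the diagonal $qq'$ meets only $n$ of them while the remaining $n(n-1)$ split evenly between $T$ and $T'$, one obtains $\tau_T=\lim_n\prod_{a\in C}\tau_a=\lim_n\prod_{a\in C'}\tau_a=\tau_{T'}$. You instead work directly with triangles: the medial subdivision gives the functional equation $g(u,v)=3g(\tfrac u2,\tfrac v2)-g(\tfrac v2,\tfrac u2)$, whose symmetrization $h(u,v)=2h(\tfrac u2,\tfrac v2)$ is incompatible with the coarse bound $|g(u,v)|\le(|u|+|v|+|u+v|)^2$ (coming from the triangle inequality applied to the three steps $\mu_{bb'}$) unless $h\equiv 0$.

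Both proofs exploit the same tension between the linear scaling of $|\tau|$ under refinement and the additivity of $|\cdot|^2$, but yours makes the alternation of the ``area'' functional $\alpha$ explicit and is somewhat more self-contained: the paper's limiting step $\tau_T=\lim_n\prod_{a\in C}\tau_a$ tacitly requires controlling the lift of the staircase region along the diagonal, which amounts to the same quadratic displacement bound you isolate but is not spelled out there. A minor point worth stating explicitly in your write-up is the cyclic invariance $g(a,b)=g(b,-a-b)$, used when you read off the edge-vectors of the medial triangle; it follows immediately from base-point independence of the extended $\tau_P$, but deserves one sentence.
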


\begin{proof} For every
$n\in\N$
we consider the subdivision
$P=\cup_{a\in A}P_a$
of
$P$
into
$|A|=n^2$
congruent parallelograms
$P_a$
with sides parallel to those of
$P$.
We assume that the orientation of each
$P_a$
is induced by that of
$P$.
The parallelograms
$P_a$, $P_{a'}$
are obtained from each other by a shift of the base
$B$,
thus
$\tau_a=\tau_{a'}$
for each
$a$, $a'\in A$,
where
$\tau_a=\tau_{P_a}$.
 
By Lemma~\ref{lem:adding_lifts} we have
$\prod_{a\in A}\tau_a=\tau_P$,
hence
$|\tau_P|^2=n^2|\tau_a|^2$
for every 
$a\in A$.
Therefore
$|\tau_a|^2=|\tau_P|^2/n^2\to 0$
as
$n\to\infty$.
We subdivide the set 
$A$
into three disjoint subsets
$A=C\cup C'\cup D$,
where
$a\in D$
if and only if the interior of
$P_a$
intersects the diagonal
$qq'$
of
$P$,
and 
$a\in C$
if
$P_a\sub T$, $a\in C'$
if
$P_a\sub T'$.
Then
$|C|=|C'|=\frac{n(n-1)}{2}$, $|D|=n$.
We conclude that
$\prod_{a\in C}\tau_a=\prod_{a\in C'}\tau_a$
and
$$\left|\prod_{a\in D}\tau_a\right|^2=n|\tau_P|^2/n^2\to 0$$
as
$n\to\infty$.
By Lemma~\ref{lem:adding_lifts},
$\tau_P=\tau_T\circ\tau_{T'}$.
It follows that
$$\tau_T=\lim_n\prod_{a\in C}\tau_a=\lim_n\prod_{a\in C'}\tau_a=\tau_{T'}.$$
\end{proof}

The following Lemma will be used in sect.~\ref{subsect:COS_suspension},
see the proof of Proposition~\ref{pro:sphere_isometric_cos}.

\begin{lem}\label{lem:area_lift_triange} Let
$T=vyz\sub B$
be an oriented triangle. Then for the triangle
$P=xyz$
with
$x\in[vz]$
we have
$$|\tau_P|^2=\frac{|xz|}{|vz|}|\tau_T|^2.$$
\end{lem}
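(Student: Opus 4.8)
The plan is to leverage the additivity of lifting isometries from Lemma~\ref{lem:adding_lifts} together with the ``squared'' addition law for vertical shifts in the same direction, namely $|\ga\circ\ga'|^2=|\ga|^2+|\ga'|^2$. The key observation is that displacement-squared behaves like an \emph{area} functional on oriented polygons in the base $B$: by Lemma~\ref{lem:equal_shifts_triangles}, any two triangulations of the same oriented parallelogram produce the same lifting isometry, and by Lemma~\ref{lem:adding_lifts} cutting a polygon into pieces multiplies the lifts (equivalently, adds their squared displacements when orientations are compatible). Thus I expect $T\mapsto|\tau_T|^2$ to be, up to a fixed constant, proportional to the Euclidean area of $T$ in the normed vector space $B$ from Proposition~\ref{pro:base_normed_space}. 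The target formula $|\tau_P|^2=\frac{|xz|}{|vz|}|\tau_T|^2$ is exactly the statement that the areas of the triangles $xyz$ and $vyz$, which share the base $yz$ and have apexes on the line through $v$ and $z$, are in the ratio $\frac{|xz|}{|vz|}$ of the corresponding base segments along $[vz]$.

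First I would reduce to the case where the ratio $\frac{|xz|}{|vz|}$ is rational, say $\frac{|xz|}{|vz|}=\frac{p}{q}$, by a continuity/approximation argument: the point $x$ varies continuously along $[vz]$, and by Lemma~\ref{lem:continuity_project_fiber} the isometries $\mu_{bb'}$ and hence the lifts $\tau_P$ depend continuously on their defining data, so $|\tau_P|^2$ is a continuous function of $x\in[vz]$. It therefore suffices to prove the identity on the dense set of rational ratios. Second, for a rational ratio I would subdivide the segment $[vz]$ into $q$ equal subsegments with division points $z=z_0,z_1,\dots,z_q=v$, where $x=z_p$, and consider the $q$ oriented triangles $z_{i-1}yz_i$. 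These triangles are congruent under shifts of the base $B$ (each is obtained from the next by the base shift carrying $z_i$ to $z_{i-1}$ along the direction of $[vz]$), so by the same reasoning as in Lemma~\ref{lem:equal_shifts_triangles} all their lifts have equal squared displacement, say $c$. The triangle $T=vyz$ decomposes as the union of all $q$ subtriangles, and $P=xyz$ as the union of the first $p$ of them; applying Lemma~\ref{lem:adding_lifts} along the internal diagonals $z_iy$ gives $|\tau_T|^2=qc$ and $|\tau_P|^2=pc$, whence $|\tau_P|^2=\frac{p}{q}|\tau_T|^2=\frac{|xz|}{|vz|}|\tau_T|^2$.

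The main obstacle I anticipate is justifying that the $q$ subtriangles $z_{i-1}yz_i$ really are mutually congruent \emph{as pointed oriented figures in the sense relevant to $\tau$}, i.e., that the base shift of $B$ carrying one onto the next conjugates the corresponding lifting isometries trivially. This is where Lemma~\ref{lem:equal_shifts_triangles}'s technique is essential: congruence under a base shift was exactly the hypothesis used there to conclude $\tau_a=\tau_{a'}$, via the fact that shifts commute with vertical shifts and leave the fiber orientation $O$ invariant. Since $B$ is a genuine normed vector space (Proposition~\ref{pro:base_normed_space}), translations along the line through $v$ and $z$ are available and carry $z_{i-1}yz_i$ to a congruent translate; the catch is that the apex $y$ is \emph{not} translated, so the subtriangles are congruent only after also accounting for the shear, and one must check that the squared-displacement functional is invariant under such base shears. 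I would handle this by invoking Lemma~\ref{lem:equal_shifts_triangles} in the form that $|\tau_\bullet|^2$ is additive and shift-invariant as a polygonal area, reducing the congruence claim to the equality of the relevant parallelogram lifts rather than to a literal geometric congruence of the triangles; the Lipschitz continuity from Lemma~\ref{lem:continuity_project_fiber} then closes the irrational case.
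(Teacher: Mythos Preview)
Your plan has a genuine gap at the point you yourself flag. The triangles $z_{i-1}yz_i$ are \emph{not} congruent under translations of the base: the shift of $B$ carrying $[z_{i-1}z_i]$ to $[z_{j-1}z_j]$ moves the apex $y$ as well, so it sends $z_{i-1}yz_i$ to $z_{j-1}(y+\Delta)z_j$, not to $z_{j-1}yz_j$. The equality $\tau_a=\tau_{a'}$ in Lemma~\ref{lem:equal_shifts_triangles} holds precisely because the small \emph{parallelograms} there are translates of one another; your subtriangles are not. Your proposed remedy --- ``invoke Lemma~\ref{lem:equal_shifts_triangles} in the form that $|\tau_\bullet|^2$ is additive and shift-invariant as a polygonal area'' --- begs the question: shift-invariance is available, additivity under subdivision is available, but what you need is invariance under a \emph{shear} (moving the apex parallel to the base), and nothing proved so far gives you that. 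Saying it ``reduces to equality of the relevant parallelogram lifts'' is not a reduction until you specify which parallelograms and why their equality yields $\tau_{z_{i-1}yz_i}=\tau_{z_{j-1}yz_j}$.

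The paper's proof avoids this by treating only the midpoint case first: with $x$ the midpoint of $[vz]$, one embeds $T=vyz$ in the parallelogram $Q=vyzw$ having $vz$ as diagonal and runs the subdivision-and-limit argument of Lemma~\ref{lem:equal_shifts_triangles} for $Q$ to conclude $\tau_{xyz}=\tau_{vyx}$; here the small pieces \emph{are} translate-congruent parallelograms, so the argument goes through. This gives $|\tau_P|^2=\frac12|\tau_T|^2$ when $|xz|/|vz|=\frac12$. One then iterates (bisection) to reach all dyadic ratios, and finishes by continuity, exactly as you propose for the last step. So your outer scaffolding (dense set of ratios plus continuity) is fine; what is missing is that the only ratio you can get directly, without assuming the area interpretation you are trying to prove, is $\frac12$.
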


\begin{proof} Arguing as in Lemma~\ref{lem:adding_lifts} for
the oriented parallelogram
$Q=vyzw$ 
(for which 
$vz$
is a diagonal) we find that
$\tau_P=\tau_{P'}$
for 
$P=xyz$, $P'=vyx$
in the case
$x$
is the midpoint of
$[vz]$.
Therefore
$|\tau_P|^2=\frac{1}{2}|\tau_T|^2=\frac{|xz|}{|vz|}|\tau_T|^2$
in this case. Next we obtain by induction the required formula
for the case 
$|xz|/|vz|$
is a dyadic number. Then the general case follows by
continuity.
\end{proof}

\begin{lem}\label{lem:homothety_lift_parallelogram} Let
$P\sub B$
be an oriented parallelogram,
$\la P\sub B$
the parallelogram obtained from
$P$
by a homothety
$h:B\to B$
with coefficient
$\la>0$,
$h(o)=o$, $h(v)=\la v$.
Then
$|\tau_{\la P}|=\la|\tau_p|$.
\end{lem}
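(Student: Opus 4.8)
Let me sketch how I would prove
$$
|\tau_{\la P}|=\la|\tau_P|.
$$

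The plan is to reduce the statement to the already-established scaling behavior of lifting isometries under subdivision, namely Lemma~\ref{lem:equal_shifts_triangles} and Lemma~\ref{lem:area_lift_triange}. The key geometric observation is that a homothety by $\la$ multiplies the area of a planar region by $\la^2$, and the displacement $|\tau_P|^2$ should depend \emph{linearly} on area. Indeed, Lemma~\ref{lem:area_lift_triange} already tells us that $|\tau_P|^2$ scales linearly with the ratio $|xz|/|vz|$ when we cut a triangle $vyz$ down to $xyz$ with $x\in[vz]$; geometrically this ratio is precisely the ratio of the areas of the two triangles. So the natural strategy is: first establish that $|\tau_P|^2$ is proportional to the Euclidean-style area of $P$ (measured in the base $B$), and then read off the homothety formula immediately from $\operatorname{area}(\la P)=\la^2\operatorname{area}(P)$.

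First I would handle the case where $\la$ is a dyadic rational, or more simply a positive integer $n$. For $\la=n$, I subdivide $\la P=nP$ into $n^2$ congruent copies of $P$ (each obtained from $P$ by a shift of the base $B$), exactly as in the proof of Lemma~\ref{lem:equal_shifts_triangles}. Since shifts of the base produce equal lifting isometries, all $n^2$ pieces have the same displacement $|\tau_P|$. By Lemma~\ref{lem:adding_lifts}, the lifting isometry of $nP$ is the composition (product in the commutative group of vertical shifts) of these $n^2$ pieces, all in the same direction, so
$$
|\tau_{nP}|^2=n^2|\tau_P|^2,
$$
giving $|\tau_{nP}|=n|\tau_P|$. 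Combined with the same argument applied in reverse (subdividing $P$ itself into $n^2$ congruent pieces and identifying $P$ with $n\cdot(\tfrac1n P)$), this yields the formula for $\la=1/n$ and hence for every positive rational $\la=p/q$.

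The remaining step is to pass from rational $\la$ to arbitrary real $\la>0$ by continuity. The main obstacle I anticipate is justifying this continuity rigorously: I need that the map $\la\mapsto|\tau_{\la P}|$ (equivalently, the dependence of the lifting isometry on the parallelogram) is continuous. This follows from Lemma~\ref{lem:continuity_project_fiber}, since $\tau_{\la P}$ is built as a composition of the projection isometries $\mu_{bb'}$, which depend continuously on the base points $b,b'\in B$, and the vertices of $\la P$ depend continuously on $\la$. Because the rational values of $\la$ are dense and the formula $|\tau_{\la P}|=\la|\tau_P|$ holds on them, continuity of both sides in $\la$ forces equality for all $\la>0$. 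Alternatively, and perhaps more cleanly, I would invoke Lemma~\ref{lem:area_lift_triange} directly: cutting $\la P$ into triangles and applying the linear area-ratio formula already delivers arbitrary real ratios by the continuity argument built into that lemma, so the homothety case is essentially a corollary once one checks that $\operatorname{area}(\la P)=\la^2\operatorname{area}(P)$.
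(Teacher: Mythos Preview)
Your proposal is correct and follows essentially the same approach as the paper: subdivide into congruent small parallelograms, use additivity of $|\tau|^2$ from Lemma~\ref{lem:adding_lifts}, and pass to arbitrary $\la$ by continuity. The paper packages the rational case slightly more efficiently---it picks $n$ with $\la n\in\N$ and subdivides both $P$ and $\la P$ simultaneously into congruent pieces of side $1/n$---but this is only a cosmetic difference from your integer/reciprocal/rational sequence.
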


\begin{proof} We can assume that the parallelograms
$P$, $\la P$
have
$o$
as a common vertex, and that
$\la$
is rational. For a general
$\la$
one needs to use approximation. We choose
$n\in\N$
such that
$\la n\in\N$
and subdivide
$P$
into 
$n^2$
congruent parallelograms,
$P=\cup_{a\in A}P_a$, $|A|=n^2$,
as in Lemma~\ref{lem:equal_shifts_triangles}. This
also gives a subdivision of
$\la P$
into
$\la^2n^2$
parallelograms congruent to ones of the first subdivision,
$\la P=\cup_{a\in A'}P_a$, $|A'|=\la^2n^2$.
Then
$\tau_P=\prod_{a\in A}\tau_a$,
$|\tau_a|^2=|\tau_P|^2/n^2$
and
$$|\tau_{\la P}|^2=\la^2n^2|\tau_a|^2=\la^2|\tau_P|^2.$$
\end{proof}

\subsection{Reflections with respect to $\C$-circles}
\label{subsect:reflection_ccircle}

In sect.~\ref{sect:ccircle_involutions} we have defined 
for every
$\C$-circle $F\sub X$
the reflection
$\phi_F:X\to X$
whose fixed point set is 
$F$
and
$v=\phi_F(u)$
is conjugate to
$u$
pole of
$F$
for every
$u\in X\sm F$.

\begin{pro}\label{pro:moebius_ccircle_involution} For every
$\C$-circle $F\sub X$
the reflection
$\phi_F:X\to X$
is M\"obius.
\end{pro}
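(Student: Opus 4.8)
The plan is to show that $\phi_F$ preserves the cross-ratio triple of every admissible quadruple, and the natural strategy is to realize $\phi_F$ as a limit or composition of maps already known to be Möbius. The cleanest route exploits the fixed-point structure: $\phi_F$ fixes $F$ pointwise and swaps each $u\in X\sm F$ with its conjugate pole $v$. First I would fix $\om\in F$ and work in $X_\om$, where $F$ becomes a $\C$-line and $\phi_F$ restricts on $F$ to the identity. The key observation is that for conjugate poles $u,v$ we have $\eta_u=\eta_v$ and, by Corollary~\ref{cor:bisector_cline}(i) together with Lemma~\ref{lem:bisector_rcircle}, the distance functions $d_u$ and $d_v$ along $F_\om$ coincide; moreover both are symmetric about the common foot $o=\mu_{F,\om}(u)=\mu_{F,\om}(v)$ by Lemma~\ref{lem:sym_dist}. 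This symmetry data should pin down the isometry type of the map $u\mapsto v$ on each fiber.

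The main structural input I would use is the distance formula, Proposition~\ref{pro:explicit_distance}: for $u\in X_\om$ with foot $z=\mu_{F,\om}(u)$ one has $|ou|_\om^4=|zu|_\om^4+|oz|_\om^4$ for $o\in F$. Since $v=\phi_F(u)$ lies on the same $\R$-line $\ga$ through $u$ and $o=z$ with $|vo|_\om=|uo|_\om$, the ``vertical'' coordinate $a=|zu|_\om$ is preserved in absolute value while its sign (the fiber orientation $O$) is reversed, and the foot $z$ is unchanged. Thus in the fiber coordinates $(z, a)\in B_\om\times\R$ furnished by the canonical foliation, $\phi_F$ acts as $(z,a)\mapsto(z,-a)$ over the base, fixing the distinguished fiber $F$ where $a=0$. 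I would then compute $|\phi_F(p)\,\phi_F(q)|_\om$ directly: writing $p=(z,a)$, $q=(z',a')$, one combines the distance formula with Lemma~\ref{lem:mu_isometric} (so that $|zz'|$ is the base distance, unchanged by $\phi_F$) to express $|pq|_\om$ in terms of $|zz'|$, $a$, $a'$, and then check that replacing $(a,a')$ by $(-a,-a')$ yields the same value. Because the cross-ratio triple is a ratio of products of such distances, invariance of all pairwise distances in $X_\om$ would give that $\phi_F$ is an isometry of $X_\om$ fixing $\om$, hence Möbius.

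The hard part will be justifying that the distance $|pq|_\om$ really depends on the two fiber heights only through a combination symmetric under simultaneous sign reversal $(a,a')\mapsto(-a,-a')$, rather than through each sign separately. The distance formula controls $|pq|_\om$ only when one of the two points lies on $F$ (i.e.\ when one height is zero), so for a general pair $p,q$ off $F$ I cannot read off the full dependence from Proposition~\ref{pro:explicit_distance} alone. The way around this is to invoke the vertical shifts: by Proposition~\ref{pro:vshift_isometry} every vertical shift is an isometry of $X_\om$, and these shifts act transitively on each fiber while preserving the foliation. I would use a vertical shift to move $q$ into the fiber $F$ (setting $a'=0$), reducing the general distance computation to the controlled case; the sign reversal on the remaining height $a$ is then handled by the symmetry already established on a single fiber via Lemma~\ref{lem:sym_dist} and Lemma~\ref{lem:symmetric_functions}. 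Conjugating by shifts in this manner, while tracking that the reversal of fiber orientation is compatible across fibers (here Lemma~\ref{lem:fiber_orient} guarantees a globally consistent orientation $O$), is where the argument needs care, and I expect this compatibility of the sign reversal across all fibers to be the genuine obstacle rather than any single distance estimate.
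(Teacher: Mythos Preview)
Your proposal rests on a mistaken picture of how $\phi_F$ interacts with the canonical foliation $\pi_\om:X_\om\to B_\om$. You write that in fiber coordinates $(z,a)\in B_\om\times\R$ the reflection acts as $(z,a)\mapsto(z,-a)$, fixing the ``distinguished fiber $F$ where $a=0$''. But $F$ is itself one fiber of the canonical foliation (a single point $\ov o\in B_\om$), not a zero section meeting every fiber. For $u\notin F$ the $\C$-line through $u$ is \emph{not} the $\C$-line through $v=\phi_F(u)$: what is preserved is the foot $\mu_{F,\om}(u)\in F$, not the base point $\pi_\om(u)\in B_\om$. In fact the paper shows that $\phi_F$ induces the \emph{central symmetry} of $B_\om$ about $\ov o$, not the identity. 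Consequently your reduction ``use a vertical shift to move $q$ into the fiber $F$'' cannot work: vertical shifts move points along their own $\C$-line and never carry a point of $X_\om\sm F$ onto $F$.

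Once this is corrected the genuine difficulty becomes visible, and it is exactly what the paper isolates. Writing $z=\mu_{F_x,\om}(x')$ and $z'=\mu_{F_y,\om}(y')$ for $y=\phi_F(x)$, $y'=\phi_F(x')$, the distance formula gives $|xx'|^4=|x'z|^4+|zx|^4$ and $|yy'|^4=|y'z'|^4+|z'y|^4$. The base term $|x'z|=|y'z'|$ follows from the central symmetry on $B_\om$. The fiber term $|xz|=|yz'|$ is the real issue: it amounts to showing that the holonomy of the canonical foliation around the closed polygon $\ov x'\,\ov y'\,\ov z'\,\ov z\,\ov x'\sub B_\om$ is trivial. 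The paper handles this with the lifting-isometry machinery of sect.~\ref{subsect:lifting_isometry}, specifically Lemma~\ref{lem:equal_shifts_triangles}, which shows that the two triangles in $B_\om$ related by the central symmetry $\ov\phi$ have equal lifting shifts, forcing $\tau_P=\id$. Your proposal does not engage this holonomy obstruction at all; the attempted conjugation by vertical shifts does not substitute for it, since vertical shifts commute with one another but there is no a~priori reason they commute with $\phi_F$ in the required sense --- that commutation is essentially equivalent to the holonomy statement you would need.
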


\begin{proof}
We fix 
$\om\in F$
and consider
$F$
as a
$\C$-line
in 
$X_\om$.
By definition,
$\phi_F:X_\om\to X_\om$
preserves every
$\R$-line $\si\sub X_\om$
intersecting
$F$
and acts on
$\si$
as the reflection with respect to
$\si\cap F$,
in particular,
$\phi_F|\si$
is isometric. It follows from Lemma~\ref{lem:mu_isometric}
that
$\phi_F$
is isometric on every
$\C$-line
in
$X_\om$,
thus
$\phi_F$
induces the central symmetry
$\ov\phi=\ov\phi_F:B_\om\to B_\om$
with respect to
$\ov o=\pi_\om(F)\in B_\om$.

Given
$x$, $x'\in X_\om$
we show that
$|yy'|=|xx'|$
for
$y=\phi_F(x)$, $y'=\phi_F(x')$.
Let
$F_x$, $F_y\sub X_\om$
be the 
$\C$-lines
through
$x$, $y$
respectively. We denote
$z=\mu_{F_x,\om}(x')\in F_x$, $z'=\mu_{F_y,\om}(y')\in F_y$.
By Proposition~\ref{pro:explicit_distance} we have
$$|xx'|^4=|x'z|^4+|zx|^4\quad\text{and}\quad |yy'|^4=|y'z'|^4+|z'y|^4.$$
Furthermore,
$|x'z|=|y'z'|$
because
$|x'z|=|\ov x'\ov z|$, $|y'z'|=|\ov y'\ov z'|$
and
$\ov\phi(\ov x')=\ov y'$, $\ov\phi(\ov z)=\ov z'$,
where ``bar'' means the projection by
$\pi_\om$.

The triangles
$\ov o\,\ov x'\ov z$
and
$\ov o\,\ov y'\ov z'$
in
$B$
are symmetric to each other by
$\ov\phi$
and have the same orientation. It follows from
Lemma~\ref{lem:equal_shifts_triangles} that
any lift of the closed polygon
$P=\ov x'\ov y'\ov z'\ov z\,\ov x'\sub B$
closes up in
$X_\om$,
that is,
$\tau_P=\id$.
Hence
$\mu_{F_y,\om}(z)=z'$
and therefore
$|xz|=|yz'|$.
We conclude that
$|yy'|=|xx'|$.
\end{proof}

\subsection{Pure homotheties}
\label{subsect:pure_homothety}

For an
$\R$-line $\si\sub X_\om$
we define the {\em\semi-plane}
$R=R_\si\sub X_\om$
as
$R=\pi_\om^{-1}(\pi_\om(\si))$.
Note that
$R$
has two foliations: one by
$\C$-lines
and another by
$\R$-lines.

Given a
$\C$-line
$F\sub X_\om$
and an 
$\R$-line $\si\sub X_\om$
with
$o=F\cap\si$,
let 
$R\sub X_\om$
be the \semi-plane {\em spanned} by
$F$
and
$\si$.
Every point
$x\in R$
is uniquely determined by its projections the {\em vertical} to
$F$, $x_F=\mu_{F,\om}(x)$,
and the {\em horizontal} to
$\si$, $x_\si=\si\cap F_x$,
where
$F_x$
is the 
$\C$-line
through
$x$.

For
$o\in X_\om$
and
$\la>0$
we define a map 
$h=h_{o,\la}:X_\om\to X_\om$
as follows. We put
$h(o)=o$
and require that 
$h$
preserves the 
$\C$-line $F$
through
$o$
and every
$\R$-line $\si$
through
$o$
acting on
$F$
and 
$\si$
as the homotheties with coefficient
$\la$.
Finally,
$h$
preserves every \semi-plane
$R$
containing  
$F$
and acts on
$R$
by
$h(x_F,x_\si)=(h(x_F),h(x_\si))$,
where
$R$
is spanned by
$F$
and the 
$\R$-line $\si$
through
$o$.

\begin{pro}\label{pro:pure_homothety} The map 
$h:X_\om\to X_\om$
defined above is a homothety with coefficient
$\la$, $|h(x)h(y)|=\la|xy|$
for every
$x$, $y\in X_\om$,
in particular,
$h$
is M\"obius.
\end{pro}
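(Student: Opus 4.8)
The plan is to introduce adapted coordinates on $X_\om$ and reduce the statement to the distance formula (Proposition~\ref{pro:explicit_distance}) together with the scaling law for lifting isometries (Lemma~\ref{lem:homothety_lift_parallelogram}). For $p\in X_\om$ write $\ov p=\pi_\om(p)\in B_\om$, and let $v(p)\in\R$ be the parameter of $\mu_{F,\om}(p)$ on the fiber $F=\pi_\om^{-1}(\ov o)$, where $F$ is identified with $\R$ by an orientation-compatible map sending $o$ to $0$ and realizing the $\C$-line metric as $|pq|_F=|v(p)-v(q)|^{1/2}$. Since $\mu_{F,\om}$ is bijective on each fiber, the pair $(\ov p,v(p))$ determines $p$. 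I first record the action of $h$ in these coordinates. On each $\R$-line through $o$ the map $h$ is a homothety with coefficient $\la$ fixing $o$; as $\pi_\om$ is isometric on $\R$-lines and $B_\om$ is a normed vector space (Proposition~\ref{pro:base_normed_space}), the induced map $\ov h$ is the homothety of $B_\om$ centered at $\ov o$ with coefficient $\la$, so $\ov{h(p)}=\la\,\ov p$ (with $\ov o$ as origin). By the definition of $h$ on the \semi-planes containing $F$ one has $\mu_{F,\om}(h(p))=h(\mu_{F,\om}(p))$, and since $h|F$ is an orientation-preserving homothety with coefficient $\la$, this gives $v(h(p))=\la^2 v(p)$.

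Next I would write the distance between two arbitrary points in these coordinates. For $x,y\in X_\om$ let $F_x$ be the fiber through $x$ and $z=\mu_{F_x,\om}(y)\in F_x$. Proposition~\ref{pro:explicit_distance} applied to $F_x$ and $y$, together with Lemma~\ref{lem:distance_clines} (which gives $|zy|=|\ov x\,\ov y|$) and Lemma~\ref{lem:mu_isometric} (which makes $\mu_{F,\om}$ isometric on $F_x$, so $|xz|_{F_x}=|v(x)-v(z)|^{1/2}$), yields
$$|xy|^4=|\ov x\,\ov y|^4+\big(v(x)-v(z)\big)^2.$$
Put $c(\ov x,\ov y):=v(y)-v(z)$, so that
$$|xy|^4=|\ov x\,\ov y|^4+\big(v(x)-v(y)+c(\ov x,\ov y)\big)^2.$$
This $c$ depends only on $\ov x$ and $\ov y$: it does not involve $v(x)$, and replacing $y$ by its image under a vertical shift (an isometry preserving fibers that translates $v$ by a constant, Proposition~\ref{pro:vshift_isometry}) changes $v(y)$ and $v(z)$ by the same amount. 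Writing the analogous formula for $|h(x)h(y)|^4$ and inserting $\ov{h(p)}=\la\,\ov p$ and $v(h(p))=\la^2 v(p)$, the desired identity $|h(x)h(y)|=\la|xy|$ reduces to the single homogeneity relation $c(\la\ov x,\la\ov y)=\la^2\,c(\ov x,\ov y)$.

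The heart of the matter is this homogeneity, and here the lifting isometries of sect.~\ref{subsect:lifting_isometry} enter. I would identify $c(\ov x,\ov y)$ with the signed displacement of the lifting isometry $\tau_T$ of the oriented triangle $T=\ov o\,\ov x\,\ov y\subset B_\om$, so that $|\tau_T|^2=|c(\ov x,\ov y)|$. To see this, lift $T$ starting at $o$: the radial edges $\ov o\,\ov x$ and $\ov y\,\ov o$ lift to $\R$-lines along which $v$ is constant, because the $\R$-line through $o$ over a radial direction is carried by $\mu_{F,\om}$ to the single point $o$ (uniqueness in axiom~($\rm E_\R$)); the lift of the middle edge $\ov x\,\ov y$ raises $v$ by exactly $v(y)-v(z)=c(\ov x,\ov y)$, since $z$ and $y$ lie on a common $\R$-line over $\ov x\,\ov y$ and the $v$-increment along the lift of a fixed base geodesic is independent of the chosen lift. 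The base homothety $\ov h$ sends $T$ to $\la T$ with fixed vertex $\ov o$, and Lemmas~\ref{lem:area_lift_triange} and \ref{lem:homothety_lift_parallelogram} give $|\tau_{\la T}|=\la|\tau_T|$; hence $|c(\la\ov x,\la\ov y)|=\la^2|c(\ov x,\ov y)|$, with matching signs because $h$ preserves the fiber orientation $O$. This proves the homogeneity and hence the Proposition. The main obstacle is precisely this translation: the defect $c$ is defined through the oblique projections $\mu_{F_x,\om}$, which $h$ does not obviously respect, since the $\R$-lines involved need not pass through $o$; expressing $c$ instead through the holonomy of sect.~\ref{subsect:lifting_isometry} is what makes the scaling law $|\tau_{\la P}|=\la|\tau_P|$ applicable.
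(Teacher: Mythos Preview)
Your proof is correct and follows essentially the same route as the paper's own argument: both reduce the claim to the distance formula $|xy|^4=|xz|^4+|zy|^4$ with $z=\mu_{F_x,\om}(y)$, both observe that the base component $|zy|=|\ov x\,\ov y|$ scales by $\la$ under $\ov h$, and both handle the fiber component by expressing the defect via the lifting isometry $\tau$ of the triangle $\ov o\,\ov x\,\ov y$ (the paper writes this as $\tau_P(y_F)=z_F$ for the polygon $P=\ov o\,\ov y\,\ov z\,\ov o$) and invoking the scaling law $|\tau_{\la T}|=\la|\tau_T|$. Your packaging via the coordinates $(\ov p,v(p))$ and the function $c(\ov x,\ov y)$ is a pleasant reformulation of the same mechanism; the paper works directly with the points $y_F,z_F$ and the relation $h\circ\tau_a=\tau_{\la a}\circ h$, but the content is identical.

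One small correction: the scaling $|\tau_{\la T}|=\la|\tau_T|$ for a triangle with vertex at $\ov o$ does not follow directly from Lemma~\ref{lem:area_lift_triange} (which slides a vertex along a side). You should instead complete $T$ to a parallelogram $P$ by the diagonal $\ov x\,\ov y$, use Lemma~\ref{lem:equal_shifts_triangles} to get $|\tau_P|^2=2|\tau_T|^2$, and then apply Lemma~\ref{lem:homothety_lift_parallelogram}; this is also what the paper does. Also, your justification that $v$ is constant along the lift of the radial edge $\ov y\,\ov o$ needs to refer to the $\R$-line through $y_0$ (not through $o$), but the same uniqueness argument applies verbatim.
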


\begin{proof} It follows from definition and 
Proposition~\ref{pro:explicit_distance} that the restriction
$h|R$
is the required homothety for every \semi-plane
$R\sub X_\om$
containing the 
$\C$-line $F$
through
$o$.
Thus the induced map 
$\ov h:B_\om\to B_\om$
is the homothety with coefficient
$\la$.
 
Let
$F_x$
be the
$\C$-line
through
$x$, $\si$
the 
$\R$-line
through
$o$
that intersect 
$F_x$.
Similarly, let 
$F_y$
be the
$\C$-line
through
$y$, $\ga$
the 
$\R$-line
through
$o$
that intersect 
$F_y$.
We denote 
$z=\mu_{F_x,\om}(y)\in F_x$
the projection of
$y$
to
$F_x$.
Then by Proposition~\ref{pro:explicit_distance} we have
$|xy|^4=|xz|^4+|zy|^4$.

For the closed polygon
$P=\ov o\,\ov y\,\ov z\,\ov o\sub B_\om$,
where ``bar'' means the projection by
$\pi_\om$,
we have
$\tau_P(y_F)=z_F$,
where
$y=(y_F,y_\ga)$, $z=(z_F,z_\si)$
are vertical and horizontal coordinates in respective
\semi-planes.

Applying the map 
$h$
we obtain
$x'=h(x)$, $y'=h(y)$, $z'=h(z)$, $y_F'=h(y_F)$, $z_F'=h(z_F)$.
Furthermore, the polygon
$\la P=\ov h(P)=\ov o\,\ov y'\ov z'\ov o$
is obtained from
$P$
by the homothety
$\ov h:B_\om\to B_\om$, $\ov h(\ov o)=\ov o$,
in particular
$|\ov y'\ov z'|=\la|\ov y\,\ov z|=\la|yz|$.
Using the fact that 
$h\circ\tau_a=\tau_{\la a}\circ h$
for any vertical shift
$\tau_a:F\to F$
with
$|\tau_a|=a\ge 0$
and Lemmas~\ref{lem:equal_shifts_triangles},
\ref{lem:homothety_lift_parallelogram} we obtain
$\tau_{\la P}(y_F')=z_F'$.
It means that 
$z'=\mu_{F_{x'},\om}(y')$.
Therefore,
$|y'z'|=\la|yz|$
and
$|x'y'|^4=|x'z'|^4+|z'y'|^4$
again by Proposition~\ref{pro:explicit_distance}.
Since
$|x'z'|=\la|xz|$,
we obtain
$|x'y'|=\la|xy|$.
\end{proof}

By definition, the homothety
$h:X_\om\to X_\om$, $h(o)=o$,
preserves every
$\R$-line $\si\sub X_\om$
through
$o$.
Every homothety with this property is said to be
{\em pure}.

\section{Orthogonal complements to a $\C$-circle}
\label{sect:fiber_filling_map}

\subsection{Definition and properties}

Let
$F\sub X$
be a
$\C$-circle.
Every
$u\in X\sm F$
determines an involution
$\eta_u:F\to F$
without fixed points, which is M\"obius by Corollary~\ref{cor:phi_u_moebius}.
In other words, we have a map 
$\cF:X\sm F\to J$, $u\mapsto\eta_u$,
where
$J=J_F$
is the set of M\"obius involutions 
of
$F$
without fixed points. We study fibers of this map, 
$\cF^{-1}(\eta)=:(F,\eta)^\perp$.
The set 
$(F,\eta)^\perp=\set{u\in X\sm F}{$\eta_u=\eta$}$
is called the {\em orthogonal complement} to
$F$
at
$\eta$. 

Given distinct
$x$, $y\in F$
let
$S_{x,y}\sub X$
be the set covered by all
$\R$-circles
in 
$X$
through
$x$, $y$.
By Lemma~\ref{lem:filling_sphere_rcirles},
this set can be described as the sphere in
$X$
between
$o$, $\om\in F$
such that
$(x,o,y,\om)\in\harm_F$.
Then for every
$u\in S_{x,y}\sm\{x,y\}$
we have
$\eta_u(x)=y$.
Thus
\begin{equation}\label{eq:filling_fiber_represent}
(F,\eta)^\perp=\bigcap_{x\in F}S_{x,\eta(x)} 
\end{equation}

\begin{lem}\label{lem:filling_fiber} Given a
$\C$-circle $F\sub X$
and a M\"obius involution
$\eta:F\to F$
without fixed points, the orthogonal complement 
$A=(F,\eta)^\perp$
to
$F$
at
$\eta$
can be represented as
$$A=S_{x,y}\cap S_{o,\om}$$
for any
$x$, $o\in F$, $o\neq x,y=\eta(x)$,
where
$\om=\eta(o)$.
\end{lem}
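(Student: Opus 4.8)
The plan is to prove the two inclusions $A\sub S_{x,y}\cap S_{o,\om}$ and $S_{x,y}\cap S_{o,\om}\sub A$ separately, the first being formal and the second carrying all the content. For the first, the representation \eqref{eq:filling_fiber_represent} gives $A=\bigcap_{p\in F}S_{p,\eta(p)}$, and since $y=\eta(x)$ and $\om=\eta(o)$ the two spheres $S_{x,y}$ and $S_{o,\om}$ occur among the factors of this intersection; hence $A\sub S_{x,y}\cap S_{o,\om}$. Before turning to the reverse inclusion I would record that $x,y,o,\om$ are four pairwise distinct points: the hypotheses $o\neq x,y$ together with $\eta$ being a fixed-point-free involution force $\{x,y\}\cap\{o,\om\}=\es$. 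Consequently the two spheres have disjoint intersections with $F$ (each of $S_{x,y}$, $S_{o,\om}$ meets $F$ exactly in its indexing pair, by the description of spheres in Lemma~\ref{lem:filling_sphere_rcirles}), so $S_{x,y}\cap S_{o,\om}$ contains no point of $F$.

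Now let $u\in S_{x,y}\cap S_{o,\om}$; by the previous remark $u\in X\sm F$ and $u\notin\{x,y,o,\om\}$. Since $u\in S_{x,y}\sm\{x,y\}$ we have $\eta_u(x)=y=\eta(x)$, and since $u\in S_{o,\om}\sm\{o,\om\}$ we have $\eta_u(o)=\om=\eta(o)$. Thus $\eta_u$ and $\eta$ agree at the two points $x$ and $o$, and the goal is to upgrade this to $\eta_u=\eta$, which is precisely the statement $u\in A$. The device I would use is the composition $g=\eta_u\circ\eta\colon F\to F$, which is M\"obius by Corollary~\ref{cor:phi_u_moebius} and the hypothesis on $\eta$. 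Using that both maps are involutions, the two agreements propagate to the partner points: $g(x)=\eta_u(\eta(x))=\eta_u(y)=x$ and $g(y)=\eta_u(\eta(y))=\eta_u(x)=y$, and in the same way $g(o)=o$ and $g(\om)=\om$. Hence $g$ fixes the four distinct points $x,y,o,\om$.

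It then remains to conclude that $g=\id$. This is the one place where the intrinsic geometry of a $\C$-circle enters: $F$ is M\"obius equivalent to $\di(\frac{1}{2}\hyp^2)$, whose M\"obius self-maps are induced by isometries of the hyperbolic plane $\frac{1}{2}\hyp^2$, and a nontrivial such isometry fixes at most two points at infinity. Therefore any M\"obius automorphism of $F$ fixing three distinct points is the identity, so $g=\id$ and $\eta_u=\eta^{-1}=\eta$, giving $u\in A$. I expect this rigidity step to be the main (and essentially the only) obstacle; the point of the composition trick is that the involution property manufactures a third and fourth fixed point out of the two hypotheses, so that \emph{agreement at two points} turns into \emph{equality} with no further analysis of the spheres themselves.
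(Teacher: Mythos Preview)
Your proof is correct and follows essentially the same approach as the paper: both reduce to the fact that a M\"obius automorphism of a $\C$-circle is determined by its values at three points, using that $\eta_u$ and $\eta$ agree on the four-tuple $(x,y,o,\om)$. Your composition trick $g=\eta_u\circ\eta$ is a cosmetic variant of the paper's direct appeal to ``agreement at three points implies equality'', and your additional care in checking $S_{x,y}\cap S_{o,\om}\cap F=\es$ is a welcome detail the paper leaves implicit.
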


\begin{proof} We have
$A\sub S_{x,y}\cap S_{o,\om}$
by Eq.~(\ref{eq:filling_fiber_represent}).
On the other hand, for every
$u\in S_{x,y}\cap S_{o,\om}$
we have
$\eta_u=\eta$
along the 4-tuple
$(x,o,y,\om)$.
Thus
$\eta_u=\eta$
because any M\"obius
$\eta:F\to F$
is uniquely determined by values 
at three distinct points. Hence
$u\in A$. 
\end{proof}

Note that the set 
$(F,\eta)^\perp$
contains with every
$u\in (F,\eta)^\perp$
the conjugate pole
$v=\phi_F(u)$
of
$F$.
Thus
$\phi_F:(F,\eta)^\perp\to (F,\eta)^\perp$
is an involution without fixed points. 
By Proposition~\ref{pro:moebius_ccircle_involution} 
this involution is M\"obius.

\begin{lem}\label{lem:orthogonal_covering_ccircles} For each pair
$u,v\in (F,\eta)^\perp$
of conjugate poles of
$F$
the
$\C$-circle $F'$
through
$u$, $v$
is contained in
$(F,\eta)^\perp$.
\end{lem}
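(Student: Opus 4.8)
The plan is to produce, for the $\C$-circle $F'$ through the conjugate poles $u,v$, the reflection $\phi_{F'}:X\to X$ and to show that it interchanges the roles of $F$ and the involution $\eta$, thereby forcing membership of $F'$ in $(F,\eta)^\perp$ pointwise.

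First I would record the naturality of reflections: for every M\"obius bijection $g:X\to X$ and every $\C$-circle $G$ one has $g\circ\phi_G\circ g^{-1}=\phi_{g(G)}$. This is immediate from Proposition~\ref{pro:conjugate_pole}, since conjugate poles are defined purely through the projections $\mu_{G,\om}$ and harmonic $4$-tuples, all preserved by $g$ (and M\"obius maps carry $\C$-circles to $\C$-circles and $\R$-circles to $\R$-circles, by the squared Ptolemy and Ptolemy equalities). Applying this with $g=\phi_F$: since $\phi_F$ is M\"obius (Proposition~\ref{pro:moebius_ccircle_involution}) and interchanges $u,v$, the image $\phi_F(F')$ is the $\C$-circle through $\phi_F(u)=v$ and $\phi_F(v)=u$, hence $\phi_F(F')=F'$ by ($\rm E_\C$). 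Therefore $\phi_F\circ\phi_{F'}=\phi_{\phi_F(F')}\circ\phi_F=\phi_{F'}\circ\phi_F$, so $\phi_F$ and $\phi_{F'}$ commute. Commuting involutions preserve each other's fixed-point sets, whence $\phi_{F'}(F)=\phi_{F'}(\fix\phi_F)=\fix\phi_F=F$.

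Thus $\eta':=\phi_{F'}|_F$ is a M\"obius involution of $F$ with fixed-point set $F\cap\fix\phi_{F'}=F\cap F'$. I would next check $F\cap F'=\es$, so that $\eta'$ is fixed-point-free. Two distinct $\C$-circles meet in at most one point by ($\rm E_\C$), and a putative common point $p$ would, for every $\om\in F$, lie on the sphere between $\eta(\om)$ and $\om$ (this sphere contains all of $F'$: apply axiom ($\rm O_\C$) to $F'$ and the $\R$-circle through $u,v$ that meets $F$ in $\eta(\om),\om$, using that $o=\eta(\om)$ is the midpoint of the conjugate poles $u,v$ in $X_\om$). Letting $\eta(\om)\to p$ along $F$ forces $|p\,\eta(\om)|_\om\to0$ while the radius $|u\,\eta(\om)|_\om$ stays bounded away from $0$, a contradiction. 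Hence $\eta'$ is a fixed-point-free M\"obius involution of $F$.

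Finally I would use that $\phi_{F'}$ fixes $F'$ pointwise together with the naturality $g(\mu_{F,\om}(x))=\mu_{g(F),g(\om)}(g(x))$ of the projection. For $w\in F'$, applying $g=\phi_{F'}$ to the identity $\eta_w(\om)=\mu_{F,\om}(w)$, and using $\phi_{F'}(w)=w$, $\phi_{F'}(F)=F$, $\phi_{F'}|_F=\eta'$, yields $\eta'\circ\eta_w=\eta_w\circ\eta'$ on $F$; that is, every $\eta_w$ with $w\in F'$ commutes with $\eta'$. Taking $w=u$ and recalling $\eta_u=\eta$ shows $\eta$ commutes with $\eta'$; since both are fixed-point-free M\"obius involutions of the $\C$-circle $F\cong\di(\frac12\hyp^2)$ (equivalently central symmetries, which commute only when equal), we get $\eta'=\eta$. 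Then for arbitrary $w\in F'$ the fixed-point-free $\eta_w$ commutes with $\eta$, so $\eta_w=\eta$ by the same rigidity, giving $w\in(F,\eta)^\perp$ and hence $F'\subseteq(F,\eta)^\perp$. I expect the main obstacle to be the two structural inputs, namely the commutation $\phi_F\phi_{F'}=\phi_{F'}\phi_F$ (equivalently $\phi_{F'}(F)=F$) and the disjointness $F\cap F'=\es$; once these are secured, the rigidity of commuting fixed-point-free involutions on a $\C$-circle closes the argument cleanly.
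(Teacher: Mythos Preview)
Your argument is correct but follows a different route from the paper's. The paper proves the lemma directly from the incidence axioms and Lemma~\ref{lem:filling_fiber}: choosing $x,o\in F$ with $(x,o,y,\om)\in\harm_F$ (where $y=\eta(x)$, $\om=\eta(o)$), one has $\R$-circles $\si,\ga$ with $(x,u,y,v)\in\harm_\si$ and $(o,u,\om,v)\in\harm_\ga$; axiom~($\rm O_\C$) applied to $F'$ and each of $\si,\ga$ places $F'$ inside the spheres $S$ (between $x,y$) and $S'$ (between $o,\om$) through $u,v$; axiom~($\rm O_\R$) applied to $F$ identifies these as $S_{o,\om}$ and $S_{x,y}$, whose intersection is $(F,\eta)^\perp$ by Lemma~\ref{lem:filling_fiber}. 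This uses only material from Sections~3--4 and~8.1.

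Your approach instead invokes the M\"obius property of $\phi_F$ (Proposition~\ref{pro:moebius_ccircle_involution}, proved via the distance formula and the lifting isometries of Section~7), establishes naturality $g\phi_G g^{-1}=\phi_{g(G)}$, and closes with the rigidity of commuting fixed-point-free M\"obius involutions on a $\C$-circle. This is logically sound (Proposition~\ref{pro:moebius_ccircle_involution} precedes the lemma and does not depend on it) and in fact anticipates the commutation statement of Lemma~\ref{lem:involutions_commute}. What you gain is a structural picture: you are effectively showing $F\perp F'$ in the sense of Section~\ref{subsect: mutually_orthogonal_ccircles} and deducing $\phi_{F'}|F=\eta$. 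What the paper's proof buys is minimality of hypotheses: it stays entirely within the axioms~(E),~(O) plus the elementary sphere description~(\ref{eq:filling_fiber_represent}), without the heavier analytic input from Sections~5--7. Your disjointness argument for $F\cap F'$ is also correct, though it can be shortened: if $p\in F\cap F'$, the $\R$-circle through $p,u,\eta(p),v$ already meets the $\C$-circle $F'$ in the three points $p,u,v$, contradicting Lemma~\ref{lem:intersect_rcircle_ccircle}.
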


\begin{proof} We take distinct
$x,o\in F$, $o\neq x,y$,
such that
$(x,o,y,\om)\in\harm_F$,
where
$y=\eta_u(x)$, $\om=\eta_u(o)$.
There are 
$\R$-circles $\si,\ga\sub X$
such that
$(x,u,y,v)\in\harm_\si$, $(o,u,\om,v)\in\harm_\ga$.
By axiom 
$\rm O_\C$
the
$\C$-circle $F'$
through
$u,v$
is contained in 
$S\cap S'$,
where
$S$, $S'\sub X$
are spheres between
$x,y$
and
$o,\om$
respectively that contain
$u,v$.
Since
$(x,o,y,\om)\in\harm_F$, 
by axiom~$\rm O_\R$
we have
$\ga\sub \wt S$
and
$\si\sub \wt S'$,
where
$\wt S$, $\wt S'$
are spheres between
$x,y$
through
$o,\om$
and between
$o,\om$
through
$x,y$
respectively. Hence
$\wt S=S$, $\wt S'=S'$.
We conclude that
$S=S_{o,\om}$, $S'=S_{x,y}$. 
Then
$S\cap S'=(F,\eta)^\perp$
by Lemma~\ref{lem:filling_fiber}, hence
$F'\sub (F,\eta)^\perp$.
\end{proof}

\begin{lem}\label{lem:orthogonal_foliated_ccircles} Given a
$\C$-circle $F\sub X$
and a M\"obius involution
$\eta:F\to F$
without fixed points, the orthogonal complement
$A=(F,\eta)^\perp$
is foliated by
$\C$-circles $F'$
through pairs
$u,v\in A$
of conjugate poles of
$F$,
and
$\phi_F:A\to A$
preserves every fiber of this fibration.
\end{lem}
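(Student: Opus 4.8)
The plan is to exhibit, for each point of $A=(F,\eta)^\perp$, a canonical $\C$-circle leaf through it, and then to check that these leaves are pairwise disjoint and individually $\phi_F$-invariant. First I would fix $u\in A$ and set $v=\phi_F(u)$. Since $\phi_F$ restricts to a fixed-point-free involution of $A$ (as recorded just before Lemma~\ref{lem:orthogonal_covering_ccircles}), we have $v\in A$ and $v\neq u$, so by axiom~($\rm E_\C$) there is a unique $\C$-circle $F'$ through $u,v$; by Lemma~\ref{lem:orthogonal_covering_ccircles} this $F'$ is contained in $A$. This produces a leaf through every point of $A$.

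Next I would establish $\phi_F$-invariance of each leaf. Because $\phi_F:X\to X$ is M\"obius (Proposition~\ref{pro:moebius_ccircle_involution}) and $\C$-circles are characterised by the squared Ptolemy (cross-ratio) condition, $\phi_F$ sends $\C$-circles to $\C$-circles. As $\phi_F$ is an involution interchanging $u$ and $v$, the image $\phi_F(F')$ is a $\C$-circle again through $v$ and $u$; uniqueness in axiom~($\rm E_\C$) then forces $\phi_F(F')=F'$.

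The heart of the argument is the self-consistency of the leaves, which also yields disjointness. Given any $w\in F'$, invariance gives $\phi_F(w)\in F'$, and since $w\neq\phi_F(w)$ the $\C$-circle through $w$ and its conjugate pole $\phi_F(w)$ is unique and must coincide with $F'$, as $F'$ already contains both. Thus the leaf attached to $w$ is the same $F'$ regardless of which of its points one starts from. Consequently, if two such leaves $F_1'$, $F_2'$ share a point $w$, then each equals the unique $\C$-circle through $w,\phi_F(w)$, so $F_1'=F_2'$; distinct leaves are therefore disjoint and cover $A$, giving the desired foliation. Invariance of every fiber under $\phi_F$ was shown in the second step, and continuity of the leaf assignment follows from compactness of $X$ together with continuity of $\phi_F$ and of the $\C$-circle through two points.

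I expect the only genuinely delicate point to be the passage in the third step from conjugate poles to arbitrary leaf points: a priori the leaf $F'$ was defined using the single pair $u,v$, and one must know that every $w\in F'$ has its own conjugate pole $\phi_F(w)$ lying on the very same $F'$. This is exactly what $\phi_F$-invariance of $F'$ supplies, so the crux reduces to the fact---used above---that the M\"obius involution $\phi_F$ carries $\C$-circles to $\C$-circles, combined with the uniqueness clause of axiom~($\rm E_\C$).
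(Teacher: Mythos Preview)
Your proof is correct and follows essentially the same route as the paper. The paper's proof compresses your three steps into three sentences: covering by Lemma~\ref{lem:orthogonal_covering_ccircles}, then $\phi_F(F')=F'$ ``because $\phi_F$ permutes conjugate poles,'' and finally disjointness ``by axiom~($\rm E_\C$).'' Your write-up makes explicit the logical link the paper leaves implicit---namely that disjointness needs the $\phi_F$-invariance to produce \emph{two} common points $w,\phi_F(w)$ before axiom~($\rm E_\C$) can be invoked---so your version is a cleaner exposition of the same argument.
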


\begin{proof} By Lemma~\ref{lem:orthogonal_covering_ccircles},
$A$
is covered by 
$\C$-circles $F'$.
We have
$\phi_F(F')=F'$
because
$\phi_F$
permutes conjugate poles of
$F$.
Thus by axiom~$\rm E_\C$
distinct 
$\C$-circles
of the covering are disjoint, that is,
the 
$\C$-circles $F'$
form a fibration of
$A$.
\end{proof}

The fibration of
$(F,\eta)^\perp$
by
$\C$-circles
described in Lemma~\ref{lem:orthogonal_foliated_ccircles}
is called {\em canonical}. We do not claim that every
$\C$-circle
in
$(F,\eta)^\perp$
is a fiber of the canonical fibration, this is actually not true
in general.

\begin{lem}\label{lem:fiber_involution} Given a M\"obius involution
$\eta:F\to F$
without fixed points of a 
$\C$-circle $F$,
for every fiber
$F'$
of the canonical fibration of
$A=(F,\eta)^\perp$
the reflection
$\phi'=\phi_{F'}:X\to X$
preserves
$A$
and its canonical fibration.
\end{lem}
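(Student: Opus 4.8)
The plan is to reduce the whole statement to the single geometric fact that $\phi'$ maps $F$ onto itself; once that is established, both conclusions follow formally from the Möbius invariance of the reflection construction. Concretely, I will first prove
\[
\phi'(F)=F\quad\text{and}\quad \phi'|_F=\eta .
\]
Granting this, I use the general principle that for any Möbius automorphism $g$ of $X$ and any $\C$-circle $G$ one has $g\circ\phi_G\circ g^{-1}=\phi_{g(G)}$: the reflection $\phi_G$ is built, via Proposition~\ref{pro:conjugate_pole}, entirely out of $\C$-circles, $\R$-circles and harmonic $4$-tuples, all of which a Möbius automorphism preserves. Applying this with $g=\phi'$ and $G=F$ and inserting $\phi'(F)=F$ gives $\phi'\circ\phi_F\circ\phi'=\phi_F$, i.e. $\phi_F$ and $\phi'$ commute.

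For the crux $\phi'|_F=\eta$ I would use the $\R$-circles joining $F$ to $F'$. Fix a conjugate pair $u,v=\phi_F(u)$ lying on $F'$; since $F'\sub A=(F,\eta)^\perp$ we have $\eta_u=\eta$. For an arbitrary $x\in F$ set $y=\eta(x)$ and let $\si_x$ be the $\R$-circle through $x,u,y$; by Proposition~\ref{pro:conjugate_pole} the $4$-tuple $(x,u,y,v)$ is harmonic on $\si_x$, and both $u,v$ lie on $F'$, so $\si_x$ meets $F'$ in exactly the two points $u,v$ (Lemma~\ref{lem:intersect_rcircle_ccircle}). Hence $\phi'(\si_x)=\si_x$ and $\phi'$ fixes $u,v$, so $\phi'|_{\si_x}$ is a nontrivial Möbius involution of $\si_x\cong\wh\R$ with the two fixed points $u,v$ (nontrivial because $\si_x\not\sub F'$). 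There is only one such involution, the harmonic-conjugate involution determined by $u,v$, and the harmonicity of $(x,u,y,v)$ says precisely that it interchanges $x$ and $y$. Thus $\phi'(x)=y=\eta(x)$; as $x\in F$ was arbitrary this yields $\phi'|_F=\eta$ and $\phi'(F)=\eta(F)=F$.

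It remains to read off the two assertions. For $\phi'(A)=A$ I invoke the transformation rule for orthogonal complements: if $g$ is Möbius then $g\big((F,\eta)^\perp\big)=(g(F),\,g\,\eta\,g^{-1})^\perp$, since $\eta_{g(w)}=g\circ\eta_w\circ g^{-1}$ for every $w$. Taking $g=\phi'$ and inserting $\phi'(F)=F$, $\phi'|_F=\eta$ turns the conjugated involution $g\,\eta\,g^{-1}=\eta\circ\eta\circ\eta$ back into $\eta$, so $\phi'(A)=(F,\eta)^\perp=A$. For the fibration, recall that the fibers of the canonical fibration are exactly the $\C$-circles $G\sub A$ with $\phi_F(G)=G$: every fiber is $\phi_F$-invariant by Lemma~\ref{lem:orthogonal_foliated_ccircles}, and conversely any $\phi_F$-invariant $\C$-circle $G\sub A$ contains a conjugate pair $w,\phi_F(w)$ and hence coincides with the fiber through $w$. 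Given such a fiber $G$, the image $\phi'(G)$ is again a $\C$-circle contained in $\phi'(A)=A$, and by the commutativity $\phi_F\circ\phi'=\phi'\circ\phi_F$ established above, $\phi_F(\phi'(G))=\phi'(\phi_F(G))=\phi'(G)$. Thus $\phi'(G)$ is a $\phi_F$-invariant $\C$-circle in $A$, i.e. again a fiber, so $\phi'$ permutes the fibers and preserves the canonical fibration.

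The main obstacle is the crux $\phi'(F)=F$; everything else is formal. Its proof rests on identifying the restriction of $\phi'$ to each bridging $\R$-circle $\si_x$ as the harmonic-conjugate involution, which uses the defining property of the reflection (it preserves $\R$-circles meeting $F'$ twice and fixes $F'$ pointwise, Proposition~\ref{pro:moebius_ccircle_involution}) together with the harmonic position supplied by Proposition~\ref{pro:conjugate_pole}. I should also be explicit that a Möbius automorphism carries $\R$-circles to $\R$-circles and $\C$-circles to $\C$-circles, since this is what legitimizes both the conjugation identity $g\phi_G g^{-1}=\phi_{g(G)}$ and the claim that $\phi'(G)$ is a $\C$-circle.
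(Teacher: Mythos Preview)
Your proof is correct and follows essentially the same route as the paper: establish $\phi'|_F=\eta$ by looking at the bridging $\R$-circles $\si_x$ through a conjugate pair $u,v\in F'$, then deduce $\phi'(A)=A$ from the Möbius covariance of the orthogonal complement. The only cosmetic difference is in the fibration step: the paper applies $\phi'$ directly to a harmonic $4$-tuple $(x,u,y,v)\in\harm_\si$ to see that the image $(y,u',x,v')$ again exhibits $u',v'$ as conjugate poles of $F$, whereas you first extract the commutation $\phi_F\phi'=\phi'\phi_F$ from the conjugation rule $g\phi_G g^{-1}=\phi_{g(G)}$ and then use the characterization of fibers as the $\phi_F$-invariant $\C$-circles in $A$; these are two phrasings of the same computation.
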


\begin{proof} For every
$u\in A$
and
$x\in F$
there an
$\R$-circle $\si\sub X$
such that
$(x,u,y,v)\in\harm_\si$,
where
$y=\eta(x)$, $v=\phi_F(u)$.
Taking
$u\in F'$
we observe that by definition
$\phi'$
permutes
$x$, $y$,
thus
$\phi'$
preserves
$F$
and
$\phi'|F=\eta$.
Since
$\phi'$
is M\"obius, we have
$$\phi'(A)=(\phi'(F),\phi'\circ\eta\circ\phi'^{-1})^\perp
  =(F,\eta)^\perp=A.$$
For an arbitrary 
$u\in A$
we have
$\phi'(x,u,y,v)=(y,u',x,v')\in\harm_{\si'}$,
where
$\si'=\phi'(\si)$,
and
$u',v'\in A$
are conjugate poles of
$F$.
Thus
$\phi'$
moves the fiber of the canonical fibration through
$u,v$
to the fiber through
$u',v'$.
\end{proof}

\subsection{Mutually orthogonal $\C$-circles}
\label{subsect: mutually_orthogonal_ccircles}

We say that distinct
$\C$-circles $F$, $F'\sub X$
are {\em mutually orthogonal} to each other, 
$F\perp F'$,
if
$\phi_F(F')=F'$
and
$\phi_{F'}(F)=F$.
Note that then
$F$, $F'$
are disjoint because by Lemma~\ref{lem:intersect_rcircle_ccircle} a
$\C$-circle
and an
$\R$-circle 
have in common at most two points. If
$\phi_F(F')=F'$,
then
$\phi_{F'}(F)=F$
automatically by definition of
$\phi_F$.
If
$\eta=\phi_{F'}|F$
and
$\eta'=\phi_F|F'$,
then
$F\sub (F',\eta')^{\perp}$
and
$F'\sub (F,\eta)^{\perp}$.
We also note that if
$F\perp F'$,
then
$\phi_F$
acts on
$F'$
as a M\"obius involution without fixed points.

\begin{lem}\label{lem:involutions_commute} Assume a
$\C$-circle $F\sub X$
is invariant under a M\"obius
$\phi':X\to X$.
Then
$\phi=\phi_F$
commutes with
$\phi'$, $\phi\circ\phi'=\phi'\circ\phi$.
In particular,
$\phi_F\circ\phi_{F'}=\phi_{F'}\circ\phi_F$
for mutually orthogonal 
$\C$-circles $F$, $F'$.
\end{lem}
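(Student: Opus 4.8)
The plan is to exploit the defining property of the reflection $\phi_F$ that was established in Proposition~\ref{pro:conjugate_pole}: $\phi_F$ is the unique involution with fixed point set $F$ that sends each pole $u$ of $F$ to its conjugate pole $v$. The key observation I would use is that for any M\"obius map $g:X\to X$, the conjugate $g\circ\phi_F\circ g^{-1}$ is again a M\"obius involution, its fixed point set is exactly $g(F)$, and it carries conjugate poles of $g(F)$ to one another. Since $\phi_F$ is M\"obius by Proposition~\ref{pro:moebius_ccircle_involution}, these conjugates are legitimate M\"obius maps. Therefore, by the \emph{uniqueness} clause in Proposition~\ref{pro:conjugate_pole}, one should have the general conjugation formula
\begin{equation}\label{eq:conj_reflection}
g\circ\phi_F\circ g^{-1}=\phi_{g(F)}.
\end{equation}

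**Applying it to the invariant circle.** Now I would specialize Eq.~\eqref{eq:conj_reflection} to $g=\phi'$. By hypothesis $\phi'$ is M\"obius and leaves $F$ invariant, i.e.\ $\phi'(F)=F$. Hence the right-hand side becomes $\phi_{\phi'(F)}=\phi_F=\phi$, and Eq.~\eqref{eq:conj_reflection} reads $\phi'\circ\phi\circ\phi'^{-1}=\phi$, which is precisely the commutation relation $\phi\circ\phi'=\phi'\circ\phi$. For the second assertion, when $F$ and $F'$ are mutually orthogonal we have $\phi_{F'}(F)=F$ by definition of mutual orthogonality, so $F$ is invariant under the M\"obius map $\phi'=\phi_{F'}$, and the first part applies verbatim to give $\phi_F\circ\phi_{F'}=\phi_{F'}\circ\phi_F$.

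**Verifying the identifying properties of the conjugate.** The step I expect to require the most care is justifying Eq.~\eqref{eq:conj_reflection}, i.e.\ checking that $g\circ\phi_F\circ g^{-1}$ genuinely matches the three characterizing features of $\phi_{g(F)}$. First, its fixed point set is $g(\fix\phi_F)=g(F)$, which is a $\C$-circle since $g$ is M\"obius. Second, I would verify the pole-conjugacy condition: given $u\in X\sm g(F)$ and the auxiliary point $x\in g(F)$, one transports the harmonic relation defining conjugate poles back through $g^{-1}$, uses that $\phi_F$ satisfies Proposition~\ref{pro:conjugate_pole} for $F$, and pushes forward by $g$; because $g$ preserves harmonicity and $\R$-circles, the $\R$-circle $\si_x$ and the harmonic $4$-tuple $(x,u,\eta_u(x),v)$ are carried to the corresponding objects for $g(F)$. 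Hence $g\circ\phi_F\circ g^{-1}$ sends each pole of $g(F)$ to its conjugate pole. By the uniqueness asserted in Proposition~\ref{pro:conjugate_pole}, this forces $g\circ\phi_F\circ g^{-1}=\phi_{g(F)}$, completing the argument.
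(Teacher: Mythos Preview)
Your proof is correct and follows essentially the same approach as the paper. Both arguments rest on the characterization of $\phi_F$ from Proposition~\ref{pro:conjugate_pole}: a M\"obius map $\phi'$ with $\phi'(F)=F$ carries the harmonic $4$-tuple $(x,z,y,\om)$ on an $\R$-circle (which witnesses $y=\phi_F(x)$) to another such tuple witnessing $\phi'(y)=\phi_F(\phi'(x))$. The paper does this pointwise directly, while you package it as the general conjugation identity $g\circ\phi_F\circ g^{-1}=\phi_{g(F)}$ and then specialize to $g=\phi'$; the underlying mechanism is identical. One minor remark: your citation of Proposition~\ref{pro:moebius_ccircle_involution} is not actually needed, since the uniqueness in Proposition~\ref{pro:conjugate_pole} is a pointwise statement about conjugate poles and does not require the candidate map to be M\"obius.
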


\begin{proof} Since
$F$
is the fixed point set for 
$\phi$, 
we have
$\phi\circ\phi'=\phi'\circ\phi$
along
$F$.
Thus to prove the equality
$\phi\circ\phi'(x)=\phi'\circ\phi(x)$
for an arbitrary
$x\in X$,
we can assume that
$x\not\in F$.

For an arbitrary
$\om\in F$
there is a (uniquely determined)
$\R$-circle
$\si\sub X$
such that
$(x,z,y,\om)\in\harm_\si$,
where
$y=\phi(x)$, $z=\eta_x(\om)\in F_\om\cap\si$.
Then for the 
$\R$-circle $\si'=\phi'(\si)$
we have
$(x',z',y',\om')=\phi'(x,z,y,\om)\in\harm_{\si'}$,
where
$\om',z'=\eta_{x'}(\om')\in F$.
This means that
$y'=\phi(x')$,
that is,
$\phi'\circ\phi(x)=\phi\circ\phi'(x)$.
\end{proof}

A collection
$\set{F_\la}{$\la\in\La$}$
of
$\C$-circles
is said to be {\em orthogonal}, if
$F_\la\perp F_{\la'}$
for each distinct
$\la$, $\la'\in\La$.
We denote by
$\phi_\la=\phi_{F_\la}$
the respective M\"obius involutions. By Lemma~\ref{lem:involutions_commute},
$\phi_\la\circ\phi_{\la'}=\phi_{\la'}\circ\phi_\la$
for each
$\la$, $\la'\in\La$.

\begin{lem}\label{lem:twoofthree_coincide} Let
$\set{F_\la}{$\la\in\La$}$
be an orthogonal collection of
$\C$-circles. Then for distinct
$\la,\la',\la''\in\La$
we have
$\phi_\la|F_{\la''}=\phi_{\la'}|F_{\la''}$,
in particular,
$F_{\la''}$
lies in the fixed point set of
$\phi_\la\circ\phi_{\la'}$.
\end{lem}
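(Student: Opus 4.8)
The plan is to reduce Lemma~\ref{lem:twoofthree_coincide} to a single fact about one $\C$-circle, namely that two commuting fixed-point-free M\"obius involutions of a $\C$-circle must coincide, and then apply this to the restrictions of $\phi_\lambda$ and $\phi_{\lambda'}$ to $F_{\lambda''}$. First I would collect the structural input that orthogonality gives for free. Since $F_\lambda\perp F_{\lambda''}$ and $F_{\lambda'}\perp F_{\lambda''}$, both reflections leave $F_{\lambda''}$ invariant and, as was noted when mutual orthogonality was defined, act on $F_{\lambda''}$ as M\"obius involutions without fixed points (M\"obiusness of each $\phi_F$ being Proposition~\ref{pro:moebius_ccircle_involution}). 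Moreover $\phi_\lambda$ and $\phi_{\lambda'}$ commute by Lemma~\ref{lem:involutions_commute}, and restricting that identity to the invariant set $F_{\lambda''}$ shows that $\eta:=\phi_\lambda|F_{\lambda''}$ and $\eta':=\phi_{\lambda'}|F_{\lambda''}$ are two commuting fixed-point-free M\"obius involutions of the single $\C$-circle $F_{\lambda''}$.

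Next I would prove the promised fact. Identifying $F_{\lambda''}=\di Y$ with $Y=\frac12\hyp^2$ as in Lemma~\ref{lem:involution_extended}, a M\"obius involution of $F_{\lambda''}$ without fixed points is induced by an orientation-preserving involutive isometry of $Y$, i.e. a central symmetry (half-turn) $s_c$; the orientation-reversing involutions of $Y$ are geodesic reflections, and these fix the two endpoints of their axis on $\di Y$, so they are excluded by the no-fixed-point hypothesis. Writing $\eta=s_c$ and $\eta'=s_{c'}$, the composition $\eta\circ\eta'=s_c\circ s_{c'}$ is a transvection along the geodesic through $c,c'$, while $\eta'\circ\eta=(\eta\circ\eta')^{-1}$ is the opposite transvection. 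Commutativity then forces $(s_c\circ s_{c'})^2=\id$; since a nontrivial transvection of $\frac12\hyp^2$ has infinite order, this yields $s_c\circ s_{c'}=\id$, hence $c=c'$ and $\eta=\eta'$. This is exactly $\phi_\lambda|F_{\lambda''}=\phi_{\lambda'}|F_{\lambda''}$.

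The ``in particular'' clause is then immediate: on $F_{\lambda''}$ one has $\phi_\lambda\circ\phi_{\lambda'}=(\phi_{\lambda'}|F_{\lambda''})\circ(\phi_{\lambda'}|F_{\lambda''})=\id$ because $\phi_{\lambda'}|F_{\lambda''}$ is an involution, so $F_{\lambda''}$ lies in the fixed-point set of $\phi_\lambda\circ\phi_{\lambda'}$.

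I expect the only genuine content to be the isolated claim that commuting fixed-point-free M\"obius involutions of a $\C$-circle coincide; the two delicate points there are ruling out orientation-reversing involutions (which do have boundary fixed points) and invoking torsion-freeness of transvections, both of which become clean once the $\frac12\hyp^2$ model of a $\C$-circle is used, exactly as the paper already does in Lemma~\ref{lem:involution_extended}. Everything else is bookkeeping with invariance and the commutation relation already in hand, so I anticipate no further obstacle.
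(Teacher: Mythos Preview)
Your proposal is correct and follows essentially the same line as the paper's proof. The paper argues more tersely: since $\phi_\la$ and $\phi_{\la'}$ are commuting involutions, $(\phi_\la\circ\phi_{\la'})^2=\id$ globally, while if the restrictions to $F_{\la''}$ differed, their composition on $F_{\la''}$ would be a nontrivial composition of two fixed-point-free M\"obius involutions and hence of infinite order, a contradiction; you simply make this last step explicit by passing to the $\frac{1}{2}\hyp^2$ model, identifying fixed-point-free involutions with central symmetries $s_c$, and noting that nontrivial transvections are torsion-free.
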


\begin{proof} By definition, 
$\phi_\la$, $\phi_{\la'}$
preserve
$F_{\la''}$
and therefore act on
$F_{\la''}$
as M\"obius involutions without fixed points. Since they commute, the composition
$\phi_\la\circ\phi_{\la'}$
has a finite order. It follows that
$\phi_\la|F_{\la''}=\phi_{\la'}|F_{\la''}$ 
because otherwise
$\phi_\la\circ\phi_{\la'}$
would be of infinite order.
\end{proof}

\subsection{Intersection of orthogonal complements}

\begin{pro}\label{pro:empty_3D} Assume
$$(F,\eta)^\perp\cap(F',\eta')^\perp=\es$$
for mutually orthogonal
$\C$-circles $F,F'\sub X$,
where
$\eta=\phi_{F'}|F$, $\eta'=\phi_F|F'$.
Then
$\dim X=3$.
\end{pro}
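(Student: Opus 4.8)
The plan is to realize $A\cap A'$ as the fixed-point set of a single M\"obius involution and then read off the dimension by Smith--Borel theory. First I would note that, since $F\perp F'$, the reflections $\phi_F$ and $\phi_{F'}$ commute by Lemma~\ref{lem:involutions_commute}, so together with $\psi:=\phi_F\circ\phi_{F'}$ they generate a group $G\cong\Z/2\times\Z/2$ acting on $X$ by homeomorphisms (each reflection is M\"obius, hence a homeomorphism). By Corollary~\ref{cor:homeo_sphere}, $X\cong S^{k+1}$. The three involutions of $G$ are $\phi_F,\phi_{F'},\psi$, with $\fix\phi_F=F$ and $\fix\phi_{F'}=F'$ two disjoint $\C$-circles (mutual orthogonality forces $F\cap F'=\es$), and $\fix G=F\cap F'=\es$. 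Since $\psi|_F=\phi_F\circ(\phi_{F'}|_F)=\eta$ and $\psi|_{F'}=\phi_F|_{F'}=\eta'$ are fixed-point-free, I also record that $\fix\psi\cap(F\cup F')=\es$.

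The crucial step is the inclusion $\fix\psi\sub A\cap A'$. Given $p\in\fix\psi$, the remark above gives $p\notin F\cup F'$, and from $\phi_F\circ\phi_{F'}(p)=p$ one gets $q:=\phi_F(p)=\phi_{F'}(p)$, so $p,q$ are conjugate poles of $F$ and also of $F'$. Let $G_0$ be the $\C$-circle through $p,q$ (axiom~($\rm E_\C$)). As $\phi_F$ and $\phi_{F'}$ each interchange $p$ and $q$, each preserves $G_0$; moreover $\phi_F|_{G_0}$ is a nontrivial M\"obius involution of $G_0$, and it must be fixed-point-free, since a fixed point would force $G_0$ to meet $F=\fix\phi_F$ in the two fixed points of an orientation-reversing involution of a circle, contradicting that two distinct $\C$-circles share at most one point. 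Hence $G_0\perp F$, and symmetrically $G_0\perp F'$, so $\{F,F',G_0\}$ is an orthogonal collection. Lemma~\ref{lem:twoofthree_coincide} then yields $\phi_{G_0}|_F=\phi_{F'}|_F=\eta$ and $\phi_{G_0}|_{F'}=\phi_F|_{F'}=\eta'$, whence $G_0\sub(F,\eta)^\perp\cap(F',\eta')^\perp=A\cap A'$; in particular $p\in A\cap A'$.

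Finally, assuming $A\cap A'=\es$, the inclusion just proved gives $\fix\psi=\es$. By Smith theory, the fixed set of every subgroup of $G$ acting on the sphere $X\cong S^{k+1}$ is a $\Z/2$-cohomology sphere (with $\es$ counted as $S^{-1}$), so Borel's formula for $(\Z/2)^2$-actions applies:
$$\dim X-\dim\fix G=\sum_H\bigl(\dim\fix H-\dim\fix G\bigr),$$
the sum ranging over the three subgroups of order two. With $\dim\fix G=\dim\fix\psi=-1$ and $\dim F=\dim F'=1$ this reads $\dim X+1=2+2+0$, hence $\dim X=3$. I expect the main obstacle to lie in the middle paragraph: the identification $\fix\psi\sub A\cap A'$, and in particular the fixed-point-freeness of $\phi_F|_{G_0}$ that makes Lemma~\ref{lem:twoofthree_coincide} available, together with checking that the Smith--Borel hypotheses (all subgroup fixed sets being cohomology spheres) are genuinely met once the problem is reduced to this purely topological statement.
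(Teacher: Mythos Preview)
Your argument is correct and takes a genuinely different route from the paper's.

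The paper argues as follows: writing $\dim X=k+1$, one has $\dim A=\dim A'=k-1$ and, since $A$ is foliated by $\C$-circles (Lemma~\ref{lem:orthogonal_foliated_ccircles}), $k$ is even. The key geometric input is that $F$ is \emph{linked} with $A$, i.e.\ $F$ is not contractible in $X\sm A$ (codimension two complement). Since $F\sub A'$ and $A'\cap A=\es$, the circle $F$ sits inside $A'\sub X\sm A$; a transversality/linking argument then forces $2(k-1)<k+1$, whence $k=2$. The paper states this step tersely and does not spell out the transversality.

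Your approach replaces this geometric linking argument by transformation-group theory. You use that $\phi_F$ and $\phi_{F'}$ generate a $(\Z/2)^2$-action on $X\cong S^{k+1}$, identify $\fix(\phi_F\circ\phi_{F'})\sub A\cap A'$ (this is precisely one direction of the later Proposition~\ref{pro:intersection_foliation}, which you reprove independently---legitimately, since that direction does not rely on the present proposition), and then invoke Borel's dimension formula. The computation is clean: all four fixed sets are computed explicitly to be $S^1,S^1,\es,\es$, so the Smith--Borel hypotheses are verified by hand rather than by appeal to Smith theory.

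Each approach has its merits. The paper's argument is more elementary in principle (no black-box theorem) but is sketchier as written, and implicitly uses that $A'$ has no interesting $\pi_1$ in high dimensions. Your argument imports heavier machinery but is self-contained once Borel's formula is granted, and it also yields $\dim X=3$ directly without the auxiliary parity step. Your concern about the middle paragraph is unfounded: the fixed-point-freeness of $\phi_F|_{G_0}$ follows exactly as you say, since a nontrivial M\"obius involution of a $\C$-circle has either zero or two fixed points, and two fixed points would place two points of $G_0$ in $F$, contradicting axiom~($\rm E_\C$).
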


\begin{proof} We let
$\dim X=k+1$
with 
$k\ge 0$.
Then
$\dim A=\dim A'=k-1$
for 
$A=(F,\eta)^\perp$, $A'=(F',\eta')^\perp$.
By Lemma~\ref{lem:orthogonal_foliated_ccircles},
$A$
is foliated by
$\C$-circles,
thus
$k$
is even. If 
$k=0$,
then
$X=\di M$
for 
$M=\C\hyp^1$.
We can assume that
$k\ge 2$.
Note that the codimension of
$A$
equals two and that
$F$
is not contractible in
$X\sm A$.
Thus the assumption
$A\cap A'=\es$
implies by transversality argument that
$2(k-1)<k+1$.
Hence
$k=2$
and
$\dim X=3$.
\end{proof}

\begin{pro}\label{pro:intersection_orthogonal_complements} Let
$F,F'\sub X$
be mutually orthogonal
$\C$-circles, $F\perp F'$,
$A=(F,\eta)^\perp$, $A'=(F',\eta')^\perp$,
where
$\eta=\phi'|F$, $\eta'=\phi|F'$
and
$\phi=\phi_F$, $\phi'=\phi_{F'}$.
Then for every
$u\in A\cap A'$
we have
$\phi(u)=\phi'(u)$.
\end{pro}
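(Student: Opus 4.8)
The plan is to reduce the identity $\phi(u)=\phi'(u)$ to a statement about a single $\C$-circle and then apply Lemma~\ref{lem:twoofthree_coincide}. Fix $u\in A\cap A'$ and put $v=\phi(u)$, $v'=\phi'(u)$. First I would record that $v,v'\in A\cap A'$. Since $F\perp F'$, for $x\in F$ the points $x,\phi_{F'}(x)$ are conjugate poles of $F'$ lying on $F$, so by Lemma~\ref{lem:orthogonal_covering_ccircles} the circle $F$ is a fiber of the canonical fibration of $A'$, and symmetrically $F'$ is a fiber of that of $A$. Hence by Lemma~\ref{lem:fiber_involution} the reflection $\phi'=\phi_{F'}$ preserves $A$ together with its canonical fibration, while $\phi'$ maps $A'=(F',\eta')^\perp$ to itself by the definition of the orthogonal complement; thus $v'=\phi'(u)\in A\cap A'$, and likewise $v\in A\cap A'$. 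Now let $F''$ be the unique $\C$-circle through the conjugate poles $u,v$ of $F$ (axiom~($\rm E_\C$)). By Lemma~\ref{lem:orthogonal_covering_ccircles} it lies in $A$ and is precisely the fiber of the canonical fibration of $A$ through $u$, so $\phi_F(F'')=F''$ by Lemma~\ref{lem:orthogonal_foliated_ccircles}; in particular $F\perp F''$.

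The whole argument then reduces to showing $F'\perp F''$, i.e. $\phi_{F'}(F'')=F''$. Since $\phi_{F'}$ permutes the fibers of the canonical fibration of $A$ and commutes with $\phi_F$ (Lemma~\ref{lem:involutions_commute}), the image $\phi_{F'}(F'')$ is again a $\phi_F$-invariant fiber of $A$, namely the fiber through $v'$. Consequently $F'\perp F''$ is equivalent to $v'\in F''$, that is, to the assertion that $\phi_{F'}$ fixes the fiber of $A$ through $u$. To break this apparent circularity I would prove the inclusion $F''\subseteq A'$. Using the representation $A'=\bigcap_{\om'\in F'}S_{\om',\eta'(\om')}$ from Eq.~\eqref{eq:filling_fiber_represent} and Lemma~\ref{lem:filling_fiber}, it suffices to check that for each $\om'\in F'$ the sphere $S_{\om',\eta'(\om')}$, which already contains the two points $u,v$ of $F''$, contains all of $F''$. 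For this I would produce, through suitable points of $F''$, an $\R$-circle passing through the poles of that sphere and meeting $F''$ in two points in harmonic position, and then invoke axiom~($\rm O_\C$), which forces the entire $\C$-circle $F''$ into the sphere.

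Granting $F''\subseteq A'$, the circle $F''$ is a $\phi_F$-invariant $\C$-circle contained in $A\cap A'$ and stable under $\phi_{F'}$; being $\phi_{F'}$-invariant and meeting its points in conjugate-pole pairs of $F'$, it is then a fiber of the canonical fibration of $A'$, necessarily the fiber through $u$. Since that fiber also contains $\phi_{F'}(u)=v'$, we obtain $v'\in F''$, hence $\phi_{F'}(F'')=F''$ and $F'\perp F''$. At this point $\{F,F',F''\}$ is an orthogonal triple of $\C$-circles with $u\in F''$, and Lemma~\ref{lem:twoofthree_coincide} gives $\phi_F|F''=\phi_{F'}|F''$; evaluating at $u$ yields $\phi(u)=\phi'(u)$.

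I expect the decisive difficulty to be the middle step, establishing $F'\perp F''$ (equivalently $v'\in F''$). This is exactly where one must show that the fiber of the canonical fibration of $A$ through $u$ is stabilized by $\phi_{F'}$, and the formal manipulation of fibrations alone is circular; the genuine input has to come from the incidence axioms, through the existence of $\R$-circles bridging the mutually orthogonal circles $F'$ and $F''$ and supplying the harmonic configuration needed to apply axiom~($\rm O_\C$). Turning that harmonic input into the inclusion $F''\subseteq A'$, and hence into the $\phi_{F'}$-invariance of $F''$, is the crux of the proof.
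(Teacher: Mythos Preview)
Your strategy---reduce to an orthogonal triple $\{F,F',F''\}$ and invoke Lemma~\ref{lem:twoofthree_coincide}---is attractive, but the gap you yourself flag is real and is not closed by the mechanism you propose. There are in fact two problems.

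First, the step ``$F''\subseteq A'$ via axiom~($\rm O_\C$)'' is circular. To apply ($\rm O_\C$) so as to force $F''$ into the sphere $S_{\om',\eta'(\om')}$ you need an $\R$-circle meeting $F''$ in two points and carrying $\om',\eta'(\om')$ in harmonic position with those two points. The only $\R$-circles through $\om',\eta'(\om')$ and a point of $F''$ that you can manufacture come from the fact that $u$ (or $v$) lies in $A'$; such a circle hits $F''$ a second time precisely at $\phi_{F'}(u)=v'$ (resp.\ $\phi_{F'}(v)$), and you do not know that $v'\in F''$---this is exactly what is to be proved. Second, even granting $F''\subseteq A'$, your inference ``hence $F''$ is $\phi_{F'}$-invariant and a fiber of the canonical fibration of $A'$'' does not follow: the paper explicitly warns (just after Lemma~\ref{lem:orthogonal_foliated_ccircles}) that not every $\C$-circle in an orthogonal complement is a fiber of its canonical fibration. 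So $F''\subseteq A'$ alone would not give $\phi_{F'}(F'')=F''$; you would still need $v'\in F''$, and you are back to the beginning.

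The paper avoids this circularity entirely by a direct metric argument, not by incidence/fibration bookkeeping. Fixing $o\in F$, $x\in F'$ and their images $\om=\phi'(o)$, $y=\phi(x)$, it exploits that $\phi$ is an isometry of $X_\om$ and $\phi'$ of $X_x$ to obtain the symmetry identities of Lemmas~\ref{lem:F_symm} and \ref{lem:Fprim_symm}. Metric inversion then yields $|v'y|_\om=|v'x|_\om=|ux|_\om$ and $|ov'|_\om=|ov|_\om$. Combining the Ptolemy \emph{equality} on the harmonic $\R$-circle through $(u,x,v',y)$ with the Ptolemy \emph{inequality} on the 4-tuple $(u,x,v,y)$ gives $|uv|_\om\le|uv'|_\om$, while the triangle inequality through $o$ gives the reverse, so $|uv'|_\om=|uo|_\om+|ov'|_\om$. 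Proposition~\ref{pro:property_u} then forces $v'$ onto the $\R$-circle $\gamma$ carrying $u,o,v,\om$, whence $v'=v$. In short, the decisive input here is the distance formula and the Ptolemy (in)equality, not an additional harmonic-incidence trick; your outline would need a substantially new idea to close the loop.
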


\begin{proof} We denote
$v=\phi(u)$, $v'=\phi'(u)$.
We fix
$o\in F$, $x\in F'$
and put
$\om=\phi'(o)\in F$, $y=\phi(x)\in F'$.
Since
$F'\sub A$,
there is an
$\R$-circle $\si\sub X$
such that
$(x,o,y,\om)\in\harm_\si$.
Since
$u\in A$,
there is an
$\R$-circle $\ga\sub X$
such that
$(u,o,v,\om)\in\harm_\ga$.

\begin{lem}\label{lem:F_symm} In the metric of
$X_\om$
we have
$|vy|_\om=|ux|_\om$, $|vx|_\om=|yu|_\om$, $|ox|_\om=|ou|_\om=|ov|_\om$. 
\end{lem}

\begin{proof} The 
$\C$-circle $F$
is the fixed point set of the M\"obius
$\phi:X\to X$.
Thus
$\phi$
acts on
$X_\om$
as an isometry. Using that
$\phi(u)=v$
and
$\phi(x)=y$,
we obtain
$|vy|_\om=|ux|_\om$, $|vx|_\om=|yu|_\om$.
Recall that
$u,v,x,y\in A$
and 
$A$
lies in a sphere between
$o$, $\om$.
Thus
$|ox|_\om=|ou|_\om=|ov|_\om$.
\end{proof}

Since
$u\in A'$,
there is an
$\R$-circle $\ga'\sub X$
such that
$(u,x,v',y)\in\harm_{\ga'}$.

\begin{lem}\label{lem:Fprim_symm} In the metric of
$X_x$
we have
$|v'\om|_x=|ou|_x$, $|v'o|_x=|u\om|_x$, $|v'y|_x=|y\om|_x=|uy|_x$. 
\end{lem}

\begin{proof} The 
$\C$-circle $F'$
is the fixed point set of the M\"obius
$\phi':X\to X$.
Thus
$\phi'$
acts on
$X_x$
as an isometry. Using that
$\phi'(u)=v'$
and
$\phi'(o)=\om$,
we obtain
$|v'\om|_x=|ou|_x$, $|v'o|_x=|u\om|_x$.
Recall that
$u,v',o,\om\in A'$
and 
$A'$
lies in a sphere between
$x$, $y$.
Thus
$|v'y|_x=|y\om|_x=|uy|_x$.
\end{proof}

Using Lemma~\ref{lem:Fprim_symm}, we obtain
$$|v'y|_\om=\frac{|v'y|_x}{|v'\om|_x\cdot|y\om|_x}=\frac{1}{|v'\om|_x}=|v'x|_\om,$$
and using Lemma~\ref{lem:F_symm}, we obtain
$$|v'x|_\om=\frac{1}{|v'\om|_x}=\frac{1}{|ou|_x}
 =\frac{|ox|_\om\cdot|ux|_\om}{|ou|_\om}=|ux|_\om.$$
That is,
$|v'y|_\om=|v'x|_\om=|ux|_\om$.

Next we show that
$|ov'|_\om=|ov|_\om$.
Using Lemma~\ref{lem:Fprim_symm}, we obtain
$$|ov'|_\om=\frac{|ov'|_x}{|o\om|_x\cdot|v'\om|_x}
  =\frac{|u\om|_x}{|o\om|_x\cdot|ou|_x}
  =\frac{|ox|_\om}{|ux|_\om}\cdot\frac{|ox|_\om\cdot|ux|_\om}{|ou|_\om}
  =\frac{|ox|_\om^2}{|ou|_\om}.$$
By Lemma~\ref{lem:F_symm},
$|ox|_\om=|ou|_\om=|ov|_\om$,
hence
$|ov'|_\om=|ov|_\om$.

Since
$(u,x,v',y)\in\harm_{\ga'}$,
we have
$$|uv'|_\om\cdot|xy|_\om=|ux|_\om\cdot|v'y|_\om+|uy|_\om\cdot|xv'|_\om
  =2|ux|_\om\cdot|v'y|_\om=2|ux|_\om^2.$$
Applying the Ptolemy inequality to the 4-tuple
$(u,x,v,y)$,
we obtain
$$|uv|_\om\cdot|xy|_\om\le |ux|_\om\cdot|vy|_\om+|vx|_\om\cdot|uy|_\om
  =|ux|_\om^2+|uy|_\om^2.$$
By Lemma~\ref{lem:Fprim_symm},
$$|uy|_\om=\frac{|uy|_x}{|u\om|_x\cdot|y\om|_x}=\frac{1}{|u\om|_x}=|ux|_\om.$$
Thus
$$|uv|_\om\cdot|xy|_\om\le 2|ux|_\om^2=|uv'|_\om\cdot|xy|_\om.$$
We conclude that
$|uv|_\om\le|uv'|_\om$.

On the other hand,
$|uv|_\om=|uo|_\om+|ov|_\om$
and
$|uv'|_\om\le|uo|_\om+|ov'|_\om=|uo|_\om+|ov|_\om=|uv|_\om$.
Hence
$|uv'|_\om=|uo|_\om+|ov'|_\om$.
Therefore, the 4-tuple
$(u,o,v',\om)$
satisfies the Ptolemy equality
$$|uv'|\cdot|o\om|=|uo|\cdot|v'\om|+|u\om|\cdot|ov'|$$
and three of its entries
$u,o,\om$
lie on the 
$\R$-circle $\ga$.
By Proposition~\ref{pro:property_u},
$v'\in\ga$
and hence
$v'=v$.
\end{proof}

\begin{pro}\label{pro:intersection_foliation} Let
$F,F'\sub X$
be mutually orthogonal
$\C$-circles, $F\perp F'$,
$A=(F,\eta)^\perp$, $A'=(F',\eta')^\perp$,
where
$\eta=\phi'|F$, $\eta'=\phi|F'$
and
$\phi=\phi_F$, $\phi'=\phi_{F'}$.
Then the intersection
$A\cap A'$
is the fixed point set of 
$\psi=\phi'\circ\phi$,
and it carries a fibration by
$\C$-circles,
which coincides with the restriction of the canonical
fibrations of
$A$, $A'$
to
$A\cap A'$.
\end{pro}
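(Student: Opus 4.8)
The plan is to treat the two assertions separately: first establish the set equality $A\cap A'=\fix\psi$, and then deduce the fibration structure, which follows almost formally once the equality is available.

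For the equality I would first record that $\psi=\phi'\circ\phi$ is itself a M\"obius involution: both $\phi=\phi_F$ and $\phi'=\phi_{F'}$ are M\"obius by Proposition~\ref{pro:moebius_ccircle_involution}, and they commute by Lemma~\ref{lem:involutions_commute}, so $\psi^2=\phi'\phi\phi'\phi=\phi'^2\phi^2=\id$. The inclusion $A\cap A'\subseteq\fix\psi$ is then immediate from Proposition~\ref{pro:intersection_orthogonal_complements}: for $u\in A\cap A'$ one has $\phi(u)=\phi'(u)$, whence $\psi(u)=\phi'(\phi(u))=\phi'(\phi'(u))=u$.

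The substantial direction is $\fix\psi\subseteq A\cap A'$, and I expect this to be the main obstacle. First I would check that $\fix\psi$ avoids $F\cup F'$: on $F$ one has $\psi|F=\phi'|F=\eta$ (since $\phi$ fixes $F$ pointwise) and on $F'$ one has $\psi|F'=\phi|F'=\eta'$, both fixed-point-free because $F\cap F'=\es$. So for $u\in\fix\psi$ we get $u\notin F\cup F'$ and, since $\phi'$ is an involution, the relation $v:=\phi(u)=\phi'(u)$. In particular $u,v$ are conjugate poles of $F$, giving $\eta_u=\eta_v$, and also conjugate poles of $F'$, giving $\eta'_u=\eta'_v$. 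The key step is the naturality of the retraction $\mu_{F,w}$ under the M\"obius map $\phi'$, which preserves $F$ since $F\perp F'$: one has $\phi'\circ\mu_{F,w}=\mu_{F,\phi'(w)}\circ\phi'$ for every $w\in F$, because $\phi'$ sends the $\R$-circle through $w$ defining $\mu_{F,w}$ to the one defining $\mu_{F,\phi'(w)}$. Applying this to $u$ and using $\phi'(u)=v$ together with $\eta_v=\eta_u$ yields
$$\phi'(\eta_u(w))=\mu_{F,\phi'(w)}(\phi'(u))=\eta_v(\phi'(w))=\eta_u(\phi'(w)),$$
i.e. the fixed-point-free M\"obius involutions $\eta_u$ and $\eta=\phi'|F$ of $F$ commute. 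Now I would invoke the fact that two commuting fixed-point-free M\"obius involutions of a $\C$-circle must coincide: each is induced by a central symmetry of $\frac12\hyp^2$ as in Lemma~\ref{lem:involution_extended}, and distinct central symmetries $s_a,s_b$ of the hyperbolic plane never commute, since $s_as_b$ is a nontrivial transvection along the geodesic through $a,b$ whose inverse is $s_bs_a$. Hence $\eta_u=\eta$ and $u\in A$. The symmetric argument, with the roles of $F,F'$ (hence of $\phi,\phi'$) interchanged, gives $\eta'_u=\eta'$ and $u\in A'$, so $u\in A\cap A'$.

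For the fibration I would argue as follows. Given $u\in A\cap A'=\fix\psi$, its conjugate pole $v=\phi(u)=\phi'(u)$ again lies in $A\cap A'$, since $A$ is $\phi$-invariant and $A'$ is $\phi'$-invariant. The $\C$-circle $F_u$ through $u,v$ is contained in $A$ by Lemma~\ref{lem:orthogonal_covering_ccircles} and, by the same lemma applied to $F'$, in $A'$; thus $F_u\subseteq A\cap A'$. By construction $F_u$ is simultaneously the fiber through $u$ of the canonical fibration of $A$ (whose fibers are the $\C$-circles joining pairs of conjugate poles of $F$) and the fiber through $u$ of the canonical fibration of $A'$, because both pass through $u$ and through the same conjugate pole $v$. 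These $\C$-circles cover $A\cap A'$ and, being fibers of the canonical fibration of $A$, are pairwise disjoint by axiom~($\rm E_\C$) exactly as in Lemma~\ref{lem:orthogonal_foliated_ccircles}. This exhibits the desired fibration of $A\cap A'$ and shows that it coincides with the restriction to $A\cap A'$ of the canonical fibrations of both $A$ and $A'$.
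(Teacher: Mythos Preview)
Your proof is correct. For the inclusion $A\cap A'\subseteq\fix\psi$ and for the fibration statement your argument coincides with the paper's. The difference lies in the reverse inclusion $\fix\psi\subseteq A\cap A'$. The paper passes through the $\C$-circle $F''$ determined by $u$ and $v=\phi(u)=\phi'(u)$, observes that $\{F,F',F''\}$ is an orthogonal collection, and then invokes Lemma~\ref{lem:twoofthree_coincide} to obtain $\phi_{F''}|F=\phi'|F=\eta$ and $\phi_{F''}|F'=\phi|F'=\eta'$, whence $F''\subset A\cap A'$. You argue directly on $F$ instead: the naturality of $\mu_{F,\cdot}$ under the M\"obius map $\phi'$ together with $\eta_u=\eta_v$ forces $\eta_u$ to commute with $\eta=\phi'|F$, and you then use that two commuting fixed-point-free M\"obius involutions of a $\C$-circle must coincide. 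The two routes are close in spirit---your last step is precisely the content of the proof of Lemma~\ref{lem:twoofthree_coincide}---but your version bypasses the orthogonal-collection formalism and is a bit more self-contained, while the paper's version has the advantage of placing the entire fiber $F''$ inside $A\cap A'$ in one stroke, which feeds directly into the fibration assertion.
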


\begin{proof} Assume
$u\in A\cap A'$.
There is a 
$\C$-circle $G\sub A$
of the canonical fibration of
$A$
passing through
$u$
and
$\phi(u)$,
and there is a 
$\C$-circle $G'\sub A'$
of the canonical fibration of
$A'$
passing through
$u$
and
$\phi'(u)$.
By Proposition~\ref{pro:intersection_orthogonal_complements},
$\phi(u)=\phi'(u)$.
Hence
$G=G'\sub A\cap A'$,
i.e.
$A\cap A'$
carries a fibration by
$\C$-circles,
which coincides with the restrictions of the canonical
fibrations of
$A$, $A'$
to
$A\cap A'$.
Furthermore, since
$\phi$, $\phi'$
are involutions, we have
$\psi(u)=\phi'\circ\phi(u)=u$,
i.e.
$A\cap A'$
lies in the fixed point set 
$\fix\psi$
of
$\psi$.

Assume
$\psi(u)=u$
for some 
$u\in X$.
Then
$u\not\in F\cup F'$
and
$v=\phi(u)=\phi'(u)\neq u$.
Thus there is a unique
$\C$-circle $F''$
through
$u$, $v$.
Then
$\phi(F'')=F''=\phi'(F'')$,
hence
$F,F',F''$
is an orthogonal collection of
$\C$-circles.
By Lemma~\ref{lem:twoofthree_coincide},
$\phi''=\phi_{F''}$
acts on
$F$ ($F'$)
as
$\phi'$ ($\phi$)
does, 
$\phi''|F=\phi'|F$, $\phi''|F'=\phi|F'$,
and
$F''\sub\fix(\phi'\circ\phi)=\fix\psi$.
Therefore
$F''\sub A\cap A'$
is a 
$\C$-circle 
of the canonical fibrations of
$A$, $A'$.
Thus
$A\cap A'=\fix\psi$. 
\end{proof}

\subsection{An induction argument}
\label{subsect:induced_argument}

\begin{lem}\label{lem:fixed_point_axioms} Assume a subset
$A\sub X$
foliated by
$\C$-circles
is the fixed point set of a M\"obius
$\psi:X\to X$.
Then
$A$
satisfies axioms (E), (O). 
\end{lem}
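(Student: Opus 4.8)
The plan is to verify axioms (E) and (O) for $A$ by exploiting that $A=\fix\psi$ for a M\"obius map $\psi$, so that the M\"obius structure of $X$ restricts to $A$ and every M\"obius-geometric object (spheres, harmonic tuples) that is $\psi$-invariant has its defining incidence structure inherited by $A$. The guiding principle throughout is: an object defined by incidence/harmonicity conditions in $X$ lies in $A$ precisely when it is $\psi$-invariant, and since $\psi$ is M\"obius and involutive on the relevant configurations, invariance is cheap to check.

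\medskip
\textbf{Axiom $(\rm E_\C)$.} Given distinct $x,y\in A$, let $F''\sub X$ be the unique $\C$-circle through $x,y$ furnished by $(\rm E_\C)$ for $X$. Since $\psi$ is M\"obius and $\psi(x)=x$, $\psi(y)=y$, the image $\psi(F'')$ is a $\C$-circle through $x,y$, hence $\psi(F'')=F''$ by uniqueness. Thus $F''$ is $\psi$-invariant. I would then argue that a $\psi$-invariant $\C$-circle through two fixed points of $\psi$ lies entirely in $\fix\psi=A$: the restriction $\psi|F''$ is a M\"obius self-map of $F''\cong\di(\tfrac12\hyp^2)$ fixing two points, and one shows it must be the identity (a M\"obius involution of a $\C$-circle fixing two points is trivial, since a nontrivial M\"obius involution of $F$ is without fixed points, as used repeatedly above). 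This gives existence; uniqueness in $A$ is immediate from uniqueness in $X$. One must also confirm that $A$ contains more than one $\C$-circle, i.e.\ that the hypothesis ``$A$ is foliated by $\C$-circles'' genuinely yields $\C$-circles of $X$ inside $A$ through any two of its points, which is exactly what the previous display arranges.

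\medskip
\textbf{Axiom $(\rm E_\R)$.} Given a $\C$-circle $F\sub A$, $\om\in F$, and $u\in A\sm F$, apply $(\rm E_\R)$ in $X$ to get the unique $\R$-circle $\si\sub X$ through $\om,u$ meeting $F_\om$. Applying $\psi$: since $\psi$ fixes $\om,u$ pointwise and preserves $F$ (because $F\sub\fix\psi$), the image $\psi(\si)$ is again an $\R$-circle through $\om,u$ hitting $F_\om$, so $\psi(\si)=\si$ by uniqueness. A $\psi$-invariant $\R$-circle containing two fixed points $\om,u$ of $\psi$ lies in $\fix\psi$, by the same rigidity as before applied to $\R$-circles (a M\"obius self-map of an $\R$-circle fixing two points and preserving the circle is the identity, using Lemma~\ref{lem:three_points_rcircles} or the fact that $\psi|\si$ is M\"obius on $\wh\R$ fixing two points plus involutivity). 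Hence $\si\sub A$, giving existence and, again by transfer of uniqueness, uniqueness within $A$.

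\medskip
\textbf{Axioms (O).} These are purely about harmonicity of $4$-tuples, and harmonicity is a cross-ratio condition preserved by the inclusion $A\hookrightarrow X$ for any metric of the structure; so if an $\R$-circle $\si$ and a $\C$-circle $F$ share two points $o,\om$ inside $A$, the harmonic relations $(\rm O_\C)$ and $(\rm O_\R)$ hold verbatim because they already hold in $X$, where $\si,F$ are genuine $\R$- and $\C$-circles. The only point needing care is that the $\R$-circles and $\C$-circles of $A$ (those produced in the verification of (E)) are bona fide $\R$- and $\C$-circles of the ambient $X$, not merely abstract curves; this is guaranteed by construction above, since at every step the circle in question was obtained from the corresponding axiom in $X$ and shown to lie in $A$. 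I expect the main obstacle to be precisely this consistency check — establishing that ``$\R$-circle of $A$'' and ``$\C$-circle of $A$'' coincide with the restrictions to $A$ of the $X$-notions, which amounts to the rigidity lemma that a $\psi$-invariant circle through two $\psi$-fixed points is pointwise fixed. Once that rigidity is in hand, all four axioms follow by the uniform invariance-and-uniqueness argument, and no further computation is required.
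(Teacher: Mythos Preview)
Your argument for $(\rm E_\C)$ has a genuine gap. You claim that a nontrivial M\"obius involution of a $\C$-circle is without fixed points, but this is false: the reflection in a geodesic of $\frac12\hyp^2$ induces a M\"obius involution of $\di(\frac12\hyp^2)$ with exactly two fixed points (the endpoints of the geodesic). More fundamentally, the lemma does not assume $\psi$ is an involution---in the paper's application $\psi=\phi_{F'}\circ\phi_F$ is a composition of two commuting involutions, which need not be involutive itself. Thus $\psi|F''$ could a priori be any M\"obius self-map of $F''$ fixing the two points $x,y$: the identity, a reflection, or a hyperbolic element. You rule out none of these, and crucially you never use the hypothesis that $A$ is \emph{foliated by $\C$-circles}. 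The paper uses that hypothesis essentially: working in $X_a$ for $a\in A$, the M\"obius map $\psi$ is a homothety; since the foliation provides an entire $\C$-line $F'\sub\fix\psi$, the homothety coefficient is forced to be $1$, so $\psi$ is an isometry of $X_a$. On the $\C$-line $F_a$ through $a'$ an isometry fixing $a'$ is either the identity or the reflection at $a'$; the latter is excluded because $\psi$, fixing $F'$ pointwise, preserves the coherent fiber orientation, whereas the reflection reverses it.

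Your $(\rm E_\R)$ argument has the same defect: a M\"obius self-map of an $\R$-circle fixing two points need not be the identity (e.g.\ $x\mapsto 2x$ on $\wh\R$ fixes $0,\infty$), and Lemma~\ref{lem:three_points_rcircles} is about two $\R$-circles sharing three points, not about a single M\"obius map. The paper's fix is immediate once $(\rm E_\C)$ is in hand: since now $F\sub A=\fix\psi$, the intersection point $\si\cap F_\om$ is a \emph{third} fixed point of $\psi$ on $\si$, and a M\"obius map of $\wh\R$ fixing three distinct points is the identity. Your treatment of axioms~(O) is fine.
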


\begin{proof} We only need to check the existence axioms (E).
Given distinct
$a$, $a'\in A$,
there is a unique
$\C$-circle $F\sub X$
through
$a$, $a'$.
Since
$\psi(a)=a$, $\psi(a')=a'$,
we have
$\psi(F)=F$.
Then in the space
$X_a$,
the M\"obius
$\psi:X_a\to X_a$
acts as a homothety preserving
$a'$.
On the other hand,
$X_a$
is foliated by
$\C$-lines,
one of which,
$F'$,
lies by the assumption in
$\fix\psi$.
Hence,
$\psi:X_a\to X_a$
is an isometry pointwise preserving
$F'$
and
$a'$.
This excludes a possibility that
$\psi$
acts on
$F_a$
as the reflection at
$a'$.
Therefore,
$F\sub\fix\psi=A$,
which is axiom~($\rm E_\C$). 

Given a
$\C$-circle $F\sub A$, $\om\in F$
and
$u\in A\sm F$,
there is a unique 
$\R$-circle $\si\sub X$
through
$\om$, $u$
that hits
$F_\om$.
Thus at least three distinct points of
$\si$
lies in
$\fix\psi$.
We conclude that
$\psi$
pointwise preserves
$\si$,
i.e.
$\si\sub A$,
which is axiom~($\rm E_\R$). 
\end{proof}

\begin{cor}\label{cor:axioms_orthogonal_complements} Let
$F$, $F'\sub X$
be mutually orthogonal
$\C$-circles, $F\perp F'$,
$A=(F,\eta)^\perp$, $A'=(F',\eta')^\perp$,
where
$\eta=\phi'|F$, $\eta'=\phi|F'$
and
$\phi=\phi_F$, $\phi'=\phi_{F'}$.
Then the intersection
$A\cap A'$
satisfies axioms (E) and (O).
\end{cor}

\begin{proof} Apply Lemma~\ref{lem:fixed_point_axioms}
to
$A\cap A'$
which is by Proposition~\ref{pro:intersection_foliation}
the fixed point set of the M\"obius
$\psi=\phi'\circ\phi$
foliated by
$\C$-circles.
\end{proof}

\section{M\"obius join}
\label{sect:moebius_join}

\subsection{Canonical subspaces orthogonal to a $\C$-circle}

Let
$(F,\eta)$
be a 
$\C$-circle
in
$X$
with a M\"obius involution
$\eta:F\to F$
without fixed points.
Let
$F'\sub (F,\eta)^\bot$
be a nonempty subspace that satisfies
axioms (E), (O), and is invariant under the reflection
$\phi_F:X\to X$, $\phi_F(F')=F'$.
In this case we say that
$F'$
is a {\em canonical} subspace orthogonal to
$F$
at
$\eta$,
COS for brevity. Note that
$F'$
carries a canonical fibration by
$\C$-circles
induced by
$\phi_F$,
where the fiber through
$x\in F'$
is the uniquely determined
$\C$-circle
through
$x$, $\phi_F(x)$.
We use notation
$\cF=\cF_{F'}$
for this fibration, and
$H\in\cF$
means that
$H$
is a fiber of 
$\cF$.

\begin{lem}\label{lem:moebius_join_rcircle} For every
$u\in F$, $x\in F'$
there are uniquely determined
$v\in F$, $y\in F'$
such that 
$(u,x,v,y)\in\harm_\si$
for an
$\R$-circle $\si\sub X$.
\end{lem}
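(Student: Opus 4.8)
The plan is to read this statement as an immediate consequence of Proposition~\ref{pro:conjugate_pole}, applied not to a point of $F$ but to $x\in F'$ regarded as a \emph{pole} of $F$. Since $F'\sub(F,\eta)^\perp\sub X\sm F$, the involution $\eta_x:F\to F$ is defined, and by the very definition of the orthogonal complement it equals $\eta$. I would set $v:=\eta(u)=\eta_x(u)\in F$; as $\eta$ has no fixed points, $v\neq u$. By Corollary~\ref{cor:cline_projection} (equivalently axiom~($\rm E_\R$)) there is a unique $\R$-circle $\si$ through $u$, $x$ that meets $F_u$, and it meets it precisely at $v$. Proposition~\ref{pro:conjugate_pole}, applied to the pole $x$ and the point $u\in F$, then delivers exactly $(u,x,v,\phi_F(x))\in\harm_\si$. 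Putting $y:=\phi_F(x)$ and invoking that $F'$ is a COS, so that $\phi_F(F')=F'$, places $y$ in $F'$ rather than merely in $(F,\eta)^\perp$. Since $x\notin F=\fix\phi_F$ we have $y\neq x$, and $u,v\in F$, $x,y\in F'$ with $F\cap F'=\es$ makes the four points pairwise distinct, so existence holds.

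For uniqueness I would argue as follows. Suppose $(u,x,v',y')\in\harm_{\si'}$ with $v'\in F$, $y'\in F'$ and some $\R$-circle $\si'$. Then $u,v'$ both lie in $F\cap\si'$ with $v'\neq u$, while $x\in\si'\sm F$, so $\si'$ is an $\R$-circle through $u$, $x$ meeting $F_u$. Axiom~($\rm E_\R$) forces $\si'=\si$ and $v'=\eta_x(u)=v$. Finally, an $\R$-circle carries the M\"obius structure of $\wh\R$, on which the harmonic conjugate of $x$ with respect to $\{u,v\}$ is uniquely determined; hence $y'=y$. Thus the pair $(v,y)$, and with it the circle $\si$, is unique.

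The argument is short because the substantive work is already carried by Proposition~\ref{pro:conjugate_pole}. The only points needing care are the identification $\eta_x=\eta$ coming from $x\in(F,\eta)^\perp$, the invariance $\phi_F(F')=F'$ built into the notion of a COS (this is exactly what keeps $y$ inside $F'$), and the two uniqueness inputs, namely ($\rm E_\R$) to pin down $\si$ and $v$ and uniqueness of the harmonic conjugate on $\si\cong\wh\R$ to pin down $y$. I expect no genuine obstacle here; the one detail I would verify is that the cyclic order of $(u,x,v,y)$ along $\si$ matches the ordering convention in the definition of $\harm_\si$, but this is automatic from the harmonic relation produced by Proposition~\ref{pro:conjugate_pole}.
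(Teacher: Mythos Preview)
Your proof is correct and follows essentially the same route as the paper's: set $v=\eta(u)=\eta_x(u)$, use axiom~($\rm E_\R$) to obtain the unique $\R$-circle $\si$ through $u,x,v$, set $y=\phi_F(x)$ via Proposition~\ref{pro:conjugate_pole}, and invoke $\phi_F(F')=F'$ to place $y$ in $F'$. The paper's version is more terse and leaves the uniqueness argument implicit, while you spell out both the dependence on Proposition~\ref{pro:conjugate_pole} and the uniqueness step explicitly; but the content is the same.
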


\begin{proof} By definition of
$(F,\eta)^\bot$, 
there is a uniquely determined 
$\R$-circle $\si\sub X$
through
$u$, $x$
that hits
$F$
at
$v=\eta(u)$.
Then there is a unique
$y=\phi_F(x)\in\si$
such that
$(u,x,v,y)\in\harm_\si$.
Again,
$y\in F'$
by definition of
$F'$.
\end{proof}

We define the {\em (M\"obius) join}
$F\ast F'$
as the union of
$\R$-circles $\si\sub X$
such that
$(u,x,v,y)\in\harm_\si$
with
$\{u,v\}=\si\cap F$, $\{x,y\}=\si\cap F'$,
where
$v=\eta(u)$, $y=\phi_F(x)$.
Every such a circle is called a {\em standard}
$\R$-circle in
$F\ast F'$.

\begin{lem}\label{lem:moebius_join_intersection} For any
different standard
$\R$-circles $\si$, $\si'\sub F\ast F'$
we have
$\si\cap\si'\sub F\cup F'$. 
\end{lem}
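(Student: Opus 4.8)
The plan is to show that any point $p\in\si\cap\si'$ with $p\notin F$ necessarily lies in $F'$; together with the trivial observation that $p\in F$ already gives the conclusion, this proves $\si\cap\si'\sub F\cup F'$. Throughout I would use the standing facts that an $\R$-circle meets $F$, and each fiber of the canonical fibration of $A=(F,\eta)^\perp$, in at most two points (Lemma~\ref{lem:intersect_rcircle_ccircle}), that two $\R$-circles sharing three points coincide (Lemma~\ref{lem:three_points_rcircles}), and that an $\R$-circle meeting a $\C$-circle in two points is invariant under the associated reflection. Denote the trace of $\si$ on $F$ by $\{u,\eta(u)\}$ and on $F'$ by $\{x,\phi_F(x)\}$, with primed names for $\si'$.

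The first step is to pin down the involution $\eta_p:F\to F$. Since $p$ lies on $\si$, which is the $\R$-circle through $u$ and $p$ and meets $F$ at $\eta(u)$, we get $\eta_p(u)=\eta(u)$; applying the same to $\si'$ gives $\eta_p(u')=\eta(u')$. Suppose $\eta_p\neq\eta$. Then $g=\eta\circ\eta_p$ is a nontrivial M\"obius automorphism of $F\cong\di\frac12\hyp^2$ fixing $u$ and $u'$. Such an automorphism has at most two boundary fixed points, and its fixed-point set is $\eta$-invariant (if $g(w)=w$ then $g(\eta w)=\eta g^{-1}(w)=\eta w$, using $g^{-1}=\eta_p\eta$), hence consists of a single $\eta$-pair $\{u,\eta(u)\}$. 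Thus $u'\in\{u,\eta(u)\}$, so the two $F$-traces coincide, $\si$ and $\si'$ share these two points of $F$, and by Lemma~\ref{lem:three_points_rcircles} we get $\si\cap\si'\sub F$, forcing $p\in F$, a contradiction. Therefore $\eta_p=\eta$, i.e.\ $p\in A$.

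It remains to prove the single-circle statement $\si\cap A\sub F'$, which is the heart of the matter. Assume $p\in\si\cap A$ but $p\notin F'$ and pass to the metric of $X_u$, in which $\si$ is an $\R$-line and $\phi_F$ acts as $z\mapsto -z$ (it fixes $u=\infty$ and the midpoint $v=0$ and swaps the harmonic pairs). Writing $x=s$ and $p=t$, we have $\phi_F(x)=-s$, $q:=\phi_F(p)=-t$, with $s,t\neq 0$ and $s\neq\pm t$ precisely because $p\notin F'$. Let $G,H$ be the $\C$-circles through $p,q$ and through $x,\phi_F(x)$; both are fibers of the canonical fibration of $A$, so $\phi_G,\phi_H$ preserve $A$ (Lemma~\ref{lem:fiber_involution}) and preserve $\si$ since $\si$ meets each of them in two points. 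As involutions of the $\R$-line $\si$ fixing $\{t,-t\}$ and $\{s,-s\}$ they are $z\mapsto t^2/z$ and $z\mapsto s^2/z$, so $\phi_G\circ\phi_H$ is the homothety $z\mapsto (t/s)^2 z$ with ratio $(t/s)^2\neq 1$. Its orbit of $p$ is then an infinite sequence in $\si\cap A$ converging to $u$ or to $v$, both lying in $F$. But $A=S_{x,y}\cap S_{o,\om}$ is closed (Lemma~\ref{lem:filling_fiber}) in the compact space $X$ and disjoint from $F$, so no sequence in $A$ can converge to a point of $F$; this contradiction yields $p\in F'$.

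Combining the two steps, every $p\in\si\cap\si'$ lies in $F\cup F'$. I expect the main obstacle to be exactly this single-circle step: the delicate part is to manufacture, from the two commuting fiber reflections $\phi_G,\phi_H$, a hyperbolic element of the M\"obius group of $\si$ and then to extract a contradiction from compactness and closedness of $A$ together with $A\cap F=\es$. By contrast, the reduction $\eta_p=\eta$ via the fixed-point analysis of $\eta\circ\eta_p$ should be routine.
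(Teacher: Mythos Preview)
Your proof is correct and takes a genuinely different route from the paper's argument. The paper never introduces the fiber reflections $\phi_G,\phi_H$ at this point; instead it works directly from axiom~$(\rm O_\R)$: choosing $\om\in\si\cap F$ and placing $\si'$ on a sphere $S'$ between two suitable points $u',v'\in F$ (with $v'=\om$), it passes to the metric of $X_\om$ and observes that the distance function from the center $u'$ to the $\R$-line $\si$ is convex and symmetric about $o=\eta(\om)$, hence takes any value at most twice. Since $\si$ already meets $F'\sub S'$ in two points, these exhaust $\si\cap S'$, and $\si\cap\si'\sub\si\cap S'\sub F'$ follows.

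Your approach has two notable features. First, Step~2 actually proves the strictly stronger single-circle statement $\si\cap A=\si\cap F'$, since you never use $\si'$ there. Second, the dynamical mechanism---the composition $\phi_G\circ\phi_H$ acting on $\si$ as a nontrivial homothety and pushing the orbit of $p$ out of the closed set $A$ toward $F$---is clean and conceptual once Proposition~\ref{pro:moebius_ccircle_involution} and Lemma~\ref{lem:fiber_involution} are available. The trade-off is that you invoke this heavier machinery (M\"obius reflections, canonical fibration of $A$), whereas the paper's argument is self-contained at the level of the incidence axioms plus elementary convexity. Both are logically sound in the paper's ordering. One minor phrasing slip in Step~1: rather than ``we get $\si\cap\si'\sub F$,'' the conclusion is that $\si,\si'$ would share the three distinct points $u,\eta(u),p$, so $\si=\si'$ by Lemma~\ref{lem:three_points_rcircles}, contradicting the hypothesis directly.
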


\begin{proof} We take
$o\in\si\cap F$, $o'=\si'\cap F$,
and put
$\om=\eta(o)\in\si\cap F$, $\om'=\eta(o')\in\si'\cap F$.
Then by Axiom~($\rm O_\R$), for any
$u'$, $v'\in F$
such that
$(u',o',v',\om')\in\harm_F$
the 
$\R$-circle $\si'$
lies in a sphere
$S'\sub X$
between
$u'$, $v'$.

Assume
$o'\in\{o,\om\}$.
Without loss of generality,
$o'=o$.
Then
$\om'=\om$
and
$\si\cap\si'=\{o,\om\}$
since otherwise
$\si=\si'$
by Lemma~\ref{lem:three_points_rcircles}.
Thus we can assume that
$o'\neq o,\om$.

We take
$v'=\om$
and
$u'\in F$
such that
$(u',o',v',\om')\in\harm_F$.
In the space
$X_\om$, $\si$
is an
$\R$-line
through
$o$,
and
$F$
is a
$\C$-line
through
$o$,
while
$S'$
is the metric sphere centered at the midpoint
$u'\in F$
between
$o'$, $\om'$, $|u'o'|=|u'\om'|=:r$.
The distance function
$d_{u'}:\si\to\R$, $d_{u'}(x)=|u'x|$,
is convex and by Corollary~\ref{cor:bisector_cline}(i) 
it is symmetric with respect to
$o$.
Thus
$d_{u'}$
takes the value 
$r$
at most two times, that is,
$\si$
intersects
$S'$
at most two times (actually exactly two times because
the pair
$(o,\om)$
separates the pair
$(o',\om')$
on
$F$
by properties of
$\eta$).

On the other hand,
$\si$
intersects
$F'$
twice and
$F'\sub S'$
because 
$(F,\eta)^\bot\sub S'$.
Thus
$\si\cap\si'\sub F\cup F'$.
\end{proof}

\begin{pro}\label{pro:dim_moeb_join} Assume
$(F,\eta)$
is a 
$\C$-circle
in
$X$
with a free of fixed point M\"obius involution
$\eta:F\to F$, $F'\sub X$
a COS to
$F$
at
$\eta$.
If
$\dim X=\dim F'+2$,
then
$X=F\ast F'$.
\end{pro}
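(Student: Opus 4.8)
The plan is to prove that $W:=F\ast F'$ equals $X$ by showing that $W$ is closed, that it contains $F\cup F'$, and that $W\setminus(F\cup F')$ is open in $X$; a connectedness argument on the complement of $F\cup F'$ then forces $W=X$.

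First I set up the parametrization. By Lemma~\ref{lem:moebius_join_rcircle} each pair $(u,x)\in F\times F'$ determines a unique standard $\R$-circle $\ell_{u,x}$ (through $u,\eta(u)\in F$ and $x,\phi_F(x)\in F'$), every standard circle arises this way, and $(u,x)$, $(\eta(u),\phi_F(x))$ give the same circle. Since $\eta$ and $\phi_F|F'$ are fixed-point free involutions (note $F\cap F'=\es$ as $F'\subset(F,\eta)^\perp$), the identification $(u,x)\sim(\eta(u),\phi_F(x))$ is free, so the space of standard circles together with a choice of position on each circle is a manifold $P$ of dimension $\dim F+\dim F'+1=\dim F'+2=\dim X$, carrying a continuous evaluation map $\Theta\colon P\to X$ with image $W$. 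Continuity of $(u,x)\mapsto\ell_{u,x}$ follows from uniqueness together with a limiting argument as in Lemma~\ref{lem:limit_circle}; the same argument with compactness of $X$, $F$, $F'$ shows $W$ is closed. Finally $F\cup F'\subset W$, since every $u\in F$ and every $x\in F'$ lies on $\ell_{u,x}$.

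The structural heart of the proof is Lemma~\ref{lem:moebius_join_intersection}: two distinct standard circles meet only inside $F\cup F'$. Hence every point $p\in X\setminus(F\cup F')$ lies on at most one standard circle, and it determines its position on that circle, so $\Theta$ is injective over the open set $X\setminus(F\cup F')$. As $P$ and $X$ are topological manifolds of the same dimension $\dim X$, invariance of domain applies: $\Theta$ restricted to $\Theta^{-1}(X\setminus(F\cup F'))$ is an open map, and therefore $W\setminus(F\cup F')$ is open in $X$. For the connectedness I note that $F$ is a $\C$-circle and $F'$ satisfies axioms (E),(O), so Corollary~\ref{cor:homeo_sphere} (applied to $F'$ as well) gives $X\cong S^{\dim X}$, $F\cong S^1$ and $F'\cong S^{\dim X-2}$; thus $F\cup F'$ is closed of topological dimension $\dim X-2$, and since $\dim X\ge 3$ (because $F'$ contains $\C$-circles) such a set does not separate the $(\dim X)$-manifold $X$. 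Consequently $X\setminus(F\cup F')$ is connected. The set $W\setminus(F\cup F')$ is open in $X$, closed in $X\setminus(F\cup F')$ (as $W$ is closed in $X$), and nonempty, hence equals $X\setminus(F\cup F')$. Together with $F\cup F'\subset W$ this yields $W=X$.

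The step I expect to be most delicate is making the evaluation map rigorous: one must verify that the standard circles and the position parameter assemble into a genuine $(\dim X)$-dimensional manifold chart on $P$ (using the freeness of the $\Z_2$-identification and the continuous dependence of $\ell_{u,x}$ on $(u,x)$), so that invariance of domain is legitimately applicable. Once that is in place, the injectivity over $X\setminus(F\cup F')$ furnished by Lemma~\ref{lem:moebius_join_intersection} is exactly the hypothesis invariance of domain requires, and the openness of $W\setminus(F\cup F')$ follows at once.
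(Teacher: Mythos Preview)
Your argument is correct and takes a genuinely different route from the paper. The paper's proof is direct and computational: fixing $\omega\in F$, $o=\eta(\omega)$, it works in $X_\omega$ and, for a given $u\in X\setminus F$, explicitly solves a system (equations~(\ref{eq:prod}) and~(\ref{eq:square_diff}), derived from the distance formula and Lemma~\ref{lem:wharm_basics}) to locate $x,\eta(x)\in F$ whose midpoint $w\in F_\omega$ satisfies $|wu|_\omega=|wx|_\omega$; then Lemma~\ref{lem:filling_sphere_rcirles} furnishes the standard $\R$-circle through $u$. Your approach replaces this calculation by soft topology: local injectivity of the evaluation map over $X\setminus(F\cup F')$ (from Lemma~\ref{lem:moebius_join_intersection}), invariance of domain, and connectedness of a sphere minus a codimension-two subset. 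Two small points to tighten: the identification on $F\times F'$ giving the same standard circle is by the full group $\Z_2\times\Z_2$ (generated by $\eta$ on $F$ and $\phi_F|F'$ on $F'$ independently), not just the diagonal $\Z_2$ you wrote---but since both involutions are fixed-point free the evaluation map is still \emph{locally} injective away from $F\cup F'$, which is all invariance of domain needs; and you are tacitly using that $F'$ is compact (for $W$ closed and for applying Corollary~\ref{cor:homeo_sphere} to $F'$), which holds in every instance where the paper invokes this proposition but is not literally part of the definition of a COS. The paper's approach is constructive and independent of the global sphere topology of $X$; yours is shorter once Corollary~\ref{cor:homeo_sphere} is available and makes the role of the codimension-two hypothesis transparent.
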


\begin{proof} We show that every point
$u\in X$
lies on a standard
$\R$-circle
in
$F\ast F'$.
Given
$\om\in F$,
we put
$o=\eta(\om)\in F$
and use notation
$|xy|=|xy|_\om$
for the distance between
$x$, $y$
in
$X_\om$.
Then
$F$
is a 
$\C$-line
in 
$X_\om$,
and every standard
$\R$-circle $\ga\sub F\ast F'$
through
$o$, $\om$
is an
$\R$-line 
in
$X_\om$.
Moreover, every
$\R$-line $\ga\sub X_\om$
through
$o$
intersects
$F'$
because
$\dim X=\dim F'+2$,
furthermore
$(o,u,\om,v)\in\harm_\ga$
for 
$\{u,v\}=\ga\cap F'$
and thus
$\ga\sub F\ast F'$
is a standard
$\R$-line.
That is, for every
$x\in F$
every
$\R$-circle $\ga\sub X$
through
$x$, $y=\eta(x)\in F$
is a standard one in
$F\ast F'$. 
Let
$w\in F$
be the midpoint between
$x$, $y$, $|xw|=|wy|=r$.
By Lemma~\ref{lem:filling_sphere_rcirles}
the sphere 
$S_r(w)\sub X_\om$
centered at
$w$
of radius
$r$
is covered by
$\R$-circles
through
$x$, $y$.
Thus it suffices to show that for every
$u\in X\sm F$
there is
$x\in F$
such that
$u\in S_r(w)$,
where
$w\in F$
is the midpoint between
$x$, $y=\eta(x)$, $r=|xw|=|wy|$.

Let
$z=\mu_{F,\om}(u)$
be the projection of
$u$
to
$F$
in
$X_\om$,
$a=|zu|$, $b=|zo|$.
Let
$\rho>0$
be the radius of the sphere in
$X_\om$
centered at
$o$
that contains
$F'$.
For
$x\in F$
the condition
$y=\eta(x)$
is equivalent to
\begin{equation}\label{eq:prod}
|xo|\cdot|oy|=\rho^2. 
\end{equation}

Assuming that
$z$
lies on
$F$
between
$o$
and
$x$
in
$X_\om$,
we also have
$|xz|^2=|xo|^2-b^2$,
$|yz|^2=|yo|^2+b^2$.
By Lemma~\ref{lem:wharm_basics},
$a^4=|xz|^2\cdot|yz|^2$,
which gives another equation for 
$|xo|$, $|yo|$,
\begin{equation}\label{eq:square_diff}
|xo|^2-|yo|^2=\frac{a^4+b^4-\rho^4}{b^2}. 
\end{equation}
The equations~(\ref{eq:prod}), (\ref{eq:square_diff})
have a positive solution
$(|xo|,|yo|)$,
which allows us to find 
$x$, $y=\eta(x)\in F$
and the midpoint
$w\in F$
between
$x$, $y$, 
that is,
$r:=|wx|=|wy|$
with
$|wu|=r$.
Indeed, we have
$|xz|^2+|yz|^2=2r^2$, $|zw|^2=r^2-|xz|^2$.
Using
$a^4=|xz|^2\cdot|yz|^2$
and Proposition~\ref{pro:explicit_distance},
we obtain
$$|wu|^4=|zw|^4+a^4=r^4+|xz|^2(|xz|^2+|yz|^2-2r^2)=r^4.$$
This shows that
$F\ast F'=X$.
\end{proof}

\section{M\"obius join equivalence}
\label{sect:moebius_join_equivalence}

\begin{thm}\label{thm:moebius_join_canonical} Let
$(F,\eta)$
be a
$\C$-circle in
$X$
with a fixed point free M\"obius involution
$\eta:F\to F$, $F'\sub X$
a canonical subspace orthogonal to
$F$
at
$\eta$.
Assume that
$F'$
is M\"obius equivalent to
$\di\C\hyp^k$, $k\ge 1$,
taken with the canonical M\"obius structure. Then the join
$F\ast F'$
is M\"obius equivalent to
$\di\C\hyp^{k+1}$.
\end{thm}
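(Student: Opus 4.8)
The plan is to fix a base point $\om\in F$ and to show that $(F\ast F')_\om=(F\ast F')\sm\om$, equipped with the metric $|\cdot|_\om$, is isometric to the Heisenberg group $\bH^{2k+1}$ carried with the Kor\'anyi metric. The same description holds for the model space: by Proposition~\ref{pro:axioms_model} the boundary $Y=\di\C\hyp^{k+1}$ satisfies (E) and (O), and since its dimension equals $\dim F_0'+2$, Proposition~\ref{pro:dim_moeb_join} gives $Y=F_0\ast F_0'$ for a $\C$-circle $F_0$ and its canonical orthogonal complement $F_0'\cong\di\C\hyp^{k}$; fixing $\om_0\in F_0$, the metric $|\cdot|_{\om_0}$ on $Y_{\om_0}$ is the Kor\'anyi metric on $\bH^{2k+1}$ (Remark~\ref{rem:koranyi_gauge}). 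An isometry $(F\ast F')_\om\to Y_{\om_0}$ carrying $\om$ to $\om_0$ preserves all cross-ratio triples, hence extends to a M\"obius equivalence $F\ast F'\to Y$. The coordinates come from the canonical foliation $\pi_\om\colon(F\ast F')_\om\to B_\om$ whose fibers are $\C$-lines, with $F$ the central fiber; the base $B_\om$ is a strictly convex normed space (Proposition~\ref{pro:base_normed_space}) and the fiber orientation supplies a vertical coordinate. The distance formula $r^4=a^4+b^4$ (Proposition~\ref{pro:explicit_distance}) expresses $|pq|_\om^4$ as the sum of a fourth power of a horizontal (base) distance and a square of a vertical displacement, while the lifting isometries $\tau_P$ (Lemmas~\ref{lem:adding_lifts}, \ref{lem:area_lift_triange}, \ref{lem:homothety_lift_parallelogram}) show that this vertical displacement carries a skew correction given by an area form on $B_\om$. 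This is exactly the shape of the Kor\'anyi metric.

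First I would settle the base case $k=1$, where $F'$ is itself a $\C$-circle and $\dim(F\ast F')=3$. Here the distance formula applies without obstruction: the base $B_\om$ is two-dimensional, and the homogeneity provided by the pure homotheties (Proposition~\ref{pro:pure_homothety}), the vertical shifts (Proposition~\ref{pro:vshift_isometry}) and the reflections $\phi_{F''}$ (Proposition~\ref{pro:moebius_ccircle_involution}), together with strict convexity and the proportionality of $|\tau_T|^2$ to the base area of $T$ (Lemma~\ref{lem:area_lift_triange}), force $B_\om$ to be Euclidean and the area form to be symplectic. Assembling through $r^4=a^4+b^4$ identifies $|\cdot|_\om$ with the Kor\'anyi metric on $\bH^3$, so $F\ast F'=\di\C\hyp^2$ in this case.

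For the general case I would bootstrap from $k=1$. The aim is to prove that $B_\om$ is isometric to Euclidean $\C^{k}$ with its standard complex structure and that the area form recorded by $\tau$ is the standard symplectic form. Applying the already-established $k=1$ statement to the three-dimensional slices $F\ast H$ spanned by $F$ and fibers $H$ of the canonical fibration $\cF$ of $F'$ controls the restriction of the area form to each complex coordinate plane, while Proposition~\ref{pro:holonomy_normal_bundle}, via Lemma~\ref{lem:unique_representative_cfiber}, guarantees that the complex structure transported across $B_\om$ is well defined and compatible with $\tau$. The hypothesis $F'\cong\di\C\hyp^{k}$ enters here: it forces $\pi_\om(F')$ to be a round sphere, which is what pins the norm on $B_\om$ down to the Euclidean one. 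Once $B_\om=\C^{k}$ is Euclidean and the area form is standard symplectic, the decomposition furnished by the distance formula assembles $|\cdot|_\om$ into the Kor\'anyi metric on $\bH^{2k+1}$, and the comparison with $Y_{\om_0}$ completes the proof.

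The main obstacle is precisely the step $\dim F'>1$. The distance formula only relates $|pq|_\om$ to the vertical projection of one fiber onto another, so it determines distances between points whose fibers are ``aligned'' but says nothing directly about points in skew position; recovering those distances requires knowing the cross (area) term, i.e.\ the symplectic form, globally and in the correct normalization. This is the content that cannot be read off the distance formula alone, and for which the base case together with the holonomy computation of Proposition~\ref{pro:holonomy_normal_bundle} are the decisive tools.
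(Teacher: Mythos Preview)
Your strategy matches the paper's: isolate the base case $k=1$ (where $F'$ is a $\C$-circle and the distance formula suffices), then bootstrap to higher $k$ using the holonomy of Proposition~\ref{pro:holonomy_normal_bundle} via Lemma~\ref{lem:unique_representative_cfiber}, with the Euclidean nature of the base as the bridge. You have also correctly located the obstruction: for $\dim F'>1$ the distance formula only sees one $\C$-fiber projected onto another, and the missing cross-term is supplied by the lifting-isometry calculus (Lemmas~\ref{lem:area_lift_triange}, \ref{lem:homothety_lift_parallelogram}).

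Where the paper differs is in execution. Rather than building intrinsic Heisenberg coordinates on $(F\ast F')_\om$, it fixes the model $Y=\di\C\hyp^{k+1}=G\ast G'$ and constructs an explicit comparison map $f:Y\to F\ast F'$: first on $G\cup G'$ via equivariant M\"obius equivalences (Lemma~\ref{lem:involution_equivariant_moebius_circles}), then extended M\"obiusly along each standard $\R$-circle. The work is then to show $f_\om$ is an isometry, verified on a tower of subsets: standard $\R$-circles (Lemma~\ref{lem:standard_isometry_rlines}, Corollary~\ref{cor:standard_isometry_rcircles}), the factors $G,G'$, suspensions $S_uH$ over $\C$-circles (Proposition~\ref{pro:rfoliation_isometric}, Lemmas~\ref{lem:suspension_isometric}--\ref{lem:nonstandard_suspension_isometric}), and finally suspensions over full COS's (Proposition~\ref{pro:sphere_isometric_cos}). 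The comparison-map route has the advantage that $f$ is defined before one knows it is M\"obius, so each isometricity check is a concrete identity between already-existing distances; your coordinate route would have to manufacture the isometry and verify the Kor\'anyi formula simultaneously.

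One point to adjust: you write that the hypothesis $F'\cong\di\C\hyp^k$ is what forces $\pi_\om(F')$ to be a round sphere and hence $B_\om$ to be Euclidean. In the paper this is not how the argument runs. Proposition~\ref{pro:base_euclidean} shows the \emph{ambient} base $B_\om$ of $X$ is Euclidean for any $X$ satisfying (E) and (O), with no hypothesis on $F'$ at all; the mechanism is that products $\phi_{K'}\circ\phi_K$ of $\C$-circle reflections act transitively on the unit sphere (Lemma~\ref{lem:transitive_unitary_reflections}), and the already-established $k=1$ case supplies the needed $3$-dimensional joins $H\ast H'\cong\di\C\hyp^2$. The hypothesis $F'\cong\di\C\hyp^k$ enters only to define $f|G'$, not to control the geometry of $B_\om$.
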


We start the proof with

\begin{lem}\label{lem:involution_equivariant_moebius_circles} Let
$F$, $G$
be M\"obius spaces which are equivalent to
$Y=\di\C\hyp^k$, $k\ge 1$, 
taken with the canonical M\"obius structure. Given
M\"obius involutions
$\eta:F\to F$, $\eta':G\to G$
without fixed points, there is a M\"obius equivalence
$g:F\to G$
that is equivariant with respect to
$\eta$, $\eta'$,
$$\eta'\circ g=g\circ\eta.$$
\end{lem}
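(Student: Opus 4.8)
The plan is to transport both involutions to the single model space $Y=\di M$, $M=\C\hyp^k$, and then to show that any two fixed-point-free Möbius involutions of $Y$ are conjugate by a Möbius automorphism of $Y$. Concretely, I would fix Möbius equivalences $p:F\to Y$ and $q:G\to Y$ (which exist by hypothesis) and set $\bar\eta=p\circ\eta\circ p^{-1}$ and $\bar\eta'=q\circ\eta'\circ q^{-1}$; these are again fixed-point-free Möbius involutions of $Y$. Once I produce a Möbius automorphism $g_0:Y\to Y$ with $\bar\eta'\circ g_0=g_0\circ\bar\eta$, the map $g:=q^{-1}\circ g_0\circ p:F\to G$ is the required equivariant equivalence, since direct substitution gives $\eta'\circ g=q^{-1}\circ\bar\eta'\circ g_0\circ p=q^{-1}\circ g_0\circ\bar\eta\circ p=g\circ\eta$.

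The heart of the matter is to identify fixed-point-free Möbius involutions of $Y$ with central symmetries of $M$. I would invoke that every Möbius automorphism of $Y=\di M$ is induced by a unique isometry of $M$, so that $\bar\eta=\di\Phi$ for some isometry $\Phi:M\to M$; since the correspondence is injective, $\bar\eta^2=\id$ forces $\Phi^2=\id$, i.e.\ $\Phi$ is an involutive isometry. As a finite-order isometry of the Hadamard manifold $M$, $\Phi$ fixes a point $a\in M$ (the midpoint of $[x,\Phi(x)]$ is fixed for any $x$, as $\Phi$ reverses this segment). Then $d\Phi_a:T_aM\to T_aM$ is an orthogonal involution, and $\bar\eta=\di\Phi$ has no fixed point on $\di M$ precisely when $d\Phi_a$ has no nonzero fixed vector: a fixed unit vector would span a $\Phi$-invariant geodesic ray, hence a fixed boundary point, and conversely. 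An orthogonal involution with trivial $(+1)$-eigenspace equals $-\id$, so $d\Phi_a=-\id$ and therefore $\Phi=s_a$ is the central symmetry at $a$. The same argument yields $\bar\eta'=\di s_{a'}$ for some $a'\in M$.

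It then remains to conjugate $s_a$ to $s_{a'}$. Since $M=\C\hyp^k$ is homogeneous, I would choose an isometry $T:M\to M$ with $T(a)=a'$. Then $Ts_aT^{-1}$ is an isometry fixing $a'$ with differential $dT\circ(-\id)\circ dT^{-1}=-\id$, whence $Ts_aT^{-1}=s_{a'}$ by uniqueness of the central symmetry. Passing to the boundary, $g_0:=\di T$ is a Möbius automorphism of $Y$ with $g_0\circ\bar\eta\circ g_0^{-1}=\di(Ts_aT^{-1})=\di s_{a'}=\bar\eta'$, which is exactly the required equivariance. Combined with the first paragraph, this produces $g$.

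I expect the single nontrivial input to be the correspondence between Möbius automorphisms of $\di M$ and isometries of $M$, together with its injectivity (an isometry acting trivially on $\di M$ is trivial); this is standard for rank one symmetric spaces and is precisely what reduces the problem to the elementary conjugacy of central symmetries. The remaining ingredients---existence of a fixed point for a finite-order isometry, the eigenvalue argument forcing $d\Phi_a=-\id$, homogeneity of $\C\hyp^k$, and uniqueness of $s_a$---are routine.
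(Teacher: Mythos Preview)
Your proposal is correct and follows essentially the same approach as the paper: identify $F$ and $G$ with $Y=\di M$, recognize each fixed-point-free M\"obius involution as the boundary map of a central symmetry $s_a$, and then conjugate by the boundary map of an isometry sending $a$ to $a'$. The paper's proof simply asserts the identification $\eta=\di s_a$ without argument, whereas you supply the justification (via the M\"obius--isometry correspondence, the Cartan-type fixed point for an involutive isometry, and the eigenvalue argument forcing $d\Phi_a=-\id$); this is exactly the content the paper leaves implicit.
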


\begin{proof} If we identify
$F$
with
$Y$,
then there is
$a\in M=\C\hyp^k$
such that the central symmetry
$s_a:M\to M$
induces the involution
$\eta$
on
$F$, $\di s_a=\eta$.
Similarly,
$\eta'=\di s_{a'}$
for 
$a'\in M$.
Thus any isometry
$f:M\to M$
with
$f(a)=a'$
induces a required M\"obius equivalence
$g=\di f$.
\end{proof}

\begin{lem}\label{lem:central_symmetry_decomposition} Let
$E\sub M=\C\hyp^{k+1}$
be a complex hyperbolic plane and
$a\in E$.
Let
$E'\sub M$
be the orthogonal complement to
$E$
at
$a$.
Denote by
$G=\di E$, $G'=\di E'\sub Y=\di M$.
Then
$\phi_G|G'=\psi|G'$,
where
$\psi:Y\to Y$
is the M\"obius involution without fixed points
induced by the central symmetry
$s_a:M\to M$
at
$a$, $\psi=\di s_a$.
\end{lem}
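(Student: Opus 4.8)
The plan is to realize the abstract reflection $\phi_G$ by a concrete holomorphic isometry of $M$ and then restrict it to $G'$. Since $Y=\di M$ satisfies axioms (E), (O) by Proposition~\ref{pro:axioms_model}, both $\phi_G$ and Proposition~\ref{pro:conjugate_pole} are available. Let $\rho:M\to M$ be the isometry fixing $a$ with $d\rho_a=\id$ on $T_aE$ and $d\rho_a=-\id$ on $E^\bot_a=T_aE'$. Since both summands are $J_a$-invariant complex subspaces, $d\rho_a$ is unitary, hence realized by a (holomorphic) isometry fixing $a$, as $M$ is symmetric. Because $E$ is totally geodesic and $d\rho_a|T_aE=\id$, $\rho$ fixes $E$ pointwise; because $E'$ is totally geodesic and $d\rho_a|T_aE'=-\id$, the restriction $\rho|E'$ is the central symmetry of $E'$ at $a$, so $\rho|E'=s_a|E'$ and therefore $\di\rho|G'=\di(s_a|E')=\psi|G'$. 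Thus it suffices to prove $\phi_G=\di\rho$.

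The first step is to check that $\rho$ acts as $-\id$ on the whole normal bundle $E^\bot$ along $E$. For $p\in E$ pick a path $\ga$ in $E$ from $a$ to $p$; since $\rho$ fixes $E$ pointwise, $\rho\circ\ga=\ga$, and as isometries commute with the Levi--Civita parallel transport $P_\ga$ we get $d\rho_p\circ P_\ga=P_\ga\circ d\rho_a$ on $T_aM$. Restricting to $E^\bot_a$, which $P_\ga$ maps to $E^\bot_p$ because $E$ is totally geodesic, yields $d\rho_p|E^\bot_p=-\id$ for every $p\in E$.

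The main step identifies $\phi_G$ with $\di\rho$ via the conjugate-pole characterization. Fix $u\in Y\sm G$, let $a_u\in E$ be the orthogonal projection of $u$ onto $E$, and let $w\in E^\bot_{a_u}$ be the unit vector for which the ray $t\mapsto\exp_{a_u}(tw)$ is asymptotic to $u$; then $v:=\di\rho(u)$ is the endpoint of the opposite ray $t\mapsto\exp_{a_u}(-tw)$. The proof of axiom~$\rm E_\R$ in Proposition~\ref{pro:axioms_model} identifies $\mu_{G,\om}(u)$ as the second endpoint of the geodesic of $E$ through $a_u$ with end $\om$, so $\eta_u=\di(s_{a_u}|E)$ is the boundary central symmetry of $E$ at $a_u$. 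Given $x\in G$ put $y=\eta_u(x)$; the geodesic $xy\sub E$ then passes through $a_u$, and its unit tangent $\xi\in T_{a_u}E$ points to $x$. Since $w\in E^\bot_{a_u}$ is orthogonal to $T_{a_u}E\ni J_{a_u}\xi$, the plane $\mathrm{span}(\xi,w)$ is totally real, so $R=\exp_{a_u}\mathrm{span}(\xi,w)$ is a real hyperbolic plane, and in $\di R$ the geodesics $xy$ and $uv$ meet orthogonally at $a_u$. Hence $(x,u,y,v)$ is harmonic on the $\R$-circle $\di R$ through $x,u,y$. By the uniqueness in Proposition~\ref{pro:conjugate_pole} this forces $v=\phi_G(u)$, so $\di\rho=\phi_G$ on $Y\sm G$, hence on all of $Y$; restricting to $G'$ gives $\phi_G|G'=\di\rho|G'=\psi|G'$.

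The hard part is this last step: reading off $\eta_u$ as the central symmetry attached to the normal projection $a_u$, and then producing, for \emph{every} $x\in G$ simultaneously, the real hyperbolic plane in which $uv\perp xy$. This is exactly where the $J$-invariant splitting $T_{a_u}M=T_{a_u}E\oplus E^\bot_{a_u}$ and the real-versus-complex curvature dichotomy of Section~\ref{sect:model_space} are used. For the statement as given one may shorten the argument by taking $u\in G'$ only: then $a_u=a$ and the opposite ray already lies in $E'$, so $v=s_a|E'(u)=\psi(u)$ directly, and the same real-plane computation verifies the harmonic condition.
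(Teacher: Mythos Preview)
Your proof is correct and follows a genuinely different route from the paper's. The paper's argument is top-down: it invokes the (nontrivial) fact that every M\"obius automorphism of $Y=\di M$ is induced by an isometry of $M$, takes the isometry $\zeta$ with $\di\zeta=\phi_G$, identifies its fixed point set as $E$ (since the fixed set of $\phi_G$ is $G$), and concludes that $\zeta$ must act as $-\id$ on $T_aE'$, hence as $s_a$ on $E'$. Your argument is bottom-up: you construct the complex reflection $\rho$ in $E$ explicitly and then verify $\di\rho=\phi_G$ directly from the conjugate-pole characterization of Proposition~\ref{pro:conjugate_pole}, by reading off $\eta_u$ as the boundary central symmetry of $E$ at the foot point $a_u$ and exhibiting, for each $x\in G$, the real hyperbolic plane in which the required harmonic condition holds. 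The advantage of your approach is that it is self-contained within the paper's framework and avoids the black-box M\"obius rigidity of rank one symmetric spaces; the paper's proof is shorter but leans on that external fact. Your closing remark that for $u\in G'$ one has $a_u=a$, so the restricted statement follows immediately, is a nice simplification and in fact is all that is needed here.
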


\begin{proof} We use the fact that every M\"obius
$Y\to Y$
is induced by an isometry
$M\to M$.
Recall that 
$G$
is the fixed point set of the reflection
$\phi_G:Y\to Y$.
Then
$E$
is the fixed point set for the isometry
$\zeta:M\to M$
with
$\di\zeta=\phi_G$,
and
$\zeta$
acts on
$E'$
as
$s_a$
does,
$\zeta|E'=s_a|E'$.
Hence
$\phi_G|G'=\psi|G'$.
\end{proof}

\subsection{Constructing a map between M\"obius joins}
\label{subsect:proof_moebius_join_canonical}

We fix a complex hyperbolic plane
$E\sub M=\C\hyp^{k+1}$
and
$a\in E$.
The orthogonal complement 
$E'\sub M$
to
$E$
at 
$a$
is isometric to
$\C\hyp^k$.
We denote by
$G=\di E$, $G'=\di E'$.

Recall that the boundary at infinity 
$Y=\di M$
taken with the canonical M\"obius structure
satisfies axioms (E), (O) (see Proposition~\ref{pro:axioms_model}).
Thus all the notions involved in the definition of
the M\"obius join
$G\ast G'$
are well defined for
$Y$.

The central symmetry
$s_a:M\to M$
induces the M\"obius involution
$\psi=\di s_a:Y\to Y$
without fixed points, for which
$G$, $G'$
are invariant,
$\psi(G)=G$, $\psi(G')=G'$.
Furthermore,
$G'\sub Y$
is a COS to
$G$
at
$\psi$
and
$\dim Y=\dim G'+2$.
Then by Proposition~\ref{pro:dim_moeb_join},
$Y=G\ast G'$.

By Lemma~\ref{lem:involution_equivariant_moebius_circles},
there is a M\"obius equivalence
$g:G\to F$,
which is equivariant with respect to
$\psi$
and
$\eta$, $g\circ\psi|G=\eta\circ g$.
Note that
$\phi_F:X\to X$
acts on
$F'$
as a M\"obius involution without fixed points.
By the assumption,
$F'\sub X$
is M\"obius equivalent to
$G'\sub Y$.
By Lemma~\ref{lem:involution_equivariant_moebius_circles} again,
there is a M\"obius equivalence
$g':G'\to F'$
which is equivariant with respect to 
$\psi$
and
$\psi'=\phi_F|F'$, $g'\circ\psi|G'=\psi'\circ g'$.
We define
$\psi':F\cup F'\to F\cup F'$
by
$\psi'|F=\eta$, $\psi'|F'=\phi_F|F'$,
and
$f:G\cup G'\to F\cup F'$
by
$f|G=g$, $f|G'=g'$.
Then 
$f$
is equivariant with respect to
$\psi$
and
$\psi'$, $\psi'\circ f=f\circ\psi$.
Furthermore,
$f$
maps the canonical fibration of
$G'$
by
$\C$-circles 
to that of
$F'$.

The intersection
$\si\cap(F\cup F')$
of every standard
$\R$-circle $\si\sub F\ast F'$
with
$F\cup F'$
is invariant under
$\psi'$.
Then
$\psi'$
uniquely extends to a M\"obius
$\si\to\si$,
for which we use the same notation
$\psi'$.
By Lemma~\ref{lem:moebius_join_intersection},
different standard
$\R$-circles
in
$F\ast F'$
may have common points only in
$F\cup F'$.
Thus we have a well defined involution
$\psi':F\ast F'\to F\ast F'$
without fixed points, which is M\"obius along
$F$, $F'$
and every standard
$\R$-circle 
in
$F\ast F'$.

For every standard
$\R$-circle $\si\sub G\ast G'$,
we have the map
$f:\si\cap(G\cup G')\to F\ast F'\sub X$,
which is equivariant with respect to 
$\psi$
and 
$\psi'$.
The map 
$f$
uniquely extends to a M\"obius
$f:\si\to F\ast F'$.
By Lemma~\ref{lem:moebius_join_intersection} different standard
$\R$-circles 
in
$G\ast G'$, $F\ast F'$
may have common points only in
$G\cup G'$, $F\cup F'$
respectively, thus this gives a well defined bijection
$f:G\ast G'\to F\ast F'$
which is M\"obius along
$G$, $G'$
and any standard
$\R$-circle
in
$Y$.
Moreover,
$f$
is equivariant with respect to
$\psi$, $\psi'$.

We show that
$f$
is M\"obius. We fix
$\om\in G$,
put
$o=\psi(\om)\in G$, $\om'=f(\om)$, $o'=f(o)\in F$
and consider
$Y_\om$, $X_{\om'}$
with metrics normalized so that
$f|G:G\to F$
is an isometry. 

It suffices to check that 
$f:Y_\om\to (F\ast F')_{\om'}$
is an isometry,
$|u'v'|_{\om'}=|uv|_\om$
for each
$u$, $v\in Y_\om$, $u'=f(u)$, $v'=f(v')\in (F\ast F')_{\om'}$.
For brevity, we use notation
$f_\om:=f:Y_\om\to (F\ast F')_{\om'}$
regarding
$f$
as a map between respective metric spaces.

During the proof we will also consider the maps
$f_u:Y_u\to X_{u'}$,
where
$u'=f(u)$.
We view
$f_u$
as a map between metric spaces, where on
$Y_u$
the metric is defined as the metric inversion (\ref{eq:metric_inversion})
of
$|\ |_\om$
and on
$X_{u'}$
the metric inversion of
$|\ |_{\om'}$.

Note that then
$f|G_u$
is isometric, if
$u\in G$.

\subsection{Isometricity along standard objects}

\begin{lem}\label{lem:standard_isometry_rlines} The map 
$f_\om$
is isometric on every standard
$\R$-line
in
$Y_\om$
through
$o$. 
\end{lem}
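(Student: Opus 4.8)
The plan is to show first that $f_\om$ restricted to such a line is a homothety, and then that its coefficient equals $1$. Fix a standard $\R$-line $\si\sub Y_\om$ through $o$. Being standard, $\si$ meets $G$ only in the pair $\{o,\om\}$ and meets $G'$ in a pair $\{x,y\}$ with $y=\psi(x)$ and $(o,x,\om,y)\in\harm_\si$, so $o$ is the $|\cdot|_\om$-midpoint of $x,y$. By construction $f$ is M\"obius along $\si$, it carries the $\R$-line $\si$ to the $\R$-line $f(\si)\sub X_{\om'}$, and it sends the infinitely remote point $\om$ of $\si$ to the infinitely remote point $\om'=f(\om)$ of $f(\si)$. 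A M\"obius bijection between $\R$-lines that preserves the infinitely remote point is a homothety; hence $f|\si$ is a homothety with some coefficient $\la_\si>0$, and evaluating at the pair $o,x$ gives
\[
\la_\si=\frac{|o'x'|_{\om'}}{|ox|_\om}=\frac{\rho'(x')}{\rho(x)},
\]
where $x'=f(x)$ and $\rho(x):=|ox|_\om$, $\rho'(x'):=|o'x'|_{\om'}$ are the heights of $x,x'$ over $G,F$ respectively. Thus everything reduces to proving $\la_\si=1$, i.e.\ that $f$ preserves these heights.

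The next ingredient is that $\rho$ is constant on $G'$ and behaves well under the fibrations. Every point of $\si$ projects to $o$ under $\mu_{G,\om}$, so $\mu_{G,\om}(x)=o$ for all $x\in G'$, and Proposition~\ref{pro:explicit_distance} yields $|wx|_\om^4=|wo|_\om^4+\rho(x)^4$ for every $w\in G$. In the model $Y=\di\C\hyp^{k+1}$ the isometries of $M$ fixing the complex plane $E$ pointwise fix $o,\om\in G$ and act transitively on $\di E'=G'$; being isometries of $(Y_\om,|\cdot|_\om)$ they preserve $\rho$, so $\rho\equiv r$ on $G'$. (Equivalently, $\rho$ is constant on each $\C$-fiber of $G'$ by Lemma~\ref{lem:distance_clines}, and these fibers are permuted transitively by the same symmetries.) On the target side the identical computation gives $\mu_{F,\om'}(x')=o'$ for $x'\in F'$, so by Lemma~\ref{lem:distance_clines} the height $\rho'$ is constant along every $\C$-fiber of $F'$; since $f$ carries the canonical fibration of $G'$ onto that of $F'$, the coefficient $\la_\si$ depends only on the $\C$-fiber of $G'$ containing $x$.

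It remains to prove $\rho'\equiv r$ on $F'$, and this is the step I expect to be the main obstacle: it is precisely the assertion that the transverse scale of the embedding $F'\sub X$ agrees with that of $G'\sub Y$ once the scales of $G,F$ have been matched by the normalization ``$f|G$ isometric''. The difficulty is real, because along a single line through $o$ the homothety property, the equivariance $f\circ\psi=\psi'\circ f$, and the reflection $\phi_G$ are all consistent with an arbitrary value of $\la_\si$; hence $\la_\si=1$ cannot be read off the geometry of one $\R$-line and must come from the global M\"obius-ness of $f|G'$ interacting with the join structure. The plan is to extract it by the distance formula applied \emph{inside} the join $F\ast F'$, exactly as in the proof of Proposition~\ref{pro:dim_moeb_join}: writing a point of $X_{\om'}$ in its vertical and horizontal coordinates relative to $F$ and a standard $\R$-line and comparing with the corresponding identity in $Y_\om$, the M\"obius equivalence $f|G'$ together with $f|G$ isometric forces $\rho'(x')=\rho(x)$, hence $\la_\si=1$. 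Once $\la_\si=1$ is established for a single fiber, the fiberwise constancy of $\rho,\rho'$ and the transitivity noted above propagate it to all standard $\R$-lines through $o$, completing the proof.
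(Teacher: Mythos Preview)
Your reduction to showing $\la_\si=1$ is correct, and you are right that this cannot be read off a single standard $\R$-line through $o$: along such a line the equivariance with $\psi,\psi'$ and the reflection $\phi_G$ are compatible with any positive $\la_\si$. But the remainder of your argument is not a proof; you sketch a hope (``the M\"obius equivalence $f|G'$ together with $f|G$ isometric forces $\rho'(x')=\rho(x)$'') without carrying it out, and the reference to the proof of Proposition~\ref{pro:dim_moeb_join} does not supply the missing step. Also, your appeal to Lemma~\ref{lem:distance_clines} for the constancy of $\rho'$ along $\C$-fibers of $F'$ is misapplied: those fibers do not pass through $\om'$, so they are not $\C$-\emph{lines} in $X_{\om'}$. (In fact $\rho'$ is constant on all of $F'$ for a different reason: $F'\sub(F,\eta)^\perp\sub S_{x,y}$, and $S_{x,y}$ is a metric sphere centred at $o'$ in $X_{\om'}$ whenever $(x,o',y,\om')\in\harm_F$.)

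The paper's device for pinning down the radius is to use a \emph{different} standard $\R$-circle, one through a pair $u,v\in G$ with $(u,o,v,\om)\in\harm_G$ rather than through $o,\om$. Such a circle $\si$ lies on the metric sphere in $Y_\om$ centred at $o$ of radius $|uo|_\om$ (axiom~$\rm O_\R$), and since $\si$ is standard it also meets $G'$; hence that radius equals the radius $r$ of $G'$. The image $f(\si)$ is a standard $\R$-circle in $F\ast F'$ through $u',v'$, lies on the metric sphere in $X_{\om'}$ centred at $o'$ of radius $|u'o'|_{\om'}$, and meets $F'$. Because $f|G$ is isometric, $|u'o'|_{\om'}=|uo|_\om=r$, so the radius of $F'$ in $X_{\om'}$ is also $r$. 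This is exactly the transfer of scale from $G$ to $G'$ that your argument lacks, and once it is in hand $\la_\si=1$ is immediate for every standard $\R$-line through $o$.
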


\begin{proof} Recall that
$Y$
is a compact Ptolemy space with axioms (E), (O). Thus
$G'$
lies in a sphere between
$o$, $\om$,
that is, in a metric sphere in
$Y_\om$
centered at
$o$,
while
$F'$
lies in a metric sphere in
$X_{\om'}$
centered at
$o'$,
see sect.~\ref{sect:fiber_filling_map}.
The respective radii are called the {\em radii}
of
$G'$, $F'$
in
$Y_\om$, $X_{\om'}$
respectively.

There is
$u\in G$
such that
$(u,o,v,\om)\in\harm_G$
for 
$v=\psi(u)$.
Any standard
$\R$-circle $\si\sub Y$
through
$u$, $v$
lies in a metric sphere in
$Y_\om$
centered at
$o$.
Then
$\si'=f(\si)$
lies in a metric sphere in
$X_{\om'}$
centered at
$o'=f(o)$.
Since
$f|G$
preserves distances, the metric spheres in
$Y_\om$, $X_{\om'}$
centered at
$o$, $o'$
containing
$\si$, $\si'$
respectively have equal radii. Hence the radius of
$G'$
in
$Y_\om$
is the same as the radius of
$F'$
in
$X_{\om'}$, $|o'x'|_{\om'}=|ox|_\om=r$
for every
$x\in G'$, $x'\in F'$.

Any standard
$\R$-circle $\si\sub Y$
through
$o$, $\om$
hits
$G'$
at
$x$, $y=\psi(x)$
and it is an
$\R$-line
in
$Y_\om$.
Thus
$f|\si:\si\to\si'$
is a homothety with respect to the metrics of
$Y_\om$, $X_{\om'}$.
Since
$|o'x'|_{\om'}=r=|ox|_\om$
for 
$x'=f(x)$,
we observe that
$f|\si$
is an isometry, that is,
$f_\om$
preserves distances along any standard
$\R$-line
through
$o$.
\end{proof}

\begin{lem}\label{lem:metric_inversion_line} Assume for 
$u\in G$
the map
$f_u$
is isometric along
$\R$-lines
(not necessarily standard) through some
$v\in G_u$
and preserves the distance between some
$x$, $y$
on those lines,
$|x'y'|_{u'}=|xy|_u$,
where
$x'=f(x)$, $y'=f(y)$, $u'=f(u)$.
Then
$|x'y'|_{\om'}=|xy|_\om$. 
\end{lem}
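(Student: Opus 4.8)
The plan is to treat this as a metric-inversion transfer and to reduce it to the distance formula of Proposition~\ref{pro:explicit_distance}. Since $|\cdot|_u$ is by definition the metric inversion of $|\cdot|_\om$ at $u$, and $|\cdot|_{u'}$ that of $|\cdot|_{\om'}$ at $u'$, inverting back at $\om$ and $\om'=f(\om)$ gives
$$|xy|_\om=\frac{|xy|_u}{|x\om|_u\,|y\om|_u},\qquad |x'y'|_{\om'}=\frac{|x'y'|_{u'}}{|x'\om'|_{u'}\,|y'\om'|_{u'}}.$$
As $|x'y'|_{u'}=|xy|_u$ by hypothesis, the conclusion $|x'y'|_{\om'}=|xy|_\om$ follows once I show that $f_u$ preserves the two correction factors, that is $|x'\om'|_{u'}=|x\om|_u$ and $|y'\om'|_{u'}=|y\om|_u$.

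The key observation is that the $\mu_{G_u,u}$-projection of $x$ to the $\C$-line $G_u$ is exactly $v$. Indeed, the $\R$-line through $v$ carrying $x$ is, viewed in $X$, the $\R$-circle through $u$ and $v$; it passes through $x$ and meets $G_u$ at $v$, so by the uniqueness in Corollary~\ref{cor:cline_projection} it is precisely the $\R$-circle defining $\mu_{G_u,u}(x)$, whence $\mu_{G_u,u}(x)=v$, and likewise for $y$. Since $\om,u\in G$, the $\C$-line through $\om$ with infinitely remote point $u$ is $G_u$, so Proposition~\ref{pro:explicit_distance} applied in $Y_u$ yields
$$|x\om|_u^4=|xv|_u^4+|v\om|_u^4,$$
and the analogous identity for $y$.

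I would then set up the corresponding identity in $X_{u'}$, namely $|x'\om'|_{u'}^4=|x'v'|_{u'}^4+|v'\om'|_{u'}^4$ with $v'=f(v)$, and match the two summands. The term $|v\om|_u=|v'\om'|_{u'}$ because $v,\om\in G_u$ and $f|G_u$ is isometric (as noted in the construction, using $f(u)=u'$ together with the fact that $f|G$ is an isometry in the $\om$-metrics). The term $|xv|_u=|x'v'|_{u'}$ because $x$ and $v$ lie on an $\R$-line through $v$, along which $f_u$ is isometric by hypothesis. Combining gives $|x\om|_u=|x'\om'|_{u'}$, and symmetrically $|y\om|_u=|y'\om'|_{u'}$; substituting into the inversion formulas above closes the argument.

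The step that needs care — and the main obstacle — is justifying the distance formula on the image side, i.e. that $\mu_{F_{u'},u'}(x')=v'$. This amounts to checking that $f$ carries the $\R$-circle through $u$ and $v$ (containing $x$) to an $\R$-circle through $u'$ and $v'$ (containing $x'$): the hypothesis that $f_u$ is isometric along these lines makes $f(\ga)$ a geodesic through $u'$ and $v'$ in $X_{u'}$, and I would verify that it is in fact an $\R$-line, so that it realizes the projection $\mu_{F_{u'},u'}(x')=v'$. Once this is secured, the decomposition of $|x'\om'|_{u'}$ is valid and the computation above gives the desired equality.
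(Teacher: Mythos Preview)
Your approach is essentially identical to the paper's: reduce to the metric-inversion formula, then use Proposition~\ref{pro:explicit_distance} on both sides to show $|x\om|_u=|x'\om'|_{u'}$ and $|y\om|_u=|y'\om'|_{u'}$ via the two summands $|vx|_u$ and $|v\om|_u$. The paper disposes of the one point you flag as the obstacle --- that $\si'=f(\si)$ is an $\R$-line in $(F\ast F')_{u'}$ through $v'$, so that $\mu_{F,u'}(x')=v'$ --- by simply asserting it; in every application of the lemma this is already known (either $\si$ is standard, or it has been established via Proposition~\ref{pro:rfoliation_isometric}), so the hypothesis ``$f_u$ is isometric along $\R$-lines through $v$'' is tacitly read as ``$f$ carries these $\R$-lines to $\R$-lines isometrically.''
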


\begin{proof} We can assume that
$u\neq\om$
since otherwise there is nothing to prove. Let
$\si\sub Y_u$
be the  
$\R$-line
through
$v$, $x$.
Then
$\si'=f(\si)$
is an 
$\R$-line in
$(F\ast F')_{u'}$
through
$v'=f(v)\in F$.
Recall that,
$G$
is a
$\C$-line 
in
$Y_u$.
Then by Proposition~\ref{pro:explicit_distance} the distance
$|x\om|_u$
is uniquely determined by
$|vx|_u$, $|v\om|_u$
for every
$x\in\si$, $|x\om|_u^4=|vx|_u^4+|v\om|_u^4$.
Since 
$f_u$
is isometric along
$\si$
and along
$G_u$, 
we have
$|v'x'|_{u'}=|vx|_u$, $|v'\om'|_{u'}=|v\om|_u$.
Hence
$|x'\om'|_{u'}=|x\om|_u$,
and similarly
$|y'\om'|_{u'}=|y\om|_u$.
Since
$$|xy|_\om=\frac{|xy|_u}{|x\om|_u\cdot|y\om|_u}$$
and a similar formula holds for 
$|x'y'|_{\om'}$,
we obtain
$|x'y'|_{\om'}=|xy|_\om$.
\end{proof}

\begin{cor}\label{cor:standard_isometry_rcircles} The map 
$f_\om$
is isometric along every standard
$\R$-circle $\si\sub Y$.
\end{cor}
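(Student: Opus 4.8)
The plan is to reduce an arbitrary standard $\R$-circle to the special ones already handled by Lemma~\ref{lem:standard_isometry_rlines}, using a change of the infinitely remote point together with the transfer mechanism of Lemma~\ref{lem:metric_inversion_line}. Let $\si\sub Y$ be a standard $\R$-circle, with $\{u,v\}=\si\cap G$, $v=\psi(u)$, and $\{x,y\}=\si\cap G'$, $y=\psi(x)$. If $\om\in\si$, then $u$ or $v$ equals $\om$, hence $\{u,v\}=\{o,\om\}$ and $\si$ is a standard $\R$-line through $o$ in $Y_\om$, so the claim is Lemma~\ref{lem:standard_isometry_rlines}. Thus I would assume $\om\notin\si$, in which case $\{u,v\}\cap\{o,\om\}=\es$.

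First I would produce an auxiliary controlled circle sharing the $G'$-points of $\si$. By Lemma~\ref{lem:moebius_join_rcircle} applied to $o\in G$, $x\in G'$ there is a unique standard $\R$-circle $\tau$ through $o$, $x$; since $\phi_G|G'=\psi|G'$ (Lemma~\ref{lem:central_symmetry_decomposition}) and $\psi(o)=\om$, one has $(o,x,\om,y)\in\harm_\tau$, so $\tau\cap G=\{o,\om\}$ and $\tau\cap G'=\{x,y\}$. Hence $\tau$ is a standard $\R$-line through $o$ in $Y_\om$, and Lemma~\ref{lem:standard_isometry_rlines} gives that $f_\om$ is isometric along $\tau$; in particular $|x'y'|_{\om'}=|xy|_\om$, where primes denote $f$-images.

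The key step is to upgrade this single preserved distance into isometricity of $f_u$ along $\si$. Changing the infinitely remote point to $u\in G$, the map $f_u|\si$ sends the infinite point $u$ to the infinite point $u'$ and is M\"obius, hence a homothety of the inversive metrics, with some coefficient $\la$. To identify $\la$ I would run Lemma~\ref{lem:metric_inversion_line} with the roles of $\om$ and $u$ interchanged (its proof is symmetric in the two points $u,\om\in G$ at infinity, $G$ being a $\C$-line in both $Y_u$ and $Y_\om$): since $o=\mu_{G,\om}(x)=\mu_{G,\om}(y)$ and $f_\om$ is isometric along $\tau$ and along $G_\om$, Proposition~\ref{pro:explicit_distance} yields $|x'u'|_{\om'}=|xu|_\om$ and $|y'u'|_{\om'}=|yu|_\om$, and metric inversion at $u$ then gives $|x'y'|_u=|xy|_u$. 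Comparing with the homothety relation $|x'y'|_u=\la|xy|_u$ forces $\la=1$, so $f_u$ is isometric along $\si$.

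Finally I would transfer back. Since $\si$ is an $\R$-line through $v=\mu_{G,u}(p)\in G_u$ for every $p\in\si$, and $f_u$ is isometric along $\si$ and along $G_u$, Lemma~\ref{lem:metric_inversion_line} applies to each pair $p,q\in\si$ and gives $|p'q'|_\om=|pq|_\om$. Thus $f_\om$ is isometric along $\si$, as required. The main obstacle is precisely the middle step: a general standard circle does not pass through the base point, so $f_\om|\si$ is not a homothety and the clean radius argument of Lemma~\ref{lem:standard_isometry_rlines} is unavailable at $u$; the device of rebasing at $u$, fixing the homothety coefficient through the shared $G'$-pair $x,y$ of the controlled line $\tau$, and invoking the symmetric form of Lemma~\ref{lem:metric_inversion_line} is what circumvents it.
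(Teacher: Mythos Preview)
Your argument is correct and follows the same two-step scheme as the paper: rebase at $u\in\si\cap G$, show $f_u$ is isometric along $\si$, then invoke Lemma~\ref{lem:metric_inversion_line} to transfer back to $\om$. The difference lies only in the middle step. The paper simply notes that Lemma~\ref{lem:standard_isometry_rlines} applies verbatim with $u$ in place of $\om$ (the point $\om\in G$ was arbitrary, and $f_u|G$ is isometric by metric inversion of the normalized $f_\om|G$), so $f_u$ is isometric along the standard $\R$-line $\si\sub Y_u$ through $v=\psi(u)$ in one stroke. You instead reconstruct this conclusion by hand: build the auxiliary standard line $\tau$ through $o,\om,x,y$, use it together with Proposition~\ref{pro:explicit_distance} to get $|x'u'|_{\om'}=|xu|_\om$, $|y'u'|_{\om'}=|yu|_\om$, invert at $u$ to obtain $|x'y'|_{u'}=|xy|_u$, and then identify the homothety coefficient of $f_u|\si$ as $1$. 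This is a legitimate unpacking of the paper's one-line appeal to Lemma~\ref{lem:standard_isometry_rlines}; it buys explicitness at the cost of length, but is not a genuinely different route.
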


\begin{proof} We let
$\{u,v\}=\si\cap G$.
Then
$\si$
is an 
$\R$-line
in
$Y_u$,
hence by Lemma~\ref{lem:standard_isometry_rlines},
$f_u$
is isometric along
$\si$.
By Lemma~\ref{lem:metric_inversion_line},
$f_\om$
is isometric along 
$\si$.
\end{proof}

\begin{lem}\label{lem:standard_isometry_G} For every
$u\in G'$
the map 
$f_u$,
is isometric on
$G$. 
\end{lem}

\begin{proof} Given
$x$, $y\in G$,
there are standard
$\R$-circles $\si$, $\ga\sub Y$
through
$u$
with
$x\in\si$, $y\in\ga$.
By the assumption,
$|x'y'|_{\om'}=|xy|_\om$
for 
$x'=f(x)$, $y'=f(y)$.
By Corollary~\ref{cor:standard_isometry_rcircles},
$|x'u'|_{\om'}=|xu|_\om$
and
$|y'u'|_{\om'}=|yu|_\om$.
Applying the metric inversions with respect to
$u$, $u'$
we obtain
$|x'y'|_{u'}=|xy|_u$. 
\end{proof}

\begin{lem}\label{lem:standard_isometry_ccircle} The map 
$f_\om$
is isometric on
$G'$.
\end{lem}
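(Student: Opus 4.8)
The plan is to show that $f$ restricted to $G'$ is an isometry $(G',|\,|_\om)\to(F',|\,|_{\om'})$; this is exactly the assertion of the Lemma. Recall that by construction $f|G'=g'$ is a M\"obius equivalence, so it preserves every cross-ratio internal to $G'$. The only thing missing is that it preserves the particular bounded metric $|\,|_\om$ induced from the ambient space through the \emph{external} point $\om\in G$. I will capture the obstruction by a single conformal factor and then kill it.

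First I would extract the conformal factor from internal base points. Fix $w\in G'$. Since $f|G'$ is M\"obius and sends the infinitely remote point $w$ of $G'_w$ to the infinitely remote point $w'=f(w)$ of $F'_{w'}$, the two metrics $f_*(|\,|_w)$ and $|\,|_{w'}$ on $F'$ share an infinitely remote point, hence are homothetic; thus $f|G'$ is a homothety $|p'q'|_{w'}=\la_w|pq|_w$ for all $p,q\in G'$. Writing $c(p,q):=|p'q'|_{\om'}/|pq|_\om$ and inserting the metric inversions $|pq|_w=|pq|_\om/(|pw|_\om|qw|_\om)$ and $|p'q'|_{w'}=|p'q'|_{\om'}/(|p'w'|_{\om'}|q'w'|_{\om'})$ (normalizing $|\,|_w$, $|\,|_{w'}$ so these are exact), the homothety relation collapses to $c(p,q)=\la_w\,c(p,w)\,c(q,w)$ for every fixed $w$. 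Setting $g(p):=\sqrt{\la_w}\,c(p,w)$ gives $c(p,q)=g(p)g(q)$, i.e. $f|G'$ is conformal, $|p'q'|_{\om'}=g(p)g(q)\,|pq|_\om$, for a positive function $g$ on $G'$ that is independent of $w$.

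It remains to prove $g\equiv1$, and here I use the canonical fibration of $G'$ by $\C$-circles from Lemma~\ref{lem:orthogonal_foliated_ccircles}, whose fibers $H$ pass through conjugate pairs $p,\psi(p)$ (recall $\phi_G|G'=\psi|G'$ by Lemma~\ref{lem:central_symmetry_decomposition}). Each such fiber is $\phi_G$-invariant, so $\phi_G(H)=H$ and hence $\phi_H(G)=G$, i.e. $G\perp H$ are mutually orthogonal $\C$-circles. The slice $G\ast H\subset Y$ is therefore a M\"obius join of two orthogonal $\C$-circles, which is $\di\C\hyp^2$; applying the already established three-dimensional case of the theorem to $G\ast H$ (the power tool described in the introduction), together with the normalization making $f|G$ isometric and the matching of radii, shows that $f_\om$ is isometric along $H$. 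Thus $c\equiv1$ on $H$, so $g(p)g(q)=1$ for all $p,q\in H$; choosing three distinct points of $H$ forces $g\equiv1$ on $H$. Since the fibers cover $G'$, we get $g\equiv1$ on all of $G'$, hence $c\equiv1$ and $f_\om$ is isometric on $G'$.

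The main obstacle is step three: reducing to the slice $G\ast H$ and invoking the base case to obtain isometry along the fibers of the canonical fibration of $G'$. One must check that the globally constructed $f$ restricts on $G\ast H$ to the join map for that slice (which holds because $f$ is built along $G$, $G'$ and standard $\R$-circles, and $H$ is a fiber), so that the $\di\C\hyp^2$ result indeed applies; the metric-inversion bookkeeping in step two, including the normalization constants absorbed into $\la_w$, must also be handled consistently.
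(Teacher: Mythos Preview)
Your argument has a circular dependency in step three. You invoke ``the already established three-dimensional case of the theorem'' to conclude that $f_\om$ is isometric along each $\C$-circle fiber $H$ of the canonical fibration of $G'$. But in the paper's logical order, the three-dimensional case (Remark~\ref{rem:case3D}, via Proposition~\ref{pro:join_isometric}) is proved \emph{after} and \emph{using} Lemma~\ref{lem:standard_isometry_ccircle}: the proof of Lemma~\ref{lem:nonstandard_suspension_isometric} explicitly calls this lemma to get that $f_u$ is isometric on a $\C$-circle $H$. So the ``power tool'' from the introduction is not yet available here; it is precisely the present lemma (in the special case $\dim G'=1$) that feeds into the three-dimensional result. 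At best your argument is an induction on $\dim G'$ whose base case is missing --- and that base case already requires a direct proof.

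The paper's own proof avoids any bootstrap by a short argument valid uniformly in all dimensions. For $x,y\in G'$ set $z=\psi(x)$, $w=\psi(y)$. Since $x,z$ and $y,w$ each lie on a standard $\R$-line through $o$, one has $|xz|_\om=|yw|_\om=2r$, and the same holds for the images by the matching of radii. Cross-ratio preservation of $f|G'$ then gives $|x'y'|_{\om'}\,|z'w'|_{\om'}=|xy|_\om\,|zw|_\om$. Now by Lemma~\ref{lem:central_symmetry_decomposition}, $z=\phi_G(x)$ and $w=\phi_G(y)$; since $\om\in G=\fix\phi_G$, the reflection $\phi_G$ is an isometry of $Y_\om$, so $|zw|_\om=|xy|_\om$. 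Equivariance gives the same on the $X$-side, whence $|x'y'|_{\om'}^2=|xy|_\om^2$. The ingredient you are missing is this direct use of the reflection $\phi_G$ as an isometry of $Y_\om$, combined with the $\psi$-equivariance of $f$; that replaces your appeal to the three-dimensional theorem entirely.
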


\begin{proof} For
$x,y\in G'$
we put
$z=\psi(x)$, $w=\psi(y)\in G'$.
There are the standard
$\R$-circles $\si,\ga\sub Y$
such that
$(x,o,z,\om)\in\harm_\si$, $(y,o,w,\om)\in\harm_\ga$,
in particular,
$\si$, $\ga$
are 
$\R$-lines
in 
$Y_\om$.
Hence
$|xz|_\om=|yw|_\om=2r$,
where
$r>0$
is the radius of
$G'$
in 
$Y_\om$.
For
$(x',y',z',w')=f(x,y,z,w)\sub F'$
we have
$(x',o',z',\om')\in\harm_{\si'}$, $(y',o',w',\om')\in\harm_{\ga'}$
for the 
$\R$-circles $\si'=f(\si)$, $\ga'=f(\ga)$.
Hence
$|x'z'|_{\om'}=|y'w'|_{\om'}=2r$.
Since
$f|G':G'\to F'$
is M\"obius,
$\crt(x',y',z',w')=\crt(x,y,z,w)$,
and we conclude
$$|x'y'|_{\om'}\cdot|z'w'|_{\om'}=|xy|_\om\cdot|zw|_\om,\quad
  |x'w'|_{\om'}\cdot|y'z'|_{\om'}=|xw|_\om\cdot|yz|_\om.$$
On the other hand,
$z'=\psi'(x')$, $w'=\psi'(y')$
because
$f$
is equivariant with respect to
$\psi$, $\psi'$.
Thus
$z'=\phi_F(x')$, $w'=\phi_F(y')$.
Furthermore,
$z=\phi_G(x)$, $w=\phi_G(y)$
by Lemma~\ref{lem:central_symmetry_decomposition}. Recall that
$G$ ($F$)
is the fixed point set of
$\phi_G$ ($\phi_F$),
thus
$\phi_G$ ($\phi_F$)
is an isometry of
$Y_\om$ ($X_{\om'}$)
and hence
$|zw|_\om=|xy|_\om$, $|z'w'|_{\om'}=|x'y'|_{\om'}$.
Therefore,
$|x'y'|_{\om'}=|xy|_\om$
for any
$x$, $y\in G'$,
i.e.
$f_\om|G'$
is an isometry. 
\end{proof}

\subsection{$\R$- and COS-foliations of a suspension}
\label{subsect:rcfoliations}

Let
$H\sub Y$
be a COS to a 
$\C$-circle $K\sub Y$.
For every
$u\in K$, $x\in H$
there is a uniquely determined
$\R$-circle $\si\sub Y$
through
$u$, $x$
that meets
$K$
at another point
$v$.
We denote by
$S_uH=S_{u,v}H$
the set covered by
$\R$-circles
in
$Y$
through
$u$, $v$
which meet 
$H$.
Note that
$\si\cap H=\{x,y\}$, $y=\phi_K(x)$,
for every such an
$\R$-circle $\si$.
Topologically,
$S_uH$
is a {\em suspension} over 
$H$.
The points
$u$, $v$
are called the {\em poles} of
$S_uH$.
The set 
$S_uH\sm\{u,v\}$
is foliated by
$\R$-circles
through
$u,v$.
Slightly abusing terminology, this foliation
is called the 
$\R$-{\em foliation}
of
$S_uH$.
In the metric of
$Y_u$,
the circles of the 
$\R$-foliation
are
$\R$-lines
through
$v$,
and
$K$
is a
$\C$-line.

Let
$h=h_\la:Y_u\to Y_u$
be a pure homothety (see sect.~\ref{subsect:pure_homothety}) with coefficient
$\la>0$
centered at
$v$, $h(v)=v$.
The homothety
$h$
is induced by an isometry
$\ga:M\to M$,
which is a transvection along the geodesic
$uv\sub M$, $h=\di\ga$.
Then
$h$
preserves every
$\R$-line $\si\sub Y_u$
through
$v$,
acting on 
$\si$
as the homothety with coefficient
$\la$,
which preserves orientations of
$\si$.
The image
$H_\la=h(H)\sub Y_u$
is the boundary at infinity of
$\wt E_\la=\ga(\wt E)$,
where
$\wt E\sub M$
is the subspace with
$\di\wt E=H$,
and it is also a COS to 
$K$.
In that way, the COS's
$H_\la$
form a foliation of
$S_uH\sm\{u,v\}$,
which is called the 
{\em COS-foliation}
of
$S_uH$.
In the case
$H$
is a 
$\C$-circle,
we say about
$\C$-foliation
of
$S_uH$.

\begin{lem}\label{lem:cfoliation_isometric} Assume
$f_u$
is isometric along every
$R$-line $\si$
of the 
$\R$-foliation 
of
$S_uH$
and along
$H$.
Then
$f_u$
is isometric along every COS
$H_\la$
of the 
$\C$-foliation. 
\end{lem}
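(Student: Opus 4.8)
The plan is to transfer the pure homothety that generates the COS-foliation from $Y_u$ to $X_{u'}$ and use it to propagate the assumed isometricity from $H=H_1$ to every leaf $H_\la$. Recall from sect.~\ref{subsect:rcfoliations} that $h=h_\la:Y_u\to Y_u$ is the pure homothety centered at $v$ with coefficient $\la$, so that $H_\la=h(H)$; by definition $h$ preserves every $\R$-line of the $\R$-foliation (these are exactly the $\R$-lines through $v$ in $Y_u$) and acts on each of them as a homothety with coefficient $\la$, and by Proposition~\ref{pro:pure_homothety} (applicable to $Y$ since $Y$ satisfies (E), (O) by Proposition~\ref{pro:axioms_model}) it is a global homothety, $|h(p)h(q)|_u=\la|pq|_u$. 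On the target side, Proposition~\ref{pro:pure_homothety} provides the pure homothety $h'=h'_\la:X_{u'}\to X_{u'}$ centered at $v':=f(v)$ with the same coefficient $\la$: it preserves every $\R$-line through $v'$, acts on each as a homothety with coefficient $\la$, and satisfies $|h'(p')h'(q')|_{u'}=\la|p'q'|_{u'}$.

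First I would establish the equivariance $f_u\circ h=h'\circ f_u$ on $H$. Fix $p\in H$ and let $\si$ be the leaf of the $\R$-foliation through $v$ and $p$. Since $f_u|\si$ is isometric by hypothesis and $\si$ is an $\R$-line through $v$ in $Y_u$, the image $\si':=f_u(\si)$ is the $\R$-line through $v'=f_u(v)$ and $p':=f_u(p)$, with $|v'p'|_{u'}=|vp|_u$ and betweenness preserved. Now $h(p)$ lies on $\si$ at distance $\la|vp|_u$ from $v$ on the ray through $p$, so by isometricity $f_u(h(p))$ lies on $\si'$ at distance $\la|vp|_u=\la|v'p'|_{u'}$ from $v'$ on the ray through $p'$. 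On the other hand $h'$ preserves the $\R$-line $\si'$ and sends $p'$ to the point of $\si'$ at distance $\la|v'p'|_{u'}$ from $v'$ on the same ray. These two points coincide, so
$$f_u(h(p))=h'(p')=h'(f_u(p)),$$
which is the desired equivariance on $H$.

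Finally, every point of $H_\la$ is of the form $h(p)$ for a unique $p\in H$, so for $p,q\in H$ with $p'=f_u(p)$, $q'=f_u(q)$ I compute
$$|f_u(h(p))\,f_u(h(q))|_{u'}=|h'(p')h'(q')|_{u'}=\la|p'q'|_{u'}=\la|pq|_u=|h(p)h(q)|_u,$$
using the equivariance, that $h'$ has coefficient $\la$, that $f_u|H$ is isometric, and that $h$ has coefficient $\la$. Since $h(p),h(q)$ range over all of $H_\la$, this shows $f_u$ is isometric along $H_\la$ for every $\la>0$, proving the lemma.

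The main obstacle is the step identifying $\si'=f_u(\si)$ with the $\R$-line through $v'$ and $p'$, which is what lets the target pure homothety $h'$ genuinely preserve $\si'$ and act on it with coefficient $\la$. This relies on the fact that the isometric image of an $\R$-line is again an $\R$-line in $X_{u'}$; I would justify this from the $\R$-line recognition in Proposition~\ref{pro:property_u} together with unique geodesicity of the geodesic Ptolemy space $X_{u'}$, so that the complete geodesic $f_u(\si)$ through $v'$ and $p'$ is forced to be the $\R$-line joining them.
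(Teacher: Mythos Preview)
Your argument is essentially the paper's proof spelled out in full: the paper simply asserts the equivariance $f_u|\si$ with respect to $h$ and $h'$ and then writes $f|H_\la=h'\circ f\circ h^{-1}|H_\la$, which is exactly your computation. So the approach is the same.

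The subtlety you flag in your last paragraph is genuine, but your proposed fix is not solid. You want $f_u(\si)$ to be an $\R$-line so that the pure homothety $h'$ preserves it; you propose to deduce this from ``unique geodesicity of the geodesic Ptolemy space $X_{u'}$'' together with Proposition~\ref{pro:property_u}. Neither ingredient works as stated: the paper never proves that $X_{u'}$ is uniquely geodesic (only the base $B_{u'}$ is, Lemma~\ref{lem:base_unique_geodesic}), and Proposition~\ref{pro:property_u} needs three of the four points already on an $\R$-circle, which is precisely what is in question for $u',v',p'$.

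In the paper this gap is not closed inside Lemma~\ref{lem:cfoliation_isometric} itself; rather, in every application the hypothesis is only invoked when $f(\si)$ is already known to be an $\R$-circle for independent reasons. In Lemma~\ref{lem:sphere_isometric_omega} and the first part of Lemma~\ref{lem:nonstandard_suspension_isometric} the leaves $\si$ are \emph{standard} $\R$-circles, which $f$ sends to $\R$-circles by its very construction in sect.~\ref{subsect:proof_moebius_join_canonical}; in the non-standard case the images are shown to be $\R$-lines via Proposition~\ref{pro:rfoliation_isometric} (using Proposition~\ref{pro:property_u} and Lemma~\ref{lem:concatenation}) \emph{before} Lemma~\ref{lem:suspension_isometric} is invoked. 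So the clean way to state the equivariance step is to add the standing assumption that $f_u$ sends each leaf $\si$ onto an $\R$-line in $X_{u'}$, which is satisfied in every use of the lemma.
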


\begin{proof} Note that 
$f_u|\si$
is equivariant with respect to
$h$, $h'$,
where
$h'=h_\la':X_{u'}\to X_{u'}$
is the pure homothety (see sect.~\ref{subsect:pure_homothety})
with coefficient
$\la$
with the fixed point
$v'=f(v)$.
It follows that
$f|H_\la:H_\la\to f(H)_\la=h'\circ f(H)$
is an isometry with respect to the metrics of
$Y_u$, $X_{u'}$
because
$f|H_\la=h'\circ f\circ h^{-1}|H_\la$.
\end{proof}

\begin{lem}\label{lem:suspension_isometric} Let COS
$H$
to
$K$
be a 
$\C$-circle. 
Under the assumption
of Lemma~\ref{lem:cfoliation_isometric}, the map 
$f_u$
is isometric along the suspension
$S_uH$.
\end{lem}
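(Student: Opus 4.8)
The plan is to work throughout in the metric of $Y_u$ and its image in $X_{u'}$, and to reduce the full isometry to the two foliations of the suspension. Recall that in $Y_u$ the $\R$-lines of the $\R$-foliation of $S_uH$ all pass through $v=\eta(u)$, that the $\C$-circles $H_\la$ of the $\C$-foliation are the images $h_\la(H)$ of $H$ under the pure homothety $h_\la$ centered at $v$, and that (by the description of a COS in sect.~\ref{sect:fiber_filling_map}) $H=H_1$ lies in a metric sphere about $v$, so that each $H_\la$ is precisely the metric sphere of radius $\la\rho$ about $v$ inside $S_uH$, where $\rho$ is the radius of $H$. By the hypothesis together with Lemma~\ref{lem:cfoliation_isometric}, the map $f_u$ is isometric along every $\R$-line of the $\R$-foliation and along every $H_\la$; moreover $f_u$ carries $\R$-lines to $\R$-lines and latitudes to latitudes, intertwines $h_\la$ with the pure homothety $h'_\la$ centered at $v'=f(v)$ (Proposition~\ref{pro:pure_homothety}), and preserves the radial coordinate $|vp|_u$, since $v$ and $p$ lie on a common $\R$-line.

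First I would record the key decomposition. For $p,q\in S_uH\sm\{u,v\}$ let $\si_q$ be the $\R$-line of the $\R$-foliation through $q$, and let $w$ be the point of $\si_q\cap H_{\la(p)}$ lying on the same ray from $v$ as $q$, i.e. the unique point on the latitude of $p$ and the $\R$-line of $q$. Then $p,w$ lie on the common latitude $H_{\la(p)}$, while $v,w,q$ lie on the common $\R$-line $\si_q$, so that $|vw|_u=|vp|_u$ and $|wq|_u=\big|\,|vp|_u-|vq|_u\,\big|$. The heart of the matter is a universal distance formula
$$|pq|_u=\Phi\big(|vp|_u,\,|vq|_u,\,|pw|_u\big),$$
valid in any compact Ptolemy space satisfying axioms (E),(O), where $\Phi$ is built from the quartic law of Proposition~\ref{pro:explicit_distance}. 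Granting it, the lemma follows at once: $f_u$ sends the configuration $(p,q,w)$ to $(p',q',w')$ with $w'=f_u(w)=\si'_q\cap H'_{\la(p)}$, because $f_u$ respects both foliations, and it preserves every argument of $\Phi$ — the radii $|vp|_u,|vq|_u$ by isometry along $\R$-lines and the latitude chord $|pw|_u$ by isometry along $H_{\la(p)}$. Since the same $\Phi$ governs distances in both $Y$ and $X$, we conclude $|p'q'|_{u'}=\Phi=|pq|_u$.

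To establish the distance formula I would change the basepoint to $w$. In $Y_w$ the latitude $H_{\la(p)}$ is a $\C$-line through $p$ and $\si_q$ is an $\R$-line through $q$, so Proposition~\ref{pro:explicit_distance} applied with this $\C$-line gives $|pq|_w^4=|pz|_w^4+|zq|_w^4$ with $z=\mu_{H_{\la(p)},w}(q)\in H_{\la(p)}$, while Lemma~\ref{lem:wharm_basics} and the reflection $\phi_{H_{\la(p)}}$ serve to identify the two terms. The metric inversion (\ref{eq:metric_inversion}), exactly as in the proof of Lemma~\ref{lem:metric_inversion_line}, then converts $|pq|_w$ back into $|pq|_u$ and rewrites the right-hand side through $|vp|_u,|vq|_u,|pw|_u$. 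Here the homothety-equivariance of $f_u$ is again useful: by scaling with $h_\la$ one may normalize $|vq|_u=1$, which reduces $\Phi$ to a single latitude and simplifies the bookkeeping, the general case following by scale-coherence.

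I expect the main obstacle to be precisely this last derivation. The clean decomposition at the geometrically natural point $w$ is obstructed by the fact that the latitude $H_{\la(p)}$ and the $\R$-line $\si_q$ meet in the \emph{single} point $w$; hence the orthogonality axioms~(O), which require an $\R$-circle and a $\C$-circle with two common points, do not apply directly, and the $\mu$-projection $z$ produced by Proposition~\ref{pro:explicit_distance} is not $w$ but a second point of $H_{\la(p)}$, so that the transverse term $|zq|$ must be re-expressed through the $\R$-segment $|wq|$. The technical core is therefore to show that $\si_q$ meets $H_{\la(p)}$ "orthogonally" at $w$ in exactly the sense needed to run the quartic law; I would extract this from the fact (Lemma~\ref{lem:filling_sphere_rcirles}) that the $\R$-lines of the $\R$-foliation fill the metric spheres about $v$, combined with the symmetries furnished by $\phi_F$ and $\phi_{H_{\la(p)}}$.
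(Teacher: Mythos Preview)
Your overall strategy --- reduce to the two foliations, change basepoint to an intersection point, apply the quartic distance formula --- is exactly right and coincides with the paper's approach. But you manufacture an illusory obstacle in your final paragraph.

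You claim that the $\R$-line $\si_q$ and the latitude $H_{\la(p)}$ meet in a \emph{single} point $w$. This is false: in $Y_u$ the $\R$-line $\si_q$ passes through $v$, while $H_{\la(p)}$ is the metric sphere of radius $\la(p)\rho$ about $v$ inside $S_uH$, so they meet in exactly two points, one on each ray from $v$. (You implicitly acknowledge this when you single out $w$ as the one ``on the same ray as $q$''.) Call the second intersection point $z$. Then $\si_q$ is an $\R$-circle through $w$ and $z$, and $H_{\la(p)}$ is a $\C$-circle through the same two points $w$, $z$ --- precisely the two-common-point configuration you thought was missing.

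This is the observation the paper exploits. Invert at $w$: in $Y_w$ the latitude $H_{\la(p)}$ becomes a $\C$-line and $\si_q$ an $\R$-line, and they meet at $z$. Hence $z=\mu_{H_{\la(p)},w}(q)$ on the nose, and Proposition~\ref{pro:explicit_distance} gives
$|pq|_w^4=|pz|_w^4+|zq|_w^4$
with no further work. After undoing the metric inversion, the five distances $|pz|_u$, $|pw|_u$, $|qz|_u$, $|qw|_u$, $|zw|_u$ that appear are all preserved by $f_u$ (the first two lie on $H_{\la(p)}$, the last three on $\si_q$), so $|pq|_u$ is preserved. There is no need for a closed-form three-argument $\Phi$, no need to re-express $|zq|$ through $|wq|$, and no need to invoke $\phi_F$ or $\phi_{H_{\la(p)}}$.
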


\begin{proof} By the assumption,
$f_u$
is isometric along every 
$\R$-line
in
$S_uH$
through
$v$.
Thus taking
$x$, $y\in S_uH$
we can assume that neither
$x$
nor 
$y$
coincides with
$u$
or
$v$.
Then there are uniquely determined an
$\R$-line $\si\sub S_uH$
of the 
$\R$-foliation
through
$x$
and a
$\C$-circle
$H_\la\sub S_uH$
of the 
$\C$-foliation 
through
$y$.
The intersection
$\si\cap H_\la$
consists of two points, say,
$z$, $w$.
Taking the metric inversion with respect to
$w$
we obtain
$$|xy|_w=\frac{|xy|_u}{|xw|_u\cdot|yw|_u}$$
and similar expressions for 
$|xz|_w$, $|yz|_w$.
In the space
$X$
we have respectively
$$|x'y'|_{w'}=\frac{|x'y'|_{u'}}{|x'w'|_{u'}\cdot|y'w'|_{u'}}$$
and similar expressions for 
$|x'z'|_{w'}$, $|y'z'|_{w'}$,
where ``prime'' means the image under
$f$.

By the assumption and Lemma~\ref{lem:cfoliation_isometric},
$f_u$
is isometric along
$\si$
and
$H_\la$.
Hence
$|x'w'|_{u'}=|xw|_u$, $|y'w'|_{u'}=|yw|_u$,
and
$|x'z'|_{u'}=|xz|_u$, $|y'z'|_{u'}=|yz|_u$.
On the other hand,
$$|xy|_w^4=|xz|_w^4+|yz|_w^4,\quad |x'y'|_{w'}^4=|x'z'|_{w'}^4+|y'z'|_{w'}^4$$
by Proposition~\ref{pro:explicit_distance}, and we conclude that
$|x'y'|_{w'}=|xy|_w$.
Therefore
$|x'y'|_{u'}=|xy|_u$. 
\end{proof}

\begin{lem}\label{lem:sphere_isometric_omega} For every
$u\in G$,
every
$v\in G'$,
the map 
$f_u$
is isometric along the suspension
$S_vG=S_{v,w}G$, $w=\phi_G(v)$.
\end{lem}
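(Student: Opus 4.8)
The plan is to exhibit $S_vG$ as a suspension over the $\C$-circle $G$ in the sense of sect.~\ref{subsect:rcfoliations}, apply Lemmas~\ref{lem:cfoliation_isometric} and \ref{lem:suspension_isometric} with the pole $v$ as basepoint, and then pass to the basepoint $u$ by a metric inversion. First I would locate the $\C$-circle $K\subset G'$ through the poles $v$ and $w=\phi_G(v)$. In the model these are the endpoints of the geodesic through $a$ in $E'$, so that geodesic spans a complex hyperbolic plane $L\subset E'$ with $a\in L$ and $K=\di L\subset G'$. Since $L\subset E'$, the planes $E$ and $L$ are mutually orthogonal at $a$, whence $G\perp K$ and $E\subset L'$, the orthogonal complement to $L$ at $a$. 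Thus $G$ is a $\C$-circle lying in $(K,\phi_G|K)^\perp$ and invariant under $\phi_K$, i.e.\ a COS to $K$, and $S_vG=S_{v,w}G$ is a suspension over $G$ carrying a $\C$-foliation.

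The central identification is that the $\R$-lines of the $\R$-foliation of $S_vG$ are exactly the standard $\R$-circles of $G\ast G'$ through $v,w$. Such a line meets $G'$ in the poles $v,w=\phi_G(v)$ and meets $G$ in a pair $x,\,y=\phi_K(x)$. Applying Lemma~\ref{lem:central_symmetry_decomposition} to $L$ in place of $E$ (with the same centre $a$) gives $\phi_K|G=\psi|G$, so $y=\psi(x)$; and poles and equator points lie in harmonic position, $(x,v,y,w)\in\harm$. These are precisely the defining conditions of a standard $\R$-circle in $G\ast G'$.

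Granting this, I would check the hypotheses of Lemma~\ref{lem:cfoliation_isometric} for $f_v$. Isometricity along $G$ is immediate from Lemma~\ref{lem:standard_isometry_G}, since $v\in G'$. Isometricity along each $\R$-line of the $\R$-foliation holds because these are standard $\R$-circles, along which $f_\om$ is isometric by Corollary~\ref{cor:standard_isometry_rcircles}; as $v$ lies on each of them, the metric inversion at $v$ transports this isometricity to $f_v$. Lemma~\ref{lem:cfoliation_isometric} then gives isometricity of $f_v$ along every $\C$-circle of the $\C$-foliation, and Lemma~\ref{lem:suspension_isometric}, applicable because the COS $H=G$ is a $\C$-circle, upgrades this to isometricity of $f_v$ along the whole suspension $S_vG$.

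It remains to move the basepoint from $v$ to $u\in G$. Since $u\in G\subset S_vG$, the $u$-distance of any $p,q\in S_vG$ is expressed through the $v$-distances $|pq|_v,|pu|_v,|qu|_v$ and the constant $|uv|_\om$ by the metric-inversion identity relating the $u$- and $v$-metrics. The three $v$-distances are preserved by $f$ because $f_v$ is isometric along $S_vG$, and $|uv|_\om$ is preserved because $u,v$ lie on a standard $\R$-circle (Corollary~\ref{cor:standard_isometry_rcircles}); hence $f_u$ is isometric along $S_vG$. The main obstacle is the geometric identification of the first two paragraphs---pinning down $K\subset G'$, the orthogonality $G\perp K$, the equality $\phi_K|G=\psi|G$, and the harmonic position of poles and equator points; once these are in place the conclusion is a bookkeeping application of the suspension machinery together with two metric inversions.
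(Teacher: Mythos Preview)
Your proposal is correct and follows essentially the same route as the paper: identify the $\R$-circles of the $\R$-foliation of $S_vG$ as standard $\R$-circles in $G\ast G'$, verify $f_v$ is isometric along these and along $G$ (via Lemma~\ref{lem:standard_isometry_G}), apply Lemmas~\ref{lem:cfoliation_isometric} and \ref{lem:suspension_isometric}, and then move the basepoint from $v$ to $u$. The only cosmetic difference is in the last step: the paper transfers to $u$ by noting that $f$ is M\"obius on $S_vG$, hence preserves $\crt(v,x,z,u)=(|vx|_u:|vz|_u:|xz|_u)$, and fixes the scale using one known distance, whereas you write out the metric-inversion identity explicitly; both arguments amount to the same computation.
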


\begin{proof} Every circle 
$\si$
of the 
$\R$-foliation
of
$S_vG$
is standard in
$Y$,
and
$\si$
intersects
$G$
at two points
$p$, $q=\psi(p)$.
Then by Lemma~\ref{lem:standard_isometry_rlines} applied to
$\om=p$, $f_p$
is isometric along
$\si$.
Thus
$f_v$
is isometric along
$\si$
because
$v\in\si$.
By Lemma~\ref{lem:standard_isometry_G},
$f_v$
is isometric along
$G$.
Using Lemmas~\ref{lem:cfoliation_isometric} and \ref{lem:suspension_isometric},
we obtain that
$f_v$
is isometric along
$S_vG$,
in particular,
$f$
is M\"obius along
$S_vG$.
Note that
$u\in S_vG$
because
$G\sub S_vG$.
Thus for any
$x$, $z\in S_vG$
the 4-tuple
$(v,x,z,u)$
lies in
$S_vG$.
Hence
$\crt(v',x',z',u')=\crt(v,x,z,u)$,
where ``prime'' means the image under
$f$.
On the other hand,
$\crt(v,x,z,u)=(|vx|_u:|vz|_u:|xz|_u)$,
and
$|v'x'|_{u'}=|vx|_u$
by Corollary~\ref{cor:standard_isometry_rcircles} because
$v,x$
lie on a standard 
$\R$-circle.
Therefore
$|x'z'|_{u'}=|xz|_u$.
\end{proof}

\subsection{Isometricity along fibers of the $\R$-foliation}

\begin{lem}\label{lem:positive_root} The polynomial
$g(s)=s^4+c(s+b)^4-d$
with positive
$b,c$
such that
$cb^4-d<0$
has a unique positive root.
\end{lem}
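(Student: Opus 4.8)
The plan is to establish existence by the intermediate value theorem and uniqueness by showing that $g$ is strictly increasing on the positive half-line; the sign hypothesis $cb^4-d<0$ is precisely what makes this work.

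First I would examine the values of $g$ at the two ends of $[0,\infty)$. At $s=0$ we have $g(0)=0+c b^4-d=cb^4-d$, which is negative by assumption. On the other hand, since $c>0$ the leading behaviour is governed by $s^4+cs^4=(1+c)s^4$, so $g(s)\to+\infty$ as $s\to+\infty$. Because $g$ is continuous (it is a polynomial), the intermediate value theorem yields at least one root in the open interval $(0,\infty)$.

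For uniqueness I would differentiate. We have $g'(s)=4s^3+4c(s+b)^3$. For every $s\ge 0$ and positive $b,c$ both summands are nonnegative, and the second is strictly positive because $s+b\ge b>0$; hence $g'(s)>0$ for all $s\ge 0$. Thus $g$ is strictly increasing on $[0,\infty)$ and can take the value $0$ at most once. Combining this with the previous paragraph, $g$ has exactly one positive root.

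There is no genuine obstacle here: the whole argument rests on the observation that the derivative $g'(s)=4s^3+4c(s+b)^3$ is manifestly positive on the positive axis, which forces strict monotonicity, while the hypothesis $cb^4-d<0$ guarantees that $g$ starts below zero so that a (necessarily unique) crossing must occur. The only point that deserves a word of care is that dropping the sign condition would allow $g(0)\ge 0$, in which case no positive root need exist, so this hypothesis cannot be omitted.
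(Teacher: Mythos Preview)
Your proof is correct and follows essentially the same approach as the paper: both use $g(0)=cb^4-d<0$ and $g(s)\to+\infty$ for existence, and monotonicity of $g$ on $[0,\infty)$ for uniqueness. Your argument is in fact slightly more direct, since you observe $g'(s)=4s^3+4c(s+b)^3>0$ for $s\ge 0$ immediately, whereas the paper first computes $g''>0$ to deduce convexity and then locates the minimum to the left of $0$ via $g'(0)>0$.
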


\begin{proof} We have
$\frac{d^2g}{ds^2}=12s^2+12c(s+b)^2>0$
for all 
$s\in\R$,
and
$\frac{dg}{ds}(0)=4cb^3>0$.
Thus
$g$
is convex and a unique minimum point
$\wt s$
of
$g$
is negative,
$\wt s<0$.
Since
$g(\wt s)<g(0)=cb^4-d<0$,
there is a unique positive root
$s_0$
of
$g$.
\end{proof}

\begin{pro}\label{pro:rfoliation_isometric} Let
$S_uH\sub Y$
be a suspension over a
$\C$-circle $H$
with the poles
$u$, $v=\phi_H(u)$,
and let
$K$
be the 
$\C$-circle
through
$u$, $v$.
Assume
$f_u$
is isometric along 
$K$
and along every
$\C$-circle $H_\la$
of the 
$\C$-foliation of
$S_uH$,
and the following assumptions hold 
\begin{itemize}
 \item [(i)] $f\circ\mu_{K,u}(x)=\mu_{K',u'}\circ f(x)$
and
$|v'x'|_{u'}=|vx|_u$
for every
$x\in S_uH$,
where
$\mu_{K,u}:Y_u\to K$
and
$\mu_{K',u'}:X_{u'}\to K'=f(K)$
are projections to the respective
$\C$-lines,
 \item [(ii)] for every
$x\in S_uH$
and every
$\C$-circle $H_\la$
of the 
$\C$-foliation
there is
$y\in H_\la$
such that
$|x'y'|_{u'}=|xy|_u$,
\end{itemize}
where ``prime'' means the image under
$f$.
Then
$f_u$
is isometric along every
$\R$-line $\si\sub S_uH$
of the 
$\R$-foliation.
\end{pro}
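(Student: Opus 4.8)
The plan is to work throughout in the pointed spaces $Y_u$ and $X_{u'}$ with the normalization fixed earlier, and to reduce the assertion to a statement about radial distances. Since $\si$ is an $\R$-circle and $K$ is the $\C$-circle through the two poles $u,v$, Lemma~\ref{lem:intersect_rcircle_ccircle} gives $\si\cap K=\{u,v\}$; hence in $Y_u$ the $\R$-line $\si$ meets the $\C$-line $K$ only at $v$, and the $\R$-circle through $u$ and any $x\in\si$ is $\si$ itself, so $\mu_{K,u}(\si)=\{v\}$. By hypothesis (i) this forces $\mu_{K',u'}(f(x))=f(v)=:v'$ for every $x\in\si$, together with $|v'f(x)|_{u'}=|vx|_u$. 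Thus $f$ carries $\si$ into the single fiber $\mu_{K',u'}^{-1}(v')$ and preserves the distance to $v$. As $\si$ is an $\R$-line through $v$, for $x_1,x_2\in\si$ one has $|x_1x_2|_u=|vx_1|_u\pm|vx_2|_u$, the sign depending only on whether $x_1,x_2$ lie on the same ray or on opposite rays of $\si$ issuing from $v$. Since the radii are preserved, the whole problem reduces to showing that $f$ preserves the distances $|x q|_u$ as $q$ ranges over $\si$, equivalently that $f(\si)$ lies on one $\R$-line through $v'$ without folding.

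Next I would locate $f(x)$ by means of the $\C$-foliation. Fix $x\in\si$ of radius $\rho_x=|vx|_u$ and a latitude $H_\la$, which lies in the metric sphere of radius $\rho_\la$ about $v$ (recall that a COS sits in a sphere between the poles, as in the proof of Lemma~\ref{lem:standard_isometry_rlines}). Let $q$ be the intersection of $\si$ with $H_\la$ on the ray through $x$. Using Proposition~\ref{pro:explicit_distance} to split distances through the projection $\mu_{K,u}$ — whose values on $x$, on $q$ and on the points of $H_\la$ are controlled by (i) and by the isometry of $f$ along $K$ — together with the isometry of $f$ along $H_\la$, I would express $|xw|_u$, for $w$ running over $H_\la$, as a function of the arclength position $s$ of $w$ measured from $q$. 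After a metric inversion of the kind used in Lemmas~\ref{lem:bisector_rcircle} and \ref{lem:suspension_isometric} (which is exactly what produces a nontrivial coefficient and an affine shift), this is governed by an equation of the form $s^4+c(s+b)^4=d$ with $c,b>0$. By Lemma~\ref{lem:positive_root} such an equation has a \emph{unique} positive root, so the point of $H_\la$ realizing a prescribed distance to $x$ on the relevant side is unique, and $q$ is thereby singled out.

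Finally I would match the two sides through (ii). Hypothesis (ii) supplies a point $y\in H_\la$ with $|f(x)f(y)|_{u'}=|xy|_u$. Running the identical decomposition on the image side is legitimate because $f$ is isometric along $K$ and along $H_\la$, commutes with $\mu_K$ by (i), and preserves radial distance; it shows that the position of $f(y)$ in $X_{u'}$ obeys the very same equation $s^4+c(s+b)^4=d$, with the same $b,c,d$ as on the $Y$-side. Uniqueness of the positive root (Lemma~\ref{lem:positive_root}) then forces $y$ to sit at the same position as $q$, whence $|f(x)f(q)|_{u'}=|xq|_u=|\rho_x-\rho_\la|$. Letting $\la$ vary makes $q$ sweep the ray of $\si$ through $x$, and the opposite intersection point of $H_\la$ with $\si$ takes care of the other ray; so $|xq|_u$ is preserved for every $q\in\si$. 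This gives that $f_u$ is isometric along $\si$, and in particular $f(\si)$ cannot fold, which is the claim.

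The hard part will be the middle two steps: extracting the precise quartic relation $s^4+c(s+b)^4=d$ from the distance formula after the correct metric inversion (so that the two genuine fourth-power terms and the sign condition $cb^4-d<0$ of Lemma~\ref{lem:positive_root} really appear), and then verifying that the mere \emph{existence} statement (ii) is upgraded, via the unique-positive-root property, to an identification of the realizing point with the geometric foot $q$ on $\si$. Careful bookkeeping of the base-point changes — so that each application of Proposition~\ref{pro:explicit_distance} is to an honest $\C$-line and the $\mu_K$-data transported by (i) lands where expected — is where the delicacy lies; once the realizing point is pinned down, isometry along $\si$ is immediate.
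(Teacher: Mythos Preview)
Your overall strategy—derive a quartic from the distance formula after a metric inversion and invoke Lemma~\ref{lem:positive_root}—is exactly the paper's, but you have the roles of the variables backwards, and this makes the argument as written not go through.

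The unknown in the quartic is \emph{not} a position of $w$ on the $\C$-circle $H_\la$; it is the distance $|xz|_u$ along $\si$, where $z$ is your $q$. Concretely, take $z=\si(t)$ with $0<t<t'$, $x=\si(t')$, and set $p=\si(-t)$, the \emph{other} intersection of $\si$ with $H_t$. Invert at $p$: in $Y_p$ the circle $H_t$ becomes a $\C$-line, $\si$ an $\R$-line meeting it at $z$, and Proposition~\ref{pro:explicit_distance} gives $|xy|_p^4=|xz|_p^4+|yz|_p^4$ for the auxiliary $y\in H_t$ supplied by~(ii). Converting back to $|\cdot|_u$ and using $|xp|_u=|xz|_u+|zp|_u$ yields
\[
s^4+c(s+b)^4=d,\qquad s=|xz|_u,\ b=|zp|_u,\ c=\frac{|yz|_u^4}{|yp|_u^4},\ d=\frac{|xy|_u^4\,|zp|_u^4}{|yp|_u^4}.
\]
All of $b,c,d$ are preserved by $f_u$ because $y,z,p\in H_t$ (on which $f_u$ is isometric) and $|xy|_u$ is preserved by~(ii). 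The condition $cb^4-d<0$ comes from $|xp|_u>|zp|_u$. Lemma~\ref{lem:positive_root} then pins down $|xz|_u$, hence $|x'z'|_{u'}=|xz|_u$. Your sentence ``forces $y$ to sit at the same position as $q$'' is the wrong conclusion: $y$ is not determined and need not equal $z$; what is determined is $|xz|_u$.

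There is also a genuine final step you skip. Knowing $|x'z'|_{u'}=|xz|_u$ and $|v'x'|_{u'}=|vx|_u$, $|v'z'|_{u'}=|vz|_u$ gives $|v'x'|_{u'}=|v'z'|_{u'}+|z'x'|_{u'}$, i.e.\ the Ptolemy equality for $(v',z',x',u')$. Since by~(i) the three points $v',z',u'$ already lie on an $\R$-circle $\si'$, Proposition~\ref{pro:property_u} forces $x'\in\si'$. This is what rules out folding ray by ray; Lemma~\ref{lem:concatenation} then glues the two rays into a single $\R$-line. Without these appeals, preservation of distances to $v$ alone does not prevent $f(\si)$ from landing on two different $\R$-lines through $v'$.
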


\begin{proof} We fix an arclength parametrization
$\si:\R\to Y_u$
of
$\si$
with 
$\si(0)=v$.
We show that
$f\circ\si:\R\to (F\ast F')_{u'}$
is an arclength parametrization of an
$\R$-line $\si'\sub (F\ast F')_{u'}$
through
$v'=f(v)$.

By assumption (i) we know that
$f\circ\mu_{K,u}(\si(t))=\mu_{K',u'}\circ f(\si(t))=v'$
and
$|f\circ\si(t)v'|_{u'}=|\si(t)v|_u=|t|$
for every
$t\in\R$.
 
Given
$t$, $t'\in\R$,
we assume without loss of generality that
$t'>0$
and
$0<|t|<t'$.
We put
$x=\si(t')$, $z=\si(t)$, $p=\si(-t)$,
and assume furthermore that
$|xp|_u>|zp|_u$.
Note that in the case
$t>0$
the last assumption follows from
$|t|<t'$
because
$|xp|_u=t'+t>2t=|zp|_u$.

We let
$H_t\sub S_uH$
be the 
$\C$-circle
of the 
$\C$-foliation
with
$\si(t)\in H_t$,
and note that
$x\in H_{t'}$, $z,p\in H_t$.

\begin{figure}[htbp]
\centering
\psfrag{p}{$p$}
\psfrag{u}{$v$}
\psfrag{y}{$z$}
\psfrag{z}{$x$}
\psfrag{x}{$y$}
\psfrag{Gt}{$H_t$}
\psfrag{Gt'}{$H_{t'}$}
\includegraphics[width=0.4\columnwidth]{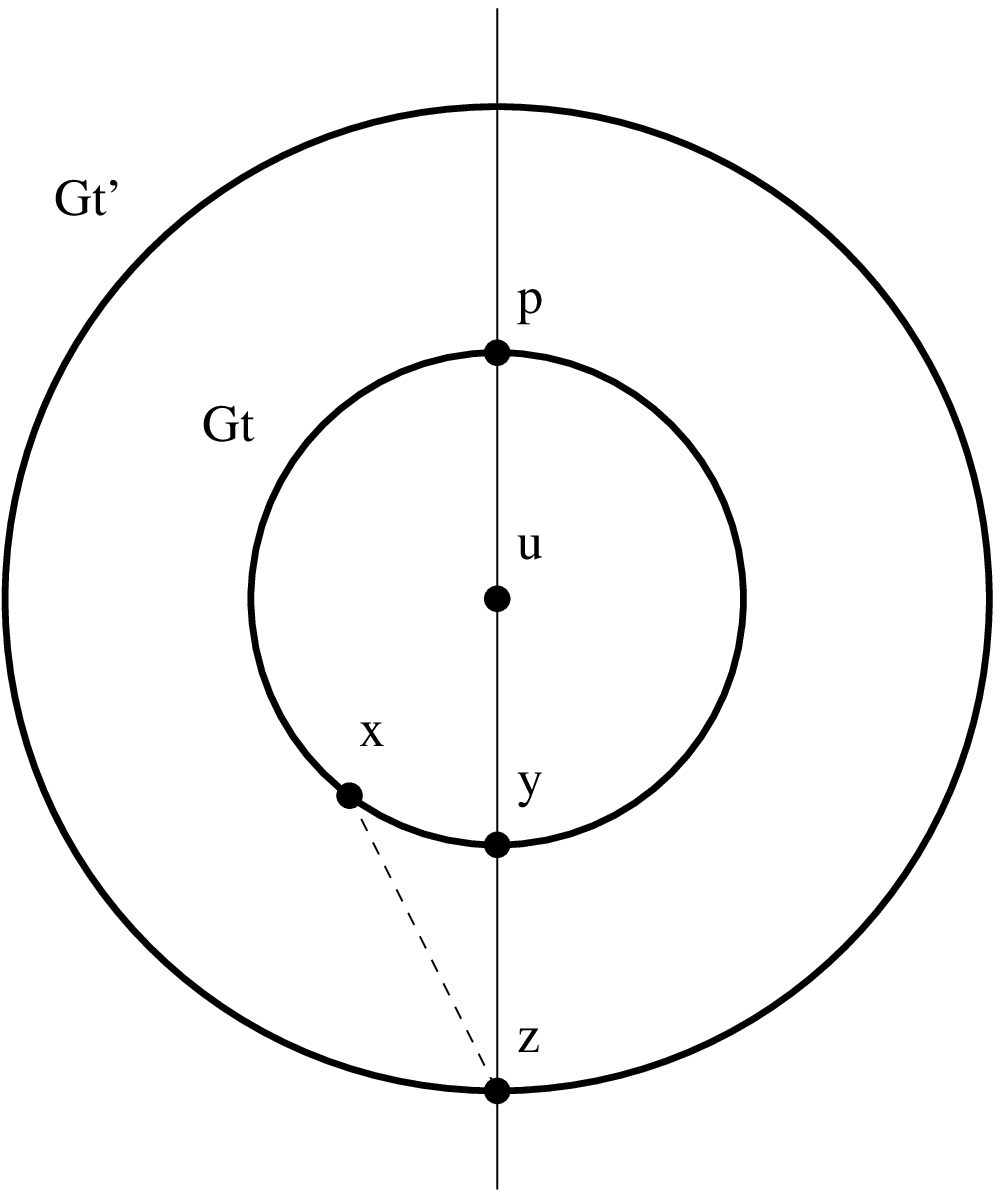}
\end{figure}

By assumption (ii), there is
$y\in H_t$
such that
$|x'y'|_{u'}=|xy|_u$.

\begin{rem}\label{rem:nonflat} Since
$f_u$
is isometric along
$H_t$,
the triangle
$xyz$
has two sides
$|xy|_u$, $|yz|_u$
preserved by
$f_u$.
Our aim is to show that the third distance
$|xz|_u$
is also preserved by
$f_u$.
In the space
$Y_p$
the distance
$|xz|_p$
is uniquely determined by
$|xy|_p$, $|yz|_p$, $|xz|_p^4+|yz|_p^4=|xy|_p^4$
by Proposition~\ref{pro:explicit_distance}. We can pass to
$Y_u$
by a metric inversion. The change of metrics involves
the distances
$|yp|_u$, $|zp|_u$
which are preserved by
$f_u$,
and the distance
$|xp|_u=|xz|_u+|zp|_u$.
In that way, we obtain an equation of 4th degree for
$|xz|_u$
to which we apply Lemma~\ref{lem:positive_root}.
\end{rem}

For brevity, we use notation
$|xy|=|xy|_u$
for 
$x,y\in Y_u$.
The metric inversion with respect to
$p$
gives
$$|xy|_p=\frac{|xy|}{|xp|\cdot|yp|},\quad 
  |xz|_p=\frac{|xz|}{|xp|\cdot|zp|},\quad
  |yz|_p=\frac{|yz|}{|yp|\cdot|zp|}.$$
Thus
\begin{equation}\label{eq:rline_isometric1}
\frac{|xy|^4}{|xp|^4\cdot|yp|^4}=\frac{|xz|^4}{|xp|^4\cdot|zp|^4}
+\frac{|yz|^4}{|yp|^4\cdot|zp|^4},
\end{equation}
and we obtain
$$|xz|^4+\frac{|yz|^4}{|yp|^4}(|xz|+|zp|)^4
 =\frac{|xy|^4\cdot|zp|^4}{|yp|^4},$$
which we write as
\begin{equation}\label{eq:rline_isometric2}
s^4+c(s+b)^4=d,
\end{equation}
where
$s=|xz|$, $b=|zp|$, $c=\frac{|yz|^4}{|yp|^4}$, 
$d=\frac{|xy|^4\cdot|zp|^4}{|yp|^4}$.
The coefficients
$b,c,d$
are positive and preserved by
$f$.
We show that
$cb^4-d<0$.
Using (\ref{eq:rline_isometric1}), we have
$$|xy|^4-|yz|^4=|yz|^4\left(\frac{|xp|^4}{|zp|^4}-1\right)
  +\frac{|xz|^4\cdot|yp|^4}{|zp|^4}>0$$
because
$|xp|>|zp|$
by the assumption. Hence
$$cb^4-d=\frac{|zp|^4}{|yp|^4}(|yz|^4-|xy|^4)<0.$$
By Lemma~\ref{lem:positive_root}, the equation (\ref{eq:rline_isometric2})
has a unique positive solution
$r_0$.
Thus
$r_0=|xz|$.
Since the same equation with the same coefficients 
$b,c,d$
holds in the space
$(F\ast F')_{u'}\sub X_{u'}$
for 
$|x'z'|=|x'z'|_{u'}$,
we obtain
$|x'z'|=|xz|$,
that is,
$f_u$
preserves the distance
$|xz|=|xz|_u$.

Hence in the case
$t>0$
we have
$$|v'x'|_{u'}=|vx|=|vz|+|zx|=|v'z'|_{u'}+|z'x'|_{u'}.$$
By assumption~(i), the points
$v'$, $z'$, $u'$
lie on an
$\R$-circle $\si'\sub X$,
and by the above, the Ptolemy equality holds for 
$\crt(v',z',x',u')$.
Then
$x'\in\si'$
by Proposition~\ref{pro:property_u}. This shows that
$f_u$
is isometric on the ray 
$\si([0,\infty))\sub\si$,
and similarly, 
$f_u$
is isometric on the opposite ray
$\si([0,-\infty)\sub\si$.

Assuming
$t<0$
with
$0<|t|<t'$
we observe that the equality
$|x'z'|_{u'}=|xz|$
implies
$|x'z'|_{u'}=|x'v'|_{u'}+|v'z'|_{u'}$
because
$|x'v'|_{u'}=|xv|$, $|v'z'|_{u'}=|vz|$
and
$|xz|=|xv|+|vz|$.
Then by Lemma~\ref{lem:concatenation}, the concatenation of the rays
$f\circ\si([o,\infty))\cup f\circ\si([0,-\infty))$
is an 
$\R$-line
in
$X_{u'}$.
Thus 
$f_u$
is isometric on
$\si$.
\end{proof}

\subsection{Isometricity along suspensions over $\C$-circles}

\begin{lem}\label{lem:iso_commutes_projections} For every
$u\in G$
the map 
$f$
commutes with the projections
$\mu_{G,u}:Y_u\to G$, $\mu_{F,u'}:(F\ast F')_{u'}\to F$
(in
$X_{u'}$),
$u'=f(u)$,
$$\mu_{F,u'}\circ f(w)=f\circ\mu_{G,u}(w)$$
for every
$w\in Y_u$. 
Moreover,
$|w'z'|_{u'}=|wz|_u$,
where
$z=\mu_{G,u}(w)$, $w'=f(w)$, $z'=\mu_{F,u'}(w')=f(z)$.
\end{lem}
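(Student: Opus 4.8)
The plan is to reduce the statement to a single suspension over $G$ that contains both $w$ and its projection $z=\mu_{G,u}(w)$, and then to exploit the isometry of $f_u$ along that suspension together with the distance formula of Proposition~\ref{pro:explicit_distance}. If $w\in G$ the claim is trivial, since then $z=w$, $f(w)\in F$, and both projections fix $f(w)$. So assume $w\in Y_u\sm G$. Then $w$ lies on a (unique) standard $\R$-circle $\si\sub G\ast G'=Y$, and I would record its intersections $\si\cap G'=\{x,y\}$ with $y=\phi_G(x)$. Taking the suspension $S_xG=S_{x,y}G$ over $G$ with poles $x,y\in G'$, I have $w\in\si\sub S_xG$, since $\si$ is a standard $\R$-circle through the poles $x,y$ which meets $G$.

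The key geometric input I would establish is that $G\sub S_xG$. Indeed, by Lemma~\ref{lem:moebius_join_rcircle} every $p\in G$ together with $x\in G'$ determines a standard $\R$-circle through $p,\psi(p)\in G$ and $x,\phi_G(x)=y\in G'$; this circle passes through the poles $x,y$ and meets $G$, so $p\in S_xG$. In particular $z=\mu_{G,u}(w)\in G\sub S_xG$. Now Lemma~\ref{lem:sphere_isometric_omega} applies with $v=x\in G'$ and gives that $f_u$ is isometric along $S_xG$. Consequently $|w'z'|_{u'}=|wz|_u$ for $z'=f(z)$ (this is already the asserted distance equality), and $f|G$ maps $G$ isometrically onto $F$ in the metrics $|\ |_u$, $|\ |_{u'}$, so that $|w'f(p)|_{u'}=|wp|_u$ and $|f(p)f(p')|_{u'}=|pp'|_u$ for all $p,p'\in G$.

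It remains to identify $z'=f(z)$ with $\mu_{F,u'}(w')$. Since $G$ is a $\C$-line through $u$ in $Y_u$ and $z=\mu_{G,u}(w)$, Proposition~\ref{pro:explicit_distance} yields
$$|wp|_u^4=|wz|_u^4+|zp|_u^4\qquad\text{for every }p\in G.$$
Transporting this along $f_u$ by the isometries recorded above, and using that $f|G$ maps $G$ onto $F$, I obtain
$$|w'q|_{u'}^4=|w'z'|_{u'}^4+|z'q|_{u'}^4\qquad\text{for every }q\in F,$$
with $z'\in F$. By Proposition~\ref{pro:explicit_distance} the projection $\tilde z=\mu_{F,u'}(w')$ satisfies the analogous relation with $\tilde z$ in place of $z'$. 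Substituting $q=\tilde z$ into the relation for $z'$ and $q=z'$ into the relation for $\tilde z$ and adding the two forces $|z'\tilde z|_{u'}^4=0$, hence $z'=\tilde z=\mu_{F,u'}(w')$. This is the asserted commutation $\mu_{F,u'}\circ f=f\circ\mu_{G,u}$, and $|w'z'|_{u'}=|wz|_u$ was already obtained from the isometry of $f_u$ along $S_xG$.

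The main obstacle I anticipate is the bookkeeping that places both $w$ and $z$ inside one suspension to which Lemma~\ref{lem:sphere_isometric_omega} applies: one must check that the standard $\R$-circle through $w$ picks out poles $x,y\in G'$ for which $G\sub S_xG$, so that the projection $z\in G$ is covered as well. Once this is secured, the remaining steps are formal consequences of the distance formula and of the uniqueness of the point realizing the Pythagorean-type relation $r^4=a^4+b^4$ along a $\C$-line.
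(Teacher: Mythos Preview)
Your proposal is correct and takes a genuinely different route from the paper's argument.

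The paper stays with the standard $\R$-circle $\si$ through $w$, records $\{x,y\}=\si\cap G$ and $v=\phi_G(w)\in\si$, cites Corollary~\ref{cor:standard_isometry_rcircles} for $f_u$ isometric along $\si$, and then uses the midpoint identity $|zw|_u=\tfrac12|wv|_u$ (coming from the conjugate-pole characterization) together with the distance formula applied to the single point $x\in G$ to conclude $z''=f(z)$ on the $\C$-line $F$. You instead pass to the suspension $S_xG$ with poles $x,y\in G'$ and invoke Lemma~\ref{lem:sphere_isometric_omega} directly, which already gives $f_u$ isometric on all of $S_xG\supset G\cup\{w\}$; then your substitution trick (plugging $q=\tilde z$ and $q=z'$ into the two Pythagorean relations and adding) forces $z'=\tilde z$ without singling out any particular point of $F$. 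Your use of Lemma~\ref{lem:sphere_isometric_omega} makes the isometry of $f_u$ transparently available for an \emph{arbitrary} $u\in G$, whereas the paper's appeal to Corollary~\ref{cor:standard_isometry_rcircles} is stated for $f_\om$ and tacitly needs the same suspension to upgrade to $f_u$; conversely the paper's midpoint computation is slightly shorter once that is granted. One inessential remark: your parenthetical ``(unique)'' for $\si$ fails when $w\in G'$, but you only use existence, so this does not affect the argument.
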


\begin{proof} We can assume that
$w\not\in G$.
There is a standard 
$\R$-circle $\si\sub Y$
through
$w$.
We let
$\{x,y\}=\si\cap G$, $v=\phi_G(w)\in\si$.
Then
$(x,w,y,v)\in\harm_\si$
and
$(x',w',y',v')\in\harm_{\si'}$
for 
$\si'=f(\si)$, $(x',w',v',y')=f(x,w,y,v)$.
By Corollary~\ref{cor:standard_isometry_rcircles},
$f_u$
is isometric along
$\si$,
thus we have
$|w'v'|_{u'}=|wv|_u$, $|x'w'|_{u'}=|xw|_u$.
Therefore,
$$|z''w'|_{u'}=\frac{1}{2}|w'v'|_{u'}=\frac{1}{2}|wv|_u=|zw|_u.$$
for
$z''=\mu_{F,u'}(w')$,
and using Proposition~\ref{pro:explicit_distance} we obtain
$|zx|_u=|x'z''|_{u'}$.
Thus
$z''=z'=f(z)$.
\end{proof}

Recall that
$G'\sub Y$
is a COS to
$G$
at
$\psi$,
and
$G'$
carries the canonical fibration 
$\cF_{G'}$
by
$\C$-circles,
where the 
$\C$-circle $H\sub G'$
of the fibration through
$x\in G'$
is determined by
$\phi_G(x)\in H$.

For distinct
$u$, $v\in G$
the sphere
$S_{u,v}\sub Y$
formed by all 
$\R$-circles 
in
$Y$
through
$u$, $v$
is foliated (except
$u,v$)
by COS's to
$G$.
One can visualize this picture by taking the geodesic
$\ga=uv\sub E$
and regarding the orthogonal projection
$\pr_E:M\to E$.
Then
$S_{u,v}=\di\pr_E^{-1}(\ga)$,
while the fibers
$G_b=\di\pr_E^{-1}(b)$, $b\in\ga$,
are COS's to
$G$,
which foliate 
$S_{u,v}\sm\{u,v\}$.
In the case
$v=\psi(u)$,
i.e., the geodesic
$uv$
passes through
$a$,
the 
$\R$-circles
through
$u$, $v$
are standard, and the sphere
$S_{u,v}$
is called {\em standard}. Every standard sphere
is a suspension over
$G'$.
If 
$S_{u,v}$
is not standard, then for every
$b\in uv\sub E$
the fiber
$G_b$
is the intersection
$G_b=S_{u,v}\cap S_w$
for the uniquely determined standard sphere
$S_w=S_{w,\psi(w)}$, $w\in G$:
one takes the geodesic
$\ga_b\sub E$
through
$a$, $b$,
then
$S_w=\di\pr_E^{-1}(\ga_b)$,
and
$w,\psi(w)\in G$
are the ends at infinity of
$\ga_b$.

Every COS
$G_b$
to
$G$, $b\in uv$,
carries the canonical fibration 
$\cF_b=\cF_{G_b}$
by
$\C$-circles, 
where the 
$\C$-circle $H\sub G_b$
through
$x\in G_b$
is determined by
$\phi_G(x)\in H$.
Note that the 
$\C$-circles $G$, $H$
are mutually orthogonal,
$G\bot H$,
and
$v=\phi_H(u)$.
The suspension
$S_{u,v}H=S_uH$
is a 2-dimensional sphere in
$S_{u,v}$.

\begin{lem}\label{lem:unique_representative_cfiber} Given distinct
$u,v\in G$ 
with 
$v\neq\psi(u)$
and a
$\C$-circle $H\in\cF_{G'}$,
there is a uniquely determined suspension
$S_u\wh H=S_{u,v}\wh H$, $v=\phi_{\wh H}(u)$, 
over a 
$\C$-circle $\wh H\sub Y$
such that every fiber
$\wh H_c$
of the 
$\C$-foliation
of
$S_u\wh H$
is represented as
$\wh H_c=S_u\wh H\cap S_wH$
for some
$w\in G$.
\end{lem}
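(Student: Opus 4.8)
The plan is to work entirely in the model $M=\C\hyp^{k+1}$, where $Y=\di M$, and to translate every object of the statement into differential geometry. Write $G=\di E$ and $G'=\di E'$, and recall that the fiber $H\in\cF_{G'}$ is $\di L$ for a complex hyperbolic plane $L\sub E'$ through $a$, while $\psi=\di s_a$. The distinct points $u,v\in G$ are the ends of a geodesic $\ga=uv\sub E$, and the condition $v\neq\psi(u)$ means precisely that $\ga$ does not pass through $a$; let $m\in\ga$ be its midpoint, so $m\neq a$. Finally recall from sect.~\ref{subsect:rcfoliations} that $S_wH=S_{w,\psi(w)}H$ is the suspension over $H$ with poles $w,\psi(w)\in G$ and that its $\C$-foliation consists of the boundaries $\di$ of the transvects of $L$ along the geodesic $\ga_w=w\psi(w)\sub E$, which passes through $a$; each such fiber is the boundary of a complex hyperbolic plane through a point $c\in\ga_w$ whose tangent space is the parallel transport of $T_aL$ along $\ga_w$ from $a$ to $c$.

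The construction of $\wh H$ is where Proposition~\ref{pro:holonomy_normal_bundle} enters. Since $T_aL\sub (T_aE)^\bot$ is a fiber of $\cF_a$, that Proposition guarantees that the parallel transport of the complex line $T_aL$ within $E$ is independent of the path, transport around any loop in $E$ preserving this line. I would therefore define $\wh E$ to be the complex hyperbolic plane through $m$ whose tangent space $T_m\wh E$ is the parallel transport of $T_aL$ from $a$ to $m$ inside $E$, and set $\wh H=\di\wh E$. Because $T_m\wh E\sub (T_mE)^\bot$ while $\dot\ga(m)\in T_mE$, the plane $\wh E$ meets $\ga$ orthogonally at $m$, so both $u$ and $v$ project orthogonally to $m$ and the reflection across $\wh E$ swaps them; hence $v=\phi_{\wh H}(u)$ and $u,v$ are conjugate poles of $\wh H$. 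The same orthogonality gives $\phi_{\wh H}(G)=G$, so $\wh H$ is a COS to $G$ and the suspension $S_{u,v}\wh H$ is defined. Its $\C$-foliation fibers $\wh H_c=\di\wh E_c$ are the transvects of $\wh E$ along $\ga$, that is, complex hyperbolic planes through the points $c\in\ga$ whose tangent space is the parallel transport of $T_aL$ from $a$ to $c$ within $E$.

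Next I would verify the intersection property. Fix $c\in\ga$; since $a\notin\ga$, there is a unique geodesic $\ga_w\sub E$ through $a$ and $c$, and I take $w\in G$ to be one of its ends, so that $\ga_w=w\psi(w)$ and $S_wH$ is a standard suspension. The fiber of $S_wH$ through $c$ is the boundary of the complex hyperbolic plane through $c$ with tangent space the parallel transport of $T_aL$ along $\ga_w$ from $a$ to $c$; by the path independence from Proposition~\ref{pro:holonomy_normal_bundle} this tangent space equals $T_c\wh E_c$. Two complex hyperbolic planes through $c$ with the same tangent space coincide, so $\wh H_c$ is simultaneously a fiber of $S_{u,v}\wh H$ and of $S_wH$, giving $\wh H_c\sub S_{u,v}\wh H\cap S_wH$. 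The opposite inclusion, that the two $2$-spheres meet in nothing more than this $\C$-circle, follows from the explicit fiber structure of the two suspensions. Thus $\wh H_c=S_{u,v}\wh H\cap S_wH$, and as $c$ runs over $\ga$ every fiber of the $\C$-foliation is obtained.

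Uniqueness is then immediate: if $\wh H'$ enjoys the same property, then $v=\phi_{\wh H'}(u)$ again forces $\wh E'$ to pass orthogonally through $m$, while the representation of the central fiber (at $c=m$) as an intersection with some $S_wH$ forces $T_m\wh E'$ to be the parallel transport of $T_aL$ from $a$ to $m$, whence $T_m\wh E'=T_m\wh E$ and $\wh H'=\wh H$. The main obstacle I anticipate is twofold: first, carefully matching the abstract M\"obius notions (conjugate poles, suspension, $\R$- and $\C$-foliations) with their differential-geometric counterparts in $\C\hyp^{k+1}$; and second, the decisive use of Proposition~\ref{pro:holonomy_normal_bundle} to identify the two parallel transports of $T_aL$ to $c$, one along $\ga$ (through $m$) and one along $\ga_w$ (through $a$), which is exactly the point where path independence of the normal holonomy is indispensable and without which neither the well-definedness of $\wh E$ nor the intersection property would hold.
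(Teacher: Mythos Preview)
Your argument is correct and follows essentially the same route as the paper: pick a point on the geodesic $uv\sub E$, define $\wh H$ as the boundary of the complex plane through that point whose tangent space is the parallel transport of $T_aL$, and then invoke Proposition~\ref{pro:holonomy_normal_bundle} to identify each $\C$-fiber with the corresponding fiber of some standard $S_wH$. One small slip: a bi-infinite geodesic has no ``midpoint'', so your $m$ should simply be an arbitrary point of $\ga$ (the paper calls it $b$); correspondingly, in your uniqueness paragraph $\wh E'$ is only forced to meet $\ga$ orthogonally at \emph{some} point $c$, not at $m$, but the intersection condition at that $c$ still pins down $T_c\wh E'$ and hence the suspension.
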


\begin{proof} Let
$uv\sub E$
be the geodesic with end points at infinity
$u,v$.
By the assumption,
$a\not\in uv$.
We take
$b\in uv$,
consider the geodesic
$\ga_b\sub E$
through
$a$, $b$,
put
$\{x,y\}=\di\ga_b\sub G$,
and let
$S_xH=S_{x,y}H$
be the M\"obius suspension over
$H$
with poles
$x,y$.
The suspension
$S_xH$
intersects the COS
$G'$
over 
$H$.
We put
$\wh H=S_xH\cap G_b$,
the fiber of the 
$\C$-foliation of
$S_uH$
over
$b$.

\begin{figure}[htbp]
\centering
\psfrag{a}{$a$}
\psfrag{b}{$b$}
\psfrag{c}{$c$}
\psfrag{u}{$u$}
\psfrag{v}{$v$}
\psfrag{x}{$x$}
\psfrag{y}{$y$}
\psfrag{z}{$z$}
\psfrag{w}{$w$}
\includegraphics[width=0.4\columnwidth]{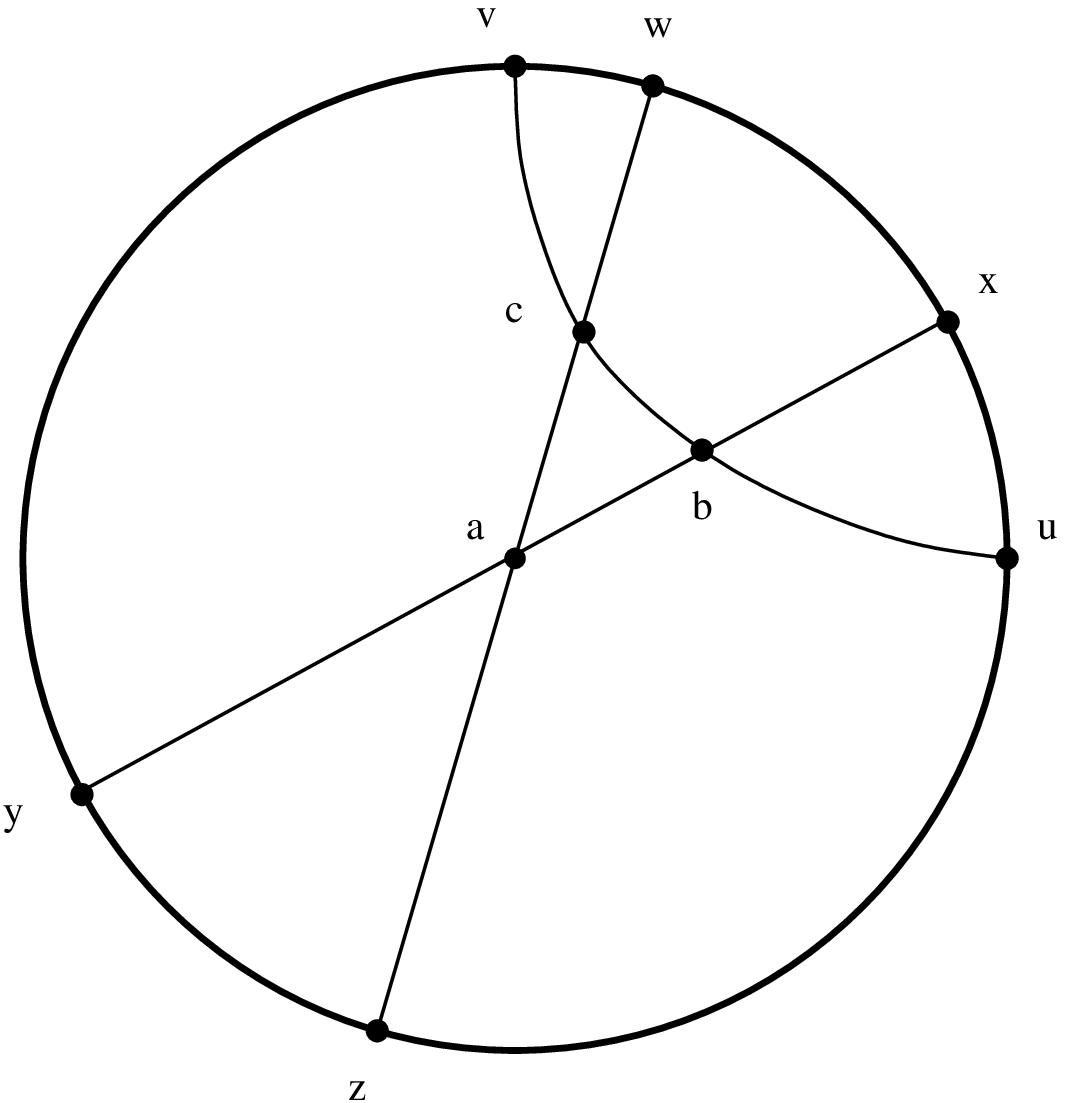}
\end{figure}

For an arbitrary
$c\in uv$
let
$\ga_c\sub E$
be the geodesic through
$a$, $c$, $\{w,z\}=\di\ga_c\sub G$,
and let
$S_wH$
be the suspension over
$H$
with poles
$w,z$.

Every transvection along a geodesic
$\ga\sub M$
induces a M\"obius automorphism of
$Y$,
which is a pure homothety of
$Y_u$
for any end
$u\in\di\ga$.
For brevity, we speak about the action of transvections on
$Y$.

The fiber
$\wh H_c=S_u\wh H\cap G_c$
of the 
$\C$-foliation
of
$S_u\wh H$
over
$c$
is obtained from
$\wh H$
by the transvection along
$uv$,
which moves
$b$
into
$c$,
and
$\wh H$
is obtained from
$H$
by the transvection along
$\ga_b$,
which moves
$a$
into
$b$.
Thus
$\wh H_c$
is obtained from
$H$
by the composition of these transvections.

On the other hand, by Proposition~\ref{pro:holonomy_normal_bundle},
the holonomy of the normal bundle
$E^\bot$
along any loop in
$E$
preserves any complex hyperbolic plane in
$E^\bot$.
It follows that
$\wh H_c$
is obtained from
$H$
by the transvection along
$\ga_c$
that moves
$a$
into
$c$.
Therefore,
$\wh H_c=S_u\wh H\cap S_wH$.
\end{proof}

\begin{lem}\label{lem:nonstandard_suspension_isometric} For every
distinct
$u$, $v\in G$
the map 
$f_\om$
is isometric along any suspension
$S_u\wh H=S_{u,v}\wh H$, $v=\phi_{\wh H}(u)$,
over a 
$\C$-circle $\wh H$
that is orthogonal to
$G$.
\end{lem}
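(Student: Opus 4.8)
The plan is to verify the hypotheses of Proposition~\ref{pro:rfoliation_isometric} for the suspension $S_u\wh H\sub Y$ (all of whose ingredients live in the model $Y=\di\C\hyp^{k+1}$), taking for $K$ the $\C$-circle through $u,v$, which equals $G$ since $u,v\in G$, and then to combine Lemma~\ref{lem:suspension_isometric} with Lemma~\ref{lem:metric_inversion_line} to pass from isometricity of $f_u$ to that of $f_\om$. The argument splits according to whether the suspension is standard, $v=\psi(u)$, or not.

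First I would treat the standard case $v=\psi(u)$. Here one checks in the model that $\wh H$ is a fiber $H$ of the canonical fibration $\cF_{G'}$ of $G'$, so that $H\sub G'$ and every $\R$-circle of the $\R$-foliation of $S_uH$ meets $G$ at the conjugate pair $u,\psi(u)$ and $G'$ at a conjugate pair, i.e. is standard. Consequently $f_\om$ is isometric along each such $\R$-circle by Corollary~\ref{cor:standard_isometry_rcircles} and along $H$ by Lemma~\ref{lem:standard_isometry_ccircle}; inverting the metric at $u$, and using that points of $H\sub G'$ and the pole $u\in G$ lie on standard $\R$-circles, one upgrades these to isometricity of $f_u$ along the $\R$-foliation and along $H$. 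Then Lemma~\ref{lem:cfoliation_isometric} gives isometricity along every $\C$-foliation fiber $H_\la$, Lemma~\ref{lem:suspension_isometric} gives isometricity of $f_u$ along $S_uH$, and Lemma~\ref{lem:metric_inversion_line} promotes this to $f_\om$.

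For the nonstandard case $v\neq\psi(u)$ I would use the standard case as the engine via the holonomy identification of Lemma~\ref{lem:unique_representative_cfiber}: each $\C$-foliation fiber $\wh H_c$ of $S_u\wh H$ coincides with $S_u\wh H\cap S_wH$ for a standard suspension $S_wH$, along which $f_u$ is already isometric by the standard case, whence $f_u$ is isometric along every $\wh H_c$. Isometricity of $f_u$ along $K=G$ follows from the chosen normalization of $f|G$ together with a metric inversion at $u$, and assumption~(i) of Proposition~\ref{pro:rfoliation_isometric} follows from Lemma~\ref{lem:iso_commutes_projections} once one notes that the $\R$-foliation circle through any $x\in S_u\wh H$ already passes through the pole $v\in G$, so that $\mu_{G,u}(x)=v$ identically on $S_u\wh H$. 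With these inputs Proposition~\ref{pro:rfoliation_isometric} yields isometricity of $f_u$ along the $\R$-foliation, and then Lemma~\ref{lem:suspension_isometric} and Lemma~\ref{lem:metric_inversion_line} conclude exactly as in the standard case.

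The main obstacle is assumption~(ii) of Proposition~\ref{pro:rfoliation_isometric}: for every $x\in S_u\wh H$ and every $\C$-foliation fiber $\wh H_c$ one must exhibit a point $y\in\wh H_c$ with $|x'y'|_{u'}=|xy|_u$. The naive candidates, the foot-point $\mu_{\wh H_c,u}(x)$ or the point where the $\R$-foliation line through $x$ meets $\wh H_c$, lead to circularity, since preserving those very distances is essentially the conclusion being sought. The point is instead to realise both $x$ and a suitable $y\in\wh H_c$ inside a single standard object, a standard $\R$-circle or a standard suspension $S_wH$, on which isometricity is already guaranteed by Corollary~\ref{cor:standard_isometry_rcircles} or by the standard case; locating such a $y$ is precisely what the holonomy Proposition~\ref{pro:holonomy_normal_bundle}, encoded in Lemma~\ref{lem:unique_representative_cfiber}, makes possible, since it forces the fibers $\wh H_c$ to be genuine intersections with standard suspensions over the \emph{same} $\C$-circle $H$. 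I expect this coincidence-of-fibers bookkeeping to be the delicate part of the proof.
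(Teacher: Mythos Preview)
Your overall strategy matches the paper's: treat the standard case $v=\psi(u)$ directly, then in the nonstandard case verify the hypotheses of Proposition~\ref{pro:rfoliation_isometric} with $K=G$, using Lemma~\ref{lem:unique_representative_cfiber} to identify each $\C$-foliation fiber $\wh H_c$ with a slice of a standard suspension $S_wH$, and finish via Lemmas~\ref{lem:suspension_isometric} and~\ref{lem:metric_inversion_line}. The standard case and the verification of hypothesis~(i) via Lemma~\ref{lem:iso_commutes_projections} are essentially as in the paper.

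The genuine gap is in hypothesis~(ii). Your proposal is to place $x$ and a suitable $y\in\wh H_\la$ inside ``a standard $\R$-circle or a standard suspension $S_wH$''. Neither works in general: the unique standard $\R$-circle through $x$ need not meet $\wh H_\la$, and $x\in\wh H_{\la'}=S_u\wh H\cap S_{w'}H$ with $w'\neq w$, so a single $S_wH$ will not contain both points (two distinct standard suspensions $S_wH$, $S_{w'}H$ over the same $H$ meet only along $H$). The paper instead pivots through the base fiber $H\sub G'$: follow a standard $\R$-circle of the $\R$-foliation of $S_{w'}H$ from $x$ down to some $q\in H$, then follow a standard $\R$-circle of the $\R$-foliation of $S_wH$ from $q$ up to a point $y\in\wh H_\la$. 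Both of these $\R$-circles pass through $q$ and $\phi_G(q)\in G'$, hence both lie in the suspension $S_qG$ over $G$ with poles $q,\phi_G(q)$. The decisive input you are missing is Lemma~\ref{lem:sphere_isometric_omega}, which says precisely that $f_u$ is isometric along $S_qG$ for every $q\in G'$; this yields $|x'y'|_{u'}=|xy|_u$ and closes~(ii). In short, the ``single standard object'' you need is a suspension over $G$ with poles in $G'$, not a suspension over $H$, and Lemma~\ref{lem:sphere_isometric_omega} is the lemma you have to invoke.
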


\begin{proof} As above we use a ``prime'' notation for images under
$f$,
and the notation
$f_u:=f:Y_u\to(F\ast F')_{u'}$, $u\in Y$.

We first note that for every
$u\in G$,
the map
$f_\om$
is isometric along any {\em standard} suspension
$S_uH=S_{u,v}H$, 
where
$H\sub G'$
is a 
$\C$-circle
of the canonical fibration and
$v=\psi(u)=\phi_H(u)$.
Indeed, the 
$\R$-foliation
of
$S_uH$
consists of standard
$\R$-circles, 
and by Lemma~\ref{lem:standard_isometry_rlines},
$f_u$
is isometric along every of them. By
Lemma~\ref{lem:standard_isometry_ccircle},
$f_u$
is isometric on
$H$.
Thus by Lemma~\ref{lem:suspension_isometric},
$f_u$
is isometric on
$S_uH$.
Then by Lemma~\ref{lem:metric_inversion_line},
$f_\om$
is isometric on
$S_uH$.

Now, we show that
$f_\om$
is isometric along every suspension
$S_u\wh H=S_{u,v}\wh H$, $u\in G$,
where
$v=\phi_{\wh H}(u)\neq\psi(u)$.
We check that the assumptions of Proposition~\ref{pro:rfoliation_isometric}
are satisfied. In this case,
$K=G$,
and
$f_u$
is isometric along
$G$
because
$f_\om$
is assumed to be isometric along
$G$
and
$u\in G$.
By the first part of the argument,
$f_u$
is isometric on every
$\C$-circle
of the 
$\C$-foliation 
of any standard suspension
$S_wH$, $w\in G$.
Thus by Lemma~\ref{lem:unique_representative_cfiber},
$f_u$
is isometric on every
$\C$-circle
of the 
$\C$-foliation
of
$S_u\wh H$.
Property~(i) follows from Lemma~\ref{lem:iso_commutes_projections}.

To check property~(ii) of Proposition~\ref{pro:rfoliation_isometric},
we take 
$x\in S_u\wh H$
and a
$\C$-circle
$\wh H_\la$
of the 
$\C$-foliation of
$S_u\wh H$.
There is a 
$\C$-circle $\wh H_{\la'}$
of the 
$\C$-foliation
with 
$x\in\wh H_{\la'}$.
By Lemma~\ref{lem:unique_representative_cfiber},
$\wh H_{\la'}=S_u\wh H\cap S_{w'}H$
for some
$w'\in G$,
and there is an
$\R$-circle $\si'\sub S_{w'}H$
of the 
$\R$-foliation
with
$x\in\si'$.
The suspension
$S_{w'}H$
is standard, and the intersection
$\si'\cap H$
consists of two points. We take one of them,
$q\in\si'\cap H$.
Again,
$\wh H_\la=S_u\wh H\cap S_wH$
for some
$w\in G$,
and there is a (standard)
$\R$-circle $\si\sub S_wH$
of the 
$\R$-foliation
with
$q\in\si$.
The intersection
$\si\cap\wh H_\la$
consists of two points, and we take one of them,
$y\in\si\cap\wh H_\la$.
Note that
$\si$, $\si'$
are standard
$\R$-circles
with
$q\in\si\cap\si'$.
Thus the suspension
$S_qG$
over
$G$
with poles
$q$, $\phi_G(q)$
contains
$x$, $y$.
By Lemma~\ref{lem:sphere_isometric_omega},
$f_u$
is isometric on
$S_qG$,
hence
$|x'y'|_{u'}=|xy|_u$.
That is, the assumptions of Proposition~\ref{pro:rfoliation_isometric}
are satisfied. By that Proposition
$f_u$
is isometric along every
$\R$-circle
of the 
$\R$-foliation
of
$S_u\wh H$.
Applying Lemma~\ref{lem:suspension_isometric}, we see that
$f_u$
is isometric along
$S_u\wh H$.
Then by Lemma~\ref{lem:metric_inversion_line},
$f_\om$
is isometric along
$S_u\wh H$.
\end{proof}

\begin{pro}\label{pro:join_isometric} For every
$\C$-circle $H\in\cF_{G'}$
the map 
$f_\om$
is isometric along the M\"obius join
$G\ast H\sub Y$.
\end{pro}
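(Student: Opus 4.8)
The plan is to cover the join $G\ast H$ by the suspensions whose isometricity under $f_\om$ was already secured in Lemma~\ref{lem:nonstandard_suspension_isometric}, and then to check that any two points of $G\ast H$ can be placed on a single such suspension. Throughout I work in the model $Y=\di M$, so I may use the geometry of $M=\C\hyp^{k+1}$ freely. Writing $G=\di E$ and $H=\di L$, where $E,L\sub M$ are the orthogonal complex hyperbolic planes through $a$ with $L\sub E'$, the planes $E,L$ span a totally geodesic $\C\hyp^2\sub M$, and $G\ast H=\di\C\hyp^2$ (for instance by Proposition~\ref{pro:dim_moeb_join} applied inside $\C\hyp^2$, where $H$ is a COS to $G$ and the dimensions match). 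The orthogonal projection $\pr_E\colon\C\hyp^2\to E$ has fibers that are complex geodesics orthogonal to $E$, whose boundaries are $\C$-circles $H_c=\di\pr_E^{-1}(c)$, $c\in E$; these foliate $(G\ast H)\sm G$, with $H_a=H$, and each $H_c$ is orthogonal to $G$ since $\phi_G$ preserves the fiber over $c$.

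The key step is to identify, for every geodesic $uv\sub E$ with endpoints $u,v\in G$, the sphere $S_{u,v}$ between $u$ and $v$ inside $G\ast H$ with one of the suspensions of Lemma~\ref{lem:nonstandard_suspension_isometric}. Indeed, by Lemma~\ref{lem:unique_representative_cfiber} there is a suspension $S_{u,v}\wh H$ over a $\C$-circle $\wh H$ orthogonal to $G$ whose $\C$-foliation fibers are $\wh H_c=S_{u,v}\wh H\cap S_wH$; by the holonomy computation Proposition~\ref{pro:holonomy_normal_bundle}, built into that lemma, each $\wh H_c$ is exactly the fiber $H_c$ over $c\in uv$ (when $a\in uv$ this is read off directly from the $\C$-foliation by transvections). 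Hence $S_{u,v}\wh H=\bigcup_{c\in uv}H_c=S_{u,v}$, and by Lemma~\ref{lem:nonstandard_suspension_isometric} the map $f_\om$ is isometric along $S_{u,v}$: in the standard case $a\in uv$ this is a suspension over $H$, and in the nonstandard case $a\notin uv$ it is one with $v=\phi_{\wh H}(u)\neq\psi(u)$, both of which that lemma covers.

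With this in hand, I would take arbitrary $x,z\in G\ast H$, write $x'=f(x)$, $z'=f(z)$, and produce a single sphere $S_{u,v}$, $u,v\in G$, containing both. A point $x\in(G\ast H)\sm G$ lies on a unique fiber $H_{c_x}$, and $x\in S_{u,v}$ precisely when $c_x\in uv$. If $c_x\neq c_z$, the unique geodesic of $E$ through $c_x,c_z$ has endpoints $u,v\in G$ and $S_{u,v}$ contains both $x$ and $z$; if $c_x=c_z$, any geodesic through this common point yields a sphere containing $H_{c_x}\ni x,z$; the mixed case $x\in G$ is handled by taking the geodesic with ideal endpoint $x$ through $c_z$, so that $x$ is a pole of the resulting sphere; and for $x,z\in G$ one simply uses that $f|G$ is an isometry. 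In every case $x,z$ lie on a common $S_{u,v}$ along which $f_\om$ is isometric, whence $|x'z'|_\om=|xz|_\om$ and $f_\om$ is isometric along $G\ast H$.

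The main obstacle is the identification in the second paragraph, namely that the abstractly defined suspension $S_{u,v}\wh H$ exhausts the entire sphere $S_{u,v}$ rather than a proper subset of it. Equivalently, one must know that transvecting the single fiber $\wh H$ along $uv$ sweeps out complete fibers $H_c$ and returns consistent fibers independently of the path; this is exactly what Proposition~\ref{pro:holonomy_normal_bundle} provides through Lemma~\ref{lem:unique_representative_cfiber}. Were the normal holonomy to move fibers, the suspension would meet each $H_c$ only in part, the covering of pairs $(x,z)$ by a common suspension would break down, and the argument would fail.
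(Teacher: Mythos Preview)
Your argument is correct and is essentially the paper's own proof: both locate the pair $x,z$ on a common suspension $S_{u,v}\wh H$ by projecting to $E$ (your $c_x,c_z$ are the paper's $\ov x=\pr_E(x)$, $\ov y=\pr_E(y)$), take the geodesic in $E$ through these projections to obtain $u,v\in G$, and then invoke Lemma~\ref{lem:nonstandard_suspension_isometric}. Your explicit description of the foliation of $(G\ast H)\sm G$ by the slices $H_c$ and the identification $S_{u,v}\wh H=\bigcup_{c\in uv}H_c$ merely unpacks what the paper leaves implicit in its appeal to Lemma~\ref{lem:unique_representative_cfiber}.
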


\begin{proof} Given
$x$, $y\in G\ast H$,
we show that there is a suspension
$S_u\wh H=S_{u,v}\wh H$, $u,v=\phi_{\wh H}(u)\in G$
with
$x,y\in S_u\wh H$.

There are standard suspensions
$S_pH$, $S_qH\sub G\ast H$,
$p,q\in G$, 
over 
$H$
with
$x\in S_pH$, $y\in S_qH$.
We can assume that
$\ov x=\pr_E(x)$, $\ov y=\pr_E(y)\in E$
are distinct since otherwise we can take
$S_pH=S_qH$
as the required suspension.
Then there is a unique geodesic
$\ga\sub E$
through
$\ov x$, $\ov y$.
Let
$u$, $v\in G$
be the ends at infinity of
$\ga$.
We can assume that
$v\neq\psi(u)$,
since otherwise again
$S_pH=S_qH$.
Then by Lemma~\ref{lem:unique_representative_cfiber},
there is a uniquely determined suspension
$S_u\wh H$
such that every fiber
$\wh H_c$
of the 
$\C$-foliation
of
$S_u\wh H$
is represented as
$\wh H_c=S_u\wh H\cap S_wH$
for some
$w\in G$.
Then
$x\in S_u\wh H\cap S_pH$, $y\in S_u\wh H\cap S_qH$,
in particular,
$x,y\in S_u\wh H$.

By Lemma~\ref{lem:nonstandard_suspension_isometric},
$|x'y'|_{\om'}=|xy|_\om$,
that is,
$f_\om$
is isometric along
$G\ast H$.
\end{proof}

\begin{rem}\label{rem:case3D} Proposition~\ref{pro:join_isometric}
implies Theorem~\ref{thm:moebius_join_canonical} and
Theorem~\ref{thm:complex_hyperbolic} in the case
$\dim X=3$
because
$Y=G\ast H$
by Proposition~\ref{pro:dim_moeb_join} in that case. 
Moreover, it implies Theorem~\ref{thm:moebius_join_canonical} 
in the case
$F$, $F'$
are mutually orthogonal 
$\C$-circles
in 
$X$.
\end{rem}

\subsection{Properties of the base revisited}

Here we make a digression to prove an important fact,
Proposition~\ref{pro:base_euclidean}.

Let
$M=\C\hyp^2$, $Y=\di M$.
We fix 
$o\in M$
and consider a complex reflection
$h:M\to M$
with respect to a complex hyperbolic plane 
$E\sub M$
through
$o$.
It acts on
$Y$
as a reflection with respect to the 
$\C$-circle 
$F=\di E\sub Y$.
The tangent space
$V=T_oM$
is a 2-dimension complex vector space with a hermitian
form generated by the Riemannian metric, and
$g=dh:V\to V$
is a unitary involution whose fixed point set is the complex line
$T_oE\sub V$.
We formalize this situation as follows.

Let
$V=\C^2$
be the 2-dimensional complex vector space with the standard
hermitian form. A unitary involution
$g:V\to V$
is said to be {\em reflection} if
$\dim_\C\im(1-g)=1$.
Note that
$\det g=-1$.
Thus any product
$g\cdot g'$
of unitary involutive reflections lies in
$S\cU(2)$.
We need the following simple 

\begin{lem}\label{lem:transitive_unitary_reflections} Let
$\cF\sub\cU(2)$
be the set of unitary involutive reflections
$V\to V$.
Then the subgroup
$G\sub S\cU(2)$
generated by products
$g\cdot g'$
with
$g,g'\in\cF$
is transitive on the unit sphere
$S\sub V$,
and for 
$g\in\cF$
there is a dense subset
$\wt\cF\sub\cF$
such that
$g'\cdot g$
has a finite order for every
$g'\in\wt\cF$.
\end{lem}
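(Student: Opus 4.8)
The plan is to identify $\cF$ with the space of complex lines in $V=\C^2$, i.e.\ with $\C P^1\cong S^2$: a reflection $g\in\cF$ is determined by its $(+1)$-eigenline $\C u$ and is given by $g=g_u:=2P_u-\id$, where $P_u$ is the orthogonal projection onto $\C u$. Since $g_u$ has eigenvalues $1,-1$ we have $\det g_u=-1$, so every product $g_ug_v$ lies in $S\cU(2)$, and the generating set is symmetric because each $g_u$ is an involution. The elementary identity I would rely on is that $S\cU(2)$ acts on $\cF$ by conjugation via $hg_uh^{-1}=g_{hu}$ (as $hP_uh^{-1}=P_{hu}$ for unitary $h$). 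Recall also that $S\cU(2)$ acts simply transitively on the unit sphere $S=S^3\subset V$ through the orbit of $e_1$.

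For the transitivity statement I would argue that $G$ is a \emph{normal} subgroup of $S\cU(2)$ and is noncentral, whence $G=S\cU(2)$. Normality is immediate from the conjugation identity: for $h\in S\cU(2)$ one has $h(g_ug_v)h^{-1}=g_{hu}g_{hv}$, again a product of two reflections, so conjugation permutes the generators and preserves $G$. That $G$ is noncentral follows by exhibiting one explicit product: for $v_0=(e_1+e_2)/\sqrt2$ the matrix $g_{e_1}g_{v_0}=\left(\begin{smallmatrix}0&1\\-1&0\end{smallmatrix}\right)$ has order $4$, so $G\neq\{\id\}$ and $G\neq\{\pm\id\}$. Since the only proper normal subgroups of $S\cU(2)$ are contained in its center $\{\pm\id\}$ (because $S\cU(2)/\{\pm\id\}\cong SO(3)$ is simple), we conclude $G=S\cU(2)$, which is transitive on $S$.

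For the finite-order statement, fix $g=g_{e_1}=\operatorname{diag}(1,-1)$ and compute, for a unit vector $v=(v_1,v_2)$ with $|v_1|=\cos\alpha$, $|v_2|=\sin\alpha$ (so $\alpha\in[0,\pi/2]$ measures the position of the line $\C v$ relative to $\C e_1$),
\begin{equation*}
\operatorname{tr}(g_{e_1}g_v)=2\,v^{*}g_{e_1}v=2\bigl(|v_1|^2-|v_2|^2\bigr)=2\cos2\alpha .
\end{equation*}
As $\alpha$ runs through $[0,\pi/2]$ this trace sweeps all of $[-2,2]$, so the eigenvalues of $g_vg$ are $e^{\pm i\theta}$ with $\theta=2\alpha$ assuming every value in $[0,\pi]$. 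An element of $S\cU(2)$ has finite order precisely when $\theta/\pi\in\mathbb{Q}$, i.e.\ when $2\alpha/\pi\in\mathbb{Q}$ (note $g_vg$ and $gg_v$ are conjugate, so either order of multiplication may be used). For each such admissible $\alpha$ the lines at that ``latitude'' form a whole circle in $\cF\cong S^2$ (parametrised by the phase of $v_2/v_1$), and since the admissible values of $\alpha$ are dense in $[0,\pi/2]$ the union of these latitude circles is dense in $\cF$. Taking $\wt\cF$ to be this union gives the claim.

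The transitivity half is made easy by the normality trick, so the real work sits in the finite-order half: one must verify the trace formula, check that the order of $g_vg$ is governed exactly by $\theta/\pi\in\mathbb{Q}$, and---the point needing the most care---deduce genuine density in $S^2$ from density of the admissible latitudes. I would handle the last step by approaching any target line radially, along a fixed phase (a Fubini--Study geodesic through the pole $\C e_1$), so that $\alpha$ varies continuously and monotonically and a nearby rational-latitude line lands in any prescribed neighbourhood; the poles $\alpha=0,\pi/2$ themselves give $g_vg=\id,-\id$ and so are already in $\wt\cF$.
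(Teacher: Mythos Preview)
Your proof is correct and takes a genuinely different route from the paper's. For transitivity, the paper works by hand with real $2$-subspaces $L=\operatorname{span}_\R\{(a,0),(0,b)\}$: each $g'\in\cF$ meeting $L$ acts on $L$ as a reflection, so products $g'g$ act on $L$ as planar rotations, and the authors patch together such rotations to move any point of $S$ to any other. Your normality argument---$h(g_ug_v)h^{-1}=g_{hu}g_{hv}$, the explicit order-$4$ element, and the fact that $S\cU(2)/\{\pm\id\}\cong SO(3)$ is simple---is both shorter and yields the stronger conclusion $G=S\cU(2)$. For the finite-order claim, the paper again uses the real $2$-planes: $g'g$ restricted to $L$ is a rotation, and if $(g'g)^k|_L=\id$ then $(g'g)^k=\id$; your trace computation $\operatorname{tr}(g_{e_1}g_v)=2\cos 2\alpha$ goes straight to the eigenvalues and is a cleaner way to read off the order. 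The paper's approach is more elementary in that it avoids invoking the normal-subgroup structure of $S\cU(2)$, while yours is more conceptual and makes the density of $\wt\cF$ (as a union of latitude circles on $\C P^1$) completely transparent.
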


\begin{proof} We identify
$\cF$
with the set of complex lines in
$V$
and fix 
$g\in\cF$.
Then we have a decomposition
$V=\C\oplus\C$
such that 
$\C\oplus\{0\}=\fix g$
is the fixed point set of
$g$,
and 
$g$
acts on
$V$
as 
$g(a,b)=(a,-b)$.
Note that 
$g$
preserves any real 2-subspace
$L\sub V$
spanned by
$(a,0)$, $(0,b)$
with
$a,b\in\C$, $|a|=|b|=1$,
acting on
$L$
as the reflection with respect to 
$L\cap\fix g$.

Each pair of opposite points in
$L\cap S$
is represented as
$L\cap g'\cap S$
for some
$g'\in\cF$, $g'\cdot g$ 
acts on
$L$
by a rotation, and the subgroup in
$G$
preserving
$L$
acts transitively on
$L\cap S$. 
If
$(g'\cdot g)^k$
is identical on
$L$,
then it is identical,
$(g'\cdot g)^k=\id$. 
Thus there is a dense subset
$\wt\cF\sub\cF$
such that
$g'\circ g$
has a finite order for every
$g'\in\wt\cF$.

Given 
$u$, $v\in S$,
$u=(a_u,b_u)$, $v=(a_v,b_v)$,
we can find
$g_u$, $g_v\in G$
such that
$g_u(u)=(a_u,0)$, $g_v(v)=(0,b_v)$. 
Let
$L\sub V$ 
be the real 2-subspace
$L\sub V$
spanned by
$(a_u,0)$, $(0,b_v)$.
Then there is
$h\in G$
with
$h((a_u,0))=(0,b_v)$.
Hence
$g_v^{-1}\circ h\circ g_u(u)=v$.
\end{proof}

\begin{pro}\label{pro:base_euclidean} For every
$\om\in X$
the canonical metric on the base
$B_\om$
of the projection
$\pi_\om:X_\om\to B_\om$
is an Euclidean one, and the dimension of 
$B_\om$
is even.
\end{pro}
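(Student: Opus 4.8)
The plan is to equip the base $B_\om$ with a large group of \emph{linear} isometries acting transitively on its unit sphere, and then to average an inner product. Recall from Proposition~\ref{pro:base_normed_space} that $B_\om$ is a finite-dimensional, strictly convex normed vector space; I put the origin at $0=\ov o=\pi_\om(F_0)$, where $F_0\ni\om$ is the fiber over $\ov o$. The linear-algebra fact I shall exploit is standard: if the group of linear isometries of a finite-dimensional normed space is transitive on its unit sphere $S$, then the norm is Euclidean. Indeed, such a group is a compact subgroup $\Ga\sub GL(B_\om)$; averaging any inner product over $\Ga$ yields a $\Ga$-invariant inner product $\langle\,,\rangle$, and since $\Ga$ preserves $\langle\,,\rangle$ and acts transitively on $S$, the function $v\mapsto\langle v,v\rangle^{1/2}$ is constant on $S$. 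Hence the two norms are proportional and $B_\om$ is Euclidean. (The cases $\dim X\le 1$ are trivial, so I assume $\dim X\ge 3$ throughout.)

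To produce linear isometries of $B_\om$ fixing $0$, I would not use the reflections $\phi_F$ with $\om\in F$ directly, since by Proposition~\ref{pro:moebius_ccircle_involution} such a reflection only induces a central symmetry of $B_\om$. Instead, take $u\in X\sm F_0$, set $\eta=\eta_u$ and $A=(F_0,\eta)^\perp\ni u$. By Lemma~\ref{lem:orthogonal_foliated_ccircles} the complement $A$ is foliated by $\C$-circles, each mutually orthogonal to $F_0$, and by (the proof of) Lemma~\ref{lem:fiber_involution} every fiber $F'$ of this canonical fibration satisfies $\phi_{F'}|F_0=\eta$. Thus for any two such fibers $F',F''$ the M\"obius automorphism $g=\phi_{F'}\circ\phi_{F''}$ restricts to $\eta\circ\eta=\id$ on $F_0$, i.e.\ it fixes $F_0$ \emph{pointwise}. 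Being M\"obius and fixing $\om$ together with the whole $\C$-line $F_0\sub X_\om$, the map $g$ is an isometry of $X_\om$, because a similarity fixing more than one point has coefficient $1$. Hence $g$ descends to an isometry $\ov g$ of $B_\om$ fixing $0$, which is linear by Mazur--Ulam. I let $\Ga$ be the group generated by all such $\ov g$.

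The principal obstacle is to prove that $\Ga$ is transitive on the unit sphere of $B_\om$, and here the three-dimensional case enters as a tool. For mutually orthogonal $\C$-circles the join is M\"obius equivalent to $\di\C\hyp^2$ (Remark~\ref{rem:case3D}); inside such a copy of $\di\C\hyp^2$ the reflections with respect to $\C$-circles are precisely the boundary maps of complex reflections, i.e.\ the unitary involutive reflections of $V=\C^2$ figuring in Lemma~\ref{lem:transitive_unitary_reflections}. That lemma shows that products of such reflections generate a subgroup of $S\cU(2)$ transitive on the unit sphere of $V$, and that on a dense set each product has finite order; the finite-order clause, via the mechanism of Lemma~\ref{lem:twoofthree_coincide}, guarantees that a pair $F',F''$ shares a third orthogonal $\C$-circle so that the induced rotations $\ov g$ close up consistently. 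I would transplant this $S\cU(2)$-transitivity block by block: any two unit vectors of $B_\om$ are joined through a chain of complex two-planes, and on each step a suitable $\di\C\hyp^2$-subspace of $X$ supplies, through Lemma~\ref{lem:transitive_unitary_reflections}, an element of $\Ga$ carrying one vector to the next—exactly as $S\cU(2)$-blocks generate a group transitive on the sphere of $\C^k$. The delicate point is to carry out this transplantation rigorously: matching the abstract fibers $F',F''\sub A$ with genuine $\C$-circles of an honest $\di\C\hyp^2$-subspace, and controlling how the resulting rotations act on all of $B_\om$ rather than on a single two-plane.

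Granting transitivity, the averaging argument of the first paragraph shows that the canonical metric on $B_\om$ is Euclidean. Finally, the $S\cU(2)$-block structure furnishes an element of $\Ga$ acting as a complex structure $J$ with $J^2=-\id$ and no real eigenvalues (equivalently, the complex structure is inherited from the $\C$-circle foliation of $X_\om$); hence $B_\om$ is a Hermitian vector space and its real dimension is even.
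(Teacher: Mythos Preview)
Your strategy is essentially the paper's: build isometries of $X_\om$ fixing the $\C$-line $F_0$ pointwise as products $\phi_{F'}\circ\phi_{F''}$ with $F',F''$ fibers of the canonical fibration of $A=(F_0,\eta)^\perp$, invoke the three-dimensional case together with Lemma~\ref{lem:transitive_unitary_reflections} to obtain transitivity on the unit sphere, and conclude Euclidean by an averaging/L\"owner argument.

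Two points where your execution diverges from the paper deserve comment. First, you misattribute the role of the finite-order clause in Lemma~\ref{lem:transitive_unitary_reflections}. It is not used ``via Lemma~\ref{lem:twoofthree_coincide}'' to produce a third orthogonal circle; rather, for $K\in\wt\cF_Z$ the relation $(\phi_K\circ\phi_H|Z)^k=\id$ forces $\phi_K\circ\phi_H|F_0$ to be identical (a composition of two fixed-point-free involutions of a $\C$-circle is either identical or a nontrivial homothety of infinite order in $X_u$ for a fixed point $u$), hence $\phi_K|F_0=\eta$ and $K\sub A$. This is what pins the entire join $Z=H\ast H'$ inside $A$ and makes your ``block-by-block'' transitivity go through: all the $Z$'s share the common circle $H\sub A$, so the group generated acts transitively on their union, which covers $A$.

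Second, your even-dimension argument via a complex structure $J$ on $B_\om$ is more elaborate than needed and not clearly justified. The paper simply observes that $A$ is foliated by $\C$-circles (Lemma~\ref{lem:orthogonal_foliated_ccircles}), hence $\dim A$ is odd; since $\pi_\om|A:A\to\ov A$ is a homeomorphism onto the unit sphere in $B_\om$, that sphere has odd dimension, so $\dim B_\om$ is even.
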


\begin{proof} We fix a
$\C$-circle $F\sub X$
through
$\om$
and a M\"obius involution
$\eta:F\to F$
without fixed points. We show that the group
$G$
of isometries
$X_\om\to X_\om$
preserving
$o=\eta(\om)$
acts on
$(F,\eta)^\perp$
transitively. 

There is
$x\in F$, $x\neq\om,o$,
such that
$(x,o,y,\om)\in\harm_F$,
where
$y=\eta(x)$.
By Lemma~\ref{lem:filling_fiber} the orthogonal
complement
$A=(F,\eta)^\perp\sub X$
can be represented as
$A=S_{x,y}\cap S_{o,\om}$,
where
$S_{x,y}$
is the sphere in
$X$
between
$o,\om$
through
$x,y$
and
$S_{o,\om}$
is the sphere in
$X$
between
$x,y$
through
$o,\om$.
In the space
$X_\om$
the sphere
$S_{o,\om}$
consists of
$\R$-lines
through
$o$,
and
$A=\set{a\in S_{o,\om}}{$|oa|=r$}$
for some
$r>0$,
where
$|oa|=|oa|_\om$.
For simplicity of notation we assume that
$r=1$.

By Lemma~\ref{lem:orthogonal_foliated_ccircles}
there is a canonical fibration 
$\cF$
of
$A$
by
$\C$-circles
each of which is invariant under the reflection
$\phi_F:X_\om\to X_\om$
with respect to
$F$,
in particular,
$\dim A\ge 1$
is odd. Since
$\pi_\om|A:A\to\pi_\om(A)$
is homeomorphism, the unit sphere
$\ov A=\pi_\om(A)\sub B=B_\om$
centered at
$\ov o=\pi_\om(o)$
has an odd dimension
$\ge 1$.
It follows that
$\dim B$
is even. 

Every
$H\in\cF$
and
$F$
are mutually orthogonal,
$F\perp H$.
Proposition~\ref{pro:join_isometric} implies that 
the M\"obius join
$F\ast H\sub X$
is M\"obius equivalent to the standard model space
$Y=\di\C\hyp^2$.
If
$\dim A=1$
then
$A=H$, $X=F\ast H$
is M\"obius equivalent to 
$Y=\di\C\hyp^2$,
and there is nothing to prove. Thus we assume that
$\dim A\ge 3$.

The reflection
$\phi_H:X\to X$
with respect to
$H$
preserves
$F$
and
$\phi_H|F=\eta$,
furthermore, by Lemma~\ref{lem:fiber_involution}, 
$\phi_H$
preserves
$A$
and its fibration 
$\cF$
for every
$H\in\cF$.
Thus the composition
$\phi_{H'}\circ\phi_H:X\to X$
preserves
$A$
and its fibration
$\cF$, 
and it acts on
$F$
identically for each 
$\C$-circles $H$, $H'\in\cF$.
In particular,
$\phi_{H'}\circ\phi_H:X_\om\to X_\om$
is an isometry.

We fix a
$\C$-circle $H\in\cF$.
For every
$H'\in\cF$
that is orthogonal to
$H$, $H'\sub(H,\phi_F|H)^\perp\cap A$,
the M\"obius join
$Z=H\ast H'\sub X$
is M\"obius equivalent to 
$Y$
by Proposition~\ref{pro:join_isometric}, hence
$Z$
satisfies axioms (E) and (O). We show that
$Z\sub A$.
Note that
$Z$
is invariant under the reflection
$\phi_F$, $\phi_F(Z)=Z$,
because
$H$, $H'$
are. Thus
$Z$
carries a fibration
$\cF_Z$
by
$\C$-circles
invariant under
$\phi_F$,
in particular, every
$\C$-circle $K\in\cF_Z$
is orthogonal to
$F$, $K\perp F$,
and
$\phi_K(F)=F$.

To see that
$\phi_K:X\to X$
preserves
$Z$,
we note that there is a (unique)
$\C$-circle $K'\in\cF_Z$
that is orthogonal to
$K$
at
$\phi_F|K$,
actually,
$K'=(K,\phi_F|K)^\perp$
in
$Z$.
Then
$\phi_K(K')=K'$,
and since
$\phi_K$
acts on
$K$
identically, we see that
$\phi_K:X\to X$
preserves
$Z$, $\phi_K(Z)=Z$,
because
$Z$
can be represented as the M\"obius join
$Z=K\ast K'$.
Using conjugation via a M\"obius isomorphism
$Y\to Z$
and the fact that
$Y=\di\C\hyp^2$,
we can consider
$\phi_K$
as a unitary involutive reflection of
$V=\C^2$.
Applying Lemma~\ref{lem:transitive_unitary_reflections},
we find a dense subset
$\wt\cF_Z\sub\cF_Z$
such that the composition
$g=\phi_K\circ\phi_H$,
when restricted to
$Z$,
has a finite order,
$(g|Z)^k=\id_Z$, for every
$\C$-circle $K\in\wt\cF_Z$
and some
$k\in\N$
depending on
$K$.
Note that
$g|F:F\to F$
being a composition of two M\"obius involutions
$\phi_K|F$, $\phi_H|F$
without fixed points has an infinite order or is identical.
In the first case there are two fixed points
$u$, $v\in F$
for 
$g|F$,
and 
$g$
acts on
$X_u$
as a nontrivial homothety. This contradicts the fact that
$(g|Z)^k=\id_Z$.
Therefore,
$g|F=\id_F$
and thus
$\phi_K|F=\phi_H|F=\eta$.
Consequently,
$K\sub A$.
By continuity, this holds for every
$K\in\cF_Z$,
hence
$Z\sub A$.

For every
$K,K'\in\cF_Z$
the product
$g=\phi_K\circ\phi_{K'}:X\to X$
is identical on
$F$,
hence
$g:X_\om\to X_\om$
is an isometry. By Lemma~\ref{lem:transitive_unitary_reflections}
isometries of this type act on
$Z$
transitively. Varying 
$H\in\cF$, $H'\in(H,\phi_F|H)^\perp\cap A$,
we see that the M\"obius joins
$Z=H\ast H'$
cover
$A$.
Thus the group 
$G$
acts on
$A$
transitively. It follows that the group of isometries of the base
$B$
preserving
$\ov o$
acts on the unit sphere 
$\ov A\sub B$
centered at
$\ov o$
transitively. Using the standard argument with L\"ovner
ellipsoid, we obtain that the unit sphere
$\ov A$
is an ellipsoid and thus the metric of
$B$
is Euclidean.
\end{proof}

\subsection{Suspension over a COS}
\label{subsect:COS_suspension}

We turn back to the proof of Theorem~\ref{thm:moebius_join_canonical}
and to notations of sect.~\ref{subsect:proof_moebius_join_canonical}.

Given distinct
$u$, $v\in Y$,
let 
$K\sub Y$
be the 
$\C$-circle
through
$u$, $v$.
The sphere
$S_{u,v}\sub Y$
formed by all 
$\R$-circles
in
$Y$
through
$u$, $v$
carries 
$\R$-
and COS-foliations, see sect.~\ref{subsect:rcfoliations}. 
Note that every fiber
$H$
of the COS-foliation of
$S_{u,v}$
satisfies axioms (E) and (O) because
$H$
is the boundary at infinity of an orthogonal complement in
$M=\C\hyp^{k+1}$
to the complex hyperbolic plane
$E\sub M$
with
$\di E=K$.

Now we are able to prove the following generalization of
Lemma~\ref{lem:suspension_isometric}.

\begin{pro}\label{pro:sphere_isometric_cos} Given distinct
$u$, $v\in Y$
assume
$f_u$
is isometric along every
$\R$-circle
of the 
$\R$-foliation
of
$S=S_{u,v}$
and every fiber of the COS-foliation of
$S$.
Then
$f_u$
is isometric along
$S$.
\end{pro}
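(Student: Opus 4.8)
The plan is to follow the proof of Lemma~\ref{lem:suspension_isometric}, which is exactly the special case in which the fibers of the COS-foliation are $\C$-circles, and to arrange matters so that a higher-dimensional COS-fiber can play the role the $\C$-line $H_\la$ played there. I work throughout with $f_u\colon Y_u\to(F\ast F')_{u'}$ and abbreviate $|ab|=|ab|_u$; a prime denotes the image under $f$.

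First I would dispose of the degenerate configurations: if $x,y$ lie on a common fiber of the $\R$-foliation, or on a common fiber of the COS-foliation, then $|x'y'|=|xy|$ directly by hypothesis, so it remains to treat $x,y$ in general position. Let $\si$ be the $\R$-line of the $\R$-foliation through $x$ and let $H$ be the fiber of the COS-foliation through $y$. Since the fibers of the COS-foliation are carried into one another by the pure homotheties centered at $v$ and $\si$ passes through $v$, the line $\si$ meets $H$ in exactly two points $z,w$. The two hypotheses then preserve $|xz|,|xw|,|zw|$ (isometricity along $\si$) together with all mutual distances among points of $H$, in particular $|yz|,|yw|$ and the distances from $y$ to the canonical $\C$-circle $C\subset H$ through $z,w$.

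The core of the argument is to recover $|xy|$ from these preserved data through the distance formula, Proposition~\ref{pro:explicit_distance}, combined with a metric inversion, exactly as in Lemma~\ref{lem:suspension_isometric}. Inverting at $w$ makes $C$ a $\C$-line through $z$ with $\mu_{C,w}(x)=z$, so the distance from $x$ to any point of $C$ is governed by $|xz|_w$ and the position along $C$. Isometricity of $f_u$ along the two-dimensional suspension $\Sigma=S_{u,v}C$ over $C$ (its $\R$-foliation is part of the $\R$-foliation of $S$, and its base $\C$-circle $C$ lies in the COS-fiber $H$, so that Lemma~\ref{lem:cfoliation_isometric} and Lemma~\ref{lem:suspension_isometric} apply; note $x\in\si\subset\Sigma$) shows that all distances from $x$ to $C$ are preserved. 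The genuinely new point, where the proof leaves the setting of Lemma~\ref{lem:suspension_isometric}, is that $y$ is a general point of the higher-dimensional fiber $H$ and need not lie on $C$.

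The main obstacle is the control of the \emph{vertical}, i.e. holonomy, separation of $x$ and $y$ --- the part of $|xy|$ not detected by the base distance, which by Proposition~\ref{pro:base_euclidean} is Euclidean. I expect to need precisely the two tools prepared for this purpose. First, Lemma~\ref{lem:unique_representative_cfiber}, which rests on the holonomy computation of Proposition~\ref{pro:holonomy_normal_bundle}, guarantees that the $\C$-foliation of the auxiliary suspension $\Sigma$ aligns coherently with the COS-foliation of $S$, so that $C$ is consistently the correct fiber against which to compare as $y$ moves within $H$. Second, the area--shift relation Lemma~\ref{lem:area_lift_triange} expresses the vertical shift between the fibers through $x$ and $y$ in terms of areas in the base; since the map induced on the Euclidean bases is an isometry, these areas, and hence the vertical shift, are preserved by $f_u$. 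Assembling the horizontal contribution from the Euclidean base with this controlled vertical contribution produces a quartic equation for $|xy|$ whose coefficients are all preserved by $f_u$, and Lemma~\ref{lem:positive_root}, used as in Proposition~\ref{pro:rfoliation_isometric}, then forces $|x'y'|=|xy|$. Hence $f_u$ is isometric along all of $S$.
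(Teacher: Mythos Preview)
Your outline names the right toolkit --- the Euclidean base (Proposition~\ref{pro:base_euclidean}), the area--shift relation (Lemma~\ref{lem:area_lift_triange}), and the distance formula --- but the execution has a real gap in the choice of auxiliary $\C$-circle. You take $C\sub H$ through $z,w$; the paper instead takes the $\C$-circle $K\sub H$ through $w$ and $y$. This is not cosmetic: after inverting at $w$, the distance formula for $|xy|_w$ needs the $\C$-line through one of $x,y$, and with $K$ one has $y\in K$, so $|xy|_w^4=|\ov x\,\ov y|^4+|y\,\mu_{K,w}(x)|_w^4$. The task then reduces to two concrete preservations: the base distance $|\ov x\,\ov y|$, obtained by Pythagoras in the Euclidean $B_w$ after observing that $H$ lies in the bisector of $u,v$ so that $\ov z\,\ov y\perp\ov u\,\ov v$; and the vertical coordinate $\mu_{K,w}(x)$, obtained by first showing $f(\mu_{K,w}(v))=\mu_{K',w'}(v')$ and $f(\mu_{K,w}(z))=\mu_{K',w'}(z')$ from the preserved distances to $K$, hence $|\tau_T|=|\tau_{T'}|$ for $T=\ov v\,\ov y\,\ov z$, and then applying Lemma~\ref{lem:area_lift_triange} with the preserved ratio $|xz|/|vz|$ to the sub-triangle $P=\ov x\,\ov y\,\ov z$. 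With your $C$ the point $y$ is off the $\C$-line, and you never say how $|xy|$ is recovered; the appeal to a quartic and Lemma~\ref{lem:positive_root} is not how the argument closes here --- no root-finding occurs, both contributions are computed directly.

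Two further points. The assertion that ``the map induced on the Euclidean bases is an isometry'' is not available at this stage of the proof: one does not yet know that $f$ respects the canonical foliations, and indeed the paper establishes the needed commutation $f\circ\mu_{K,w}=\mu_{K',w'}\circ f$ only at the specific points $v,z,x$, pointwise, via the mechanism above. And Lemma~\ref{lem:unique_representative_cfiber} plays no role in this proof; it is specific to suspensions with poles on $G$ and is used earlier, in Lemma~\ref{lem:nonstandard_suspension_isometric}, to \emph{supply} the hypotheses of the present proposition in the final assembly, not inside its proof.
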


\begin{proof} We take
$x$, $y\in S$
and show that
$|x'y'|_{u'}=|xy|_u$,
where we use ``prime'' notation for images under
$f$.

Every fiber
$H$
of the COS-foliation of
$S$
lies in a sphere between
$u$, $v$, 
that is, there is
$r=r(H)>0$
such that
$|vh|_u=r$
for every
$h\in H$.
In the space
$Y_u$
every
$\R$-circle $\si$
of the 
$\R$-foliation
of
$S$
is an
$\R$-line 
through
$v$.
Since
$f_u$
is isometric along
$\si$,
we can assume that neither
$x$
nor 
$y$
coincides with
$u$
or
$v$.
Then there are uniquely determined an
$\R$-line $\si\sub S$
of the 
$\R$-foliation
through
$x$
and a fiber
$H\sub S$
of the COS-foliation through
$y$,
and we also assume that
$x\not\in H$, $y\not\in\si$,
since otherwise there is nothing to prove. The intersection
$\si\cap H$
consists of two points, say,
$z$, $w$
such that the pair
$z,w$
separate the pair
$u,v$
on
$\si$.
The points
$v,z,u,w$
subdivide 
$\si$
into four arcs, and we assume without loss of generality
that
$x$
lies on the arc 
$vz\sub\si$.

Since
$H$
satisfies axioms (E) and (O), the 
$\C$-circle $K\sub Y$
through
$w$, $y$
lies in
$H$,
and we assume that
$z\not\in K$
because otherwise Lemma~\ref{lem:suspension_isometric}
applies. Since
$K$
is a 
$\C$-line 
in
$Y_w$,
we have
$\ov y\neq\ov z$,
where ``bar'' means the image under
$\pi_w:Y_w\to B_w$.

It follows from Lemma~\ref{lem:bisector} that 
$H$
together with
$K$
lies in the bisector in
$Y_w$
between
$u$, $v$.
Thus
$|uz|_w=|vz|_w$, $|uk|_w=|vk|_w$
for every
$k\in K$
and therefore
$|\ov u\,\ov y|=|\ov v\,\ov y|$.
Note that
$\si$
is an
$\R$-line 
also in
$Y_w$,
thus
$\ov u\,\ov v=\ov\si$, $\ov x\in\ov u\,\ov v$
and
$\ov z\in\ov u\,\ov v$
is the midpoint between
$\ov u$, $\ov v$.
The metric on the base
$B_w$
is Euclidean, thus the line
$\ov z\,\ov y\sub B_w$
through
$\ov z$, $\ov y$
is orthogonal to the line
$\ov u\,\ov v$.
Therefore
$|\ov x\,\ov y|^2=|\ov x\,\ov z|^2+|\ov z\,\ov y|^2$.

In the space
$X$,
we have the sphere
$S_{u',v'}$
with poles
$u'$, $v'$,
the image
$S'=f(S)\sub S_{u',v'}$,
the 
$\R$-circle $\si'=f(\si)\sub S'$
with 
$v',x',z',u',w'\in\si'$
(in this cyclic order) and
$H'=f(H)$
with
$y',z',w'\in H'$.
By our assumption on
$f_u$
we have
$|h'v'|_{u'}=r$
for every
$h'\in H'$,
and
$K'=f(K)\sub H'$
is a
$\C$-circle
through
$w'$, $y'$.
Thus
$H'$
lies in a sphere in
$X$
between
$u'$, $v'$,
hence
$H'$
together with the 
$\C$-line
$K'$
lies in the bisector in
$X_{w'}$
between
$u'$
and
$v'$.

By Proposition~\ref{pro:base_euclidean}, the base
$B_{w'}$
of the canonical foliation
$\pi_{w'}:X_{w'}\to B_{w'}$
is also Euclidean. Thus we obtain as above
$|\ov x'\ov y'|^2=|\ov x'\ov z'|^2+|\ov z'\ov y'|^2$.
Note that
$f_w$
is isometric along
$\si$,
hence
$|\ov x\,\ov z|=|xz|_w=|x'z'|_{w'}=|\ov x'\ov z'|$.
Since
$f_u$
is isometric along
$H$,
we see that
$f_w$
is also isometric along 
$H$
applying the metric inversion with respect to
$w$,
thus
$|\ov z\,\ov y|=|\ov z'\ov y'|$.
It follows that
$|\ov x'\ov y'|=|\ov x\,\ov y|$.

Using that
$|v'h'|_{u'}=r=|vh|_u$
for every
$h\in H$
and applying the metric inversion with respect to
$w$,
we obtain 
$|vh|_w=\frac{1}{|hw|_u}=|uh|_w$,
$|v'h'|_{w'}=\frac{1}{|h'w'|_{u'}}=|u'h'|_{w'}$.
Hence the map 
$f_w$
preserves the distances of
$u$, $v$
to points of
$K$, $|v'h'|_{w'}=|u'h'|_{w'}=|vh|_w=|uh|_w$,
in particular,
$|v'y'|_{w'}=|vy|_w$.

Let
$\wt v=\mu_{K,w}(v)$, $\wt x=\mu_{K,w}(x)$, $\wt z=\mu_{K,w}(z)\in K$
be the projections in
$Y_w$
of
$v,x,z$
to the 
$\C$-line $K$.
Since the map 
$f_w$
preserves the distances of
$v$
to points of
$K$,
we see using the distance formula that
$f(\wt v)=\mu_{K',w'}(v')=:\wt v'$,
i.e. the projections
$\mu_{K,w}:Y_w\to K$, $\mu_{K',w'}:X_{w'}\to K'$
commute with
$f$
at
$v$.
Similarly,
$f(\wt z)=\mu_{K',w'}(z')=:\wt z'$.

On the other hand, the point
$\wt v$
can be obtained from
$\wt z$
by lifting isometry (see sect.~\ref{subsect:lifting_isometry})
of the oriented triangle
$T=\ov v\,\ov y\,\ov z\sub B_w$,
$\wt v=\tau_T(\wt z)$.
Analogously, in the space
$X_{w'}$
the point
$\wt v'$
can be obtained from
$\wt z'$
by lifting isometry of the oriented triangle
$T'=\ov v'\ov y'\ov z'\sub B_{w'}$,
$\wt v'=\tau_{T'}(\wt z')$.
Thus
$|\tau_{T'}|=|\wt z'\wt v'|_{w'}=|\wt z\,\wt v|_w=|\tau_T|$.

Using Lemma~\ref{lem:area_lift_triange} we obtain
$$|\tau_P|^2=\frac{|xz|}{|vz|}|\tau_T|^2=\frac{|x'z'|}{|v'z'|}|\tau_{T'}|^2
  =|\tau_{P'}|^2$$
for the oriented triangles
$P=\ov x\,\ov y\,\ov z\sub B_w$,
$P'=\ov x'\ov y'\ov z'\sub B_{w'}$.
We assume that a fiber orientation of
$X_w\to B_w$
is fixed and that the fiber orientation of
$X_{w'}\to B_{w'}$
is induced by
$f_w$.
Then
$f(\wt x)=f\circ\tau_P(\wt z)=\tau_{P'}\circ f(\wt z)$
and we conclude that the projections
$\mu_{K,w}$, $\mu_{K',w'}$
commute with
$f$
at
$x$, $f(\wt x)=\wt x'$
for 
$\wt x'=\mu_{K',w'}(x')$.
Hence
$$|x'y'|_{w'}^4=|\ov x'\ov y'|^4+|y'\wt x'|_{w'}^4=|\ov x\,\ov y|^4+|y\wt x|_w^4
  =|xy|_w^4$$
by Proposition~\ref{pro:explicit_distance}. Applying
the metric inversion with respect to
$u$
and using that
$|u'x'|_{w'}=|ux|_w$, $|u'y'|_{w'}=|uy|_w$,
we finally obtain
$|x'y'|_{u'}=|xy|_u$. 
\end{proof}

\subsection{Proof of Theorems~\ref{thm:moebius_join_canonical}
and \ref{thm:complex_hyperbolic}}

\begin{proof}[Proof of Theorem~\ref{thm:moebius_join_canonical}]
We show that the map 
$f_\om=f:Y_\om\to(F\ast F')_{\om'}$
is isometric.

First, we note that for every
$u\in G$
the map 
$f_u$
is isometric along the standard sphere
$S_u=S_{u,v}$, $v=\psi(u)$.
Indeed,
$S_u=S_uG'$
is a suspension over
$G'$,
and
$f_u$
is isometric along the fibers of the 
$\R$-foliation of
$S_uG'$
because they are standard
$\R$-lines
in
$Y_u$,
and along
$G'$
by Lemma~\ref{lem:standard_isometry_ccircle}. Thus
by Lemma~\ref{lem:cfoliation_isometric},
$f_u$
is isometric along every fiber of the COS-foliation of
$S_uG'$.
By Proposition~\ref{pro:sphere_isometric_cos},
$f_u$
is isometric along
$S_u=S_uG'$.
It follows from Lemma~\ref{lem:metric_inversion_line} that
$f_\om$
is isometric along
$S_u$.

Given
$x$, $y\in Y$
we can assume that the points
$\ov x=\pr_E(x)$, $\ov y=\pr_E(y)\in E$
are distinct and neither of them coincides with
$o$,
since otherwise
$x$, $y$
lie in a standard sphere
$S_u$
for some
$u\in G$,
hence
$|x'y'|_{\om'}=|xy|_\om$.
Then there is a unique geodesic
$\ga\sub E$
through
$\ov x$, $\ov y$.
Let
$u$, $v\in G$
be the ends at infinity of
$\ga$, $v\neq\psi(u)$
by our assumption. Then
$x,y\in S_{u,v}=\pr_E^{-1}(\ga)$.

The map 
$f_u$
is isometric along the fibers of the COS-foliation
of the sphere
$S_{u,v}$
because every such a fiber is represented as the fiber
$S_{u,v}\cap S_w$
of the COS-foliation of a standard sphere
$S_w$
for some
$w\in G$.
Furthermore, 
$S_{u,v}=S_{u,v}\wh G$
is a suspension over a fiber
$\wh G$
of the COS-foliation of
$S_{u,v}$,
and every fiber
$\si$
of the 
$\R$-foliation of
$S_{u,v}$
is a fiber of the 
$\R$-foliation
of
$S_{u,v}\wh H$
for a
$\C$-circle $\wh H$
of the canonical fibration of
$\wh G$.
Thus as in Lemma~\ref{lem:nonstandard_suspension_isometric} we see that
$f_u$
is isometric along 
$\si$.
By Proposition~\ref{pro:sphere_isometric_cos},
$f_u$
is isometric along
$S_{u,v}$.
By Lemma~\ref{lem:metric_inversion_line} again,
$f_\om$
is isometric along
$S_{u,v}$.
Hence
$|x'y'|_{\om'}=|xy|_\om$.
This completes the proof of Theorem~\ref{thm:moebius_join_canonical}.
\end{proof}

\begin{proof}[Proof of Theorem~\ref{thm:complex_hyperbolic}]
It follows from Proposition~\ref{pro:base_euclidean} that
$\dim X$
is odd, 
$\dim X=2k-1$, $k\ge 1$.
If
$k=1$,
then
$X=\di\C\hyp^1$
is a 
$\C$-circle.
In the case
$k=2$,
Theorem~\ref{thm:complex_hyperbolic} is already proved, see 
Remark~\ref{rem:case3D}. Thus we assume that
$k\ge 3$
and argue by induction over dimension. We take any
mutually orthogonal
$\C$-circles $F$, $F'\sub X$.
By Proposition~\ref{pro:empty_3D} their respective
orthogonal complements
$A$, $A'$
have nonempty intersection
$X'=A\cap A'$.
By Corollary~\ref{cor:axioms_orthogonal_complements},
$X'$
satisfies axioms (E) and (O), hence
$X'=\di\C\hyp^{k-2}$
by the inductive assumption. By Theorem~\ref{thm:moebius_join_canonical}
the join
$X''=F'\ast X'$
is M\"obius equivalent to
$\di\C\hyp^{k-1}$.
Since
$F'$, $X'\sub A$,
we see as in the proof of Proposition~\ref{pro:base_euclidean} that
$X''\sub A$,
and therefore 
$X''$
is a COS to
$F$.
Applying Theorem~\ref{thm:moebius_join_canonical} once again to
$F\ast X''$,
we obtain that 
$X=F\ast X''$
is M\"obius equivalent to
$\di\C\hyp^k$.
\end{proof}


\bigskip
\begin{tabbing}

Sergei Buyalo,\hskip11em\relax \= Viktor Schroeder,\\

St. Petersburg Dept. of Steklov \>
Institut f\"ur Mathematik, Universit\"at \\

Math. Institute RAS, Fontanka 27, \>
Z\"urich, Winterthurer Strasse 190, \\

191023 St. Petersburg, Russia,\>  CH-8057 Z\"urich, Switzerland\\
St. Petersburg State University, \> {\tt vschroed@math.unizh.ch}\\
{\tt sbuyalo@pdmi.ras.ru}\\

\end{tabbing}

\end{document}